\documentclass[12pt]{amsart}
\usepackage[colorlinks,citecolor=blue,linkcolor=blue, urlcolor=blue]{hyperref}
\usepackage{graphicx, geometry}

\newtheorem{theorem}{Theorem}[section]
\newtheorem{lemma}[theorem]{Lemma}
\newtheorem{corollary}[theorem]{Corollary}
\newtheorem{proposition}[theorem]{Proposition}
\theoremstyle{definition}
\newtheorem*{definition*}{Definition}

\newtheorem*{remark*}{Remark}
\newtheorem*{conjecture*}{Conjecture}
\newtheorem{example}[theorem]{Example}
\numberwithin{equation}{section}

\allowdisplaybreaks[4]

\begin{document}

\title[Hypercontractivity of $\mathbb{Z}_3$ and biased Bernoulli]{The optimal hypercontractive constants for $\mathbb{Z}_3$ and biased Bernoulli random variables}

\date{}

\author%[authorlabel1]
{Jie Cao}
\address%[authorlabel1]
{Jie CAO: School of Mathematical Sciences, Shenzhen University, Shenzhen 518060, Guangdong, China}
\email{mathcj@foxmail.com}

\author%[authorlabel1]
{Shilei Fan}
\address%[authorlabel1]
{Shilei FAN: School of Mathematics and Statistics, and Key Lab NAA--MOE, Central China Normal University, Wuhan 430079, China}
\email{slfan@ccnu.edu.cn}

\author%[authorlabel1]
{Yong Han}
\address%[authorlabel1]
{Yong HAN: School of Mathematical Sciences, Shenzhen University, Shenzhen 518060, Guangdong, China}
\email{hanyong@szu.edu.cn}

\author%[authorlabel1]
{Yanqi Qiu}
\address%[authorlabel1]
{Yanqi QIU: School of Fundamental Physics and Mathematical Sciences, HIAS, University of Chinese Academy of Sciences, Hangzhou 310024, China}
\email{yanqi.qiu@hotmail.com; yanqiqiu@ucas.ac.cn}

\author{Zipeng Wang}
\address{Zipeng WANG: College of Mathematics and Statistics, Chongqing University, Chongqing
401331, China}
\email{zipengwang2012@gmail.com; zipengwang@cqu.edu.cn}

%\thanks{JC is supported by Postdoctoral Daily Funding (No.000141020162),SF is supported by NSFC (No.12331004 and No.12231013), YH is supported by NSFC (No.12131016), ZW is supported by NSFC (No.12471116) and FRFCU (No.2025CDJ-IAIS YB004).}

\begin{abstract}
We resolve a folklore problem of determining the optimal hypercontractive constants $r_{p,q}(\mathbb{Z}_3)$ for the cyclic group \( \mathbb{Z}_3\) for all \( 1 < p < q < \infty \). More precisely, we have 
\[
r_{p,q}(\mathbb{Z}_3) = \frac{(1 + 2x)(1 - y)}{(1 + 2y)(1 - x)},
\]
where \((x,y)\) is the {\it unique} solution in the open unit square $(0,1)\times (0,1)$ to the system of equations
\begin{align*}
\left\{
\begin{aligned}
&\frac{1}{1+2x}\Big(\frac{1+2x^p}{3}\Big)^{\frac{1}{p}}=\frac{1}{1+2y}\Big(\frac{1+2y^q}{3}\Big)^{\frac{1}{q}},\\
&\frac{(1-x)(1-x^{p-1})}{1+2x^p}=\frac{(1-y)(1-y^{q-1})}{1+2y^q}.
\end{aligned}
\right.
\end{align*}
Consequently, for rational 
$p, q\in \mathbb{Q}$, the constants 
$r_{p,q}(\mathbb{Z}_3)$ are algebraic numbers which generally admit {\it no  radical expressions}, since their often rather complicated minimal polynomials may have non‑solvable Galois groups.

Our formalism relies on a key observation: the existence of {\it nontrivial critical extremizers}. This approach can also be adapted to resolve a long-standing open problem—determining all optimal $(p,q)$-hypercontractive constants for biased Bernoulli random variables, which are closely related to noise operators.  Several noteworthy phenomena emerge from numerical simulations: the monotonicity of the hypercontractive constants in the parameters, and the appearance of intriguing limit shapes. These phenomena merit further investigation.
\end{abstract}

\subjclass{Primary 47A63, 47D03 Secondary 26D15}
\keywords{Hypercontractivity, optimal hypercontractive constant, nontrivial critical extremizers,  biased Bernoulli random variables, blowup analysis}

\maketitle

\setcounter{tocdepth}{1}
\tableofcontents

\section{Introduction}

\subsection{Main results}
Given an integer $n \ge 2$, let $\mathbb{Z}_n = \mathbb{Z}/n\mathbb{Z}$ denote the finite cyclic group equipped with the Haar probability measure $\mathfrak{m}$ and the graph distance $d(k)= \min \{k, n-k\}$ for $0\le k\le n-1$.  Throughout the whole paper, let $\chi\in \widehat{\mathbb{Z}_n}$ be the unique character determined by $\chi(1)=e^{i 2\pi/n}$. 
 For any $r \in [0,1]$ and any $p>1$,  define the dilation operator on $L^p(\mathbb{Z}_n)=L^p(\mathbb{Z}_n,\mathfrak{m})$ as
$$
T_{r}: \sum_{k=0}^{n-1} a_k\chi^k \mapsto \sum_{k=0}^{n-1} a_k r^{d(k)} \chi^k, \quad \text{with $a_k\in\mathbb{C}$ for all $0\le k\le n-1$.}
$$

Let $1 < p < q < \infty$. The hypercontractivity problem for $\mathbb{Z}_n$ aims to determine the optimal (largest) constant $r_{p, q}(\mathbb{Z}_n) \in [0,1]$ such that the  following hypercontractive inequality holds: 
\begin{align}\label{intro-hyper-pq}
\left\|T_{r} f\right\|_{q} \leq \|f\|_{p}  \quad  \text{for all $0\le r \leq r_{p, q}(\mathbb{Z}_n)$ and all $f \in L^p(\mathbb{Z}_n)$.}
\end{align}

In the case of $\mathbb{Z}_2$, the hypercontractive inequality is referred to as the {\it two-point inequality} in literature and it was proved by  Bonami \cite{Bonami}-Nelson \cite{Nelson-73}-Beckner \cite{Beckner}-Gross \cite{Groos} that 
$$
r_{p,q}(\mathbb{Z}_2) = \sqrt{(p-1)/(q-1)}. 
$$
 And, it is proved that this hypercontractive constant is shared by many other finite cyclic groups:
\begin{itemize}
  \item $n=4$: $r_{p,q}(\mathbb{Z}_4)=r_{p,q}(\mathbb{Z}_2)$ by Beckner, Janson, and Jerison \cite{BJJ83} in 1983.
  \item $n=5$: $r_{2,2q}(\mathbb{Z}_5)=r_{2,2q}(\mathbb{Z}_2)$ with $q \in \mathbb{Z}_{+}$ by Andersson \cite{Andersson} in 2002.
  \item $n \geq 6$: (either $n$ is odd with $n\geq q$ or $n$ is even), $r_{2,2q}(\mathbb{Z}_n)= r_{2, 2q}(\mathbb{Z}_2)$  with $q \in \mathbb{Z}_{+}$ by Junge, Palazuelos, Parcet and Perrin \cite{JPPP} in 2017.
  \item \(n \in \{3 \cdot 2^k, 2^k\}\) with \(k \geq 1\), \(r_{p,q}(\mathbb{Z}_n) = r_{p,q}(\mathbb{Z}_2)\) by Yao \cite{yao2024optimal} in 2025.
\end{itemize}

% \cite{BJJ83} obtained the hypercontractive constant $r_{p,q}(\mathbb{Z}_4)=r_{p,q}(\mathbb{Z}_2)$.  For \( n = 5 \), $p=2$ and \( q \in 2\mathbb{Z}_+ \), Andersson \cite{And02} obtained the the hypercontractive constant $r_{p,2q}(\mathbb{Z}_5)$ coincides $r_{p,2q}(\mathbb{Z}_2)$ again.
%By developing novel combinatorial techniques, Junge, Palazuelos, Parcet and Perrin investigated the hypercontractive inequalities for $\mathbb{Z}_n$ with $n \geq 6$. They showed that for even and odd integers $n \geq q$, the optimal constant satisfies $r_{2,2q}(\mathbb{Z}_n)= r_{2, 2q}(\mathbb{Z}_2)$ (see \cite[Theorem A.3]{JPPP}). In the most recent work, Yao established the hypercontractive constant for the hypercontractive inequality on a large class of cyclic groups: specifically, it was shown that \(r_{p,q}(\mathbb{Z}_n) = r_{p,q}(\mathbb{Z}_2)\) for all \(n \in \{3 \cdot 2^k, 2^k\}\) with \(k \geq 1\).

%In \cite{Weissler}, Weissler obtained the same hypercontractive constant for the hypercontractive inequality for the  group $\mathbb{Z}$.

 However, the
hypercontractivity  of $\mathbb{Z}_3$ is drastically different and partial results  were obtained by Andersson \cite{Andersson}, Diaconis and Saloff-Coste \cite{DS96}.    
The argument of Wolff \cite{Wolff} implies
\begin{align}\label{2q-Z3}
r_{2,q}(\mathbb{Z}_3) = \sqrt{2(4^{1/q} - 1)/(4 - 4^{1/q})}.
\end{align}
Wolff's result is based on Lata{\l}a and Oleszkiewicz \cite{LaO00,Ole03}, where the condition $p=2$ is crucially used.   Beyond their own interest in analysis, hypercontractive inequalities on $\mathbb{Z}_3$ play roles in the study of the traffic light problem and subsets of independent sets in product graphs, see \cite{ADFS-04}, \cite{DFR-08} and \cite{Fri-23}. More details and applications of hypercontractive inequalities on discrete space can be found in \cite{And99, CLS08, Donnell, JPPE, IN-22, KLLM-24, SVZ24}. One can also consult \cite{Jan-83,Wei-90,Gro-99, Gro-02, EGS-18, FI-21, Kui-22, Mel-23} for the studies and applications of the hypercontractive and related log-Sobolev inequalities on complex manifolds.

By developing powerful combinatorial techniques, Junge, Palazuelos, Parcet, and Perrin \cite{JPPP} obtained optimal hypercontractive constant for free groups, some Coxeter groups, and many finite cyclic groups. However,  in  \cite[after Theorem A2 in page \bf{x}]{JPPP}, the authors note:

\begin{quote}
{\it \(\mathbb{Z}_3\) is the simplest group with a 3-loop in its Cayley graph, and the hypercontractive constant is not even conjectured.}
\end{quote}

\noindent This  inspires us to move away from seeking an explicit closed-form expression for \( r_{p,q}(\mathbb{Z}_3) \), which appears unlikely to exist, and instead to analyze its implicit functional structure. Through this shift in perspective, we  resolving the problem by determining \( r_{p,q}(\mathbb{Z}_3) \) for all $1<p<q<\infty$. 

Our work relies crucially on the following concept of {\it nontrivial critical extremizers}.  

\begin{definition*}
A {\it non-constant} function $f$  on $\mathbb{Z}_n$ (if it exists) for which the hypercontractive inequality \eqref{intro-hyper-pq} becomes an equality at the critical  $r=r_{p, q}(\mathbb{Z}_n)$  is called a {\it nontrivial $(p,q)$-critical extremizer}  for $\mathbb{Z}_n$ or simply a {\it nontrivial critical extremizer}.    For any given $r>r_{p,q}(\mathbb{Z}_n)$,  an {\it $r$-supercritical $(p,q)$-extremizer}  or simply {\it supercritical extremizer} (which is automatically non-constant) is defined as a function $f$  on $\mathbb{Z}_n$ satisfying
\[
1< \| T_r\|_{L^p(\mathbb{Z}_n) \to L^q(\mathbb{Z}_n)} = \| T_r f\|_q/\|f\|_p. 
\]
\end{definition*}

%This observation leads us to believe that the hypercontractive constant \(r_{p,q}(\mathbb{Z}_3)\) may not admit an exact closed-form expression in terms of \(p\) and \(q\). We therefore focus on studying its implicit functional behavior.  
%Following this approach, we resolve the problem by determining \(r_{p,q}(\mathbb{Z}_3)\) for all \(1 < p \leq q < \infty\).

%To the best of our knowledge, the general optimal constants \(r_{p,q}(\mathbb{Z}_3)\) remain unknown. 

The supercritical extremizers always exist. Indeed, since $L^p(\mathbb{Z}_n)$ is of finite dimension, the standard compactness argument implies that, for any $r>r_{p,q}(\mathbb{Z}_n)$, the $r$-supercritical extremizers always exist.  In the particular case of $\mathbb{Z}_3$, Wolff \cite[Theorem 3.1]{Wolff} provides a remarkable qualitative description of supercritical extremizers: they  must take exactly two values!  This description of supercritical extremizers combined with Lata{\l}a and Oleszkiewicz \cite{LaO00,Ole03} leads to the explicit expression \eqref{2q-Z3} of $r_{2,q}(\mathbb{Z}_3)$. Indeed, Wolff \cite[Theorem 3.1]{Wolff} proved that, for more general operators on a finite discrete probability space, any supercritical extremizer must exhibit a two-valued feature (see \S \ref{sec-hyper-rv} for a brief introduction). Wolff's qualitative description for supercritical extremizers has proven to be highly influential; see, e.g., \cite{DFR-08, Mos-10, MDO-10, FF-14, HHM-18}.

On the other hand, the existence of nontrivial critical extremizers {\it is not guaranteed} (although Wolff's argument implies that all nontrivial extremizers, once exist, must be exactly two-valued).  For instance,  it is not hard to show that $\mathbb{Z}_2$ does not admit nontrivial critical extremizers for any $1<p<q<\infty$.  In sharp contrast to the classical situation for $\mathbb{Z}_2$, partially owing to the strict inequality (see Lemma~\ref{lem-rough}) 
$$
%\label{strict-rpq}
r_{p,q}(\mathbb{Z}_3) < \sqrt{(p-1)/(q-1)}, 
$$
 we unexpectedly discover the existence of nontrivial $(p,q)$-critical extremizers.  

\begin{lemma}\label{lem-nce}
For any $1<p<q<\infty$, there exists a nontrivial $(p,q)$-critical extremizer for $\mathbb{Z}_3$. 
\end{lemma}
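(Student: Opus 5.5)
The plan is a limiting argument: take supercritical extremizers as $r\downarrow r_{p,q}:=r_{p,q}(\mathbb{Z}_3)$ and show that they cannot collapse to constants, so a limit is a nontrivial critical extremizer. Fix $r_k\downarrow r_{p,q}$. For each $k$ choose an $r_k$-supercritical extremizer $f_k$; these exist by compactness. We may assume $f_k\ge 0$, since $|T_r f|\le T_r|f|$ because $T_r$ has a positive kernel for $r\le 1$, and we normalize $\|f_k\|_p=1$. By Wolff's theorem each $f_k$ takes exactly two values, so up to the symmetry of $\mathbb{Z}_3$ we have $f_k=(a_k,b_k,b_k)$ with $a_k\neq b_k$. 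After passing to a subsequence, $f_k\to f$ with $\|f\|_p=1$, and continuity of $(r,f)\mapsto\|T_rf\|_q$ gives $\|T_{r_{p,q}}f\|_q\ge 1$, hence equality. If $f$ is non-constant we are done. So the whole issue is to rule out $f\equiv 1$.

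Write $f_k=1+\varepsilon_k g_k$ with $g_k=(2,-1,-1)\cdot c_k$ normalized, modulo constants, and $\varepsilon_k\to0$. The symmetric two-valued mean-zero direction on $\mathbb{Z}_3$ is unique up to scaling and sign. The action of $T_r$ on the function $(1+2t,1-t,1-t)$ is explicit: it becomes $(1+2rt,1-rt,1-rt)$. So the problem reduces to the one-variable function
\[
\Phi_r(t)=\Big(\tfrac{(1+2rt)^q+2(1-rt)^q}{3}\Big)^{1/q}-\Big(\tfrac{|1+2t|^p+2|1-t|^p}{3}\Big)^{1/p},\qquad t\in[-1/2,1],
\]
after rescaling to nonnegative functions. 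Supercriticality means $\sup_t\Phi_{r_k}>0$, attained at some $t_k\neq0$. Assuming $t_k\to0$, I would Taylor expand $\Phi_r$ at $t=0$. The quadratic term is $\tfrac{t^2}{2}\cdot2\,(r^2(q-1)-(p-1))$, coming from $\mathbb{E}g^2=2$. By the strict inequality $r_{p,q}<\sqrt{(p-1)/(q-1)}$ of Lemma~\ref{lem-rough}, this coefficient is bounded away from zero and negative for $r$ near $r_{p,q}$. Hence $\Phi_r(t)<0$ on a punctured neighbourhood $0<|t|<\delta$, uniformly for $r$ in a neighbourhood of $r_{p,q}$. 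The uniformity follows from continuity of the third derivative in $(r,t)$ near $(r_{p,q},0)$. This contradicts $\Phi_{r_k}(t_k)>0$ with $t_k\to0$. Therefore $|t_k|\ge\delta$, and the limit $f$ is non-constant.

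The main obstacle is making this reduction rigorous. First, one must justify that the supercritical extremizers $f_k$ may be taken nonnegative and two-valued with a single value on two points, and that the normalized ratio is exactly $\Phi$ up to a positive factor (the $L^p$ norm). Second, the limit must not degenerate at the endpoints, for example $t=-1/2$, where a value becomes $0$. That case is harmless: a limit function with a zero entry is still non-constant. The genuinely delicate point is the uniform second-order expansion. This is exactly where the strict inequality of Lemma~\ref{lem-rough} enters. Without it, as for $\mathbb{Z}_2$, the quadratic coefficient vanishes at criticality and extremizers may indeed collapse to constants.
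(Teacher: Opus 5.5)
Your argument is correct, but it reaches the one-variable reduction by a different route from the paper. The core mechanism is the same: the strict inequality of Lemma~\ref{lem-rough} makes the quadratic term of the defect uniformly negative-definite near $r_{p,q}$, so witnesses of supercriticality cannot collapse onto the constants.

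\textbf{How the paper does it.} It never uses supercritical extremizers or Wolff's theorem. In Proposition~\ref{prop-nontrivial-extremizer} it works directly with $G(r,\rho)$ on $[0,1]$, and this is legitimate only because of its own reduction, Proposition~\ref{thm-zhongyaoi}. That reduction rests on the $\theta$-monotonicity Lemmas~\ref{lemma-reduce-p-q-1} and \ref{lemma-reduce-p-q-2}, and is proved only for $1<p<q<2$ or $1<p\le 2<q$. The argument then runs as follows:
\begin{itemize}
\item fix $r_0\in(r_{p,q},\sqrt{(p-1)/(q-1)})$;
\item use the Taylor expansion and the monotonicity of $G$ in $r$ to get $G(r,\cdot)\ge 0$ on $[0,\rho_1]$ for every $r\le r_0$;
\item let $r\downarrow r_{p,q}$ to find a zero of $G(r_{p,q},\cdot)$ in $[\rho_1,1]$.
\end{itemize}
For the remaining range ($2<p<q$ or $q=2$) the paper gets the extremizer only in \S\ref{sec-ass-unique}. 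There it uses the modified problem $\widetilde{r_{p,q}}$, the cross-ratio duality, and the hard uniqueness result Proposition~\ref{proposition-unique}.

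\textbf{What each route buys.} Yours replaces all of this with Wolff's two-valuedness theorem and treats every $1<p<q<\infty$ at once, independently of the uniqueness theorem. The paper's route is self-contained. Together with Lemma~\ref{lemma-GAR-dayuling}, it also produces the extremizer in the specific form $f_{\rho_0}$ with $\rho_0\in(0,1)$, which the variational derivation of \eqref{xy-eq} requires. Your argument only yields some $t_*\in[-1/2,1]\setminus\{0\}$, though that suffices for the lemma.

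\textbf{Wolff is dispensable.} For every real mean-zero $g$ on $\mathbb{Z}_3$ one has
\[
\|1+g\|_p=1+\tfrac{p-1}{2}\,\mathbb{E}g^2+O(\|g\|_\infty^3)
\]
uniformly, and the same holds for $\|1+rg\|_q$ with $r\in[0,1]$. So your punctured-neighbourhood argument runs directly on the compact set of nonnegative $f=1+g$ with $\mathbb{E}g=0$ (note $-1\le g\le 2$ there). This gives a completely elementary proof of the lemma for all $(p,q)$.
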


Lemma~\ref{lem-nce} will be proved in Proposition~\ref{prop-nontrivial-extremizer} in the cases \(1 < p < q < 2\) or \(1 < p \leq 2 <  q<\infty\), while the remaining case will be proved in \S \ref{sec-ass-unique}. 

\begin{conjecture*}
 One may conjecture that, for  $n\ge 2$,  the equality $r_{p,q}(\mathbb{Z}_n) = \sqrt{(p-1)/(q-1)}$  holds if and only if $\mathbb{Z}_n$ does not admit  nontrivial $(p,q)$-critical extremizer. 
\end{conjecture*}

The existence of nontrivial critical extremizers for $\mathbb{Z}_3$ plays a key role in this work. Specifically, any such extremizer $f$ carries essential information about the constant $r_{p,q}(\mathbb{Z}_3)$. Through the simplest {\it variational principle} (i.e., Fermat's principle), this enables us to derive a system of equations involving both the Fourier coefficients of $f$ and the target constant $r_{p,q}(\mathbb{Z}_3)$.   However, for obtaining a  tractable equation-system, we need to further reduce the space for candidate nontrivial critical extremizers,   see \S \ref{sec-intro-reducing} and \S \ref{S:parameter-reduction} for the details.  Note that the application of ideas from variational principles to the hypercontractive inequality is not new. The analysis of Lagrange multipliers, in particular, have been skillfully applied in works such as \cite{Wolff}, \cite{DFR-08} and \cite{yao2024optimal}.

 Our first main result is
 
 \begin{theorem}\label{thm-main-thm-one-xy}
Let \(1 < p <  q < \infty\). Then the hypercontractive constant for \(\mathbb{Z}_3\) is given by
\begin{align}\label{cross-rpq}
r_{p,q}(\mathbb{Z}_3) = \frac{(1 + 2x)(1 - y)}{(1 + 2y)(1 - x)},
\end{align}
where \((x,y)\) is the unique solution in the open unit square $(0,1)\times(0,1)$ to the system of equations
\begin{align}\label{xy-eq}
\left\{
\begin{aligned}
&\frac{1}{1+2x}\Big(\frac{1+2x^p}{3}\Big)^{\frac{1}{p}}=\frac{1}{1+2y}\Big(\frac{1+2y^q}{3}\Big)^{\frac{1}{q}},\\
&\frac{(1-x)(1-x^{p-1})}{1+2x^p}=\frac{(1-y)(1-y^{q-1})}{1+2y^q}.
\end{aligned}
\right.
\end{align}
Moreover,  the function $f=1 + \frac{1-x}{1+2x} (\chi +\overline{\chi})
$
is a nontrivial critical extremizer.
\end{theorem}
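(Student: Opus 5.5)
Our plan is to exploit Lemma~\ref{lem-nce} together with Wolff's two-valuedness of nontrivial extremizers. Let $r=r_{p,q}(\mathbb{Z}_3)$ and let $f$ be a nontrivial $(p,q)$-critical extremizer, which exists by Lemma~\ref{lem-nce}.

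\textbf{Step 1: normalize the extremizer.} We may take $f$ nonnegative, since $|T_r f|\le T_r|f|$ and $T_r$ is positivity preserving. By Wolff's argument $f$ takes exactly two values. Because translations and the reflection $k\mapsto -k$ commute with $T_r$ and preserve norms, we may assume $f=(a,b,b)$ on $\{0,1,2\}$, and after scaling $f=(1,x,x)$ with $x\ge 0$, $x\neq 1$. On $\mathbb{Z}_3$ we have $d(1)=d(2)=1$, so
\[
T_r f=\frac{1+2x}{3}+r\,\frac{1-x}{3}(\chi+\overline{\chi}),
\]
and $T_rf=(1,y,y)$ up to the positive scalar $\frac{1+2x+2r(1-x)}{3}$, where
\[
y=\frac{1+2x-r(1-x)}{1+2x+2r(1-x)}.
\]
Solving this for $r$ gives exactly \eqref{cross-rpq}. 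The same normalization, written in Fourier form, yields the stated extremizer $1+\frac{1-x}{1+2x}(\chi+\overline\chi)$.

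\textbf{Step 2: derive the system.} Write $F(x,y)=\Phi_q(y)/\Phi_p(x)$, where $\Phi_s(t)=\frac{1}{1+2t}\bigl(\frac{1+2t^s}{3}\bigr)^{1/s}$. A direct computation gives
\[
\|T_rf\|_q/\|f\|_p=\Phi_p(x)/\Phi_q(y),
\]
since the scalar $\frac{1+2x+2r(1-x)}{3}$ and the mean $\frac{1+2x}{3}$ combine through the relation between $y$ and $r$. Equality in \eqref{intro-hyper-pq} is then the first equation of \eqref{xy-eq}.

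For the second equation, consider the one-parameter family $x'\mapsto (1,x',x')$ with $r$ fixed. The function $g(x')=\log\|T_r(1,x',x')\|_q-\log\|(1,x',x')\|_p$ is $\le 0$ and attains $0$ at $x'=x$, so $g'(x)=0$ (Fermat). Differentiate using $\partial y/\partial x$ from the Möbius relation at fixed $r$. After clearing the common factor coming from $\frac{d}{dx}\log$ of the linear prefactors, this condition should reduce to the equality of $\frac{(1-x)(1-x^{p-1})}{1+2x^p}$ and $\frac{(1-y)(1-y^{q-1})}{1+2y^q}$. The key identity is
\[
\frac{d}{dt}\log\bigl((1+2t^s)^{1/s}/(1+2t)\bigr)=\frac{2(t^{s-1}-1)}{(1+2t^s)(1+2t)},
\]
combined with
\[
\frac{dy}{dx}=\frac{(1+2y)^2}{(1+2x)^2}\cdot\frac{(1-y)\ldots}{\ldots}
\]
obtained from the cross-ratio form of \eqref{cross-rpq}. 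I expect this to be routine algebra.

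\textbf{Step 3: place $(x,y)$ in the open unit square.} This needs $x,y\in(0,1)$. First, $x\ne1$, and $x>1$ can be ruled out by reducing to $x<1$: since $(1,x,x)$ with $x>1$ is equivalent to $(1/x,1,1)$, it is a different shape, $(1,1,1)-$type. The one-bump versus two-bump cases are genuinely different, and one must show the extremizer has the shape with a single larger value, i.e. $x<1$. I would argue by comparing the functionals, or by using that $y$ lies strictly between $x$ and $1$, so $y\in(0,1)$ when $x\in(0,1)$. The endpoint $x=0$ is excluded because the second equation at $x=0$ would force a contradiction with the strict bound of Lemma~\ref{lem-rough}.

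\textbf{Step 4: uniqueness.} This is the main obstacle. Existence of a solution is already supplied by the extremizer, but uniqueness of solutions of \eqref{xy-eq} in $(0,1)^2$ is needed so that the formula determines $r$. My first approach is to parametrize: view the second equation as defining $y$ implicitly as a function of $x$, via monotonicity of $h_s(t)=\frac{(1-t)(1-t^{s-1})}{1+2t^s}$ on $(0,1)$ (or on suitable branches). Then I would show that the difference of the two sides of the first equation along this curve changes sign exactly once, by checking a derivative sign through a careful blowup/asymptotic analysis near $t=0$ and $t=1$. If $h_s$ is not monotone, I would instead show that any solution yields an extremizer at level $r$ given by \eqref{cross-rpq}. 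Two distinct solutions would then produce two critical values, one of which exceeds $r_{p,q}$, contradicting Step 2's characterization via supercritical extremizers and optimality.
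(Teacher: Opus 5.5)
Your Steps 1--2 are sound modulo Wolff's two-valuedness, which the paper itself mentions as an alternative to its reduction. The Fermat computation also closes: at fixed $r$,
\[
\frac{dy}{dx}=\frac{r(1+2y)^2}{(1+2x)^2}=\frac{(1+2y)(1-y)}{(1+2x)(1-x)},
\]
and with your log-derivative identity this gives exactly the second equation of \eqref{xy-eq}. Note that the norm ratio is $\Phi_q(y)/\Phi_p(x)$, not its inverse.

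The first genuine gap is Step 3. Two-valuedness only gives $f=(1,x,x)$ with $x\in[0,\infty]\setminus\{1\}$. The shape $x>1$ (two equal large values, i.e.\ the edge $a=\rho e^{i\pi/3}$ of the paper's triangle $\Delta$) leads, by the same computation, to a solution of \eqref{xy-eq} in $(1,\infty)^2$. Nothing would then tie $r_{p,q}$ to a solution in $(0,1)^2$.

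You give no argument excluding this shape, and ``comparing the functionals'' is exactly the hard part of the paper. It is the strict $\theta$-monotonicity of the defect in Lemmas~\ref{lemma-reduce-p-q-1} and~\ref{lemma-reduce-p-q-2}. For $1<p<q<2$ this uses $r\le\sqrt{(p-1)/(q-1)}$, the estimates \eqref{eq:term1-est}--\eqref{eq:term2-est}, and the inequality of Lemma~\ref{lemma-ineq-psi}. For $2<p<q$ or $q=2$, where no such monotonicity is available, the paper argues by duality, the invariance \eqref{cross-sym}, and uniqueness.

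Relatedly, citing Lemma~\ref{lem-nce} for all $p<q$ is circular as the paper stands, since that range of the lemma is deduced from the theorem. This is repairable: if $f\ge0$ is critical for $(q^*,p^*)$, then $(T_rf)^{p^*-1}$ is critical for $(p,q)$. Your exclusion of $x=0$ also cites the wrong reason. What works is the one-sided derivative argument of Lemma~\ref{lemma-GAR-dayuling}, or Fermat for signed perturbations at $x=0$, which forces $h_2(q,y)=1$, impossible for $y\in(0,1)$.

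The second and main gap is Step 4: uniqueness is not proved. Reparametrizing by the second equation is legitimate, since $x\mapsto\frac{(1-x)(1-x^{s-1})}{1+2x^s}$ is strictly decreasing, but it just restates the problem. Endpoint asymptotics cannot count interior sign changes. The real difficulty is that $h(p,\cdot)$ and $h(q,\cdot)$ share the endpoint $(1/3,0)$ and are nearly tangent there. That is why the paper needs the symmetrized family $H$, injectivity on $t<\frac12$ (Proposition~\ref{restrict-homeo}), separation after the blowup $b$ (Proposition~\ref{separation}), and monotonicity in $\alpha$ of the slopes after the blowup $B$ (Proposition~\ref{slope-monotone}).

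Your fallback is logically invalid. A solution $(x',y')$ of \eqref{xy-eq} with cross ratio $r'$ only says that $\rho'=\frac{1-x'}{1+2x'}$ is a zero of $G(r',\cdot)$ with vanishing derivative. It does not say that $f_{\rho'}$ extremizes $T_{r'}$. Absence of subcritical extremizers rules out $r'<r_{p,q}$. But for $r'>r_{p,q}$ one has $\|T_{r'}\|_{p\to q}>1$, and $\rho'$ may be a non-global critical point of $G(r',\cdot)$, so nothing is contradicted. Nor is a second solution with the same cross ratio excluded. So even with Step 3 repaired, your argument yields only the min-formula \eqref{rpq-as-min}, not the theorem.
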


 The uniqueness of the solution in $(0,1)^2$ to the system \eqref{xy-eq} is a highly nontrivial fact (which we suspect may be related to some deep result in real algebraic geometry, at least when $p, q$ are rational). For proving this uniqueness,  inspired by the theory of {\it Whitney pleats} (see \cite[\S1]{Arnold-Sing}), we are led to analyze the intersection properties of a  curve-family  associated with \eqref{xy-eq} by employing {\it symmetrization} and {\it blowup techniques}.

Since the proof of the uniqueness statement of  Theorem~\ref{thm-main-thm-one-xy} is considerably more involved than the derivation of the system \eqref{xy-eq} itself, here  we  provide an alternative description of $r_{p,q}(\mathbb{Z}_3)$  that avoids relying on this uniqueness. Specifically, by the absence of nontrivial subcritical extremizers for $\mathbb{Z}_3$ (see the remark below) and the argument in the establishing \eqref{xy-eq}, one may obtain 
\begin{align}\label{rpq-as-min}
r_{p,q}(\mathbb{Z}_3) = \min\left\{ \frac{(1 + 2x)(1 - y)}{(1 + 2y)(1 - x)}:   \text{\((x,y)\) is a solution in $(0,1)^2$ to \eqref{xy-eq}} \right\}. 
\end{align}

Even without the uniqueness result from Theorem~\ref{thm-main-thm-one-xy}, the system \eqref{xy-eq} is already quite useful, as it enables numerical computation of $r_{p,q}(\mathbb{Z}_3)$. However, our proof of uniqueness carries both theoretical and practical significance: we observed that directly solving \eqref{xy-eq} numerically is highly unstable, whereas employing the symmetrization and blowup techniques developed in the uniqueness proof yields an efficient and stable numerical simulation of $r_{p,q}(\mathbb{Z}_3)$.

\begin{remark*}
For any $1<p<q<\infty$, one can define  {\it subcritical extremizers} for  $0<r<r_{p,q}(\mathbb{Z}_n)$. However,   there is no  nontrivial subcritical extremizer for $\mathbb{Z}_n$, since  $T_r = T_{r/r_{p,q}(\mathbb{Z}_n)}\circ T_{r_{p,q}(\mathbb{Z}_n)}$ and only  the constant functions can have $T_{r/r_{p,q}(\mathbb{Z}_n)}$-invariant $L^q$-norms. 
\end{remark*}

\begin{remark*}  As a consequence of the uniqueness of the solution to \eqref{xy-eq}, for any $1<p<q<\infty$,  one may prove that,  up to multiplication by a non-zero constant,  there exists a unique nontrivial critical extremizer for $\mathbb{Z}_3$ (which however will vary for different pairs $(p,q)$).   
\end{remark*}

\begin{remark*}
The formula \eqref{cross-rpq} for $r_{p,q}(\mathbb{Z}_3)$ is exactly a cross ratio (see, e.g.,  \cite[\S 4.4]{Bea-95}): 
$$
%\label{cross-form}
r_{p,q}(\mathbb{Z}_3)= (x,y; -1/2,1): = \frac{(x+1/2)(y-1)}{(x-1) (y+1/2)}.
$$
Currently,  the geometric connection between  $r_{p,q}(\mathbb{Z}_3)$ and the cross ratio is not known to us. 
\end{remark*}

\begin{remark*}
In \cite{Borell-82}, Borell established a reverse hypercontractive inequality on $\mathbb{Z}_2$. Recently, Mossel, Oleszkiewicz, and Sen \cite{MOS-13} showed that reverse hypercontractive inequalities follow from standard hypercontractive inequalities or the modified log-Sobolev inequality. In particular, they obtained that Borell's reverse hypercontractive inequalities hold uniformly for all probability spaces. It seems that the method in this paper can be adapted to obtain the optimal constant of the reverse hypercontractivity on $\mathbb{Z}_3$.  Indeed, one can show (with a little effort) that the equation-system \eqref{xy-eq} has a unique solution in $(0,1)^2$ when $0<q<p<1$. Thus, the optimal reverse hypercontractive constants also have the form \eqref{cross-rpq}. 
\end{remark*}

It is clear that the hypercontractive constant $r_{p,q}(\mathbb{Z}_3)$ has a  symmetry of duality: 
\begin{align}\label{dual-rpq}
r_{p, q}(\mathbb{Z}_3)= r_{q^*, p^*}(\mathbb{Z}_3),
\end{align}
 where $p^* = p/(p-1), q^* = q/(q-1)$ are the  conjugate exponents.  Meanwhile, the  equation-system \eqref{xy-eq} also possesses a symmetry of duality.   For any $1<p<\infty$, define 
\[
\ell(p, x): = \frac{1}{1+2x}\Big(\frac{1+2x^p}{3}\Big)^{\frac{1}{p}}, \quad x \in (0,1). 
\]

\begin{lemma}\label{lem-self-dual}
The equation-system \eqref{xy-eq} has the following self-dual equivalent formulation: 
\begin{align}\label{self-dual-form}
\left\{
\begin{aligned}
\ell(p, x) & =\ell(q,y),\\
\ell(p^*, x^{p-1})&=\ell(q^*, y^{q-1}).
\end{aligned}
\right.
\end{align}
In particular, the solution of the equation-system \eqref{xy-eq} is invariant under the transform 
\[
(p, q, x, y) \mapsto (q^*, p^*, y^{q-1}, x^{p-1}). 
\] 
This transform also leaves invariant the cross ratio $(x,y; -1/2, 1)$ for the solution $(x,y)$ of \eqref{xy-eq}: 
\begin{align}\label{cross-sym}
(x,y; -1/2, 1) = (y^{q-1}, x^{p-1}; -1/2, 1). 
\end{align}
\end{lemma}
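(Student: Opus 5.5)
The plan is to reduce everything to one algebraic identity relating $\ell(p,x)$, $\ell(p^*,x^{p-1})$ and the left-hand side of the second equation in \eqref{xy-eq}. Write
\[
A_p(x)=\Big(\frac{1+2x^p}{3}\Big)^{1/p},\qquad h_p(x)=\frac{(1-x)(1-x^{p-1})}{1+2x^p}.
\]
Since $(x^{p-1})^{p^*}=x^p$ and $1/p^*=(p-1)/p$, we get
\[
\ell(p^*,x^{p-1})=\frac{A_p(x)^{p-1}}{1+2x^{p-1}}.
\]
Together with $\ell(p,x)=A_p(x)/(1+2x)$ this gives the product formula
\begin{align*}
\ell(p,x)\,\ell(p^*,x^{p-1})=\frac{A_p(x)^p}{(1+2x)(1+2x^{p-1})}=\frac{1+2x^p}{3(1+2x)(1+2x^{p-1})}.
\end{align*}

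The key elementary identity is
\[
(1+2x)(1+2x^{p-1})=3(1+2x^p)-2(1-x)(1-x^{p-1}).
\]
It is checked by expanding both sides to $1+2x+2x^{p-1}+4x^p$. Dividing by $1+2x^p$ yields
\[
\ell(p,x)\,\ell(p^*,x^{p-1})=\frac{1}{3\,(3-2h_p(x))}.
\]
The same formula holds with $(p,x)$ replaced by $(q,y)$. Assume the first equation $\ell(p,x)=\ell(q,y)$, where both sides are positive. Then $\ell(p^*,x^{p-1})=\ell(q^*,y^{q-1})$ holds if and only if $3-2h_p(x)=3-2h_q(y)$, that is, if and only if $h_p(x)=h_q(y)$. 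This proves that \eqref{xy-eq} and \eqref{self-dual-form} are equivalent.

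For the invariance, I will use $p^*-1=1/(p-1)$, so that $(x^{p-1})^{p^*-1}=x$, and $(p^*)^*=p$. Substituting $(q^*,p^*,y^{q-1},x^{p-1})$ into \eqref{self-dual-form} gives the system
\[
\ell(q^*,y^{q-1})=\ell(p^*,x^{p-1}),\qquad \ell(q,y)=\ell(p,x).
\]
This is the same pair of equations in the other order. Note that $y^{q-1},x^{p-1}\in(0,1)$, so the transform preserves the open square.

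For the cross ratio \eqref{cross-sym}, I will clear denominators. The claim is equivalent to
\[
\frac{(1+2x)(1+2x^{p-1})}{(1-x)(1-x^{p-1})}=\frac{(1+2y)(1+2y^{q-1})}{(1-y)(1-y^{q-1})}.
\]
Write $g_p(x)=(1+2x^p)/\big((1+2x)(1+2x^{p-1})\big)=3-2h_p(x)$... more precisely $1/g_p(x)=3-2h_p(x)$. Then the left side equals
\[
\frac{(1+2x^p)/g_p(x)}{h_p(x)(1+2x^p)}=\frac{1}{g_p(x)\,h_p(x)}=\frac{3-2h_p(x)}{h_p(x)}.
\]
This depends only on $h_p(x)$, and the analogous expression for $y$ depends only on $h_q(y)$. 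By the second equation of \eqref{xy-eq} these agree, which gives \eqref{cross-sym}.

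No step is a genuine obstacle. The only point needing care is spotting the factorization identity. Once it is in hand, both the equivalence and the cross-ratio symmetry follow by direct substitution.
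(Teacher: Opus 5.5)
Your proof is correct and follows essentially the paper's route. The paper uses the same identity $(1+2z)(1+2z^{\kappa-1})=3(1+2z^{\kappa})-2(1-z)(1-z^{\kappa-1})$ in three steps: it rewrites the second equation of \eqref{xy-eq} as \eqref{no-minus-eq}, multiplies this by the first equation to obtain $\ell(p^*,x^{p-1})=\ell(q^*,y^{q-1})$, and expresses the cross-ratio quantity as $3/h_p(x)-2$, just as you do. Your product formula $\ell(p,x)\,\ell(p^*,x^{p-1})=1/\bigl(3(3-2h_p(x))\bigr)$ and your explicit check of invariance under $(p,q,x,y)\mapsto(q^*,p^*,y^{q-1},x^{p-1})$ are only a slightly more explicit packaging of the same argument.
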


In general, $r_{p,q}(\mathbb{Z}_3)$ rarely admits an explicit expression; the following corollary presents the rare instances that we conjecture to be the only ones.  Note that,  $r_{2,p^*}(\mathbb{Z}_3)   = r_{p,2}(\mathbb{Z}_3)$ is due to Wolff \cite{Wolff}, while $r_{p,p^*}(\mathbb{Z}_3)$ appears to be  new in the literature. One shall see in \S \ref{sec-sym} that these expressions are related to the symmetry \eqref{self-dual-form}.

\begin{corollary}\label{cor:pp}
Let $1 < p < 2$.
Then
$$r_{p,p^*}(\mathbb{Z}_3)=
2(4^{1/p^*} - 1)/(4 - 4^{1/p^*}) \text{\,\, and \,\,}
r_{p,2}(\mathbb{Z}_3)=
r_{2,p^*}(\mathbb{Z}_3)=\sqrt{2(4^{1/p^*} - 1)/(4 - 4^{1/p^*})}.$$
Consequently, the multiplicative relation $r_{p,p^*}(\mathbb{Z}_3)=r_{p,2}(\mathbb{Z}_3)r_{2,p^*}(\mathbb{Z}_3)$ holds. 
\end{corollary}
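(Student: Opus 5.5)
The plan is to combine Theorem~\ref{thm-main-thm-one-xy} (uniqueness of the solution in $(0,1)^2$) with the self-dual formulation of Lemma~\ref{lem-self-dual}. For $q=p^*$ we have $q^*=p$ and $p^*=q$, so the transform $(p,q,x,y)\mapsto(q^*,p^*,y^{q-1},x^{p-1})$ maps the system \eqref{self-dual-form} for the pair $(p,p^*)$ to itself. Concretely, it sends a solution $(x,y)$ to $(y^{p^*-1},x^{p-1})$. By uniqueness, the solution must be a fixed point, i.e. $x=y^{p^*-1}$ (equivalently $y=x^{p-1}$, since $(p-1)(p^*-1)=1$). Substituting $y=x^{p-1}$ into the first equation $\ell(p,x)=\ell(p^*,x^{p-1})$ gives
\[
\frac{1}{1+2x}\Big(\frac{1+2x^p}{3}\Big)^{1/p}=\frac{1}{1+2x^{p-1}}\Big(\frac{1+2x^{p}}{3}\Big)^{1/p^*}.
\]
This simplifies to $\frac{1+2x^{p-1}}{1+2x}=\big(\frac{1+2x^p}{3}\big)^{1/p^*-1/p}$. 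The second equation of \eqref{xy-eq} should then be checked to hold automatically under $y=x^{p-1}$, because it is the image of the first under the symmetry.

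This single equation still has to be solved explicitly, and this is the step I expect to be the main obstacle. I would not attack it directly. Instead I would use the candidate closed form: set $s=4^{1/p^*}$ and guess that $x$ and $y$ are such that the cross ratio $\frac{(1+2x)(1-y)}{(1+2y)(1-x)}$ equals $2(s-1)/(4-s)$. Comparing with the case $p=2$ suggests trying $x^{p}$ as a simple expression. A natural guess is that the extremizer values satisfy $(1+2x^p)$ being proportional to something like $4^{1/p}$ or $4^{1/p^*}$; for instance, $x^{p-1}=1/2$-type normalizations make the denominators $1+2x^{p-1}$ rational in $s$. I would test $x=2^{-p^*}\cdot(\text{const})$ numerically first, then verify algebraically. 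Once the right $x$ is found, the cross ratio formula \eqref{cross-rpq} gives $r_{p,p^*}$ after simplification.

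For $r_{p,2}$ and $r_{2,p^*}$, I would not solve \eqref{xy-eq}. The value of $r_{2,p^*}$ is Wolff's formula \eqref{2q-Z3} with $q=p^*$, and duality \eqref{dual-rpq} gives $r_{p,2}=r_{2,p^*}$ since $2^*=2$. The multiplicative relation is then immediate: the square of $\sqrt{2(4^{1/p^*}-1)/(4-4^{1/p^*})}$ equals the claimed $r_{p,p^*}$.

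If the explicit solution of the reduced equation resists, an alternative route to $r_{p,p^*}$ is this. The semigroup property $T_{r_1r_2}=T_{r_1}T_{r_2}$ already gives $r_{p,p^*}\ge r_{p,2}r_{2,p^*}$. For the reverse inequality, I would show that $T_{r_{2,p^*}}$ applied to the critical extremizer of the pair $(p,2)$ is a critical extremizer of the pair $(2,p^*)$. The two-valued structure and the fixed-point symmetry make this plausible, since in $L^2$ the adjoint pairing of $T_r$ aligns with duality. Equality in the chain $\|T_{r_1r_2}f\|_{p^*}\le\|T_{r_1}f\|_2\le\|f\|_p$ then forces $r_{p,p^*}=r_{p,2}r_{2,p^*}$, because $r>r_{p,p^*}$ would give a norm strictly above $1$.
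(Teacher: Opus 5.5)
Your first route is set up correctly: the fixed-point reduction $y=x^{p-1}$ is exactly the paper's second proof. But it stops at the one step that carries the content. You never produce a solution of $\frac{1+2x^{p-1}}{1+2x}=\big(\frac{1+2x^p}{3}\big)^{1/p^*-1/p}$, and each of your guesses misses: $x=2^{-p^*}\cdot\mathrm{const}$, $x^{p-1}=1/2$, or $1+2x^p$ proportional to $4^{1/p}$ or $4^{1/p^*}$.

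The right normalization is $\frac{1+2x^p}{3}=\frac12$, i.e. $x=4^{-1/p}$ and $y=x^{p-1}=4^{-1/p^*}$. With $u=2^{2/p-1}$ one has $2x^{p-1}=u$ and $2x=u^{-1}$. So the left side is $\frac{1+u}{1+u^{-1}}=u$, and the right side is $2^{1/p-1/p^*}=u$ as well. In the paper's first proof this point is $\Phi(\alpha,\tfrac12)$ with $\alpha=1-2/p$; it appears because $H_1(\alpha,\tfrac12)\equiv0$, so $H(\alpha,\cdot)$ and $H(-\alpha,\cdot)$ meet at $t=\tfrac12$.

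Any solution of the reduced equation, together with $y=x^{p-1}$, solves \eqref{self-dual-form} and hence \eqref{xy-eq}, so uniqueness identifies it. With $s=4^{1/p^*}$ you have $x=s/4$ and $y=1/s$, and \eqref{cross-rpq} gives $\frac{(2+s)(s-1)/(2s)}{(s+2)(4-s)/(4s)}=\frac{2(s-1)}{4-s}$. Your handling of $r_{p,2}=r_{2,p^*}$ via \eqref{2q-Z3} and \eqref{dual-rpq}, and of the multiplicative relation, is fine once this is in place.

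Your alternative route, as written, is not a proof, for three reasons. First, the decisive claim is only called plausible. Second, the closing justification addresses the wrong side. What you need is that a nonconstant equality case at $r_{p,2}r_{2,p^*}$ rules out $r_{p,2}r_{2,p^*}<r_{p,p^*}$ (there are no nontrivial subcritical extremizers); a statement about $r>r_{p,p^*}$ does not do this. Third, existence of a $(p,2)$-critical extremizer is not supplied by Proposition~\ref{prop-nontrivial-extremizer}, whose range excludes $q=2$.

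The idea nevertheless closes cleanly with no extremizers at all, by the $TT^*$ argument. Since $T_r$ is self-adjoint with $T_r^2=T_{r^2}$, H\"older and adjointness give
\[
\|T_rf\|_2^2=\langle T_{r^2}f,f\rangle\le\|T_{r^2}f\|_{p^*}\|f\|_p,\qquad \|T_{r^2}\|_{p\to p^*}\le\|T_r\|_{2\to p^*}\|T_r\|_{p\to 2}=\|T_r\|_{p\to 2}^2 .
\]
Hence $\|T_{r^2}\|_{p\to p^*}=\|T_r\|_{p\to 2}^2$ for every $r$, and therefore $r_{p,p^*}=r_{p,2}^2=r_{p,2}\,r_{2,p^*}$.

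Combined with \eqref{dual-rpq} and Wolff's formula \eqref{2q-Z3}, this proves the whole corollary without using the uniqueness part of Theorem~\ref{thm-main-thm-one-xy}. That is a genuinely shorter route than the paper's. The paper instead obtains the multiplicative relation from the curve geometry and thereby recovers Wolff's value rather than assuming it.
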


%\begin{remark}
%The hypercontractive constants $r_{2,p^*}(\mathbb{Z}_3)$ and $r_{p,2}(\mathbb{Z}_3)$ are obtained by P. Wolff \cite{Wolff}.
%\end{remark}
%\begin{remark}
%The following proof of Corollary \ref{cor:pp} reflects geometric intuitions regarding curve intersections which are key ideas in the uniqueness of the system \eqref{eq:xy-first}. . 
%\end{remark}
\begin{remark*}
While $r_{p,q}(\mathbb{Z}_2)=r_{p,s}(\mathbb{Z}_2)r_{s,q}(\mathbb{Z}_2)$ for all $1<p\leq s\leq q<\infty$, it seems to be a common understanding that the multiplicative relation does not generally hold for \(r_{p,q}(\mathbb{Z}_3)\). See also the numerical solutions of the system \eqref{xy-eq}  in  Figure ~\ref{multiplicative relationship}. The non-multiplicative relation seems another obstacle in expecting an explicit formula for $r_{p,q}(\mathbb{Z}_3)$.
\begin{figure}[htbp]
\centering
\includegraphics[width=0.35\textwidth]{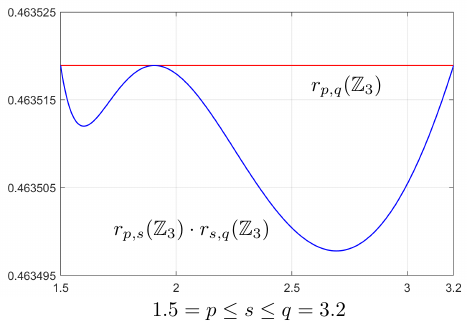}
\caption{Non-multiplicative relationship}
\label{multiplicative relationship}
\end{figure}
%\textcolor{red}{To be added by Jie Cao}
\end{remark*}

\begin{corollary}\label{coro-algebraic}
For any $p,q\in \mathbb{Q}$ such that  $1<p<q<\infty$,  $r_{p,q}(\mathbb{Z}_3)$ is an algebraic number.
\end{corollary}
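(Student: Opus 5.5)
We derive Corollary~\ref{coro-algebraic} from Theorem~\ref{thm-main-thm-one-xy}. Since $(x,y)$ is the unique solution in $(0,1)^2$ of \eqref{xy-eq}, it suffices to show that $x$ and $y$ are algebraic numbers. Then $r_{p,q}(\mathbb{Z}_3)=\frac{(1+2x)(1-y)}{(1+2y)(1-x)}$ is a rational expression in algebraic numbers, hence algebraic.

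\emph{Algebraization.} Write $p=a/b$ and $q=c/d$ with positive integers, and let $N$ be a common denominator, for example $N=bd$. Set $s=x^{1/N}$ and $t=y^{1/N}$, both in $(0,1)$. Then $x^p$, $x^{p-1}$, $y^q$, $y^{q-1}$ are integer powers of $s$ and $t$, so the second equation of \eqref{xy-eq}, after clearing the positive denominators, becomes a polynomial equation $P_2(s,t)=0$ with integer coefficients. For the first equation, raise both sides to the power $pq$; since both sides are positive, this is an equivalence. We get
\[
\Big(\tfrac{1+2x^p}{3}\Big)^{q}(1+2y)^{pq}=\Big(\tfrac{1+2y^q}{3}\Big)^{p}(1+2x)^{pq}.
\]
The exponents $q$, $p$, $pq$ are still rational. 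To clear them, raise both positive sides further to the power $bd$, which turns every exponent into a positive integer ($bdq=bc$, $bdp=ad$, $bd\,pq=ac$). This gives a polynomial equation $P_1(s,t)=0$ with rational coefficients. So $(s,t)$ is a real point of the algebraic set $V=\{P_1=P_2=0\}\subset\mathbb{C}^2$.

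\emph{Isolation implies algebraicity.} The main obstacle is that a real point of $V$ need not be algebraic unless it is an isolated point, that is, unless $V$ is zero-dimensional near it. Here uniqueness enters. The map $(s,t)\mapsto(s^N,t^N)$ is a bijection of $(0,1)^2$. The relations $P_1=P_2=0$ are equivalent to \eqref{xy-eq} on $(0,1)^2$, because all the quantities we raised to powers were positive, so taking positive real roots reverses each step. Hence $(s,t)$ is the unique point of $V\cap(0,1)^2$.

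Now suppose $(s,t)$ lay on a positive-dimensional irreducible component $W$ of $V$. Since $W$ is defined over $\mathbb{Q}$ (or over a finite extension), by taking a Galois-stable union we may regard the defining data as rational. A real point of a complex curve component at which the real locus is one-dimensional yields infinitely many real points of $V$ in $(0,1)^2$ nearby, which contradicts uniqueness. The remaining case is a real point of $W$ whose real locus is zero-dimensional there. Such a point is an isolated point of the real algebraic set $V(\mathbb{R})$. By the Tarski--Seidenberg principle, isolated points of a real algebraic set defined over $\mathbb{Q}$ have real algebraic coordinates: the set of isolated points is semialgebraic and defined over $\mathbb{Q}$, and it is finite.

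The cleanest route is therefore the following. The set $S=V(\mathbb{R})\cap(0,1)^2$ is a semialgebraic set defined over $\mathbb{Q}$, and by uniqueness it is the single point $\{(s,t)\}$. A finite semialgebraic set defined over $\mathbb{Q}$ consists of points with coordinates in the real algebraic closure of $\mathbb{Q}$, again by Tarski--Seidenberg, since the real algebraic numbers form a real closed field and $S$ is nonempty in that field by transfer. Hence $s$ and $t$ are algebraic, so $x=s^N$ and $y=t^N$ are algebraic, and therefore $r_{p,q}(\mathbb{Z}_3)$ is algebraic.
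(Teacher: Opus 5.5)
Your proof is correct, but it reaches the key step by a different route from the paper. The algebraization is essentially the paper's: they set $u=x^{1/n}$, $v=y^{1/k}$ with separate denominators, while you use a common $N=bd$. Your exponent bookkeeping ($bdq=bc$, $bdp=ad$, $bd\,pq=ac$) and the positivity argument for the equivalence on $(0,1)^2$ are fine.

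The divergence is in showing that the unique solution in an open set of a polynomial system with rational coefficients has algebraic coordinates. The paper proves an elementary lemma for this. It factors out $H=\gcd(P,Q)$ and uses the nonvanishing resultant $\operatorname{Res}_v(\tilde P,\tilde Q)$ of the coprime parts. In the common-factor case, it notes that an isolated real zero of $H$ must be a singular point and passes to the system $H=H_u=0$.

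You instead invoke Tarski--Seidenberg transfer. The existential sentence with rational parameters holds in $\mathbb{R}$, hence in the real closed field $\mathbb{R}_{\mathrm{alg}}$. Uniqueness in $\mathbb{R}$ then forces the algebraic witness to equal $(s,t)$.

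What each route buys:
\begin{itemize}
\item Your route is shorter and uniform, with no case analysis. It sidesteps the somewhat delicate iteration in the paper's Case 2. For instance, $\gcd(H,H_u)$ can be nontrivial for a squarefree $H$ that has a factor depending on $v$ alone, which the paper's ``repeated factors'' phrasing passes over.
\item The price is reliance on a nontrivial model-theoretic theorem. The paper's argument is self-contained and effective: the resultant is exactly the tool behind its remark computing the minimal polynomial of $r_{3,6}(\mathbb{Z}_3)$.
\end{itemize}

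Your intermediate paragraph on positive-dimensional components and Galois-stable unions is unnecessary and somewhat loose. The final transfer argument stands on its own, so that paragraph can simply be deleted.
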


The derivation of Corollary \ref{coro-algebraic} from Theorem \ref{thm-main-thm-one-xy} seems rather elementary in algebraic geometry, for the reader's convenience we provide a short proof in \S \ref{sec-daishushu}  of the appendix  that does not require any background in the subject.

\begin{remark*}[Absence of radical expression in general]
For $p, q\in \mathbb{Q}$,  the minimal polynomial of $r_{p,q}(\mathbb{Z}_3)$ may be written explicitly. Indeed, in view of Theorem~\ref{thm-main-thm-one-xy} or  the equivalent equation-system \eqref{eq:original} concerning directly $r_{p,q}(\mathbb{Z}_3)$,  one can use the standard {\it method of resultant}  (see the proof of Corollary~\ref{coro-algebraic} in \S \ref{sec-daishushu} of the appendix  for a quick explanation) to derive an integer-coefficient polynomial (and then an irreducible one) having $r_{p,q}(\mathbb{Z}_3)$ as a root.  For instance, 
using computer algebra systems (e.g., Mathematica, Matlab), we obtain the minimal polynomial of $r_{3,6}(\mathbb{Z}_3)$: 
\begin{align*}
&2600125 r^{20}+2600125 r^{19}-54275 r^{18}+3456600 r^{17}-1846590 r^{16}
-5287590 r^{15}+901467 r^{14}\\
-\ &3882063 r^{13}+1557057 r^{12}+4269364 r^{11}-3942536 r^{10}+1575484 r^9+1067232 r^8\\
-\ &1287048 r^7+407592 r^6+97920 r^5-118680 r^4+17160 r^3-5540 r^2-20 r-20.
\end{align*}
Furthermore, with Magma (available online at \url{https://magma.maths.usyd.edu.au/calc/}), we confirm that its Galois group is the symmetric group $S_{20}$, which is non‑solvable. 
Therefore, $r_{3,6}(\mathbb{Z}_3)$ has no radical expression.  This appears to be the underlying reason behind the lack of an explicit expression for  $r_{p,q}(\mathbb{Z}_3)$ for general $p,q$. 
\end{remark*}

\subsection{Optimal hypercontractive constants for biased Bernoulli random variables} \label{sec-hyper-rv}
For the study of hypercontractive phenomena on the discrete probability space \((\Omega, \pi)\), it is convenient to use the notation of hypercontractive random variables introduced by Krakowiak and Szulga (see, e.g., \cite{KS-88}, \cite{KS-91} and \cite[Definition 9.13]{Donnell}). For \(1 < p < q < \infty\) and \(0 \leq r < 1\),  a real-valued random variable \(X \in L^q\) is said to be $(p,q, r)$-hypercontractive if  
\[
\|1 + r \rho X\|_q \leq \|1 + \rho X\|_p, \qquad \forall \rho \in \mathbb{R}.
\]
The largest \( r \) for which the \((p, q, r)\)-hypercontractive inequality holds is called the optimal hypercontractivity constant of the random variable \(X\), and is denoted by \(\sigma_{p,q}(X)\). 

Let \(\lambda \in (0, 1/2]\). The biased Bernoulli random variable \(X_\lambda\) is defined by
\begin{align}\label{eqn-biased-rw}
\mathbb{P}(X_\lambda = 1- \lambda)  =1  - \mathbb{P}(X_\lambda = -\lambda) =  \lambda. 
\end{align}
The optimal hypercontractivity constant \(\sigma_{p,q}(X_\lambda) \) is, in this case, simply denoted by \(\sigma_{p,q}(\lambda)\).
The $\lambda=1/2$ case is the classical symmetric two-point hypercontractivity inequality, i.e., 
\[
\sigma_{p,q}(1/2)=r_{p,q}(\mathbb{Z}_2)=\sqrt{(p-1)/(q-1)}.
\]
For $\lambda\not=1/2$, the constant $\sigma_{p,q}\left(\lambda \right)$ is called the asymmetric (two-point) hypercontractive constant. 

Determining the asymmetric hypercontractive constant $\sigma_{p,q}(\lambda)$ is a highly nontrivial task, even in specific cases. Talagrand \cite{Tala-94} established the first nontrivial estimate. Oleszkiewicz \cite{Ole03} derived the first exact formulas when $p=2$ (or $q = 2$). Wolff not only extended the Oleszkiewicz formula to general discrete spaces but also obtained analogous results, in a slightly less precise form, for all $2\leq p<q$ and all $1<p<q\leq 2$. Specifically, Wolff \cite[Theorems 2.1 and 2.2]{Wolff} showed that $\sigma_{p,q}(\lambda)$ is equivalent (up to a universal numerical constant) to an explicit constant $\widetilde{\sigma}_{p,q}(\lambda)$. He \cite[Theorem 3.1]{Wolff} also proved that the hypercontractivity constant of any finite discrete measure is bounded by $\sigma_{p,q}(\lambda)$, where $\lambda$ denotes the mass of the measure's smallest atom. In particular,  by adapting Wolff's argument, one may show that for all $1 < p < q < \infty$, 
\[
r_{p,q}(\mathbb{Z}_3) = \sigma_{p,q}(1/3).
\]

To the best of our knowledge, the exact Oleszkiewicz formula for all $1<p<q<\infty$ remains open. We now state our second main result. Using ideas of proving Theorem \ref{thm-main-thm-one-xy}, we have 

\begin{theorem}\label{thm-biased-alpha}
For $\lambda\in (0,1/2)$ and $1<p<q<\infty$, the hypercontractive constant for the $\lambda$-biased Bernoulli random variable  satisfies
$$
%\label{eqn-optionmal-biased}
\sigma_{p,q}(\lambda)=\frac{(1-y)\left(x+\frac{\lambda}{1-\lambda}\right)}{(1-x)\left(y+\frac{\lambda}{1-\lambda}\right)},
$$
with $(x,y)$ is the  solution in $(0,1)^2$ of the system
\begin{align}\label{eqn-biased-duiou-system}
\left\{
\begin{aligned}
&\frac{\big(\lambda + (1-\lambda) x^p\big)^{\frac{1}{p}}}{\lambda + (1-\lambda)x} = \frac{\big(\lambda + (1-\lambda) y^q\big)^{\frac{1}{q}}}{\lambda + (1-\lambda)y},\\
&\frac{(1-x)(1 - x^{p-1})}{\lambda + (1-\lambda) x^p} = \frac{(1-y)(1 - y^{q-1})}{\lambda+ (1-\lambda) y^q}.
\end{aligned}
\right.
\end{align}
In the particular case $q=p^*$, one has
\begin{align}\label{eqn-biased-duiou}
\sigma_{p,p^*}(\lambda)=\frac{\sinh \left(\frac{1}{p^*} \log \frac{1-\lambda}{\lambda}\right)}{\sinh \left(\frac{1}{p} \log \frac{1-\lambda}{\lambda}\right)}.
\end{align}
\end{theorem}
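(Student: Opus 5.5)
The plan is to copy the $\mathbb{Z}_3$ argument behind Theorem~\ref{thm-main-thm-one-xy}, with the uniform weights $(1/3,2/3)$ replaced by $(\lambda,1-\lambda)$. The proof has four steps: existence of critical extremizers, a variational derivation of the system, uniqueness, and the self-dual case $q=p^*$.

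\emph{Existence and reduction.} By homogeneity, a nontrivial extremizer of $\|1+r\rho X_\lambda\|_q\le\|1+\rho X_\lambda\|_p$ is a two-valued function. Normalise it to take the value $1$ on the atom of mass $\lambda$ and the value $x\in(0,1)$ on the atom of mass $1-\lambda$. Then $\|f\|_p=(\lambda+(1-\lambda)x^p)^{1/p}$ and the mean is $\mathbb{E}f=\lambda+(1-\lambda)x$.

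The operator $T_r$ fixes the mean and multiplies the mean-zero part by $r$. Hence $T_rf$ is again two-valued; normalising its large value to $1$ gives a small value $y$. Since the oscillation is scaled by $r$, the ratio (oscillation)/(mean) of $T_rf$ is $r$ times that of $f$:
\[
\frac{1-y}{\lambda+(1-\lambda)y}=r\,\frac{1-x}{\lambda+(1-\lambda)x}.
\]
This gives exactly the claimed cross-ratio formula for $r=\sigma_{p,q}(\lambda)$. Equality $\|T_rf\|_q=\|f\|_p$, after dividing each side by its mean (which is shared), is the first equation of \eqref{eqn-biased-duiou-system}.

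The second equation is Fermat's principle. The function $\rho\mapsto\|1+r\rho X\|_q-\|1+\rho X\|_p$ attains its maximum $0$ at the critical extremizer, so its $\rho$-derivative vanishes there. Writing this out gives $\mathbb{E}[(T_rf)^{q-1}X]/\|T_rf\|_q^{q}$ on the $q$-side and the matching $p$-side expression. Dividing by the first relation and simplifying produces $\frac{(1-x)(1-x^{p-1})}{\lambda+(1-\lambda)x^p}$ on one side and its $q$-analogue on the other.

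The input here is a nontrivial critical extremizer for $X_\lambda$, and this is where the analogue of Lemma~\ref{lem-nce} is needed. I would prove it the same way as for $\mathbb{Z}_3$:
\begin{itemize}
\item take $r_n\downarrow\sigma_{p,q}(\lambda)$ and corresponding supercritical extremizers, which are two-valued by Wolff;
\item pass to a limit;
\item rule out degeneration to constants by a second-order expansion at $\rho=0$. This uses the strict inequality $\sigma_{p,q}(\lambda)<\sqrt{(p-1)/(q-1)}$ for $\lambda\ne 1/2$, the analogue of Lemma~\ref{lem-rough}, which follows from Wolff/Oleszkiewicz-type estimates.
\end{itemize}
A further degeneration, $x\to 0$ or the sign-change regime, must also be excluded. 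This reproduces the case split of Proposition~\ref{prop-nontrivial-extremizer}.

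\emph{Uniqueness.} Without uniqueness, the argument only gives $\sigma_{p,q}(\lambda)$ as a minimum over all solutions, as in \eqref{rpq-as-min}. Uniqueness is the main obstacle. I would parametrise the first equation as a curve $y=y(x)$ (a level-set matching of $\ell_\lambda(p,x)$ with $\ell_\lambda(q,y)$). I would then show, by the symmetrisation and blowup near the corner $(1,1)$ used for $\mathbb{Z}_3$, that the second equation's curve crosses it exactly once. The argument there never used the specific weight $1/3$ beyond monotonicity and convexity properties of $\ell$, so it should transfer with $\lambda$ as a parameter.

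\emph{The case $q=p^*$.} The duality $(p,q,x,y)\mapsto(q^*,p^*,y^{q-1},x^{p-1})$ of Lemma~\ref{lem-self-dual} holds verbatim with the $\lambda$-weights. When $q=p^*$, uniqueness forces the solution to be self-dual, $y=x^{p-1}$. The second equation then holds identically, since $1-y^{q-1}=1-x$. The first equation reduces to a one-variable equation whose solution is $x=\big(\lambda/(1-\lambda)\big)^{2/p}$, to be verified by substitution. Setting $t=\log\frac{1-\lambda}{\lambda}$, substituting into the cross ratio and writing powers of $e^{t}$ yields the ratio $\sinh(t/p^*)/\sinh(t/p)$.
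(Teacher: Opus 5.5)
Your derivation of the system and of the cross-ratio formula follows the paper. Normalizing the larger value to $1$ is exactly the change of variables \eqref{new-cov}. Equality of norms divided by the common mean is the first equation, and Fermat's principle gives the second. Your limiting argument is Lemma~\ref{lemma-cao-fan-han-qiu-wang-lemma}, i.e.\ the proof of Proposition~\ref{prop-nontrivial-extremizer}.

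A few points there need correcting.
\begin{itemize}
\item The strict bound $\sigma_{p,q}(\lambda)<\sqrt{(p-1)/(q-1)}$ is not a consequence of Wolff's two-sided estimates. It comes from the cubic Taylor coefficient $\frac{\lambda(1-\lambda)(1-2\lambda)}{6}\big[(p-1)(p-2)-(q-1)(q-2)r^3\big]$, which is nonzero at $r=\sqrt{(p-1)/(q-1)}$ because $\lambda\neq 1/2$ (Lemma~\ref{lemma-non-trivial-ffgg-cao-fan-han-qiu-wang}).
\item The endpoint $x=0$ must actually be excluded, not just flagged. The paper does this in Lemma~\ref{lemma-non-trivial-fg-cao-fan-han-qiu-wang}: $G=0$ together with a nonpositive left derivative at $\rho=1/\lambda$ forces an impossible inequality.
\item There is no case split. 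The restriction in Proposition~\ref{prop-nontrivial-extremizer} came from Proposition~\ref{thm-zhongyaoi}, which has no analogue on two points.
\item Putting the larger value on the light atom is an assumption ($\rho_0>0$), not a normalization. The existence argument only gives $\rho_0\neq 0$, and $\rho_0<0$ produces $1<y<x$. The paper's sketch is equally silent on this.
\end{itemize}

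The genuine gap is the case $q=p^*$. Your algebra is right, and it is the biased form of the second proof in \S\ref{sec-sym}. The identity \eqref{elem-id} has the weighted analogue $\frac{(\lambda+(1-\lambda)z^{\kappa-1})(\lambda+(1-\lambda)z)}{\lambda+(1-\lambda)z^{\kappa}}=1-\lambda(1-\lambda)\frac{(1-z^{\kappa-1})(1-z)}{\lambda+(1-\lambda)z^{\kappa}}$, so Lemma~\ref{lem-self-dual} carries over. The second equation holds identically on $y=x^{p-1}$. With $c=\lambda/(1-\lambda)$, the pair $(c^{2/p},c^{2/p^*})$ solves the first equation (using $\lambda+(1-\lambda)c^2=c$). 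Its cross ratio is $r_*:=\sinh(t/p^*)/\sinh(t/p)$, with $t=\log\frac{1-\lambda}{\lambda}$.

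However, identifying this pair with the critical one requires uniqueness, and your reason for expecting it is wrong. The $\mathbb{Z}_3$ proof uses the weight $1/3$ quantitatively, not only monotonicity and convexity of $\ell$:
\begin{itemize}
\item The injectivity domain $t<1/2$ is cut out by the pleat at $t=1/2=\lambda/(1-\lambda)$.
\item Proposition~\ref{separation} rests on $t_0=2/5$, the constant $1/100$, and the comparison of $\frac{3}{11}(1-\alpha^2)$ with $\frac{5-2(2^\alpha+2^{-\alpha})}{6}$.
\item Transversality is certified through the explicit functions $P,W_0,W_1$ on $(2/5,1)$.
\end{itemize}
Accordingly, the paper does not assert uniqueness in this theorem and says small $\lambda$ needs additional work. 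Its proof in \S\ref{sec-biased} stops at the system and the cross ratio, so the text does not supply this step either. As it stands, your pair gives only $\sigma_{p,p^*}(\lambda)\le r_*$. It produces a nonconstant $f$ with $\|T_{r_*}f\|_{p^*}=\|f\|_p$, and there are no nontrivial subcritical extremizers.

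The missing inequality $\sigma_{p,p^*}(\lambda)\ge r_*$ can be obtained without uniqueness. Since $T_{ab}=T_aT_b$, and since $T_r$ is self-adjoint on $L^2$ (so $\sigma_{p,2}=\sigma_{2,p^*}$), one has $\sigma_{p,p^*}(\lambda)\ge\sigma_{p,2}(\lambda)\,\sigma_{2,p^*}(\lambda)=\sigma_{2,p^*}(\lambda)^2$. Oleszkiewicz's exact $(2,q)$ formula \cite{Ole03} reads $\sigma_{2,q}(\lambda)^2=\sinh(t/q)/\sinh(t/q^*)$; for $\lambda=1/3$ this is \eqref{2q-Z3}. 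It gives $\sigma_{2,p^*}(\lambda)^2=r_*$. This proves \eqref{eqn-biased-duiou} directly and is the biased counterpart of the multiplicativity in Corollary~\ref{cor:pp}. It also does not depend on the sign of $\rho_0$.
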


\begin{remark*}
The reader may notice that, in Theorem~\ref{thm-biased-alpha}, we do not state the uniqueness of the solution in the unit square of the equation-system \eqref{eqn-biased-duiou-system}.    Indeed, with a few modifications,  the proof of the uniqueness statement in the setting of Theorem~\ref{thm-biased-alpha} follows  almost verbatim from that of Theorem~\ref{thm-main-thm-one-xy} for large $\lambda$ (say $1/5\leq\lambda<1/2$). However, for smaller  parameter $\lambda$, some additional work is needed. Therefore, in this general setting of Theorem~\ref{thm-biased-alpha}, we have decided to include all the proofs of the uniqueness in a separate document not intended for publication.  
\end{remark*}

\begin{remark*}
 For small parameter $\lambda$, a brief description of the scenario is given below. For each $\lambda\in(0,1/2)$, define
$$
\begin{array}{rccc}
h_{\lambda}:& (1,\infty)\times(0,1)&\longrightarrow &\mathbb{R}\times \mathbb{R}\\
& (p,x)&\longmapsto&
\Big(
\frac{(\lambda + (1-\lambda) x^p)^{1/p}}
{\lambda + (1-\lambda)x} ,\;
\frac{(1-x)(1 - x^{p-1})}{\lambda + (1-\lambda) x^p}
\Big).
\end{array}
$$
Since $1-x$, $1 - x^{p-1}$, $1/(\lambda + (1-\lambda) x^p)$ are all strictly decreasing in $x$, the ordinate of the curve $h_\lambda(p,\cdot)$ decreases in $x$. 
So $h_\lambda(p,\cdot)$ has no self-intersecting point. 
It follows that the system 
\eqref{eqn-biased-duiou-system} admits a unique solution in \((0,1)^2\) if and only if the curves $h_\lambda(p,\cdot)$ and $h_\lambda(q,\cdot)$ intersect exactly once. 
Through suitable changes of coordinates, we will obtain another curve family 
$$
\begin{array}{rccc}
H_{\lambda}=(H_{\lambda,1},H_{\lambda,2}):& (-1,1)\times(0,1)&\longrightarrow &\mathbb{R}\times \mathbb{R}\\
& (\alpha,t)&\longmapsto&
\left(\begin{array}{c}
\tfrac{\alpha\log(\lambda+(1-\lambda)t^2) 
-\log(\lambda+(1-\lambda) t^{1+\alpha})
+\log(\lambda+(1-\lambda) t^{1-\alpha})}
{\log\lambda}\\[2mm]
\tfrac{-\log(\lambda+(1-\lambda)t^2) 
+\log(\lambda+(1-\lambda) t^{1+\alpha})
+\log(\lambda+(1-\lambda) t^{1-\alpha})}
{\log\lambda}
\end{array}\right).
\end{array}
$$
Direct computation gives the following properties. 
\begin{itemize}
\item Symmetries $H_{\lambda,1}(-\alpha,t)=-H_{\lambda,1}(\alpha,t)$ and $H_{\lambda,2}(-\alpha,t)=H_{\lambda,2}(\alpha,t)$. 
\item The end points of $H_\lambda(\alpha,\cdot)$ are  
$H_\lambda(\alpha,0) = (\alpha,1)$ and $H_\lambda(\alpha,1) = (0,0)$. 
\item Since $H_{\lambda,1}(0,t)\equiv0$, we have $H_\lambda(0,[0,1]) = \{0\}\times[0,1]$. 
\item Since $H_{\lambda,1}(\alpha,\tfrac{\lambda}{1-\lambda}) \equiv 0$, we have $H_\lambda(\alpha,\tfrac{\lambda}{1-\lambda}) \in \{0\}\times[0,1]$. 
\end{itemize}
Furthermore, we have the following observations from graphs. 
\begin{itemize}
\item For $\alpha\neq\beta$, the curves $H_\lambda(\alpha,\cdot)$ and $H_\lambda(\beta,\cdot)$ intersect exactly once in the upper half-plane. This is the evidence that the system 
\eqref{eqn-biased-duiou-system} admits a unique solution in \((0,1)^2\). 
\item The singularity $(\alpha,t)=(0,\tfrac{\lambda}{1-\lambda})$ is a Whitney pleat and 
$$H_\lambda(0,\tfrac{\lambda}{1-\lambda}) = (0,\tfrac{\log(4\lambda(1-\lambda))}{\log\lambda})\xrightarrow[\lambda\to0]{}(0,1).$$
\item When $\lambda\to0$, the curve family $H_\lambda$ has a limit shape as shown in Figure \ref{fig-H-lambda}. 
\end{itemize}

\begin{figure}[htbp]
\centering
\includegraphics[width=0.5\textwidth]{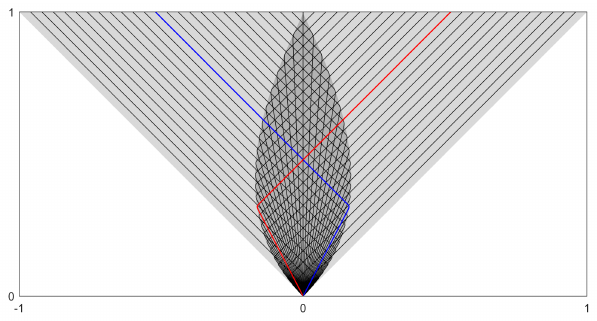}
\caption{The curve family $\{H_\lambda(\alpha,\cdot)\}_{\alpha\in(-1,1)}$ with $\lambda = 10^{-100}$
}
\label{fig-H-lambda}
\end{figure}
\end{remark*}

\begin{remark*}
Similar to   \eqref{rpq-as-min},  one also has  a description of $\sigma_{p, q}(\lambda)$ that avoids relying on the uniqueness of the solution in $(0,1)^2$ to \eqref{eqn-biased-duiou-system}: 
\[
\sigma_{p,q}(\lambda)= \min\left\{\frac{(1-y)\left(x+\frac{\lambda}{1-\lambda}\right)}{(1-x)\left(y+\frac{\lambda}{1-\lambda}\right)} :   \text{\((x,y)\) is a solution in $(0,1)^2$ to \eqref{eqn-biased-duiou-system}} \right\}. 
\]
\end{remark*}

Let $\mu$ be a probability measure on a finite set $\Omega$, for any $1<p<q<\infty$,  define 
\begin{align*}
\sigma_{p,q}(\mu): &= \max\big\{0<r<1: \| \mathbb{E}_\mu + r ( Id  - \mathbb{E}_\mu)\|_{L^p(\mu) \rightarrow L^q(\mu)}\le 1\big\}
\\ &= \inf\big\{\sigma_{p,q}(X): \text{$X$ is random variable defined on $\Omega$ with $\mathbb{E}_\mu(X)=0$}\big\}.
\end{align*} 
Since $\sigma_{p,p^*}(\lambda)$ given in \eqref{eqn-biased-duiou} is increasing in $\lambda\in (0,1/2)$, 
using \cite[Theorem 3.1]{Wolff}, we have
\begin{corollary}\label{corollary-wolf-df}
Let $\mu$ be a probability measure on a finite set $\Omega$ with the mass of the least atom equal to $\lambda \in(0,1 / 2)$. Then for all $1<p<2$,  we have $\sigma_{p, p^*}(\mu)=\sigma_{p, p^*}(\lambda)$
%$$
%\sigma_{p, p^*}(\mu)=\sigma_{p, p^*}(\alpha)=\frac{\sinh \left(\frac{1}{p^*} \log \frac{1-\alpha}{\alpha}\right)}{\sinh \left(\frac{1}{p} \log \frac{1-\alpha}{\alpha}\right)}
%$$
which is given by \eqref{eqn-biased-duiou}.
\end{corollary}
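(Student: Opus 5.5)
The plan is to prove the two inequalities $\sigma_{p,p^*}(\mu)\le\sigma_{p,p^*}(\lambda)$ and $\sigma_{p,p^*}(\mu)\ge\sigma_{p,p^*}(\lambda)$ separately. The inputs are the explicit formula \eqref{eqn-biased-duiou} from Theorem~\ref{thm-biased-alpha} and Wolff's reduction \cite[Theorem 3.1]{Wolff} to two-valued random variables. I will use the second expression for $\sigma_{p,q}(\mu)$, namely the infimum of $\sigma_{p,q}(X)$ over mean-zero $X$ on $\Omega$. Note that $\sigma_{p,q}(cX)=\sigma_{p,q}(X)$ for every $c\neq 0$, because the defining inequality ranges over all $\rho\in\mathbb{R}$. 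In particular the value is unchanged under rescaling and under $X\mapsto -X$.

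\emph{Upper bound.} Let $\omega_0$ be an atom with $\mu(\omega_0)=\lambda$, and set $X=\mathbf{1}_{\{\omega_0\}}-\lambda$. Then $X$ is mean zero under $\mu$ and has exactly the law \eqref{eqn-biased-rw} of $X_\lambda$. Since $\sigma_{p,q}(X)$ depends only on the law of $X$, this gives $\sigma_{p,p^*}(\mu)\le \sigma_{p,p^*}(X_\lambda)=\sigma_{p,p^*}(\lambda)$.

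\emph{Lower bound.} Here I invoke \cite[Theorem 3.1]{Wolff}. It says that the optimal constant of the operator $\mathbb{E}_\mu+r(Id-\mathbb{E}_\mu)$ is attained along, and hence bounded below by, the constants of mean-zero random variables taking only two values. A mean-zero two-valued $X$ is constant on a set $B$ and on its complement $B^c$. After rescaling and possibly flipping sign, it becomes $\mathbf{1}_B-\mu(B)$, whose law is that of $X_{\lambda'}$ or $-X_{\lambda'}$ with $\lambda'=\min\{\mu(B),1-\mu(B)\}$. Since $B$ and $B^c$ each contain at least one atom, $\lambda'\in[\lambda,1/2]$. So it suffices to show that $\lambda\mapsto\sigma_{p,p^*}(\lambda)$ is nondecreasing on $(0,1/2]$. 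At $\lambda'=1/2$ the formula \eqref{eqn-biased-duiou} tends to $\sqrt{(p-1)/(p^*-1)}=r_{p,p^*}(\mathbb{Z}_2)$, so continuity covers the endpoint.

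\emph{Monotonicity.} This is the only computation. Put $L=\log\frac{1-\lambda}{\lambda}$, which is decreasing in $\lambda$ and positive on $(0,1/2)$, and put $a=1/p^*<b=1/p$ (recall $1<p<2$). Then $\sigma_{p,p^*}(\lambda)=g(L):=\sinh(aL)/\sinh(bL)$, and
\[
(\log g)'(L)=\frac{1}{L}\bigl(aL\coth(aL)-bL\coth(bL)\bigr).
\]
This is negative because $u\mapsto u\coth u$ is strictly increasing on $(0,\infty)$. Hence $g$ decreases in $L$, so $\sigma_{p,p^*}$ increases in $\lambda$.

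The main obstacle is citing Wolff's theorem in the right form: it must reduce the infimum to two-valued mean-zero variables, rather than just bound things by the smallest atom for a fixed $q$. If the cited statement is instead phrased as ``$\sigma_{p,q}(\mu)\ge\min_{\lambda'\ge\lambda}\sigma_{p,q}(\lambda')$'', then the monotonicity step above closes the argument just as well.
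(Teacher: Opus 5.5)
Your proposal is correct and is essentially the paper's argument. The paper obtains the corollary in one line from the monotonicity of \eqref{eqn-biased-duiou} in $\lambda$ together with \cite[Theorem 3.1]{Wolff}, which gives $\inf_{\lambda'\in[\lambda,1/2]}\sigma_{p,p^*}(\lambda')\le\sigma_{p,p^*}(\mu)\le\sigma_{p,p^*}(\lambda)$; you have filled in the steps the paper leaves implicit (the indicator test function for the upper bound, the reduction of two-valued mean-zero variables to $\pm X_{\lambda'}$, and the $u\coth u$ monotonicity check), and all of them are sound.
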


%\begin{remark*}
%The Haar  probability measure  on $\mathbb{Z}_3$ is the uniform  measure. By \cite[Theorem 3.1]{Wolff},
%\[
%\inf_{\lambda\in[1/3,1/2]}\sigma_{p,q}(\lambda)\leq r_{p,q}(\mathbb{Z}_3)\leq \sigma_{p,q}(1/3).
%\] 
%For $2< q < \infty$, the monotonicity of $\sigma_{2,q}(\lambda)$ with respect to $\lambda$ (see \cite[Lemma 2.1]{Wolff}) implies
%\begin{equation}\label{eq:sigman-r-2-q}
%r_{2,q}(\mathbb{Z}_3) = \sigma_{2,q}(1/3).
%\end{equation}
%Our result in this paper, as well as some variant of Wolff's argument,  shows that \eqref{eq:sigman-r-2-q} holds for all $1 < p < q < \infty$.
%% Consequently, Theorem~\ref{thm-main-thm-one-xy} can be viewed as a particular case of Theorem~\ref{thm-biased-alpha}. 
%\end{remark*}

\begin{remark*}
Since 
$
\sigma_{p,q}(\lambda)=\sigma_{p,q}(1-\lambda)
$, Theorem~\ref{thm-biased-alpha} gives also $\sigma_{p,q}(\lambda)$ for all $\lambda\in (0,1)\setminus\{1/2\}$. 
\end{remark*}
\begin{remark*}
Numerical simulation  indicates that $\sigma_{p,q}(\lambda)$ 
is increasing in $\lambda \in (0,1/2]$. 
\begin{figure}[htbp] 
\centering
\includegraphics[width=0.45\textwidth]{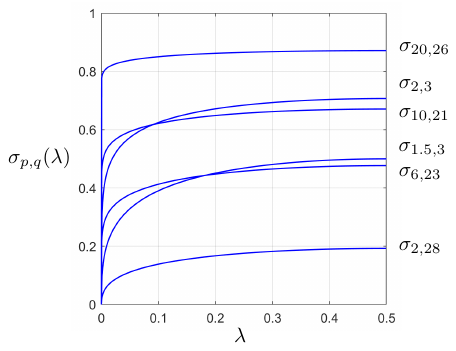}
\caption{Numerical simulation of $\sigma_{p,q}(\lambda)$}\label{figtriangle001}
\end{figure}
\end{remark*}

 Inspired by the  simulations in Figure~\ref{figtriangle001}, using \cite[Theorem 3.1]{Wolff}, we formulate the following 
\begin{conjecture*}
Let $\mu$ be a probability measure on a finite set $\Omega$ with the mass of the least atom equal to $\lambda \in(0,1 / 2)$. Then for all $1<p<q<\infty$,  we have $\sigma_{p, q}(\mu)=\sigma_{p, q}(\lambda)$.
\end{conjecture*}

\subsection{Noise operators}

The noise operator plays an important role in $\lambda$-biased Fourier analysis, and provides another key example of hypercontractive random variables. For $\lambda \in (0,1/2]$, let $\Omega = \{-1,1\}$ and let $\pi_\lambda$ be the distribution on $\Omega$ defined by
\[
\pi_\lambda(-1) = \lambda \quad \text{and} \quad \pi_\lambda(1) = 1-\lambda.
\]
An orthogonal basis (Fourier basis) for the real Hilbert space $L^2(\Omega,\pi_\lambda)$ consists of the constant function $1$ and 
\[
\phi_\lambda(x) = \frac{x - (1 - 2\lambda)}{2\sqrt{\lambda(1-\lambda)}}, \quad x \in \Omega.
\]
For $r \in (0,1]$, the \emph{noise operator with parameter $r$} is the linear operator on $L^2(\Omega,\pi_\lambda)$ defined by
\begin{align}\label{eqn-nise-operator}
T_r f = 1 + r\rho \phi_\lambda, \quad \text{with} \quad  f=1+\rho\phi_\lambda \quad \text{and} \quad \rho = \mathbb{E}_{x \sim \pi_\lambda}[f(x)\phi_\lambda(x)]. 
\end{align}
 In this setting, the hypercontractive random variable is given by $Y_\lambda= \phi_\lambda$:
\begin{align}\label{eqn-biasedss}
\mathbb{P}\Big(Y_\lambda=\sqrt{\lambda/(1-\lambda)}\Big)=1-\lambda \text{\,\, and \,\,} \mathbb{P}\Big(Y_\lambda=-\sqrt{(1-\lambda)/\lambda}\Big)=\lambda.
\end{align}
The related problem is known as $\lambda$-biased hypercontractive problem (see \cite[Chapters 8 and 9]{Donnell}). Further details can be found in \cite[Definitions 8.26, 8.27, 8.39 and  Proposition 8.27]{Donnell}.

%When $\Omega = \{-1,1\}$ is equipped with the uniform measure, $T_r$ reduces to the classical unbiased noise operator. %Specifically, letting $a = f(1)$ and $b = f(-1)$, the function $T_\rho f$ takes the values  
%\[
%T_\rho f(1) = \frac{1+\rho}{2}\,a + \frac{1-\rho}{2}\,b, \qquad
%T_\rho f(-1) = \frac{1-\rho}{2}\,a + \frac{1+\rho}{2}\,b.
%\]
%The $(p,q)$-hypercontractive inequality for the noise operator $T_r$ is exact the two-point hypercontractive inequality. 
%One can consult \cite[Chapter??]{Donnell} and \cite[Chapter 1]{Zhao2021} for more details.

By the works of Bourgain-Kahn-Kalai-Katznelson-Linial \cite{BKKKL}, Talagrand \cite{Tala-94}, Friedgut-Kalai \cite{FK-96}, and Friedgut \cite{Fri-98}, the general hypercontractive theorem \cite[Page 278]{Donnell} states that for every $f \in L^2(\Omega,\pi_\lambda)$,
\[
\|T_r f\|_q \leq \|f\|_2 \quad \text{whenever} \quad 0 \leq r \leq \frac{1}{\sqrt{q-1}} \lambda^{\,1/2 - 1/q}.
\]

It is observed that when $\lambda$ is small, the hypercontractive inequality requires $r$ to also be small, and such a bound is often too weak for applications. In a recent breakthrough, Keevash, Lifshitz, Long, and Minzer \cite{KLLM-24} introduced global hypercontractivity, a new form of biased hypercontractive inequality. This tool yields a strengthened version of Bourgain's theorem, advances on a conjecture of Kahn and Kalai, and a $\lambda$-biased analogue of the Mossel–O'Donnell–Oleszkiewicz invariance principle. Subsequently, Keller, Lifshitz, and Marcus \cite{KLM-26} established sharp global hypercontractivity, leading to numerous applications in extremal set theory, group theory, theoretical computer science, and number theory. From the concluding remarks of \cite{KLLM-24}, global hypercontractivity yields a variant of the Kahn--Kalai isoperimetric conjecture that is effective only in the $\lambda$-biased setting for small $\lambda$; corresponding results for the uniform measure are given in \cite{KL-18} and \cite{KLi-18}. The authors posed the open problem of finding a unified approach that extends both results to all $\lambda \in (0, 1/2]$. However, we note that the correct bound for general hypercontractivity is unknown \cite{Zhao2021}. 

One may notice that the hypercontractive analysis of the noise operator defined in \eqref{eqn-nise-operator} can be transferred to that  of the  random variable $Y_\lambda$ defined as \eqref{eqn-biasedss}. Indeed, one has 
\[
Y_\lambda=-\frac{1}{\sqrt{\lambda(1-\lambda)}}X_\lambda
\]
and  hence $Y_\lambda$ and $X_\lambda$ share the same optimal hypercontractive constant. Therefore, Theorem~\ref{thm-biased-alpha} provides the optimal hypercontractive constant for  the noise operator defined in \eqref{eqn-nise-operator}.

%\subsection{Discussion on the reverse hypercontractivity } \textcolor{red}{Add GAFA reverse}
%Let $0<q<p<1$, the reverse hypercontractivity inequality involves finding the largest $r$ such that 
%\[
%\|T_rf\|_q\geq \|f\|_p\quad \text{for all }\quad f=1+a\chi+\bar{a}\bar{\chi}\geq 0.
%\]
%Using our method, we can also give the optimal constant of the reverse hypercontractivity on $\mathbb{Z}_3$.  We can also show (with a little effort) that the equation system \eqref{xy-eq} has a unit solution in $(0,1)^2$ when $0<q<p<1$.  Then the optional hypercontractive constant also has the form \eqref{cross-rpq}. 

\subsection{Outlines for the proof of Theorem \ref{thm-main-thm-one-xy}}
The proof is divided into three main parts: 
\begin{enumerate}
\item reducing the problem from the full space $\mathbb{C}^3$ to the segment $[0,1]$; 
\item establishing the equation-system via variational principle; 
\item proving the uniqueness of the solution to this system. 
\end{enumerate}

For convenience, throughout the whole paper,  we denote $r_{p,q}(\mathbb{Z}_3)$ simply by $r_{p,q}$:
\[
r_{p,q} = r_{p,q}(\mathbb{Z}_3). 
\]

\subsubsection{Reducing the full space to a segment}\label{sec-intro-reducing}

The reduction proceeds in two main steps, each narrowing the candidate space for nontrivial critical extremizers.   While the first reduction is relatively routine --- using standard symmetry arguments --- the second reduction is technical and crucial. It effectively \textbf{lowers the dimension of the parameter space} from $\mathbb{C}^3$  to a one-dimensional interval \([0,1]\). This dimensional reduction is what ultimately allows us to formulate and solve a tractable system of equations for the extremal parameters in the subsequent analysis.

\begin{remark*}
An alternative way of reducing the space for candidate nontrivial critical extremizers is to use Wolff's argument Wolff \cite[Theorem 3.1]{Wolff}: assuming their existence, they must be exactly two-valued.  In this paper, we choose to give a self-contained direct reduction which seems to be of independent interest. Moreover,  the proofs provide key strategies and insights for our proof of the uniqueness of the solution to \eqref{xy-eq}. It can therefore be seen as a prelude to that proof. 
\end{remark*}

\textbf{Reduction 1: from $\mathbb{C}^3$ to $\Delta$ via symmetry.}  
Define the {\it defect functional}
\[
D_{p,q,r}(f) = \|f\|_p - \|T_r f\|_q, 
\quad \forall  f \in L^p(\mathbb{Z}_3). 
\]
Via Fourier expansion, any \( f \) can be written as \( f =a_0 + a_1\chi + a_2\overline{\chi} \).  
Thus, the functional \( D_{p,q,r} \) can be viewed as a function of the Fourier coefficients. 
The following  reduction is standard. 
\begin{enumerate}
\item By homogeneity, we may assume \(a_0=1\) without loss of generality. 
\item Since $\|f\|_p=\||f|\|_p$ and $\|T_r f\|_p\leq \|T_r|f|\|_p$, 
it suffices to consider non-negative functions, which are necessarily real.
Hence, we just need to consider 
\[
f_a := 1 + a\chi + \overline{a}\,\overline{\chi},\quad a \in \mathbb{C}.
\] 
 The condition \(f_a \geq 0\) forces \(a\) to lie in the equilateral triangle \(\Delta_0 \subset \mathbb{C}\); see Figure \ref{figtriangle00}. 
\item Then the symmetries  
$\|f_{a} \|_p= \|f_{\overline{a}} \|_p= \|f_{a e^{i2\pi/3}}\|_p$ (see Lemma \ref{lem:symmetry})
allow us to restrict $\rho e^{i\theta}$ to a smaller triangle \(\Delta \subset \Delta_0\); see Figure \ref{figtriangle00} again. 
\end{enumerate}

%Since the operator \(D_{p,q,r}\) is linear, we may assume \(a_0=1\) without loss of generality. 
%By $\|f\|_p=\||f|\|_p$ and $\|T_r f\|_p\leq \|T_r|f|\|_p$ (see Lemma \ref{lemma-convolution}), 
%it suffices to consider non-negative functions, which are necessarily real. 
%Hence, we just need to consider 
%\[
%f_a := 1 + a\chi + \overline{a} \overline{\chi},\quad a \in \mathbb{C}.
%\]
%The non-negative condition \(f_a \geq 0\) forces \(a\) to lie in the equilateral triangle \(\Delta_0 \subset \mathbb{C}\); see Figure \ref{figtriangle00}. 

\begin{figure}[htbp] 
\centering
\includegraphics[width=0.45\textwidth]{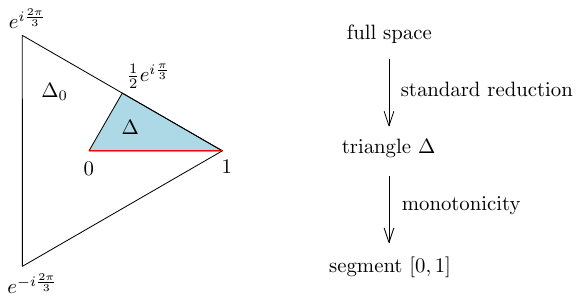}
\caption{Reduction architecture}\label{figtriangle00}
\end{figure}

Thus for any \(r \in [0, 1]\), we have 
$$\|T_r f\|_q \leq \|f\|_p,\ \forall\ f \in L^p(\mathbb{Z}_3) \Longleftrightarrow 
\|T_r f_{a}\|_q \leq \|f_{a}\|_p,\  \forall\  a\in\Delta.$$

%\textbf{Reduction 2: Exploit symmetry to shrink \(\Delta_0\) to \(\Delta\).}  
%Using the polar coordinate $a=\rho e^{i\theta}$, let 
%$$f_{\rho,\theta}=f_{\rho e^{i\theta}} = 1 + \rho e^{i\theta} \chi + \rho e^{-i\theta}\overline{\chi}.$$
%Then the symmetries  
%$\|f_{a} \|_p= \|f_{\overline{a}} \|_p= \|f_{\rho, \theta+\frac{2\pi}{3}}\|_p$
%allow us to restrict $\rho e^{i\theta}$ to a smaller triangle \(\Delta \subset \Delta_0\); see Figure \ref{figtriangle00} again. 
%Now for any \(r \in [0, 1]\), we have 
%$$\|T_r f\|_q \leq \|f\|_p,\ \forall\ f \in L^p(\mathbb{Z}_3) \Longleftrightarrow 
%\|T_r f_{a}\|_q \leq \|f_{a}\|_p,\  \forall\  \rho e^{i\theta}\in\Delta.$$
% That is, the hypercontractive inequality \eqref{hyper-pq} holds for all \(f \in L^p(\mathbb{Z}_3)\) if and only if it holds for all $f_{\rho,\theta}$ with $\rho e^{i\theta}\in\Delta$. 

 \textbf{Reduction 2:  from \(\Delta\) to \([0,1]\) via monotonicity.}  
By \eqref{dual-rpq},
it suffices to determine \(r_{p,q}\) for the  two cases
$$1 < p < q < 2\ \ \text{and}\ \  1 < p \leq 2 < q<\infty.$$

For $r\in[0,1]$ and $(\rho,\theta)$ with $\rho e^{i\theta}\in\Delta$, define the {\it defect function}
\begin{equation}\label{eq:defectfunction}
G(p,q,r,\rho,\theta) = D_{p,q,r}(f_{\rho e^{i\theta}}).
\end{equation}
By expanding \( G(p,q,r,\rho,0) \) at \( \rho=0 \), one easily verifies that
$$r_{p,q} < \sqrt{(p-1)/(q-1)}\text{ (see Lemma \ref{lem-rough})}.$$

In both cases \(1 < p < q < 2\) and \(1 < p \leq 2< q<\infty\), for any \(r \in [0,\sqrt{(p-1)/(q-1)}]\), the defect function $G(p,q,r,\rho,\theta)$
is increasing with respect to $\theta$ (see Lemmas \ref{lemma-reduce-p-q-1} and \ref{lemma-reduce-p-q-2}).    
So it suffices to check functions $f_\rho$ with \(\rho \in [0,1]\). 
That is, for any \(r \in  [0,\sqrt{(p-1)/(q-1)}]\), we have 
$$\|T_r f_{a}\|_q \leq \|f_{a}\|_p,\  \forall\   a\in\Delta
\Longleftrightarrow 
\|T_r f_\rho\|_q \leq \|f_\rho\|_p,\  \forall\  \rho \in [0,1].$$

\subsubsection{Establishing the system of equations}
%We proceed in two steps: first establishing the existence of a nontrivial extremizer, and then deriving the system of equations that characterizes it.
%
%\textbf{Existence of a nontrivial extremizer.}  
For fixed exponents \(1<p<q<\infty \), consider 
\begin{align}\label{def-Grrho}
G(r,\rho) &= G(p,q,r,\rho,0)=\left(\frac{\left(1+2\rho\right)^p+2\left(1-\rho\right)^p}{3}\right)^{\frac{1}{p}}
-\left(\frac{\left(1+2r\rho\right)^q+2\left(1-r\rho\right)^q}{3}\right)^{\frac{1}{q}}
\end{align}
with $(r,\rho) \in [0,1]\times[0,1]$. 
In Proposition \ref{prop-nontrivial-extremizer} below, we establish the existence of a \textbf{nontrivial extremizer}  of the form \( f_{\rho_0} \) with \(\rho_0 \in (0,1) \).  This further allows us to derive an equation-system via variational principle.  See Figure~\ref{Defect} for an illustration.  

\begin{figure}[htbp] 
\centering
\includegraphics{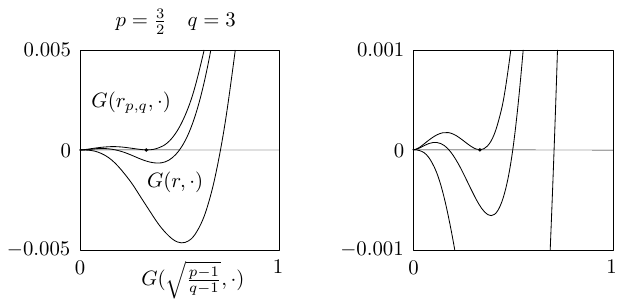}
\caption{Defect functions $G(r,\cdot)$ for different $r$}
\label{Defect}
\end{figure}

More precisely, since \( \rho_0 \in (0,1)\) is an interior point of \( [0,1] \) where  the function \(  G(r_{p,q}, \cdot) \) attains its minimum $G( r_{p,q},\rho_0) = 0$, the pair \( ( r_{p,q}, \rho_0) \) satisfies the system
\begin{align}\label{two-zero-eq}
G(r,\rho) = 0 \text{\,\, and \,\,} \frac{\partial G}{\partial \rho}(r,\rho) = 0.
\end{align}
To simplify the analysis, we introduce the auxiliary variables
\begin{equation*}\label{eq:transform-ar2xy}
x = \frac{1 - \rho}{1 + 2\rho},\quad  
y = \frac{1 - r\rho}{1 + 2 r\rho}. 
\end{equation*}
The mapping \((r,\rho) \mapsto (x,y)\) is injective from \([0,1]\times[0,1]\) to itself. 
Under this transformation, the system \eqref{two-zero-eq} reduces to the equation-system \eqref{xy-eq}. See  \S \ref{sec-system} for the details. 
 
The remainder of the proof is devoted to establishing that \eqref{xy-eq} admits a \textbf{unique solution}  in the interior domain \((0,1)\times(0,1)\).

\subsubsection{Uniqueness of the solution}
We consider the one-parameter family of curves defined by  

\[
\begin{array}{rccc}
h:& (1,\infty)\times(0,1)&\longrightarrow &\mathbb{R}\times \mathbb{R} \\
& (p,x)&\longmapsto&
\Big(
\frac{1}{1+2x}\big(\frac{1+2x^{p}}{3}\big)^{\frac{1}{p}},\;
\frac{(1-x)(1-x^{p-1})}{1+2x^{p}}
\Big).
\end{array}
\]
Observe that the system \eqref{xy-eq} admits a unique solution in \((0,1)^2\) if and only if 
\begin{center}
{\it any two distinct curves $h(p,\cdot)$ and $h(q,\cdot)$ intersect exactly once.}
\end{center} Thus, the problem reduces to analyzing the intersection properties of these curves (see Figure \ref{intersection}).

\begin{figure}[htbp]
\centering
\includegraphics[width=0.5\textwidth]{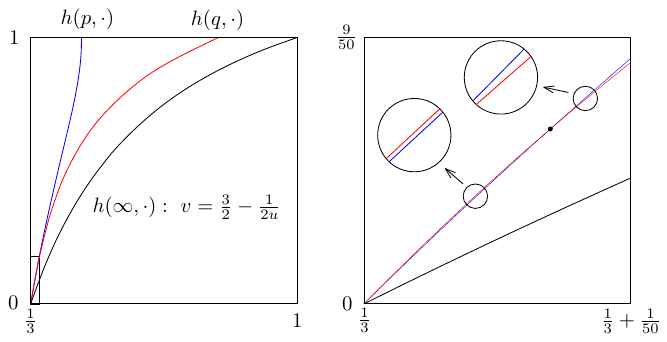}
\caption{Intersection of the curves \(h(p,\cdot)\) and \(h(q,\cdot)\)}
\label{intersection}
\end{figure}

%\[
%h_p(x):=h(p,x)=\Bigl(
%\frac{1}{1+2x}\big(\frac{1+2x^{p}}{3}\big)^{\frac{1}{p}},\;
%\frac{(1-x)(1-x^{p-1})}{1+2x^{p}}
%\Bigr),\quad x\in (0,1)
%\]
%and
%\[
%h_q(y):=h(q,y)\Bigl(
%\frac{1}{1+2y}\big(\frac{1+2y^{q}}{3}\big)^{\frac{1}{q}},\;
%\frac{(1-y)(1-y^{q-1})}{1+2y^{q}}
%\Bigr),\quad y\in (0,1)
%\]

At first glance, the curves \(h(p,\cdot)\) and \(h(q,\cdot)\) appear to be tangent at their endpoints, suggesting no intersection in the interior region. However, a closer examination, particularly after zooming in, reveals that they indeed cross {\bf exactly once} in \((\frac{1}{3},1)\times(0,1)\). To prove this phenomenon rigorously, we will employ {\bf symmetrization} and {\bf blowup analysis}.

%\[
%\begin{CD}
%\left(1, \infty \right) \times (0,1) @>h>> (0,\infty) \times (-\infty, 3/2) \\
%@A{\Phi}AA @VV{\Psi}V \\
%\left(-1, 1\right) \times (0, 1) @>> H> \mathbb{R} \times (-\infty, 3/2).
%\end{CD}
%\]

{\bf Symmetrization.} 
Exploiting inherent symmetries of the curve family $h$, there are suitable changes of coordinates $\Phi$ and $\Psi$ (see \eqref{phi} and \eqref{psi}) such that the curve family 
$$H=(H_1,H_2):=\Psi\circ h\circ\Phi:(-1,1)\times(0,1)
\longrightarrow \mathbb{R}\times \mathbb{R}$$ indexed by $\alpha\in (-1,1)$ possessing the symmetries 
$$H_1(-\alpha,t)=-H_1(\alpha,t),\quad H_2(-\alpha,t)=H_2(\alpha,t).$$
More precisely, we have  (see Figure \ref{fig-H})
$$H(\alpha,t)=
\left(
-\frac{\alpha}{2}\log \frac{1+2 t^2}{3} 
+\frac{1}{2}\log\frac{1+2 t^{1+\alpha}}{1+2 t^{1-\alpha}},\ \  
\frac{(1-t^{1-\alpha})(1-t^{1+\alpha})}{1+2 t^2}
\right).
$$

\begin{figure}[htbp]
\centering
\includegraphics[width=0.5\textwidth]{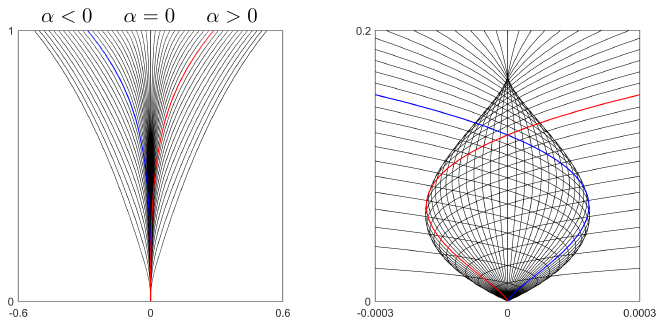}
\caption{The family of curves $H(\alpha,\cdot)$ and a close-up view near $(0,0)$
}
\label{fig-H}
\end{figure}

Coordinate transformations preserve intersection properties. Therefore, to establish that distinct curves $h(p,\cdot)$ have a unique intersection point, it suffices to prove the same for the transformed curves $H(\alpha,\cdot)$. 
The form of $H$ and its symmetries will greatly simplify the subsequent analysis.

\textbf{Blowup analysis.} 
To rigorously prove the unique intersection property of the new curve family $H(\alpha,\cdot)$, we require three properties: injectivity, separation, and transversality, where the latter two are  established via appropriate blowups. 

\begin{itemize}
\item 
\textbf{Injectivity.} 
The restricted map 
\(H: (-1,1)\times(0,\frac12) \to \mathbb{R}^2\) is injective (see Proposition~\ref{restrict-homeo}). 

\item 
\textbf{Separation after a first-order blowup.} 
Applying the following blowup to $H$: 
\begin{equation}
\label{blowup-b}
b(u,v)=\left(\frac{u}{v},\ v\right),
\end{equation}
 we find that the curvilinear triangle \(b\circ H((-1,1)\times(\frac{1}{2},1))\) is separated from the curve $b\circ H((-1,1)\times\{t_0\})$ for some $t_0\in(0,\frac{1}{2})$ (see Proposition \ref{separation}, as illustrated in Figure \ref{fig-blowup-b}). 

\item 
\textbf{Transversality after a $3/2$-order blowup.} 
Define another blowup 
\begin{equation}
\label{blowup-B}
B(u,v) = \left(\frac{u}{v^{3/2}},\ v^{1/2}\right).
\end{equation}
As shown in Figure \ref{fig-blowup-B}, each subarc $B\circ H(\alpha,[\frac{1}{4},1))$ resembles a straight line segment, and any two of them intersect transversally. 
For the details, see Proposition \ref{slope-monotone}. 
\end{itemize}

\begin{figure}[htbp]
\centering
\includegraphics[width=0.45\textwidth]{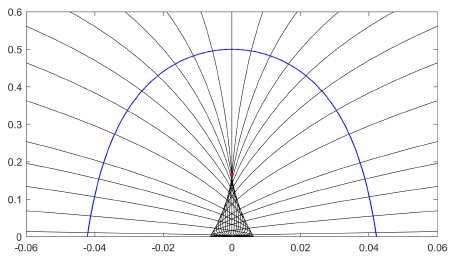}
\caption{Range of \(b\circ H\) 
(the black curves correspond to fixed values of \(\alpha\), the blue curve corresponds to \(t=\frac{1}{4}\), and the red point represents \(b\circ H(0,\frac{1}{2})\))
}
\label{fig-blowup-b}
\end{figure}

\begin{figure}[htbp]
\centering
\includegraphics[width=0.45\textwidth]{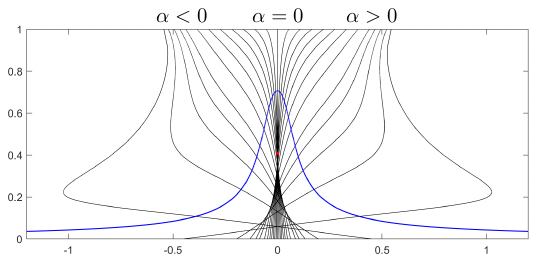}
\caption{Range of $B\circ H$
(the black curves correspond to fixed values of $\alpha$, the blue curve corresponds to $t=\frac{1}{4}$, and the red point represents $B\circ H(0,\frac{1}{2})$)
}
\label{fig-blowup-B}
\end{figure}

Combining these three properties, we show that distinct curves $H(\alpha,\cdot)$ (and consequently $h(p,\cdot)$) have a unique intersection point, 
which implies that the system equation \eqref{xy-eq} has a unique solution in the unit square $(0,1)^2$; see Proposition~\ref{proposition-unique}. 
This completes the proof of Theorem \ref{thm-main-thm-one-xy}.

\begin{remark*}
Readers familiar with the classification of singularities of smooth maps will recognize that the singularity at $(\alpha,t)=(0,\frac{1}{2})$ of $B\circ H$ (or $b\circ H$, $H$) is a Whitney pleat (see  \cite[\S1]{Arnold-Sing}). 
\end{remark*}

{\flushleft\bf Organization of the paper: }
The remaining part of the paper is organized as follows. In Section~\ref{S:preli}, we present the orthogonal basis method and the monotonicity game, which are fundamental to our subsequent analysis. Section \ref{S:parameter-reduction} is devoted to reducing the domain of the function coefficients in the hypercontractive inequalities from \(\mathbb{C}^3\) to the interval \([0,1]\). In Section \ref{S:establishing the e-s}, we first prove the existence of nontrivial critical extremizers for the \(\mathbb{Z}_3\) hypercontractive inequality and then, by applying the variational principle, obtain a characterizing system of equations. The proof of the main theorem is given in Section \ref{S:proof-thm1}, contingent on the uniqueness of the solution to this system, which is addressed in Section~ \ref{sec-unique}.  Section~\ref{sec-biased} is devoted to a sketched proof of Theorem~\ref{thm-biased-alpha}.  Lastly, the appendix provides elementary proofs for several inequalities and for the corollaries of the main result.

%\begin{remark}
%The authors of current paper learnt hypercontractive constant problems for the hypercontractive inequality over \(\mathbb{Z}_3\) in the above mentioned monograph \cite{JPPP} by Junge, Palazuelos, Parcet, and Perrin. Applications of \(r_{p,q}(\mathbb{Z}_3)\) go beyond the scope of this paper, and we encourage interested readers to explore its potential applications in other contexts.
%\end{remark}

{\flushleft\bf Acknowledgements.}  We would like to thank Prof. Javier Parcet and Dr. Gan Yao for helpful discussions. JC is supported by Postdoctoral Daily Funding (No.000141020162), SF is supported by NSFC (No.12331004 and No.12231013), YH is supported by NSFC (No.12131016), YQ is supported by NSFC (No.12595283 and No.12471145), ZW is supported by NSFC (No.12471116) and FRFCU (No.2025CDJ-IAIS YB004).

%Z. Wang is supported by NSF of China (No.12471116) and 2025CDJ-IAIS YB-004 (Chongqing University). 

\section{Preliminaries}\label{S:preli}

In this section, we introduce two elementary methods: the orthogonal basis method and the monotonicity game. Both approaches are used to establish criteria for determining the sign of a function over an interval by utilizing local information about the function---such as its values, derivatives at specific points, or convexity and monotonicity properties on an interval. These methods will be employed repeatedly in the proofs of main theorems. The detailed descriptions and application contexts of these two methods are provided below.

%These methods not only hold independent theoretical significance --- providing a concise and universal approach to addressing function sign determination problems --- but also play an indispensable role in the core arguments of this paper: 

\subsection{Orthogonal basis method}
This simple property of strictly convex/concave functions on an interval plays a crucial role in the proof of our monotonicity lemmas (\S\ref{proof:lemm-23} and \S\ref{sect:mono2}). Although it might be known in the literature, we have not been able to locate an explicit reference.

\begin{lemma}\label{lem-orthogonal}
Let \(A\) be a strictly convex or strictly concave  function on an interval \((a,b) \subset \mathbb{R}\),
and let \((y_1,y_2,y_3) \in \mathbb{R}^3\) be a nonzero vector with \(y_1+y_2+y_3=0\).
If \(x_1,x_2,x_3\in(a,b)\) are distinct and satisfy
$x_1y_1+x_2y_2+x_3y_3=0$, 
then
\[
\sum_{k=1}^{3}A(x_k)y_k\neq 0.
\]
\end{lemma}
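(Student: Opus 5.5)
Without loss of generality we order the points as $x_1<x_2<x_3$, permuting the indices together with the $y_k$. Both hypotheses and the conclusion are invariant under such a permutation. We may also assume $A$ is strictly convex, replacing $A$ by $-A$ otherwise.

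The two constraints $\sum y_k=0$ and $\sum x_ky_k=0$ are linear in $y$. Since the points are distinct, the vectors $(1,1,1)$ and $(x_1,x_2,x_3)$ are linearly independent, so the solution space is one-dimensional. A spanning vector is the cross product
\[
(1,1,1)\times(x_1,x_2,x_3)=(x_3-x_2,\;x_1-x_3,\;x_2-x_1).
\]
Hence $y=c\,(x_3-x_2,\,x_1-x_3,\,x_2-x_1)$ for some $c\neq0$, because $y$ is nonzero.

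Substituting this into the sum gives
\[
\sum_k A(x_k)y_k=c\big[(x_3-x_2)A(x_1)-(x_3-x_1)A(x_2)+(x_2-x_1)A(x_3)\big].
\]
Write $x_2=\lambda x_1+(1-\lambda)x_3$ with $\lambda=(x_3-x_2)/(x_3-x_1)\in(0,1)$. The bracket then equals
\[
(x_3-x_1)\big[\lambda A(x_1)+(1-\lambda)A(x_3)-A(x_2)\big],
\]
and this is strictly positive by strict convexity. Multiplying by $c\neq0$ shows the sum is nonzero.

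No step is really an obstacle. The only point needing care is that the constraints determine $y$ up to a scalar, and that rests on the $x_k$ being distinct.
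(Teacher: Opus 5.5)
Your proof is correct. It rests on the same linear-algebra fact as the paper's proof, but approaches it from the dual side.

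The paper argues by contradiction in the two-dimensional space $y^{\perp}=\operatorname{span}\{(1,1,1),(x_1,x_2,x_3)\}$. If $\sum_k A(x_k)y_k=0$, then $(A(x_1),A(x_2),A(x_3))$ lies in that span, so $A$ agrees with an affine function at three distinct points. You instead solve for $y$ in the one-dimensional complement of $\{(1,1,1),(x_1,x_2,x_3)\}$ via the cross product, and you evaluate the sum explicitly as $c(x_3-x_1)$ times the three-point convexity defect.

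The paper's route is shorter and coordinate-free, which fits the ``orthogonal basis'' picture it reuses later. Yours is constructive and gives more than non-vanishing. With $x_1<x_2<x_3$ you have $c=-y_2/(x_3-x_1)$, so your computation amounts to the identity $\sum_k A(x_k)y_k=-y_2\,[\lambda A(x_1)+(1-\lambda)A(x_3)-A(x_2)]$. Hence for strictly convex $A$ the sum has the sign opposite to the weight on the middle point, and for strictly concave $A$ it has the same sign.

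In the paper's applications, the sign is fixed afterwards by continuity plus a test point (Lemma~\ref{lem-decreasing-zero}) or by a boundary limit (Lemma~\ref{lemma-beta-k}). Your formula reads it off directly. For instance, in Lemma~\ref{lem-decreasing-zero} the middle point is $2\rho\cos(\theta-\tfrac{2\pi}{3})$ with weight $\sin(\theta-\tfrac{2\pi}{3})<0$. This immediately gives $\frac{\partial F}{\partial\theta}<0$ for $p>2$ and $\frac{\partial F}{\partial\theta}>0$ for $1<p<2$.
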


\begin{proof}
Assume by contradiction that
$
\sum_{k=1}^{3}A(x_k)y_k=0.
$
That is, the vector \((A(x_1),A(x_2),A(x_3))  \) is orthogonal to the nonzero vector \((y_1,y_2,y_3)\).
By assumption, the vectors \((1,1,1)\) and \((x_1,x_2,x_3)\) are  orthogonal to \((y_1,y_2,y_3)\).
 And, the vectors \((1,1,1)\) and \((x_1,x_2,x_3)\) are linearly independent since \(x_1,x_2,x_3\) are distinct. 
%Since \(x_1,x_2,x_3\) are distinct points in \((a,b)\), the vectors \((1,1,1)\) and \((x_1,x_2,x_3)\) are linearly independent, which are both orthogonal to $(y_1,y_2,y_3)$, by the assumptions.
Thus there exist \(s,t\in\mathbb{R}\) such that 
$$
(A(x_1), A(x_2), A(x_3)) = s (1, 1, 1) + t (x_1, x_2, x_3) = (s + tx_1, s+tx_2, s+ tx_3). 
$$
%$$
%\left(\begin{array}{c}
%A(x_1)\\[1mm]
%A(x_2)\\[1mm]
%A(x_3)
%\end{array}\right)
%= s\left(\begin{array}{c}
%1\\[1mm]
%1\\[1mm]
%1
%\end{array}\right)
%+ t\left(\begin{array}{c}
%x_1\\[1mm]
%x_2\\[1mm]
%x_3
%\end{array}\right).
%$$
In other words, the affine function \(L(x)=s+tx\) coincides with \(A(x)\) at three distinct points \(x_1,x_2,x_3\).
This contradicts strict convexity or strict concavity of \(A\) and thus finishes the proof.
\end{proof}

\begin{remark*}
In our proofs of the monotonicity lemmas, the relevant setting satisfies a stronger condition: 
the vectors
\[
(1,1,1),\quad (x_1,x_2,x_3),\quad (y_1,y_2,y_3)
\]
form an orthogonal basis of \(\mathbb{R}^3\).
\end{remark*}

\subsection{Monotonicity game}
\label{S:monotonicity}

Our proof for the unique intersection property for the curve family $H(\alpha,\cdot)$ relies on establishing numerous inequalities. 
The analysis of sign patterns in derivatives naturally leads to a structured deductive puzzle like Sudoku, which we formalize as the \emph{monotonicity game}. 
The game's objective is to deduce the global sign of a function on an interval from limited information about the signs of its derivatives on the interval and at the endpoints. 

{\bf Rules and objective.} 
Let $f$ be a sufficiently smooth function on a closed interval $[a,b]$. Its $n$-th derivative is denoted by $f^{(n)}$, with the convention that $f^{(0)}=f$. 
Given information $(N,E,I)$: 
\begin{align*}
N &\in \{1,2,3,\dots\},\\
E &= (E_0,\dots,E_{N-1}),\ E_n\in \{+, -, 0,/\} \times \{+, -, 0,/\},\\
I &\in \{+,-\}. 
\end{align*}
The first (resp. second) coordinate of $E_n$ indicates whether $f^{(n)}(a)$ (resp. $f^{(n)}(b)$) is positive, negative, zero, or unknown. 
Here, $I$ indicates whether $f^{(N)}|_{(a,b)}$ is positive or negative. 
The player's goal is to determine the sign $$J\in\{+,-\}$$ of the original function $f$ on the interval $(a,b)$. 

As in a well-posed Sudoku puzzle, the given information $(N,E,I)$ must be appropriate to uniquely determine $J$. 
This logic is illustrated by the basic instance 
$$(N,E,I)=(2,\ ((0,0),(/,/)),\ +),$$ 
for which one can conclude $J=-$, as shown in Figure \ref{monotonicity-game}. 

\begin{figure}[htbp]
\centering
\includegraphics[width=0.4\textwidth]{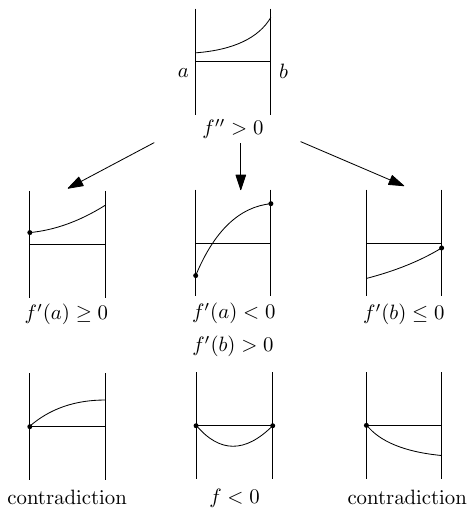}
\caption{Solving a monotonicity game instance through case analysis}
\label{monotonicity-game}
\end{figure}

The following examples are derived from specific analytical contexts within this work, showing the translation of derivative conditions into the game's formal language. While the solution to Example \ref{example-monotone-game} is provided in Figure \ref{derivatives-g}, the solutions to the remaining examples are left as exercises for the reader. 

\begin{example}
[From \eqref{monotone-game-u-1}]
\label{example-monotone-game}
Determine $J$ for 
$$(N,E,I)=(4,\ ((0,0), (/,0), (0,/), (/,/)),\ -).$$
\end{example}

\begin{example}
[From \eqref{monotone-game-u-2}]
\label{ex-monotone-game-u-2}
Determine $J$ for 
$$(N,E,I)=(4,\ ((0,0), (0,/), (0,/), (/,/)),\ -).$$
\end{example}

\begin{example} 
[From the proof of \eqref{P-J}] 
\label{ex-section-P-J}
Determine $J$ for 
$$(N,E,I)=(5,\ ((+,0), (0,-), (/,/),(0,/), (/,/)),\ +).$$
\end{example}

\begin{example}
[From the proof of \eqref{I-W0-P} and \eqref{I-W1-P}]
\label{ex-section-I-W-P}
Determine $J$ for 
$$(N,E,I)=(3,\ ((-,0), (0,/), (/,/)),\ +).$$
\end{example}

\section{Parameter reduction: from $\mathbb{C}^3$ to $[0,1]$}
\label{S:parameter-reduction}

Let \(1 < p < q < \infty\). 
Our goal is to determine the optimal  \(r_{p,q}\) such that for \(r \in (0, r_{p,q}]\), 
\begin{align}\label{eqn-Tt-f-pq}
\|T_r f\|_q \leq \|f\|_p \quad \text{for all } f \in L^p(\mathbb{Z}_3) = L^p(\mathbb{Z}_3, \mathfrak{m}).
\end{align}

It seems that $0< r_{p,q}<\sqrt{(p-1)/(q-1)}$ is well-known. For completeness, we state it as follows. 

\begin{lemma}
\label{lem-rough}
For $1<p<q<\infty$, we have 
$r_{p,q} < \sqrt{(p-1)/(q-1)}$.  
\end{lemma}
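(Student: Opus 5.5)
We aim to show that at $r_0:=\sqrt{(p-1)/(q-1)}$ some test function violates the inequality, and then use continuity together with the semigroup structure to push $r_{p,q}$ strictly below $r_0$. The natural test family is the one from the paper, $f_\rho = 1+\rho(\chi+\overline{\chi})$, i.e. $f_\rho=1+\rho g$ with $g=\chi+\overline\chi$. This $g$ takes the values $2,-1,-1$, so
\[
\mathbb{E} g=0,\qquad \mathbb{E} g^2=2,\qquad \mathbb{E} g^3=\tfrac{8-1-1}{3}=2\neq 0 .
\]
Since $T_r f_\rho = f_{r\rho}$, we have $\|T_rf_\rho\|_q=\|1+r\rho g\|_q$. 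The defect is therefore $G(r,\rho)$ from \eqref{def-Grrho}, and we will Taylor expand it in $\rho$ at $\rho=0$.

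For a mean-zero bounded $g$ and small $s$, we use the standard expansion
\[
\|1+sg\|_p = 1+\frac{p-1}{2}s^2\,\mathbb{E}g^2+\frac{(p-1)(p-2)}{6}s^3\,\mathbb{E}g^3 + O(s^4).
\]
This comes from expanding $\mathbb{E}(1+sg)^p=1+\binom p2 s^2\mathbb{E}g^2+\binom p3 s^3\mathbb{E}g^3+O(s^4)$ and then taking the $1/p$-th power. The $s^3$ term in the norm is $\frac1p\binom p3 s^3\mathbb{E}g^3$, with no cross contribution at that order, because the $s^2$ term squared is already $O(s^4)$.

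Substituting $r=r_0$ makes the quadratic terms cancel, since $(p-1)\rho^2=(q-1)r_0^2\rho^2$. What remains is
\[
G(r_0,\rho)=\frac{\mathbb{E}g^3}{6}\,\rho^3\big[(p-1)(p-2)-(q-1)(q-2)r_0^3\big]+O(\rho^4).
\]
The point is that $\rho$ may be taken of either sign. The function $f_\rho$ stays nonnegative for $\rho\in[-1/2,1]$, though positivity is not even needed here; $\rho\in(-\epsilon,\epsilon)$ suffices. Hence if the bracket $c:=(p-1)(p-2)-(q-1)(q-2)r_0^3$ is nonzero, choosing the sign of $\rho$ opposite to that of $c$ gives $G(r_0,\rho)<0$ for small $|\rho|$. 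So the inequality fails at $r_0$.

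The main obstacle is the case $c=0$. Writing $r_0^2=(p-1)/(q-1)$, the condition $c=0$ becomes $(p-2)\sqrt{q-1}=(q-2)\sqrt{p-1}$. The map $t\mapsto (t-2)/\sqrt{t-1}$ has derivative $\frac{t}{2(t-1)^{3/2}}>0$, so it is strictly increasing on $(1,\infty)$. Hence $c=0$ forces $p=q$, which is excluded, and $c\neq0$ always holds.

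Having a violation at $r_0$, we conclude $r_{p,q}<r_0$ as follows. Fix the $\rho$ found above, so that $G(r_0,\rho)<0$. The function $r\mapsto G(r,\rho)$ is continuous, so $G(r,\rho)<0$ also for $r$ in some interval $(r_0-\delta,r_0]$. Since $r\mapsto\|T_r\|_{p\to q}$ is nondecreasing (by the remark $T_r=T_{r/r'}\circ T_{r'}$ together with $\|T_s\|_{q\to q}\le1$, as $T_s$ is convolution with a probability kernel for $s\le1$), the inequality also fails for every $r\ge r_0-\delta/2$. Therefore $r_{p,q}\le r_0-\delta/2<r_0$.
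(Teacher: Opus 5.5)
Your proof is correct and is essentially the paper's argument. You expand the defect of $f_a=1+a(\chi+\overline{\chi})$ at $a=0$, note that the quadratic term cancels at $r=\sqrt{(p-1)/(q-1)}$, and choose the sign of $a$ against the nonzero cubic coefficient to obtain a violation. Two of your steps go slightly beyond the paper: the monotonicity of $t\mapsto (t-2)/\sqrt{t-1}$ justifies that this coefficient is nonzero, which the paper only asserts, and the continuity step makes explicit the strictness that the paper takes implicitly from defining $r_{p,q}$ as the largest admissible constant.
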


For \(1 < p<q < \infty\), let \(p^* = p/(p-1), q^* = q/(q-1)\) denote the conjugate exponents.
By duality, one always has $
r_{p,q}(\mathbb{Z}_3) = r_{q^*,p^*}(\mathbb{Z}_3)$. Hence
it suffices to determine \(r_{p,q}(\mathbb{Z}_3)\) for the cases
$$
% \label{eqn-pq-range}
1 < p < q <2\ \ \text{and}\ \  1 < p \leq 2 < q<\infty.
$$

\begin{proposition}
\label{thm-zhongyaoi}
Let \(1 < p < q < 2\) or \(1 < p \leq 2 < q<\infty\) and  \(r \in (0, \sqrt{(p-1)/(q-1)}]\). Then the inequality 
\eqref{eqn-Tt-f-pq} holds for all functions on $\mathbb{Z}_3$ if and only if it holds for all functions of the form
$$
f_\rho = 1 + \rho \chi + \rho \overline{\chi} \quad \text{with } \rho \in [0,1].
$$
\end{proposition}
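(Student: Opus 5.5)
The proof runs in two stages: the routine symmetry reduction to the triangle $\Delta$, then a monotonicity argument in the angle $\theta$ that pushes every candidate to the ray $\theta=0$.

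\textbf{Step 1 (reduction to $\Delta$).} By homogeneity and since $\|T_r f\|_q\le \|T_r|f|\|_q$ (the kernel of $T_r$ is nonnegative for $r\in[0,1]$), it suffices to treat $f\ge 0$ normalized with $a_0=1$. Such $f$ has the form $f_a=1+a\chi+\overline a\,\overline\chi$ with $a\in\Delta_0$. The maps $a\mapsto\overline a$ and $a\mapsto ae^{i2\pi/3}$ correspond to reflection and translation on $\mathbb{Z}_3$. These commute with $T_r$ and preserve all $L^s$ norms, so we may restrict to $a=\rho e^{i\theta}\in\Delta$, which is a fundamental domain with $\theta\in[0,\pi/3]$. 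The edge $\theta=0$ of $\Delta$ is exactly $\rho\in[0,1]$; indeed $f_\rho\ge0$ iff $\rho\le 1$, since $f_\rho(1)=1-\rho$.

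\textbf{Step 2 (monotonicity in $\theta$).} We need to show that, for fixed $\rho$ and $r\le\sqrt{(p-1)/(q-1)}$, the defect $G(p,q,r,\rho,\theta)=\|f_{\rho e^{i\theta}}\|_p-\|T_rf_{\rho e^{i\theta}}\|_q$ is nondecreasing in $\theta$ on the part of $\Delta$ at radius $\rho$. Given this, the minimum over $\Delta$ is attained on $\theta=0$, which yields the equivalence. Write the values $f_{\rho,\theta}(k)=1+2\rho\cos(\theta+2\pi k/3)=:1+\rho u_k$. The vector $u=(u_k)$ is orthogonal to $(1,1,1)$ and has constant norm, and $\partial_\theta u=:v$ is also orthogonal to both $(1,1,1)$ and $u$. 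Thus $(1,1,1)$, $u$, $v$ form an orthogonal basis, which is exactly the setting of Lemma~\ref{lem-orthogonal}. Differentiating,
\[
\partial_\theta\|f\|_p=\tfrac{\rho}{3}\|f\|_p^{1-p}\sum_k (1+\rho u_k)^{p-1}v_k ,
\]
and similarly for $T_rf=1+r\rho u$ with exponent $q$. By Lemma~\ref{lem-orthogonal} applied to $A(s)=s^{p-1}$ (strictly convex or concave when $p\ne2$), the sum $\sum_k A(1+\rho u_k)v_k$ never vanishes for $\theta$ in the open range where the $u_k$ are distinct. By continuity its sign is therefore constant, and can be read off at a convenient point such as small $\theta$ or small $\rho$. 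This gives the signs of the two derivative terms. For $1<p\le2<q$, $s^{p-1}$ is concave and $s^{q-1}$ is convex, so the two terms have opposite signs and the difference $\partial_\theta G$ has a definite sign. This case should be relatively clean, with $p=2$ handled directly since the $L^2$ norm is independent of $\theta$.

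\textbf{Main obstacle.} The case $1<p<q<2$ is harder, since both terms have the same sign and we must compare magnitudes. I would study the ratio
\[
\Phi(\rho,\theta)=\frac{\|f\|_p^{1-p}\sum_k (1+\rho u_k)^{p-1}v_k}{r\,\|T_rf\|_q^{1-q}\sum_k(1+r\rho u_k)^{q-1}v_k}.
\]
The aim is to show $\Phi\ge1$, using $r\le\sqrt{(p-1)/(q-1)}$. One possible route is to apply Lemma~\ref{lem-orthogonal} again to a combined function such as $A(s)=s^{p-1}-c\,(1+r(s-1))^{q-1}$. Here one would need the change of variable $s=1+\rho u_k$ to make $T_rf$ a function of the same nodes; then $A$ is strictly convex or concave for suitable constants $c$. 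Arguing by contradiction, a zero of $\partial_\theta G$ in the interior would force $A$ to be affine on three nodes. A second-order Taylor comparison at $\rho\to0$, where the ratio tends to $(p-1)/(r^2(q-1))\ge1$ times norm factors, fixes the sign. I expect controlling the norm prefactors $\|f\|_p^{1-p}$ and $\|T_rf\|_q^{1-q}$ uniformly in $\rho$ to be the delicate point, likely requiring a separate elementary inequality of the type proved in the appendix.
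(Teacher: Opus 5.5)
Your Step~1 and your treatment of $1<p\le 2<q$ are correct and follow the paper's route: Lemma~\ref{lem-orthogonal} applied to $s^{p-1}$, with the sign then fixed at one point. Your small-$\rho$ expansion $\sum_k(1+\rho u_k)^{p-1}v_k=3(p-1)(2-p)\rho^2\sin 3\theta+O(\rho^3)$ is a legitimate way to fix that sign.

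\textbf{The gap.} The case $1<p<q<2$ is the real content of the proposition, and your proposal leaves it unproved. Worse, the mechanism you suggest does not work as stated. Write $\partial_\theta G=\frac{\rho c_1}{3}\sum_k A(s_k)v_k$ with $s_k=1+\rho u_k$ and $A(s)=s^{p-1}-c\,(1+r(s-1))^{q-1}$. The constant $c$ is not yours to choose: it is forced to be $c=r\|T_rf\|_q^{1-q}/\|f\|_p^{1-p}$. For this $c$, the function $A$ need not be convex or concave on the interval spanned by the nodes.
\begin{itemize}
\item On one side, $A''(s)\to-\infty$ as $s\to0^+$; the second term of $A''$ stays bounded because $r<1$.
\item On the other side, take $p=1.1$, $q=1.15$, $r=\sqrt{(p-1)/(q-1)}$ and let $a\to1$ inside $\Delta$, so the nodes tend to $3,0,0$. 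Then $c\approx0.81$ and $A''(3)\approx3.7\cdot10^{-4}>0$.
\end{itemize}
So near that vertex $A$ is S-shaped across the three nodes, a line can meet it three times, and Lemma~\ref{lem-orthogonal} gives no contradiction. Hence the sign at $\rho\to0$ cannot be propagated over $\operatorname{int}(\Delta)$. (Incidentally, the ratio there tends to $\frac{(p-1)(2-p)}{r^3(q-1)(2-q)}$, not to $(p-1)/(r^2(q-1))$.)

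\textbf{The missing idea.} The paper decouples the norm prefactors from the sums. Set $c_1=\|f\|_p^{1-p}$, $c_2=r\|T_rf\|_q^{1-q}$, $S_p=\sum_k(1+\rho u_k)^{p-1}v_k$ and $S_q=\sum_k(1+r\rho u_k)^{q-1}v_k$. Then $\partial_\theta G=\frac{\rho}{3}(c_1S_p-c_2S_q)$, with $S_p,S_q>0$ by Lemma~\ref{lem-decreasing-zero}. It therefore suffices to prove $c_1>c_2$ and $S_p>S_q$ separately.
\begin{itemize}
\item The first inequality, \eqref{eq:term1-est}, is elementary. One has $\|f\|_p^{1-p}>\|f\|_q^{1-q}$ because $\|f\|_q>\|f\|_p>1$, and $r\mapsto r\|T_rf\|_q^{1-q}$ is nondecreasing on $[0,1]$ because $q<2$.
\item The second, \eqref{eq:term2-est}, is where $r\le\sqrt{(p-1)/(q-1)}$ is genuinely used. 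First, $S_q$ increases with $r$, by Lemma~\ref{lem-orthogonal} applied to the strictly concave function $x(1+x)^{q-2}$. This reduces matters to $r=\sqrt{(p-1)/(q-1)}$. At $q=p$ that sum equals $S_p$, so one then needs $q\mapsto\sum_k\bigl(1+\sqrt{(p-1)/(q-1)}\,\rho u_k\bigr)^{q-1}v_k$ to be decreasing on $(p,2)$.
\end{itemize}
That last monotonicity is the three-variable inequality $\psi<0$ of Lemma~\ref{lemma-ineq-psi}. The paper establishes it by a lengthy derivative analysis in Appendix~\ref{section-psi}, and nothing in your sketch replaces it.
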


The proof of Proposition~\ref{thm-zhongyaoi} is illustrated in Figure~\ref{figtriangle00}: the parameter space of the functions is reduced from $\mathbb{C}^3$
to the triangle $\Delta$, and then further reduced from $\Delta$ to the interval $[0,1]$.
%The proof of Proposition \ref{thm-zhongyaoi} is illustrated by Figure \ref{figtriangle00}. Roughly speaking, the parameters in the functions are reduced from $\mathbb{C}^3$  to the triangles $\Delta$,  and then $\Delta$ to the interval $[0,1]$. 
\subsection{Proof of Lemma \ref{lem-rough}}
Consider the real-valued $f_a=1+a\chi+a\overline{\chi}$ 
with $a\in [-\frac{1}{2},1]$.  As $a\to 0$, 
\begin{equation}\label{eq:taylor}
\|f_a\|_p-\|T_r f_a\|_q =   (p-1 - (q-1)r^2)a^2 
+ \frac{(p-1)(p-2) - (q-1)(q-2) r^3}{3} a^3 + o(a^3).
\end{equation}
If $r > \sqrt{(p-1)/(q-1)}$, then $p-1 - (q-1)r^2 < 0$ and  there exists a constant $\delta>0$ such that $\|f_a\|_p-\|T_r f_a\|_q<0$ for any $0<|a|<\delta$. 
If $r = \sqrt{(p-1)/(q-1)}$, then  
\[
\|f_a\|_p-\|T_r f_a\|_q=\frac{p-1}{3}\left(p-2-(q-2)\sqrt{\frac{p-1}{q-1}}\right)a^3+o(a^3). 
\]
Since the coefficient of $a^3$ is nonzero, $\|f_a\|_p-\|T_r f_a\|_q<0$ for some small $a$. It follows that $r_{p,q} < \sqrt{(p-1)/(q-1)}$, as desired.

\subsection{Reduction from $\mathbb{C}^3$ to $\Delta$ via symmetry}
%The following lemma is standard. 
%\begin{lemma}\label{lemma-convolution}
%For any function $f$ on $\mathbb{Z}_3$, we have
%$
%\|T_r f\|_p\leq \|T_r|f|\|_p. 
%$ Consequently, the inequality \eqref{eqn-Tt-f-pq} holds if and only if
%\begin{align*}\label{eqn-Tt-f-pq-positive}
%\|T_r f\|_q \leq \|f\|_p \quad 
%\text{for all non-negative } f \in L^p(\mathbb{Z}_3).
%\end{align*}
%\end{lemma}
%
%\begin{proof}
%Let $P_r = 1 + r\chi + r\overline{\chi}$. Then $T_r f = P_r * f$, where $*$ denotes  the convolution operation. Noticing that $P_r \geq 0$ and $\int_{\mathbb{Z}_3} P_r(x) \, \mathfrak{m}(\mathrm{d}x) = 1$, we obtain
%\begin{align*}
%|T_r (f)(x)| &= \left|\int_{\mathbb{Z}_3} P_r(x-y)f(y) \,  \mathfrak{m}(\mathrm{d}y)\right|\leq \int_{\mathbb{Z}_3} P_r(x-y)|f(y)| \,  \mathfrak{m}(\mathrm{d}y) = T_r(|f|)(x)
%\end{align*}
%and thus $\|T_r f\|_p\leq \|T_r|f|\|_p$. The last assertion follows by noticing that $\| f\|_p= \| |f|\|_p$. 
%\end{proof}
%

 Any function $f$ on $\mathbb{Z}_3$  has Fourier expansion:
\[
f=a_0+a_1 \chi+a_2 \chi^2  = a_0+a_1 \chi+a_2 \overline{\chi} \quad  \text{with $a_0, a_1, a_2 \in \mathbb{C}$. }
\]

%Therefore, by Lemma \ref{lemma-convolution} 
Note that    $T_r f = P_r * f$ with Poisson kernel $P_r = 1 + r\chi + r\overline{\chi}\ge 0$ for $0\le r\le 1$ and   $\|T_r f\|_p\leq \|T_r|f|\|_p$. Therefore,  the inequality \eqref{eqn-Tt-f-pq} holds if and only if
\begin{align*}\label{eqn-Tt-f-pq-positive}
\|T_r f\|_q \leq \|f\|_p \quad 
\text{for all non-negative } f \in L^p(\mathbb{Z}_3).
\end{align*} 
By homogeneity,  to study the inequality  \eqref{eqn-Tt-f-pq}, we may assume without loss of generality that 
\[
f = 1 + a\chi + b\overline{\chi} \geq 0.
\]
 It follows that \(b = \overline{a}\), hence \(f = 1 + a\chi + \overline{a}\,\overline{\chi}\). We note that \(f = 1 + a\chi + \overline{a}\,\overline{\chi} \geq 0\) if and only if
$$
\left\{
\begin{aligned}
&1 + a + \overline{a} \geq 0, \\
&1 + ae^{i\frac{2\pi}{3}} + \overline{a}e^{-i\frac{2\pi}{3}} \geq 0, \\
&1 + ae^{-i\frac{2\pi}{3}} + \overline{a}e^{i\frac{2\pi}{3}} \geq 0.
\end{aligned}
\right.
$$
Equivalently, $f\geq 0$ if and only if  the complex number \(a\) lies in the triangular region \(\Delta_0\) with vertices at \(1\), \(e^{i 2\pi/3}\), and \(e^{-i 2\pi/3}\); see Figure \ref{figtriangle00}.
Hence, the inequality \eqref{eqn-Tt-f-pq} holds if and only if it holds for all functions of the form
$$f_a = 1 + a\chi + \overline{a}\,\overline{\chi} \quad \text{with } a\in\Delta_0.$$

\begin{lemma} \label{lem:symmetry}
For $p>1$ and $f_a=1+a\chi+\overline{a}\,\overline{\chi}$ with $a\in\mathbb{C}$, we have 
$
\|f_a\|_p = \|f_{\overline{a}}\|_p = \|f_{a e^{i2\pi/3}}\|_p.
$
\end{lemma}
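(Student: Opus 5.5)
The plan is to compute the values of $f_a$ at the three points of $\mathbb{Z}_3$ explicitly. Then check that the two transformations $a\mapsto\overline a$ and $a\mapsto ae^{i2\pi/3}$ only permute these values (or reflect the group). Since $\mathfrak m$ is the uniform measure, permuting the values leaves the $L^p$-norm unchanged.

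Write $\omega=e^{i2\pi/3}$, so $\chi(k)=\omega^k$ for $k\in\{0,1,2\}$. Then
\[
f_a(k)=1+a\omega^k+\overline{a}\,\overline{\omega}^{k}=1+2\,\mathrm{Re}(a\omega^k),
\]
and therefore
\[
\|f_a\|_p^p=\frac13\sum_{k=0}^{2}\big|1+2\,\mathrm{Re}(a\omega^k)\big|^p .
\]

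For rotation, note that $\{ae^{i2\pi/3}\omega^k: k=0,1,2\}=\{a\omega^{k+1}\}=\{a\omega^k\}$ as multisets, because $\omega^3=1$. Hence $f_{a\omega}(k)=f_a(k+1)$, meaning $f_{a\omega}$ is a translate of $f_a$. By translation invariance of $\mathfrak m$, the two norms agree.

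For conjugation, $\mathrm{Re}(\overline a\omega^k)=\mathrm{Re}(a\overline{\omega}^k)=\mathrm{Re}(a\omega^{-k})$. So $f_{\overline a}(k)=f_a(-k)$, and $k\mapsto -k$ is a measure-preserving bijection of $\mathbb{Z}_3$, which again preserves the norm.

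There is no real obstacle here. The only point needing care is the bookkeeping of indices modulo $3$, which is elementary and uses nothing beyond uniformity of the Haar measure.
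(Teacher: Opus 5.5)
Your proposal is correct and follows essentially the same route as the paper, which writes $3\|f_a\|_p^p=\sum_{k=0}^{2}|1+2\,\mathrm{Re}(a\omega^k)|^p$ and observes that $a\mapsto a\omega$ cyclically permutes the three summands and $a\mapsto\overline a$ also permutes them. Your formulation $f_{\overline a}(k)=f_a(-k)$ is slightly more precise than the paper's remark that conjugation ``does not change the three summands'': it fixes the $k=0$ term and swaps the other two.
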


\begin{proof}
By  definition,
\[
3\|f_a\|_p^p = |1+2\operatorname{Re}(a)|^p
+ |1+2\operatorname{Re}(a e^{i 2\pi/3})|^p
+ |1+2\operatorname{Re}(a e^{-i 2\pi/3})|^p.
\]
Since $\operatorname{Re}(\overline{a})=\operatorname{Re}(a)$, replacing $a$ with $\overline{a}$ does not change the three summands.  Moreover, multiplying $a$ by $e^{i2\pi/3}$ cyclically permutes the cubic roots of unity $1$, $e^{i2\pi/3}$, $e^{-i2\pi/3}$, which simply permutes the three terms in the sum and leaves the total sum unchanged.
\end{proof}

Recall that $T_r f_a = f_{ra}$. Using the symmetry in Lemma \ref{lem:symmetry}, we obtain the following reduction.

\begin{proposition}\label{proposition-small-triangle}
The inequality \eqref{eqn-Tt-f-pq} holds for all functions on $\mathbb{Z}_3$ if and only if it holds for all
\[
f_a = 1 + a\chi + \overline{a}\,\overline{\chi}, \quad a \in \Delta,
\]
where $\Delta \subset \mathbb{C}$  is the closed triangle with three vertices given by $0, 1, \frac{1}{2}e^{i \frac{\pi}{3}} \in \mathbb{C}$ (see Figure \ref{figtriangle00}). 
\end{proposition}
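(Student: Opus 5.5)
We will deduce Proposition~\ref{proposition-small-triangle} from the previous reduction to $a\in\Delta_0$ by showing that $\Delta_0$ is covered by the images of $\Delta$ under the group generated by complex conjugation and rotation by $e^{i2\pi/3}$, and that both sides of the inequality are invariant under these transformations.

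\textbf{Step 1: invariance.} The first task is to check that both sides of the inequality are unchanged under these symmetries. Since $T_r f_a = f_{ra}$, and $r$ is real, we have $\overline{ra}=r\overline{a}$ and $r(ae^{i2\pi/3})=(ra)e^{i2\pi/3}$. Lemma~\ref{lem:symmetry} applied with exponent $q$ to $ra$, and with exponent $p$ to $a$, then gives
\[
\|T_r f_{\overline a}\|_q=\|T_r f_a\|_q=\|T_r f_{ae^{i2\pi/3}}\|_q,\qquad \|f_{\overline a}\|_p=\|f_a\|_p=\|f_{ae^{i2\pi/3}}\|_p .
\]
Hence the validity of $\|T_rf_a\|_q\le\|f_a\|_p$ is invariant under the dihedral group $D_3$ generated by $a\mapsto\overline a$ and $a\mapsto ae^{i2\pi/3}$.

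\textbf{Step 2: covering.} The group $D_3$ is exactly the symmetry group of the equilateral triangle $\Delta_0$, whose vertices are $1,e^{\pm i2\pi/3}$ and whose center is $0$. Its three reflection axes are the lines through $0$ at angles $0,\pi/3,2\pi/3$ (mod $\pi$). These axes cut $\Delta_0$ into six congruent closed triangles, each with vertices at $0$, a vertex of $\Delta_0$, and the midpoint of an adjacent edge. The midpoint of the edge joining $1$ and $e^{i2\pi/3}$ is $\tfrac12(1+e^{i2\pi/3})=\tfrac12e^{i\pi/3}$, so $\Delta$ is one of these six pieces: it is the part of $\Delta_0$ with $0\le\arg a\le\pi/3$. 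Since $D_3$ acts simply transitively on the six pieces, every $a\in\Delta_0$ equals $g(a')$ for some $g\in D_3$ and some $a'\in\Delta$.

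\textbf{Conclusion.} One direction of the equivalence is trivial, because $\Delta\subset\Delta_0$. For the other, any $a\in\Delta_0$ has the form $g(a')$ with $a'\in\Delta$, and Step~1 transfers the inequality from $a'$ to $a$. Combining this with the earlier reduction to $a\in\Delta_0$ completes the argument. The only point requiring care is the elementary geometric fact in Step~2: that the six sectors of opening angle $\pi/3$, each truncated by $\Delta_0$, are permuted transitively. This can be verified directly in polar coordinates: conjugation sends $\theta\mapsto-\theta$, rotation sends $\theta\mapsto\theta+2\pi/3$, and together these carry $[0,\pi/3]$ onto every sector $[k\pi/3,(k+1)\pi/3]$ while preserving $\Delta_0$.
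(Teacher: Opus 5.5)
Your proposal is correct and follows the same route as the paper. The paper deduces this proposition in one line, from $T_r f_a = f_{ra}$ and the symmetries of Lemma~\ref{lem:symmetry}, applied on top of the earlier reduction to $a\in\Delta_0$. Your Steps 1 and 2 spell out what that line leaves implicit: the invariance of both sides under the group generated by conjugation and rotation by $e^{i2\pi/3}$, and the fact that the images of $\Delta$ under this group tile $\Delta_0$.
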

%%%%%%%%%%%%%%%%%%%%%%%%%%%%%%%%%%%%%%%%%%%%%%%%%%%%%%%%%%%%%%%%%%%%%%%%%%%%%%%%%%%%%%%%%%%%%%%%%%%%%%%%%%%%%%%%%%%%%%%%%%%%%%%%%%%%%%%%%%%%%%%%%%%%%%%%%%%%%%%%%%
\subsection{Reduction from \(\Delta\) to \([0,1]\) via monotonicity}

%For any $1<p,q<\infty$, note that
%\begin{align}\label{eqn-dual-index-r}
%r_{p,q}(\mathbb{Z}_3) = r_{q^*,p^*}(\mathbb{Z}_3),
%\end{align}
%it suffices to determine \(r_{p,q}(\mathbb{Z}_3)\) for the cases
%\begin{align}\label{eqn-pq-range}
%1 < p < q \leq 2 \quad \text{or} \quad 1 < p \leq 2 < q<\infty.
%\end{align}
%The main result in this section is the following result.

%The first contribution of this paper is to transform the hypercontractive inequality \eqref{eqn-Tt-f-pq-positive} into a more computationally tractable form, with the precise result stated as follows.

%To reduce $\Delta_0$ to the unit interval $[0,1]$, we present several auxiliary technical results in this subsection. 
%The main results here are Lemmas 1, 2, and 3. 
%These proofs are tedious, and for the sake of quickly reaching the proof of Proposition \ref{thm-zhongyaoi}, readers may take these results for granted on first reading and proceed directly to the next subsection.

Write the triangle $\Delta$  in polar coordinates:
$$
\Delta=\bigg\{\rho e^{i\theta} ~\bigg|~ \theta \in \Big[0,\frac{\pi}{3}\Big],\ 
\rho\in\Big[0,\frac{1}{2\sin(\theta+\frac{\pi}{6})}\Big]\bigg.\bigg\}.
$$
For $p>1$ and $(\rho,\theta)$ with $\rho e^{i\theta}\in \Delta$, define 
\begin{equation}\label{eq-Fq}
\begin{aligned}
F(p,\rho,\theta)= 3\|f_{\rho e^{i\theta}}\|_p^p= \big(1+2\rho\cos\theta\big)^p 
+ \big(1+2\rho\cos(\theta+\tfrac{2\pi}{3})\big)^p 
+ \big(1+2\rho\cos(\theta-\tfrac{2\pi}{3})\big)^p.
\end{aligned}
\end{equation}
Consequently, for $q>1$ and $0<r<1$, we have 
\[
\|T_r f_{\rho e^{i\theta}}\|_q 
= \|f_{r\rho e^{i\theta}}\|_q
= \left(\frac{F(q,r\rho,\theta)}{3}\right)^{\frac{1}{q}}.
\]
Hence, the defect function \eqref{eq:defectfunction} can be written as 
\begin{align}\label{eqn-df-H}
G(p,q,r, \rho, \theta) =\left(\frac{F(p,\rho,\theta)}{3}\right)^{\frac{1}{p}}
   -\left(\frac{F(q,r\rho,\theta)}{3}\right)^{\frac{1}{q}}.
\end{align}
%For convenience in the subsequent analysis, we write \( G(p,q,\rho,\theta, r) \) in place of \( G(p,q,\rho e^{i\theta}, r) \).
For completing the proof of Proposition \ref{thm-zhongyaoi}, we need the following monotonicity lemmas.

\begin{lemma}
\label{lemma-reduce-p-q-1}
Let \(1 < p \leq 2 < q < \infty\) and \(0<r\leq 1\).
For any \((\rho, \theta)\) with $\rho e^{i\theta}\in\Delta$, the function \(G(p,q,r, \rho, \theta)\) is  strictly increasing in \(\theta\). Consequently,
$$ 
%\label{eq:hpq12}
G(p,q,r, \rho, \theta) \geq G(p,q,r, \rho, 0).
$$
\end{lemma}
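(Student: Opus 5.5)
The plan is to show that $\theta\mapsto F(p,\rho,\theta)$ is strictly \emph{decreasing} for $1<p<2$ and $\theta\in(0,\pi/3)$, and strictly \emph{increasing} for $q>2$. Since $G=(F(p,\rho,\theta)/3)^{1/p}-(F(q,r\rho,\theta)/3)^{1/q}$, this would give monotonicity of $G$ in $\theta$. The case $p=2$ needs separate handling: there $F(2,\rho,\theta)=3+6\rho^2$ is independent of $\theta$, so only the $q$-term moves, and it moves in the required direction. (We must settle on the direction; see below.)

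Write $c_k=\cos(\theta+2\pi k/3)$, $k=0,1,-1$, and $x_k=1+2\rho c_k\ge 0$ on $\Delta$. Then
\[
\partial_\theta F(p,\rho,\theta)=-2p\rho\sum_k x_k^{p-1}s_k,\qquad s_k=\sin(\theta+2\pi k/3).
\]
The vectors $(1,1,1)$, $(c_k)$ and $(s_k)$ are mutually orthogonal, because $\sum c_k=\sum s_k=\sum c_ks_k=0$. Hence $y_k:=s_k$ satisfies $\sum y_k=0$ and $\sum x_ky_k=0$. For $\theta\in(0,\pi/3)$ the $x_k$ are distinct and $(s_k)\ne 0$. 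The function $A(x)=x^{p-1}$ is strictly concave for $p<2$ and strictly convex for $p>2$, so Lemma~\ref{lem-orthogonal} gives $\sum_k x_k^{p-1}s_k\neq0$ on the open interval. (At boundary points where some $x_k=0$, we either restrict to the interior or pass to a limit.)

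By continuity, $\partial_\theta F$ therefore has a constant sign on $(0,\pi/3)$ for each fixed $\rho$, and we read off the sign from a small-$\rho$ expansion. Expanding $x^{p-1}$ to third order and using $\sum c_k^2s_k=\tfrac{3}{4}\cdot(-\sin3\theta)\cdot\tfrac{1}{\,}$-type identities (precisely $\sum c_k^2 s_k=-\tfrac34\sin 3\theta$) gives
\[
\sum_k x_k^{p-1}s_k\approx \tfrac{(p-1)(p-2)}{2}\,4\rho^2\sum_k c_k^2s_k,
\]
which has the sign of $-(p-2)\sin3\theta$. Consequently
\[
\partial_\theta F(p)\ \text{has sign of}\ (p-2)\sin 3\theta,
\]
so $F(p,\cdot)$ is decreasing for $p<2$ and $F(q,\cdot)$ is increasing for $q>2$.

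This yields $\partial_\theta G<0$ in these regimes, not $>0$. The stated consequence $G(\cdot,\theta)\ge G(\cdot,0)$ must hold, because the small-$a$ expansion shows $\theta=0$ (real positive $a$) is the worst direction. So the first task is to recheck the orientation convention: either $\theta$ is measured so that the vertex $\tfrac12 e^{i\pi/3}$ corresponds to negative real $a$ via symmetry, or the coefficient sign differs from the heuristic above. If after checking the cubic coefficient $\sum c_k^3$ the sign turns out to be the reverse, then $p<2$ gives $F(p)$ increasing and $q>2$ gives $F(q)$ decreasing, and $G$ is increasing as claimed. Either way, the structure is the same: fix the sign by the leading-order term in $\rho$.

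The main obstacle is extending the sign from small $\rho$ to all $\rho$, and near the boundary of $\Delta$ where some $x_k=0$ and $x^{p-1}$ is singular for $p<2$. Continuity in $\rho$ together with the non-vanishing from Lemma~\ref{lem-orthogonal} for every $\rho>0$ handles the first issue. On the boundary edge, the distinctness and non-degeneracy conditions still hold, and the singular term only reinforces the sign, which I would verify directly. Finally, $r$ enters only by replacing $\rho$ with $r\rho$, which stays in $\Delta$, so the sign conclusions apply uniformly for $0<r\le1$.
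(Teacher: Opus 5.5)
The gap is in the step that decides the lemma, the sign of $\partial_\theta F$, and it comes from an error in your trigonometric identity. Your non-vanishing argument via Lemma~\ref{lem-orthogonal} is exactly the paper's. Fixing the sign by the leading term in $\rho$ would also be a fine, numerics-free substitute for the paper's evaluation at $(\rho,\theta)=(\frac12,\frac\pi6)$.

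However, $\sum_k c_k^2 s_k$ is not $-\frac34\sin 3\theta$. Since $\cos^2x\sin x=\frac14(\sin x+\sin 3x)$ and $\sin(3\theta+2\pi k)=\sin3\theta$, one has $\sum_k c_k^2 s_k=+\frac34\sin 3\theta$. Consequently
\[
\sum_k x_k^{p-1}s_k=\tfrac32(p-1)(p-2)\rho^2\sin 3\theta+O(\rho^3),\qquad
\partial_\theta F(p,\rho,\theta)=-3p(p-1)(p-2)\rho^3\sin 3\theta+O(\rho^4).
\]
For $\theta\in(0,\pi/3)$ this is positive when $1<p<2$ and negative when $p>2$. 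So $F(p,\rho,\cdot)$ increases, $F(q,r\rho,\cdot)$ strictly decreases, and $G$ is strictly increasing, as claimed.

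Two independent checks confirm the corrected sign:
\begin{itemize}
\item At $(\rho,\theta)=(\frac12,\frac\pi6)$ one has $(x_k)=(1+\frac{\sqrt3}{2},\,1-\frac{\sqrt3}{2},\,1)$ and $(s_k)=(\frac12,\frac12,-1)$. This gives $\partial_\theta F(\frac32,\frac12,\frac\pi6)=\frac{6-3\sqrt3}{4}>0$.
\item The third central moment of $f_{\rho e^{i\theta}}$ is $2\rho^3\cos 3\theta$. Its coefficient $p(p-1)(p-2)/6$ in the expansion of $\|f_{\rho e^{i\theta}}\|_p^p$ about the mean is negative for $p<2$.
\end{itemize}

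As written, though, your proposal derives the opposite monotonicity, notes that it contradicts the statement, and then leaves the direction undecided. That is not a proof. There is no orientation convention to revisit: $\Delta$ and $\theta$ are fixed, with $\theta=0$ the segment $[0,1]$. Your remark that $\theta=0$ ``must'' be the worst direction presupposes exactly the sign you are trying to establish.

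Once you correct the identity and commit to the sign, the rest of your argument goes through. You also need no boundary analysis of the singular term. For fixed $\rho$, the admissible $\theta$'s form a closed interval: $[0,\pi/3]$ for $\rho\le\frac12$, and $[0,\arcsin\frac{1}{2\rho}-\frac\pi6]$ for $\rho>\frac12$. Its interior lies in $\operatorname{int}(\Delta)$, and scaling by $r\le1$ keeps $r\rho e^{i\theta}$ there. Strict monotonicity on the closed interval then follows from $\partial_\theta G>0$ on the open interval together with continuity.
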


\begin{lemma}
\label{lemma-reduce-p-q-2}
Let \(1 < p < q < 2\) and \(0 < r \leq \sqrt{(p-1)/(q-1)}\).  
For any \((\rho, \theta)\) with $\rho e^{i\theta}\in\Delta$, the function \(G(p,q,r, \rho, \theta)\) is  strictly increasing in \(\theta\). Therefore,
$$
% \label{eq:hpqsmall2}
G(p,q,r, \rho, \theta) \geq G(p,q,r, \rho, 0).$$
\end{lemma}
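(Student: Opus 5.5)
The plan is to differentiate $G$ in $\theta$ and reduce strict monotonicity to a sign statement plus one comparison inequality between the $p$-term and the $q$-term. Put $\varphi_k\in\{0,\tfrac{2\pi}{3},-\tfrac{2\pi}{3}\}$, $u_k(\rho,\theta)=1+2\rho\cos(\theta+\varphi_k)$ and $y_k=\sin(\theta+\varphi_k)$. From \eqref{eq-Fq},
\[
\partial_\theta F(p,\rho,\theta)=-2p\rho\, S_p(\rho,\theta),\qquad S_p(\rho,\theta):=\sum_{k=1}^3 u_k^{\,p-1}y_k .
\]
Hence, by \eqref{eqn-df-H},
\[
\partial_\theta G=-\tfrac{2\rho}{3}\Big(\tfrac{F(p,\rho,\theta)}{3}\Big)^{\frac1p-1}S_p(\rho,\theta)+\tfrac{2r\rho}{3}\Big(\tfrac{F(q,r\rho,\theta)}{3}\Big)^{\frac1q-1}S_q(r\rho,\theta).
\]
I will work in the interior $\theta\in(0,\pi/3)$, $\rho>0$. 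The boundary cases $\theta=0,\pi/3$ follow by continuity once $\partial_\theta G>0$ is shown inside.

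\textbf{Step 1 (sign of $S_s$).} The vector $(y_k)$ satisfies $\sum_k y_k=0$. It also satisfies $\sum_k u_ky_k=0$, because $\sum_k\cos(\theta+\varphi_k)\sin(\theta+\varphi_k)=\tfrac12\sum_k\sin(2\theta+2\varphi_k)=0$. For $\theta\in(0,\pi/3)$ the three $u_k$ are distinct, and $A(u)=u^{s-1}$ is strictly concave for $1<s<2$. Lemma~\ref{lem-orthogonal} therefore gives $S_s(\rho,\theta)\neq0$ on the connected interior region. The sign is then fixed by the expansion at $\rho\to0$: using $\sum_k\cos^2(\theta+\varphi_k)\sin(\theta+\varphi_k)=\tfrac34\sin3\theta$, one gets
\[
S_s(\rho,\theta)=\tfrac32(s-1)(s-2)\rho^2\sin3\theta+O(\rho^3)<0 .
\]
So $S_p<0$ and $S_q<0$, since both $p,q<2$. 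Consequently the first term of $\partial_\theta G$ is positive and the second is negative, and everything reduces to
\begin{equation*}
\Big(\tfrac{F(p,\rho,\theta)}{3}\Big)^{\frac1p-1}|S_p(\rho,\theta)|> r\Big(\tfrac{F(q,r\rho,\theta)}{3}\Big)^{\frac1q-1}|S_q(r\rho,\theta)|.
\end{equation*}

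\textbf{Step 2 (the comparison).} To leading order the left side is $\tfrac32(p-1)(2-p)\rho^2\sin3\theta$ and the right side is $\tfrac32(q-1)(2-q)r^3\rho^2\sin3\theta$. The hypothesis $r^2(q-1)\le p-1$, together with $r<1$ and $2-q<2-p$, gives strict inequality of these coefficients. To make this global, I will normalize: set
\[
K_s(\rho,\theta):=\frac{(F(s,\rho,\theta)/3)^{\frac1s-1}|S_s(\rho,\theta)|}{(s-1)(2-s)\rho^2\sin3\theta}.
\]
The target then reads $(p-1)(2-p)K_p(\rho)> r^3(q-1)(2-q)K_q(r\rho)$. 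It suffices to prove two facts: (a) $K_q(r\rho)\le K_q(\rho)$, meaning $K_s$ is nondecreasing in $\rho$; and (b) $K_q(\rho)\le K_p(\rho)$ for $p<q$, meaning $K_s$ is nonincreasing in $s$, or at least controlled by the factor $(2-q)/(2-p)<1$ to spare. For both I will expand $u^{s-1}$ into its second-order Taylor integral remainder $\int_0^1(1-\tau)(s-1)(s-2)(1+\tau(u-1))^{s-3}\,d\tau\,(u-1)^2$. This writes $S_s$ as a weighted average of terms of the form $\sum_k(1+\tau(u_k-1))^{s-3}(u_k-1)^2y_k$. I then apply Lemma~\ref{lem-orthogonal} again, or a direct convexity comparison, to control how these depend on $s$ and $\rho$.

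\textbf{Main obstacle.} Step 2 is where I expect the difficulty, particularly near the boundary edge of $\Delta$. There some $u_k\to0$, and $u_k^{s-3}$ blows up differently for $p$ and $q$. If the clean monotonicity of $K_s$ fails there, my fallback is to use $r\le\sqrt{(p-1)/(q-1)}<1$ more directly. Near that edge, $r\rho$ stays in a compact subregion where $u_k(r\rho)\ge1-r>0$, so the right side is bounded. Meanwhile the left side is bounded below via the $u^{p-1}$ singularity in $|S_p|$. I would then settle the remaining compact range by monotonicity-game arguments from \S\ref{S:monotonicity} in the variable $\rho$ for fixed $\theta$.
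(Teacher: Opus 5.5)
Your Step~1 is correct and is the paper's reduction. The sign of $S_s$ comes from Lemma~\ref{lem-orthogonal}; you fix it by the $\rho\to0$ expansion, the paper by evaluating at one point. From there, $\partial_\theta G>0$ reduces to your displayed comparison.

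The gap is Step~2. It is the real content of the lemma, and you only outline it. Worse, your sufficient condition (a) is false. Expand exactly as in Step~1, now to fourth order, using $\sum_k\cos^3(\theta+\varphi_k)\sin(\theta+\varphi_k)=0$, $\sum_k\cos^4(\theta+\varphi_k)\sin(\theta+\varphi_k)=\tfrac{9}{16}\sin3\theta$ and $\tfrac13\sum_k(u_k-1)^2=2\rho^2$. For each fixed $\theta\in(0,\pi/3)$ this gives
\[
K_s(\rho,\theta)=\frac32\Big[1-(s-1)^2\rho^2\Big]\Big[1+\frac{(3-s)(4-s)}{4}\rho^2\Big]+O(\rho^3)
=\frac32+\frac32\Big(\frac{(3-s)(4-s)}{4}-(s-1)^2\Big)\rho^2+O(\rho^3).
\]
The $\rho^2$-coefficient is negative for $s>(1+\sqrt{97})/6\approx 1.81$; it equals $-\tfrac34$ at $s=2$. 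In words, the norm factor $\|f_{\rho e^{i\theta}}\|_s^{1-s}\approx 1-(s-1)^2\rho^2$ decays faster than $|S_s|/\rho^2$ grows.

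Hence for such $q$ and small $\rho$ you get $K_q(r\rho,\theta)>K_q(\rho,\theta)$ for every $r<1$, and the chain (a)+(b) breaks down. A valid argument would have to trade this failure against the slack in $(2-q)/(2-p)$, and you do not do that. Your fallback does not help either:
\begin{itemize}
\item $u^{p-1}$ is continuous and vanishes at $u=0$; only its derivatives are singular. So there is no lower bound coming from a ``singularity''.
\item The margin in the comparison degenerates as $q\downarrow p$, since then $r\to1$ and the two sides coincide in the limit. Boundedness or compactness on a compact range therefore cannot close the argument uniformly in $(p,q,r)$.
\end{itemize}

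The paper avoids this with a split that keeps the factor $r$ attached to the norm term, proving two separate inequalities.

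\textbf{Norm factor.} It proves \eqref{eq:term1-est} as
\[
\|f_{\rho e^{i\theta}}\|_p^{1-p}>\|f_{\rho e^{i\theta}}\|_q^{1-q}\ge r\,\|f_{r\rho e^{i\theta}}\|_q^{1-q}.
\]
The last step holds because $r\mapsto r\,(F(q,r\rho,\theta)/3)^{1/q-1}$ is nondecreasing for $q<2$. Its derivative is a positive multiple of $\sum_k u_k^{q-1}\big(q-1+(2-q)u_k\big)$, with $u_k$ taken at $r\rho$.

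\textbf{Derivative factor.} It proves $|S_p(\rho,\theta)|>|S_q(r\rho,\theta)|$ in two steps.
\begin{itemize}
\item First, $r\mapsto|S_q(r\rho,\theta)|$ is increasing, by Lemma~\ref{lem-orthogonal} applied to the concave function $x(1+x)^{q-2}$. This reduces to $r=\sqrt{(p-1)/(q-1)}$.
\item Second, $q\mapsto|S_q(\sqrt{(p-1)/(q-1)}\,\rho,\theta)|$ is decreasing.
\end{itemize}
That second monotonicity is the three-variable inequality $\psi<0$ of Lemma~\ref{lemma-ineq-psi}. Its proof occupies all of Appendix~\ref{section-psi}, via a fourth- and sixth-derivative analysis in $x$. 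Your proposal contains neither this inequality nor any substitute for it, and that is the missing idea.
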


\begin{remark*} While Lemma~\ref{lemma-reduce-p-q-1} holds for all $r\in (0,1]$, 
the condition $0 < r \leq \sqrt{(p-1)/(q-1)}$ is essential for Lemma~\ref{lemma-reduce-p-q-2}. Note also that, in general, if   $2\leq p<q<\infty$, we do not have a similar monotonicity result as in Lemmas~\ref{lemma-reduce-p-q-1} and \ref{lemma-reduce-p-q-2}. 
\end{remark*}
%\begin{lemma}\label{lemma-H-monotonicity}
%Let $1<p,q<\infty$ and $0<r\leq\sqrt{\frac{p-1}{q-1}}$. 
%Assume either
%\begin{enumerate}
%    \item $1<p\leq2\leq q<\infty$, or
%    \item $1<p\leq q\leq2$.
%\end{enumerate}
%Then for any $(\rho,\theta)\in\Delta$, we have
%\[
%H(p,q,\rho,\theta,r)\geq H(p,q,\rho,0,r).
%\]
%That is, the function $\theta\mapsto H(p,q,\rho,\theta,r)$ attains its minimum at $\theta=0$ on $[0,\pi/3]$.
%\end{lemma}

%\begin{lemma}\label{lemma-reduce-p-q-1}
%Let \(1 < p \leq 2 \leq q < \infty\).  
%For any \((\rho, \theta) \in \Delta\), we have $\frac{\partial G}{\partial\theta}(p,q,r, \rho, \theta)>0.$ Hence
%\begin{align}\label{eq:hpq12}
%G(p,q,r, \rho, \theta) \geq G(p,q,r, \rho, 0).
%\end{align}
%\end{lemma}
%
%\begin{lemma}\label{lemma-reduce-p-q-2}
%Let \(1 < p \leq q \leq 2\) and \(0 < r \leq \sqrt{\frac{p-1}{q-1}}\).  
%For any \((\rho, \theta) \in \Delta\), $\frac{\partial G}{\partial\theta}(p,q,r, \rho, \theta)>0.$  we have
%\begin{align}\label{eq:hpqsmall2}
%G(p,q,r, \rho, \theta) \geq G(p,q,r, \rho, 0)
%\end{align}
%\end{lemma}

%\subsection{The proofs of Monotonicity lemmas}\label{two-key-lemmas}
%These proofs require a careful and detailed analysis, and readers may take these results for granted on first reading and proceed directly to the next section.

\subsection{Proof of Lemma \ref{lemma-reduce-p-q-1}}\label{proof:lemm-23}
Recall from \eqref{eq-Fq} that
\[
F(p, \rho, \theta) = \big(1+2\rho\cos\theta\big)^p 
+ \big(1+2\rho\cos(\theta+\tfrac{2\pi}{3})\big)^p 
+ \big(1+2\rho\cos(\theta-\tfrac{2\pi}{3})\big)^p,  
\]
where $p > 1$ and $(\rho,\theta)$ with $\rho e^{i\theta}\in\Delta$. 
Our goal is to estimate the partial derivative
$\frac{\partial F}{\partial\theta}(p,\rho,\theta)$. 
Using the orthonormal basis method,
we show that $\frac{\partial F}{\partial\theta}(p,\rho,\theta)$
does not vanish for $\rho e^{i\theta}\in\operatorname{int}(\Delta)$
whenever $p\neq 2$.
We then evaluate $\frac{\partial F}{\partial\theta}(p,\rho,\theta)$ at a special choice of $(\rho,\theta)$
to determine its sign.

\begin{lemma}\label{lem-decreasing-zero}
For  $(\rho,\theta)$ with $\rho e^{i\theta}\in\operatorname{int}(\Delta)$, we have 
\begin{enumerate}
    \item \(\frac{\partial F}{\partial\theta}(p,\rho,\theta) > 0\) if \(1 < p < 2\);
    \vspace{1mm}
    \item \(\frac{\partial F}{\partial\theta}(p,\rho,\theta) < 0\) if \(p > 2\). 
\end{enumerate}
\end{lemma}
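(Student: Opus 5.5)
Write $x_1=\cos\theta$, $x_2=\cos(\theta+\frac{2\pi}{3})$, $x_3=\cos(\theta-\frac{2\pi}{3})$, and set $u_k=1+2\rho x_k$, so that $F(p,\rho,\theta)=\sum_k u_k^p$. Differentiating in $\theta$ gives
\[
\frac{\partial F}{\partial\theta}(p,\rho,\theta)=-2p\rho\sum_{k=1}^3 u_k^{p-1}y_k,
\]
with $y_1=\sin\theta$, $y_2=\sin(\theta+\frac{2\pi}{3})$, $y_3=\sin(\theta-\frac{2\pi}{3})$. The vectors $(1,1,1)$, $(x_1,x_2,x_3)$ and $(y_1,y_2,y_3)$ are mutually orthogonal: they are the real and imaginary parts of the cube-root sums. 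Hence $\sum y_k=0$ and $\sum x_ky_k=0$, and since the $u_k$ are affine in the $x_k$, also $\sum u_ky_k=0$.

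For $\rho e^{i\theta}\in\operatorname{int}(\Delta)$ we have $\rho>0$, $u_k>0$, and $\theta\in(0,\pi/3)$. On this range the $x_k$, and hence the $u_k$, are pairwise distinct: $x_1=x_3$ would force $\theta=\pi/3$, $x_1=x_2$ would force $\theta=-\pi/3$ (mod the symmetric values), and $x_2=x_3$ would force $\theta=0$. Also $(y_k)\neq 0$. I apply Lemma~\ref{lem-orthogonal} to $A(u)=u^{p-1}$ on $(0,\infty)$, which is strictly concave for $1<p<2$ and strictly convex for $p>2$. This gives $\sum u_k^{p-1}y_k\neq0$, so $\partial_\theta F$ has no zero on $\operatorname{int}(\Delta)$ whenever $p\neq2$. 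Since $\operatorname{int}(\Delta)$ in polar coordinates $(\rho,\theta)$ is connected and $\partial_\theta F$ is continuous there, its sign is constant for each fixed $p$.

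It remains to determine that sign at one convenient point, say small $\rho$ with $\theta=\pi/6$. I expand $u_k^{p-1}=1+(p-1)2\rho x_k+\frac{(p-1)(p-2)}{2}4\rho^2x_k^2+O(\rho^3)$. The first two orders vanish against $y$ by orthogonality, leaving
\[
\sum_k u_k^{p-1}y_k=2(p-1)(p-2)\rho^2\sum_k x_k^2y_k+O(\rho^3).
\]
Next, $\sum_k x_k^2y_k=\sum\cos^2\phi_k\sin\phi_k$ with $\phi_k=\theta+2\pi k/3$. Using $\cos^2\phi\sin\phi=\frac14(\sin\phi+\sin3\phi)$, this sum equals $\frac34\sin3\theta$, which is positive for $\theta\in(0,\pi/3)$. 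Hence, for small $\rho$,
\[
\frac{\partial F}{\partial\theta}\approx -3p(p-1)(p-2)\rho^3\sin3\theta,
\]
which is positive for $1<p<2$ and negative for $p>2$. Combined with the constancy of sign, this proves both claims.

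The main obstacle is checking carefully that the three $u_k$ are distinct throughout the open region and that the region is connected, so that Lemma~\ref{lem-orthogonal} applies. Both facts are elementary, but the edge $\theta=0$, the edge $\theta=\pi/3$, and the point $\rho=0$ must be excluded, which is exactly why the statement restricts to the interior of $\Delta$.
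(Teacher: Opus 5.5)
Your proof is correct and follows the paper's proof. It uses the same formula for $\partial_\theta F$ and the same application of Lemma~\ref{lem-orthogonal}, with the orthogonal triple $(1,1,1)$, $(\cos\eta)$, $(\sin\eta)$ and the strictly convex/concave power $u^{p-1}$, to get $\partial_\theta F\neq 0$ on $\operatorname{int}(\Delta)$ for $p\neq 2$, followed by a sign check.

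The only difference is how you fix the sign. The paper evaluates $\partial_\theta F$ exactly at $(p,\rho,\theta)=(\frac32,\frac12,\frac\pi6)$ and $(3,\frac12,\frac\pi6)$, then uses continuity in all three variables on the connected sets $(1,2)\times\operatorname{int}(\Delta)$ and $(2,\infty)\times\operatorname{int}(\Delta)$. Your expansion $\partial_\theta F=-3p(p-1)(p-2)\rho^3\sin 3\theta+O(\rho^4)$ fixes the sign separately for each $p$ and shows directly why the factor $p-2$ governs it.
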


\begin{proof} 
 For \(p >1\) and \(x\in(-1,+\infty)\), define
\[
A_p(x) = (1 + x)^{p-1}.
\]
Clearly, \(A_p(x)\) is strictly convex/concave on \((-1,+\infty)\) if $p\neq 2$.
By direct computation, we obtain
\[
\frac{\partial F}{\partial\theta}(p,\rho,\theta)=-2p\rho \sum_{\eta \in \{\theta,\,\theta\pm\frac{2\pi}{3}\}}
A_p( 2\rho\cos\eta) \sin\eta.
\]

Observe that the columns of the matrix
\[
\left(\begin{array}{ccc}
1 & 2\rho\cos(\theta) & \sin(\theta)  \\[1mm]
1 & 2\rho\cos(\theta+\frac{2\pi}{3}) & \sin(\theta+\frac{2\pi}{3})  \\[1mm]
1 & 2\rho\cos(\theta-\frac{2\pi}{3}) & \sin(\theta-\frac{2\pi}{3}) 
\end{array}\right)
\]
form an orthogonal basis of $\mathbb{R}^3$ since $\rho\in(0,1)$.
Because \(\rho e^{i\theta}\in \operatorname{int}(\Delta)\), we have
\[
2\rho\cos(\theta),\ 
2\rho\cos(\theta+\tfrac{2\pi}{3}),\ 
2\rho\cos(\theta-\tfrac{2\pi}{3})\in (-1,2)
\]
and $\theta\in (0,\pi/3)$, which implies that 
\[
\cos(\theta) \in (1/2,1),\quad
\cos(\theta+\tfrac{2\pi}{3})\in (-1,-1/2),\quad
\cos(\theta-\tfrac{2\pi}{3})\in (-1/2,1/2)
\]
are distinct. 
By Lemma~\ref{lem-orthogonal}, 
$\frac{\partial F}{\partial\theta}(p,\rho,\theta) \neq 0$
for $\rho e^{i\theta}\in\operatorname{int}(\Delta)$ and $p\neq 2$.

%Note that $F(2,\rho,\theta) = 3(1+2\rho^2)$ is constant in $\theta$,
%so $\frac{\partial F}{\partial\theta}(2,\rho,\theta) = 0$.
%Moreover, $\frac{\partial F}{\partial\theta}(p,\rho,\theta)$ depends continuously on $p$.
By direct computation,
$$
\frac{\partial F}{\partial\theta}\left(\frac32,\frac12,\frac\pi6\right) =\frac{6-3\sqrt{3}}{4}> 0 
\text{\ \ and\ \ }
\frac{\partial F}{\partial\theta}\left(3,\frac12,\frac\pi6\right) =-\frac{9}{4} < 0.
$$
Since $\frac{\partial F}{\partial\theta}$ is continuous and does not vanish for $p\neq 2$,
it follows that 
$$\text{
$\frac{\partial F}{\partial\theta}(p,\rho,\theta) > 0$ for $1 < p < 2$
\text{\ \ and\ \ }
$\frac{\partial F}{\partial\theta}(p,\rho,\theta) < 0$ for $p > 2$. 
}$$
This completes the proof. 
\end{proof}

%
%By Lemma~\ref{lem-orthogonal}, we conclude that \(\frac{\partial F}{\partial\theta}(p,\rho,\theta) \neq 0\) for $\rho e^{i\theta}\in\operatorname{int}(\Delta)$ when $p\neq 2$.
%
%
%
%Note that $F(2, \rho,\theta)= 3(1+2\rho^2)$ is constant with respect to $\theta$. 
%Hence \(\frac{\partial F}{\partial\theta}(2,\rho,\theta) =0.\) 
%By direct computation, 
%\[\frac{\partial F}{\partial\theta}(3/2,1/2,\pi/6)=??>0 \]
%and 
%\[\frac{\partial F}{\partial\theta}(3,1/2,\pi/6)=??<0\]
%It follows that \(\frac{\partial F}{\partial\theta}(p,\rho,\theta) > 0\) for  \(1 < p < 2\)
%and   \(\frac{\partial F}{\partial\theta}(p,\rho,\theta) <0\) for  \(p>2\).

We are now ready to prove the monotonicity of the defect function \(G\) in $\theta$ for $1<p\leq 2<q<\infty$.

\begin{proof}[Proof of Lemma \ref{lemma-reduce-p-q-1}]
Fix $\rho\in (0,1)$. 
For $1<p\leq 2< q<\infty$, Lemma~\ref{lem-decreasing-zero} implies that \(F(p, \rho, \theta)\) is increasing in \(\theta\), while \(F(q, r\rho, \theta)\) is strictly decreasing in \(\theta\).  
It follows that the defect function  
\[
G(p,q,r, \rho, \theta) =\left(\frac{F(p,\rho,\theta)}{3}\right)^{\frac{1}{p}}
   -\left(\frac{F(q,r\rho,\theta)}{3}\right)^{\frac{1}{q}}
\]  
is  strictly increasing in \(\theta\).
\end{proof}

\subsection{Proof of Lemma \ref{lemma-reduce-p-q-2}} \label{sect:mono2}
 For the case $1<p<q<2$, both $F(p,\rho,\theta)$ and $F(q,r\rho,\theta)$ are increasing in $\theta$, so the monotonicity comparison in Lemma \ref{lemma-reduce-p-q-1} no longer applies.
In this regime, one cannot rely on opposite monotonicity to conclude that $G$ is increasing.
Instead, to establish the monotonicity of $G$ with respect to $\theta$, we estimate the partial derivative $\frac{\partial G}{\partial\theta}$ directly.

Let $(\rho,\theta)$ be such that $\rho e^{i\theta}\in \operatorname{int}(\Delta)$.
By a direct computation, we obtain
\begin{align*}
3\frac{\partial G}{\partial\theta}
&=  \left(\frac{F(p,\rho,\theta)}{3}\right)^{\frac{1}{p}-1}
\frac{\partial F}{\partial\theta}(p,\rho,\theta)
- \left(\frac{F(q,r\rho,\theta)}{3}\right)^{\frac{1}{q}-1}
\frac{\partial F}{\partial\theta}(q,r\rho,\theta).
\end{align*}
It follows from Lemma~\ref{lem-decreasing-zero} that
\[
\frac{\partial F}{\partial\theta}(p,\rho,\theta)>0
\quad\text{and}\quad
\frac{\partial F}{\partial\theta}(q,r\rho,\theta)>0.
\]

The desired inequality  $\frac{\partial G}{\partial\theta}>0$  follows immediately once we establish  the following two estimates:
\begin{align}
\left(\frac{F(p,\rho,\theta)}{3}\right)^{\frac{1}{p}-1}
&> r\left(\frac{F(q,r\rho,\theta)}{3}\right)^{\frac{1}{q}-1},
\label{eq:term1-est} \\[4pt]
\frac{\partial F}{\partial\theta}(p,\rho,\theta)
&> \frac{1}{r}\,\frac{\partial F}{\partial\theta}(q,r\rho,\theta).
\label{eq:term2-est}
\end{align}
We emphasize that  the second inequality requires the key condition $r\leq\sqrt{(p-1)/(q-1)}$.

%Thus a direct termwise comparison fails to yield $\partial G/\partial\theta>0$, and we instead establish the following refined estimates:
%\begin{align}
%\left(\frac{F(p,\rho,\theta)}{3}\right)^{\frac{1}{p}-1}
%&> r\left(\frac{F(q,r\rho,\theta)}{3}\right)^{\frac{1}{q}-1},
%\label{eq:term1-est} \\[4pt]
%\frac{\partial F}{\partial\theta}(p,\rho,\theta)
%&> \frac{1}{r}\,\frac{\partial F}{\partial\theta}(q,r\rho,\theta).
%\label{eq:term2-est}
%\end{align}
%Once these inequalities are verified, they immediately imply that
%\[
%\frac{\partial G}{\partial\theta}>0,
%\]
%as desired.

%For simplification, we now define two auxiliary functions
%\[
%U(q,\rho,\theta,r)=r\left(\frac{F(q,r\rho,\theta)}{3}\right)^{\frac{1}{q}-1}
%\]
%and
%\[
%V(q,\rho,\theta,r)= \frac{1}{r}\,\frac{\partial F}{\partial\theta}(q,r\rho,\theta)= \sum_{\eta}(1+2r\rho\cos\eta)^{q-1} (-\sin\eta).
%\]
%
%

\begin{proof}
[Proof of \eqref{eq:term1-est}]
By $1<p<q$ and $\|f_a\|_q > \|f_a\|_p > \|f_a\|_1 = 1$, we have 
\[
\left(\frac{F(p,\rho,\theta)}{3}\right)^{\frac{1}{p}-1}  = \| f_a\|_p^{1-p} > \| f_a\|_q^{1-p} 
>  \| f_a\|_{q}^{1-q} = 
\left(\frac{F(q,\rho,\theta)}{3}\right)^{\frac{1}{q}-1}.  
\]
Now to prove the inequality \eqref{eq:term1-est}, it suffice to show that   
\begin{align}\label{eq:rfmono}
\left(\frac{F(q,\rho,\theta)}{3}\right)^{\frac{1}{q}-1}\geq r\left(\frac{F(q,r\rho,\theta)}{3}\right)^{\frac{1}{q}-1}=:U(q,r,\rho,\theta).
\end{align}
That is, 
\[U(q,1,\rho,\theta)\geq U(q,r,\rho,\theta).\]
Differentiating $U$ with respect to $r$ yields
\begin{align*}
&\left(\frac{F(q,r\rho,\theta)}{3}\right)^{2-\frac{1}{q}}\frac{\partial U}{\partial r}
= F(q,r\rho,\theta)
     + r\left(\frac{1}{q} - 1\right)\frac{\partial F(q,r\rho,\theta)}{\partial r}\\
&= \sum_{\eta \in \{\theta,\,\theta\pm\frac{2\pi}{3}\}} \Big( (1 + 2r\rho\cos\eta)^q
   + r(1 - q)(1 + 2r\rho\cos\eta)^{q-1} 2\rho\cos\eta \Big)\\
&= \sum_{\eta \in \{\theta,\,\theta\pm\frac{2\pi}{3}\}} (1 + 2r\rho\cos\eta)^{q-1}
   \Big(q - 1 + (2 - q)(1 + 2r\rho\cos\eta)\Big)\geq0,  
\end{align*}
where the last inequality follows from $1<q<2$ and $1 + 2r\rho\cos\eta\geq0$ for each $\eta \in \{\theta,\,\theta\pm\frac{2\pi}{3}\}$. 
Then $\frac{\partial U}{\partial r}\geq 0$ implies that $U$ is increasing in $r$. Therefore \eqref{eq:rfmono} holds, and \eqref{eq:term1-est} follows.
\end{proof}

\begin{remark*}
The inequality \eqref{eq:rfmono} combined with  Lemma~\ref{lem:symmetry} implies the 
 reverse estimation:
\[
\|T_{r}f\|_q \geq r^{\frac{1}{q-1}}\|f\|_q, \quad\forall\, f\colon \mathbb{Z}_3\to\mathbb{R}_{\geq 0}.
\]
\end{remark*}

\begin{proof}
[Proof of \eqref{eq:term2-est}] 
To prove \eqref{eq:term2-est}, we introduce the function
\[
V(q,r,\rho,\theta)
:= \frac{1}{2q r \rho}\,\frac{\partial F}{\partial\theta}(q,r\rho,\theta)
= \sum_{\eta \in \{\theta,\,\theta\pm\frac{2\pi}{3}\}}  (1+2r\rho\cos\eta)^{q-1} (-\sin\eta).
\]
The inequality \eqref{eq:term2-est} is equivalent to
\[
V(p,1,\rho,\theta) > V(q,r,\rho,\theta).
\]
We first show that $V(q,r,\rho,\theta)$ is strictly increasing in $r$ by means of the orthogonal basis method.

Differentiating $V$ with respect to $r$ yields
\begin{align*}
\frac{\partial V}{\partial r}(q,r,\rho,\theta)
&= \frac{1-q}{r} \sum_{\eta \in \{\theta,\,\theta\pm\frac{2\pi}{3}\}} (1 + 2r\rho\cos\eta)^{q-2} \cdot 2r \rho\cos\eta \cdot \sin\eta.
\end{align*}
Define $A_0(x)=(1+x)^{q-2}x$, which is strictly concave on $(-1,+\infty)$.
By Lemma~\ref{lem-orthogonal}, as in the proof of Lemma~\ref{lem-decreasing-zero},
we obtain
\[
\sum_{\eta \in \{\theta,\,\theta\pm\frac{2\pi}{3}\}} A_0(2r \rho\cos\eta) \sin\eta \neq 0
\]
for all $1<q<2$ and $\rho e^{i\theta} \in \operatorname{int}(\Delta)$.
Since
\begin{equation*}
\frac{\partial V}{\partial r}\bigg(\frac{3}{2},\frac{\sqrt{3}}{2},\frac{1}{2},\frac{\pi}{6}\bigg)
= \frac{\sqrt{3}}{4}\bigg(1 - \frac{\sqrt{7}}{7}\bigg) > 0,
\end{equation*}
we conclude that $\frac{\partial V}{\partial r} > 0$.

%Then, under the assumption $r\leq \sqrt{\frac{p-1}{q-1}}$, it suffices to verify the inequality
%\begin{equation}
%\label{G-theta-3}
%V\!\left( p,1,\rho,\theta \right)
%>
%V\!\left( q,\sqrt{\frac{p-1}{q-1}},\rho,\theta \right).
%\end{equation}
%Observe that
%\[
%V\!\left( p,1,\rho,\theta \right)
%=
%V\!\left( p,\sqrt{\frac{p-1}{p-1}},\rho,\theta \right).
%\]
%Therefore, in order to establish \eqref{G-theta-3}, it is enough to show that the function
%\[
%q\mapsto
%V\!\left( q,\sqrt{\frac{p-1}{q-1}},\rho,\theta \right)
%\]
%is strictly decreasing in $q$ for $p<q<2$.

Then, under the assumption $r\leq \sqrt{(p-1)/(q-1)}$, it suffices to verify the inequality
\begin{equation}
\label{G-theta-3}
V\!\left( p,1,\rho,\theta \right) = V\!\left( p,\sqrt{\frac{p-1}{p-1}},\rho,\theta \right)
>
V\!\left( q,\sqrt{\frac{p-1}{q-1}},\rho,\theta \right).
\end{equation}
In order to establish the inequality \eqref{G-theta-3}, it is enough to show that 
\[
W(p,q,\rho,\theta) := \frac{\partial }{\partial q} \left[ V\left( q,\sqrt{\frac{p-1}{q-1}},\rho,\theta \right)\right]<0.
\]
Expanding this partial derivative, we obtain
\begin{align*}
W=\sum_{\eta \in \{\theta,\,\theta\pm\frac{2\pi}{3}\}} A(q,2r\rho\cos\eta) (-\sin\eta), 
\end{align*}
where \( r = \sqrt{(p-1)/(q-1)} \) and 
\[
A(q,t):= (1+t)^{q-1}\left(\log(1+t)-\frac{t}{2(1+t)}\right),\ t>-1.
\]  
We next express $W$ in Cartesian coordinates. Let 
$x=r\rho\cos\theta$ and $y=r\rho\sin\theta$. 
Then $x+iy\in \operatorname{int}(\Delta)$, and 
\begin{align*}
&2r\rho W(p,q,\rho,\theta) = \sum_{\eta \in \{\theta,\,\theta\pm\frac{2\pi}{3}\}} A(q,2r\rho\cos\eta) (-2r\rho\sin\eta)\\
&= A(q,2x)(-2y) + A(q,-x-\sqrt{3}y)(-\sqrt{3}x+y) + A(q,-x+\sqrt{3}y)(\sqrt{3}x+y)\\
&=:\psi(q,x,y).
\end{align*}
Thus $W<0$ is equivalent to 
$$\psi(q,x,y)< 0.$$
The proof of this inequality is given in the appendix; see Lemma \ref{lemma-ineq-psi}. 
\end{proof}

\subsection{Proof of Proposition \ref{thm-zhongyaoi}}
Fix $r\in [0,\sqrt{(p-1)/(q-1)}]$. 
By Proposition \ref{proposition-small-triangle}, we only need to deal with $f_a$ with $a\in\Delta$. 
By Lemmas \ref{lemma-reduce-p-q-1} and \ref{lemma-reduce-p-q-2}, we have that 
$$
G(p,q,r,\rho,\theta)\geq 0,\ \forall\ \rho e^{i\theta}\in\Delta
\Longleftrightarrow
G(p,q,r,\rho,0)\geq 0,\ \forall\ \rho\in (0,1).$$
Recall the definition \eqref{eqn-df-H}, we get that 
$$\|T_r f_{a}\|_q \leq \|f_{a}\|_p,\  \forall\   a\in\Delta
\Longleftrightarrow 
\|T_r f_\rho\|_q \leq \|f_\rho\|_p,\  \forall\  \rho \in [0,1].$$
This completes the proof of Proposition \ref{thm-zhongyaoi}.

\section{Establishing  the equation-system via variational principle}\label{S:establishing the e-s}
%By Proposition \ref{thm-zhongyaoi}, the $(p,q)$-hypercontractivity problem for $\mathbb{Z}_3$ is reduced to finding the optimal $r_{p,q}(\mathbb{Z}_3)\in [0,1]$ such that 
%\begin{align}\label{eq-hyper}
%\left\|T_rf_a\right\|_q\leq \|f_a\|_p
%\end{align}
%for all functions $f_a$ of the form
%$$f_a=1+a\chi+ a  \overline{\chi}, \quad a\in [0,1].$$
 Proposition \ref{thm-zhongyaoi} reduces the hypercontractivity for $\mathbb{Z}_3$ to finding the optimal $r_{p,q}(\mathbb{Z}_3)$ such that 
$$%\begin{align}\label{eq-hyper} 
\left\|T_rf_\rho\right\|_q\leq \|f_\rho\|_p \quad \text{for all $f_\rho=1+\rho\chi+ \rho\overline{\chi}$ with $\rho\in [0,1]$ and all $0\le r\le r_{p,q}(\mathbb{Z}_3)$.}
$$%\end{align}

The main goal of this section is to show that the problem of determining $r_{p,q}(\mathbb{Z}_3)$ can be transformed into the problem of the existence of the unique solution in $(0,1)^2$ to  equation-system \eqref{xy-eq}.

We proceed in two steps: 
\begin{itemize}
\item  establishing the existence of a nontrivial critical extremizer,
\item  deriving equation-system \eqref{xy-eq} via variational principle.
\end{itemize}

\subsection{Existence of a nontrivial critical extremizer}
For given exponents \( 1<p<q<\infty \), recall the defect function  defined in \eqref{def-Grrho}: 
\[
G(r,\rho) = \|f_\rho\|_p - \|T_r f_\rho\|_q, \qquad (r,\rho) \in [0,1]\times[0,1].
\]
 
The existence of a nontrivial critical extremizer is given in the following

\begin{proposition}
\label{prop-nontrivial-extremizer}
Let \(1 < p < q < 2\) or \(1 < p \leq 2 <  q<\infty\). There exists  \( \rho_0 \in (0,1) \) such that \( G(r_{p,q}, \rho_0) = 0 \). 
\end{proposition}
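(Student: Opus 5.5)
We argue by compactness and a perturbation argument at the endpoints of $[0,1]$, using Lemma~\ref{lem-rough} and Proposition~\ref{thm-zhongyaoi}. Write $r_*=r_{p,q}$. By Lemma~\ref{lem-rough}, $r_*<\sqrt{(p-1)/(q-1)}$. Hence for every $r\in(r_*,\sqrt{(p-1)/(q-1)}]$, Proposition~\ref{thm-zhongyaoi} applies. Since the hypercontractive inequality fails at $r$, there is $\rho_r\in[0,1]$ with $G(r,\rho_r)<0$. Note $\rho_r\neq 0$, because $G(r,0)=0$.

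Take a sequence $r_n\downarrow r_*$ and pass to a subsequence with $\rho_{r_n}\to\rho_0\in[0,1]$. By continuity of $G$ on $[0,1]^2$, we get $G(r_*,\rho_0)\le 0$. On the other hand, at $r_*$ the inequality holds for all $f_\rho$, so $G(r_*,\cdot)\ge 0$, and therefore $G(r_*,\rho_0)=0$. It then remains to exclude the two endpoints $\rho_0=0$ and $\rho_0=1$.

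\emph{Excluding $\rho_0=0$.} Use the Taylor expansion \eqref{eq:taylor}, which holds uniformly for $r$ in a compact set. We have $G(r,\rho)=(p-1-(q-1)r^2)\rho^2+O(\rho^3)$, and near $r_*$ the coefficient $c(r)=p-1-(q-1)r^2$ is bounded below by $c(r_*)/2>0$. So there is $\delta>0$ such that $G(r,\rho)>0$ for all $0<\rho<\delta$ and all $r$ close to $r_*$. Consequently $\rho_{r_n}\ge\delta$ for large $n$, giving $\rho_0\ge\delta>0$.

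\emph{Excluding $\rho_0=1$.} Here we must show $G(r_*,1)>0$ strictly. We have
\[
\|f_1\|_p=\Big(\tfrac{3^p}{3}\Big)^{1/p}=3^{1-1/p},
\]
since $f_1=3\delta_0$. Also $T_{r}f_1=1+2r\cos(2\pi k/3)$, which takes the values $(1+2r,1-r,1-r)$. So we need
\[
\Big(\tfrac{(1+2r_*)^q+2(1-r_*)^q}{3}\Big)^{1/q}<3^{1-1/p}.
\]
Suppose instead that equality holds. Then $G(r_*,\cdot)\ge 0$ attains its minimum $0$ at $\rho=1$, which forces $\partial_\rho G(r_*,1)\le 0$. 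We would then derive a contradiction by computing $\partial_\rho G(r,1)$ explicitly. A simpler route is to observe that, since $r_*<1$ and $G(r,1)$ is strictly decreasing in $r$, the set $\{r: G(r,1)\ge 0\}$ is an interval $[0,r_1]$. If $G(r_*,1)=0$, then $r_1=r_*$ and $\rho=1$ would be the only obstruction.

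To rule this out, we plan to compute the one-sided derivative
\[
\partial_\rho G(r_*,1)=\|f_1\|_p^{1-p}\cdot\tfrac{1}{3}\big(2\cdot 3^{p-1}\big)-\ldots,
\]
using the fact that $1-\rho$ appears to the power $p-1\to 0$ in the derivative. The term $-2p(1-\rho)^{p-1}$ vanishes at $\rho=1$ when $p>1$. After this, we would show that the derivative is positive. This positivity, equivalently a comparison of $\|f_1\|_p$ against $\|T_{r}f_1\|_q$, is the main obstacle. An explicit inequality at $r\le\sqrt{(p-1)/(q-1)}$ should settle it, handled separately in the two ranges of $(p,q)$ and possibly with the help of the monotonicity-game technique of \S\ref{S:monotonicity}.

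Once $\rho_0\in(0,1)$ is established, the proposition follows.
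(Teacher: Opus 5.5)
Your compactness argument and your exclusion of $\rho_0=0$ are sound and match the paper's proof in substance. The paper sidesteps uniformity of the Taylor remainder by fixing one $r_0\in(r_{p,q},\sqrt{(p-1)/(q-1)})$ and using that $G$ is decreasing in $r$, so $G(r,\rho)\ge G(r_0,\rho)\ge 0$ on $[0,\rho_1]$ for all $r\le r_0$. Your uniform bound does the same job.

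The genuine gap is the endpoint $\rho_0=1$, i.e.\ the strict inequality $G(r_{p,q},1)>0$. Without it you only get $\rho_0\in(0,1]$, which is not the statement. You set up the right contradiction hypothesis: $G(r_{p,q},1)=0$ forces $\partial_\rho G(r_{p,q},1^-)\le 0$. But you never derive the contradiction. The ``simpler route'' via the interval $\{r:G(r,1)\ge 0\}$ yields nothing. Your plan to prove positivity of $\partial_\rho G(r,1)$ by an explicit inequality for $r\le\sqrt{(p-1)/(q-1)}$, split by ranges of $(p,q)$, aims at the wrong target. Positivity of $\partial_\rho G(r,1)$ is not a general fact: at $r=1$ it equals $\tfrac23\big(3^{1-1/p}-3^{1-1/q}\big)<0$. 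So proving it over a range of $r$ would be a separate, unproved inequality. It is also unnecessary, because the hypothesis $\|f_1\|_p=\|T_{r_{p,q}}f_1\|_q$ itself lets you eliminate $\|f_1\|_p$.

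Concretely, the $(1-\rho)^{p-1}$ term vanishes at $\rho=1$, so your expression simplifies to $\partial_\rho\|f_\rho\|_p\big|_{\rho=1}=\tfrac23\|f_1\|_p$. With $r=r_{p,q}$ and $M=\|T_rf_1\|_q$, one also has
\[
\partial_\rho\|T_rf_\rho\|_q\big|_{\rho=1}=\tfrac{2r}{3}\,M^{1-q}\big[(1+2r)^{q-1}-(1-r)^{q-1}\big].
\]
If $\|f_1\|_p=M$, the condition $\partial_\rho G(r,1^-)\le 0$ becomes $M^q\le r\big[(1+2r)^{q-1}-(1-r)^{q-1}\big]$. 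Inserting $3M^q=(1+2r)^q+2(1-r)^q$ gives
\[
(1+2r)^{q-1}(1-r)+(1-r)^{q-1}(2+r)\le 0,
\]
which is impossible for $r\in[0,1)$. This is the paper's Lemma~\ref{lemma-GAR-dayuling}. It needs only $r_{p,q}<1$, with no case split in $(p,q)$, no use of $r\le\sqrt{(p-1)/(q-1)}$, and no monotonicity game. Supplying this short computation closes your proof.
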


To prove this, we first show that \( \rho = 1 \) is not a zero point of \( G(r_{p,q}, \cdot) \).

\begin{lemma}
\label{lemma-GAR-dayuling}
We have \( G(r_{p,q}, 1) > 0 \).
\end{lemma}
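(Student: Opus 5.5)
Since $r_{p,q}<\sqrt{(p-1)/(q-1)}<1$ by Lemma~\ref{lem-rough}, it is enough to prove the stronger statement
\[
G(r,1)>0 \quad\text{for all } 0\le r\le \sqrt{(p-1)/(q-1)}.
\]
We work with $\rho=1$ directly: $f_1=1+\chi+\overline{\chi}$ takes the value $3$ at one point and $0$ at the other two. Hence $\|f_1\|_p=3^{1-1/p}$. The function $T_rf_1=f_r$ takes the value $1+2r$ once and $1-r$ twice, so
\[
G(r,1)=3^{1-1/p}-\Big(\frac{(1+2r)^q+2(1-r)^q}{3}\Big)^{1/q}.
\]
The subtracted term is increasing in $r$, because $\frac{d}{dr}\big[(1+2r)^q+2(1-r)^q\big]=2q\big[(1+2r)^{q-1}-(1-r)^{q-1}\big]>0$. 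It therefore suffices to check the single inequality
\[
\Big(\frac{(1+2s)^q+2(1-s)^q}{3}\Big)^{1/q}<3^{1-1/p},\qquad s=\sqrt{(p-1)/(q-1)}.
\]

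\textbf{Route via hypercontractivity of $\mathbb{Z}_2$.} The two-valued function $f_1$ can be viewed as a function of a biased Bernoulli variable with $\lambda=1/3$, and $T_r f_1=1+r(f_1-1)$. So $\|T_sf_1\|_q\le\|f_1\|_p$ would follow if $X_{1/3}$ were $(p,q,s)$-hypercontractive. That is false in general: it would give $\sigma_{p,q}(1/3)\ge s$, contradicting Lemma~\ref{lem-rough}. This route is out, and a direct estimate is needed.

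\textbf{Direct estimate.} Set $a=p-1$ and $b=q-1$, so $s=\sqrt{a/b}\le 1$ with $a<b$. Raising to the $q$-th power, the target inequality reads
\[
\phi(s):=\frac{(1+2s)^q+2(1-s)^q}{3}<3^{q-q/p}=3^{q(p-1)/p}.
\]
I would use the elementary bound $\|g\|_q\le\|g\|_\infty^{1-p/q}\|g\|_p^{p/q}$ applied to $g=T_sf_1$, combined with a known $L^p$ bound for $T_sf_1$.

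A cleaner route is to treat $\Theta(p,q)=\log\phi(s)-q(p-1)\log 3/p$ as a function of $q$ for fixed $p$ and show it is negative. At $q=p$ we have $s=1$ and $\phi=3^{p-1}$, so $\Theta=0$. It would then suffice to show $\partial_q\Theta<0$ for $q>p$. Differentiating uses $ds/dq=-s/(2b)$, together with monotonicity of $\phi$ in $s$ and convexity in $q$. I expect this derivative sign to be the main obstacle, requiring a careful inequality, possibly handled through the monotonicity game of \S\ref{S:monotonicity} after substituting $t=s$.

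\textbf{Fallback.} If that fails, I would split into the two regimes $1<p<q<2$ and $1<p\le2<q$. In each, I would bound $(1+2s)^q+2(1-s)^q$ by Bernoulli/convexity-type inequalities:
\begin{itemize}
\item for $q<2$, concavity of $t\mapsto t^{q/2}$ gives $\phi(s)\le\big((1+2s)^2+2(1-s)^2\big)^{q/2}\big/3^{q/2}\cdot$const, i.e.\ comparison with the $L^2$ norm $1+2s^2$;
\item for $q>2$, I would use the explicit value $(1+2s)\le3$ together with the comparison $1+2s^2\le 1+2a/b$.
\end{itemize}
The resulting explicit one-variable inequalities would then be verified by the orthogonal-basis and monotonicity-game techniques.
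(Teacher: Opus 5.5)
Your reduction is correct as far as it goes: $\|f_1\|_p=3^{1-1/p}$, $G(r,1)$ is decreasing in $r$, and $r_{p,q}<s:=\sqrt{(p-1)/(q-1)}$. The stronger claim also appears true numerically. But the inequality everything rests on, $\phi(s)<3^{q(p-1)/p}$, is never proved, so the proposal has a genuine gap.

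Your $\Theta$-route is only announced. You check $\Theta(p,p)=0$ and leave the sign of $\partial_q\Theta$ open, and that sign is a delicate two-parameter inequality. Let $p\downarrow 1$ and $q\to\infty$ with $c=\sqrt{(p-1)q}$ fixed. Then your target becomes $\log\frac{e^{2c}+2e^{-c}}{3}<c^2\log 3$. Its two sides differ by less than $5\cdot 10^{-3}$ for $0<c\le 1/2$, and their $c$-derivatives differ by only about $0.004$ near $c=1/3$, so no soft argument will settle it.

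Your other suggestions fail outright near the diagonal $q\downarrow p$. There $s\to 1$ and both sides of the target tend to $3^{p-1}$.
\begin{itemize}
\item The interpolation bound $\|T_sf_1\|_q\le\|T_sf_1\|_\infty^{1-p/q}\|f_1\|_p^{p/q}$ would require $1+2s<3^{1-1/p}$, which fails there.
\item The $L^2$ comparison for $q<2$ would require $1+2s^2<3^{2-2/p}$, which also fails; for instance $p=1.5$, $q=1.6$ gives $2.67>2.08$.
\item Bounding $1+2s\le 3$ for $q>2$ gives nothing, since already $\phi(1)=3^{q-1}>3^{q(p-1)/p}$.
\end{itemize}

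The missing idea is to use the extremality of $r_{p,q}$ on the whole interval, not merely $r_{p,q}<s$. Since $G(r_{p,q},\rho)\ge 0$ for all $\rho\in[0,1]$, a zero at $\rho=1$ would make $\rho=1$ a minimizer at the right endpoint. That forces $\partial_\rho G(r_{p,q},1^-)\le 0$.

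Write $r=r_{p,q}$. The two conditions become $(1+2r)^q+2(1-r)^q=3^{1+q-q/p}$ and $3^{q-q/p}\le r\bigl[(1+2r)^{q-1}-(1-r)^{q-1}\bigr]$. Eliminating $3^{q-q/p}$ gives $(1+2r)^{q-1}(1-r)+(1-r)^{q-1}(2+r)\le 0$. This is impossible for $0\le r<1$, since both terms are positive.

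This is the paper's argument. It needs only $r_{p,q}<1$ and no quantitative comparison with $\sqrt{(p-1)/(q-1)}$, which avoids the hard inequality your route requires.
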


\begin{proof}
By the definition of $r_{p,q}$, we have $G(r_{p,q}, \rho) \ge 0$ for all $\rho \in [0,1]$.
In particular,  $G(r_{p,q}, 1) \geq 0$.
We now prove that $G(r_{p,q}, 1) \neq 0$. Suppose by contradiction that $G(r_{p,q}, 1) = 0$. Then the left derivative of $G(r_{p,q}, \cdot)$ at $\rho=1$ is non-positive. Therefore,
$$ G(r_{p,q},1) = 0 \text{\,\, and \,\,}
 \frac{\partial G}{\partial \rho}(r_{p,q},1^-) \leq 0.$$
Direct computation gives 
\begin{align*}
\left\{
\begin{aligned}
&3^{1+q-\frac{q}{p}} = (1+2r_{p,q})^q+2(1-r_{p,q})^q,\\
&3^{q-\frac{q}{p}} \leq r_{p,q} \big[(1+2  r_{p,q})^{q-1}-(1- r_{p,q})^{q-1}\big]. 
\end{aligned}
\right.
\end{align*}
It follows that 
\[
(1+2r_{p,q})^q+2(1-r_{p,q})^q = 3\cdot 3^{q-\frac{q}{p}}\leq
3 r_{p,q} \big[(1+2  r_{p,q})^{q-1}-(1- r_{p,q})^{q-1}\big], 
\]
and hence
$$
(1+2  r_{p,q})^{q-1}(1-r_{p,q}) + (1-  r_{p,q})^{q-1}(2+r_{p,q})\leq0, 
$$
which is clearly impossible since $r_{p,q}\in[0,1)$. 
\end{proof}

\begin{proof}
[Proof of Proposition \ref{prop-nontrivial-extremizer}]
By Lemma \ref{lem-rough}, we may  fix a  \( r_0 \in (r_{p,q}, \sqrt{(p-1)/(q-1)} ) \). 
Then by $r_0<\sqrt{(p-1)/(q-1)}$ and the expansion \eqref{eq:taylor} of \( G(r,\rho) \) at \( \rho=0 \), there exists $\rho_1=\rho_1(r_0)\in (0,1)$ such that 
\( G(r_0, \rho) \geq 0 \) for all \( \rho \in [0,\rho_1] \). 
Observe that \( \|T_r f_{\rho}\|_q \) is increasing in \( r \in [0,1] \), which implies that \( G(r, \rho) \) is decreasing with respect to \( r \).
Hence, for any $r\in[0, r_0]$ (and in particular for $r\in (r_{p,q}, r_0]$), we have 
\begin{align}\label{near-0}
G(r, \rho)\geq G(r_0, \rho)\geq 0,\ \forall\  \rho\in[0,\rho_1].
\end{align}
Now for any $r\in(r_{p,q},r_0]$, by the definition of \( r_{p,q} \) and Proposition \ref{thm-zhongyaoi},  $\min\{G(r,\rho)\mid\rho\in[0,1]\}<0$ and hence 
$$\min\{G(r,\rho)\mid\rho\in[\rho_1,1]\}<0,$$
since  one already has $\min\{G(r, \rho) \mid \rho \in [0,\rho_1]\}\ge 0$ by \eqref{near-0}.  
Therefore, on the one hand,  by the continuity of \( G(r,\rho) \), letting $r\to r_{p,q}^+$ gives 
$$\min\{G(r_{p,q},\rho)\mid\rho\in[\rho_1,1]\}\leq 0,$$
and on the other hand, again by the definition of \( r_{p,q} \), 
\[
 \min\{G(r_{p,q},\rho)\mid\rho\in[\rho_1,1]\} \ge   \min\{G(r_{p,q},\rho)\mid\rho\in[0,1]\}   \ge 0.
\]
It follows that  $\min\{G(r_{p,q},\rho)\mid\rho\in[\rho_1,1]\}   = 0$. 
That is, there exists \( \rho_0 \in [\rho_1, 1] \) such that \( G(r_{p,q}, \rho_0) = 0 \).
Finally, Lemma \ref{lemma-GAR-dayuling} implies that  \( \rho_0 \in [\rho_1, 1) \subset (0,1) \), which finishes the proof.
%
%By Lemma \ref{lem-rough}, we have \( r_{p,q} < \sqrt{\frac{p-1}{q-1}} \). For any \( r \in \bigl(r_{p,q}, \sqrt{\frac{p-1}{q-1}} \bigr) \), there exist constants \( 0 < \rho_1 < \rho_2 < 1 \) such that  
%\begin{enumerate}
%    \item \( G(r, \rho) > 0 \) for all \( \rho \in (0,\rho_1] \) (since \( r < \sqrt{\frac{p-1}{q-1}} \));
%    \item \( G(r, \rho_2) < 0 \) (since \( r > r_{p,q} \)).
%\end{enumerate}
%Statement (1) follows by the expansion \eqref{eq:taylor} of  $G(r,\rho)$ at $\rho=0$.  Statement (2) follows by Proposition \ref{thm-zhongyaoi} and  the definition of $r_{p,q} $
%
%
%Note that \( \|T_r f_{\rho}\| \) is increasing in \( r \in (0,1] \), which implies that \( G(r, \rho) \) is decreasing with respect to \( r \) on this interval. By the continuity of \( G(r, \rho) \), there exists a point \( \rho_0 \in (\rho_1, 1] \) such that \( G(r_{p,q}, \rho_0) = 0 \). 
%Finally, Lemma \ref{lemma-GAR-dayuling} further implies that \( \rho_0 \in (\rho_1, 1) \subset (0,1) \), completing the proof.
\end{proof}

\subsection{Derivation of the extremal system}\label{sec-system}
Since \( \rho_0 \in (0,1)\) is an interior point of \( [0,1] \) where the function \(  G(r_{p,q}, \cdot) \) attains its minimum ($G(r_{p,q}, \rho_0) = 0$),  the pair \( (r_{p,q}, \rho_0) \) satisfies 
\begin{align}\label{sec-degree-zero}
 G(r,\rho) = 0 \text{\,\, and \,\,} \frac{\partial G}{\partial \rho}(r,\rho) = 0.
\end{align}
%\begin{equation}\label{eq:extremalsystem}
%\begin{cases}
%G(r,\rho) = 0,\\[4pt]
%\displaystyle \frac{\partial G}{\partial \rho}(r,\rho) = 0.
%\end{cases}
%\end{equation}
By direct computation, it can be rewritten as 
\begin{align*}%\label{eqn-pair-solution}
\left\{
\begin{aligned}
&\Big(\frac{(1+2\rho)^p+2(1-\rho)^p}{3}\Big)^{\frac{1}{p}} =\Big(\frac{(1+2 r\rho)^q+2(1-r\rho)^q}{3}\Big)^{\frac{1}{q}},\\
&\frac{ (1+2\rho)^{p-1}-(1-\rho)^{p-1}}{ \Big(\frac{(1+2\rho)^p+2(1-\rho)^p}{3}\Big)^{1-\frac{1}{p}} }=r\frac{ (1+2 r\rho)^{q-1}-(1-r\rho)^{q-1}}{\Big(\frac{(1+2 r\rho)^q+2(1-r\rho)^q}{3}\Big)^{1-\frac{1}{q}}}, 
\end{aligned}
\right.
\end{align*}
which is equivalent to
\begin{equation} \label{eq:original}
\left\{
\begin{aligned}
&\Big(\frac{(1+2\rho)^p+2(1-\rho)^p}{3}\Big)^{\frac{1}{p}} =\Big(\frac{(1+2 r\rho)^q+2(1-r\rho)^q}{3}\Big)^{\frac{1}{q}},\\
&\rho\frac{(1+2\rho)^{p-1}-(1-\rho)^{p-1}}{(1+2\rho)^p+2(1-\rho)^p}=r\rho\frac{(1+2r\rho)^{q-1}-(1-r\rho)^{q-1}}{(1+2r\rho )^q+2(1-r\rho)^q}.
\end{aligned}
\right.
\end{equation}
Applying the change of variables 
\begin{equation}\label{eq:transform-ar2xy}
(x ,y)= \Big(\frac{1 - \rho}{1 + 2\rho},  \frac{1 - r\rho}{1 + 2r \rho}\Big),
\end{equation} 
we obtain the desired equation-system \eqref{xy-eq}, i.e.,
 \begin{align*}
\left\{
\begin{aligned}
&\frac{1}{1+2x}\Big(\frac{1+2x^p}{3}\Big)^{\frac{1}{p}}=\frac{1}{1+2y}\Big(\frac{1+2y^q}{3}\Big)^{\frac{1}{q}},\\
&\frac{(1-x)(1-x^{p-1})}{1+2x^p}=\frac{(1-y)(1-y^{q-1})}{1+2y^q}.
\end{aligned}
\right.
\end{align*}

\section{Proof of Theorem \ref{thm-main-thm-one-xy}}\label{S:proof-thm1}

To complete the proof of Theorem \ref{thm-main-thm-one-xy}, we need to show that \eqref{xy-eq} indeed has a unique solution, and we will carry out this work starting in  \S \ref{sec-unique}. For now, {\it assuming the uniqueness}, we can quickly complete the proof of Theorem \ref{thm-main-thm-one-xy}.

\subsection{Self-dual formulation: proof of Lemma \ref{lem-self-dual}}

Using the elementary identity 
\begin{align}\label{elem-id}
\frac{(1+ 2z^{\kappa-1})(1+2z)}{1+2z^\kappa} = 3  - 2\frac{(1-z^{\kappa-1})(1-z)}{1+2z^{\kappa}},
\end{align}
 the second equation in the equation-system  \eqref{xy-eq} is equivalent to 
\begin{align}\label{no-minus-eq}
\frac{(1+2x)(1+2x^{p-1})}{1+2x^p}=\frac{(1+2y)(1+2y^{q-1})}{1+2y^q}.
\end{align}
The first equation \eqref{xy-eq} multiplied by \eqref{no-minus-eq} yields
\[
(1+2x^{p-1}) \Big(\frac{1+2x^p}{3}\Big)^{\frac{1}{p}-1}=(1+2y^{q-1}) \Big(\frac{1+2y^q}{3}\Big)^{\frac{1}{q}-1},
\]
which can be rewritten as 
\begin{align}\label{dual-conj-eq}
\frac{1}{1+2x^{p-1}} \Big(\frac{1+2x^p}{3}\Big)^{\frac{1}{p^*}}=\frac{1}{1+2y^{q-1}} \Big(\frac{1+2y^q}{3}\Big)^{\frac{1}{q^*}}.
\end{align}
By $(p-1)p^* = p$ and $(q-1)q^*=q$, the equality 
\eqref{dual-conj-eq} is equivalent to $\ell(p^*, x^{p-1})= \ell(q^*, y^{q-1})$. 
Reversing the above procedure, we obtain the equivalence between   \eqref{xy-eq} and \eqref{self-dual-form}. 

Now we turn to the proof of the equality \eqref{cross-sym}. 
Indeed, the identity \eqref{elem-id} implies also 
\begin{align*}
  \frac{(1+2y^{q-1}) (1+2y)}{(1 - y^{q-1}) (1- y)} &=  \frac{3(1+2y^{q})}{(1 - y^{q-1})(1- y)} -2,
 \\
   \frac{(1+2x^{p-1}) (1+2x)}{(1 - x^{p-1}) (1- x)} & =  \frac{3(1+2x^{p})}{(1 - x^{p-1})(1- x)} -2.
\end{align*}
Therefore, if $(x,y)\in (0,1)^2$ is the solution to \eqref{xy-eq}, then   
\[
  \frac{(1+2y^{q-1}) (1+2y)}{(1 - y^{q-1}) (1- y)}  = \frac{(1+2x^{p-1}) (1+2x)}{(1 - x^{p-1}) (1- x)}
\]
and hence 
\[
   \frac{  (y^{q-1}+1/2)  (x^{p-1}-1)}{ ( y^{q-1}-1) (x^{p-1}+1/2)}= \frac{ (x+1/2) (y-1) }{  (x-1) (y+1/2)}.
\]
This is exactly the desired equality $ (y^{q-1}, x^{p-1}; -1/2, 1) =  (x,y; -1/2, 1) $.

\subsection{Proof of Theorem \ref{thm-main-thm-one-xy} by assuming uniqueness}\label{sec-ass-unique}
Let $1<p<q<\infty$.  We give the proof in two cases. 

{\flushleft Case 1:  $1<p<q<2$ or $1<p\le 2 < q <\infty$. }

In this case, we show in \S \ref{sec-system} that  there exists $\rho_{p,q} \in (0,1)$ such that $(r_{p,q}, \rho_{p,q})$ satisfies  \eqref{sec-degree-zero}  and under  the changing-of-variable \eqref{eq:transform-ar2xy}, the pair 
\[
(x_{p,q},y_{p,q}) = \Big(\frac{1 - \rho_{p,q}}{1 + 2\rho_{p,q}},  \frac{1 - r_{p,q} \rho_{p,q} }{1 + 2 r_{p,q} \rho_{p,q}}\Big) \in (0,1)^2 
\]
 satisfies the equation-system \eqref{xy-eq}.  Therefore, by the uniqueness of the solution of \eqref{xy-eq} in $(0,1)^2$, the pair $(x_{p,q}, y_{p,q})\in (0,1)^2$ is uniquely determined by the equation-system \eqref{xy-eq}. Hence,   reversing   changing-of-variable $(x ,y)= (\frac{1 - \rho}{1 + 2\rho},  \frac{1 - r \rho }{1 + 2 r \rho})$, we obtain the desired relations: 
\[
r_{p,q}=\frac{(1 + 2x_{p,q})(1 - y_{p,q})}{(1 + 2y_{p,q})(1 - x_{p,q})} \text{\, and \,}
\rho_{p,q}=\frac{1 - x_{p,q}}{1 + 2x_{p,q}}.
\]

{\flushleft Case 2:  $2< p<q<\infty$ or $1<p<q=2$. }

In this case, let $(x_{p,q}, y_{p,q})$ (resp. $(x_{q^*, p^*}, y_{q^*, p^*})$) denote the unique solution in the open unit square $(0,1)^2$ to \eqref{xy-eq} with respect to the parameter $(p,q)$ (resp. $(q^*, p^*)$).  

Since  $1<q^*<p^*<2$ or $q^* =2 <p^*<\infty$,  by the conclusion in case 1, we obtain 
\[
r_{q^*, p^*} =  \frac{(1 + 2x_{q^*, p^*})(1 - y_{q^*, p^*})}{(1 + 2y_{q^*,p^*})(1 - x_{q^*, p^*})} = (x_{q^*, p^*}, y_{q^*, p^*}; -1/2, 1). 
\]

By the elementary  symmetry of duality  \eqref{dual-rpq} of $r_{p,q}(\mathbb{Z}_3)$ and  the symmetry of duality for the cross ratio \eqref{cross-sym} proved  in Lemma \ref{lem-self-dual},   we obtain  the desired relation for $r_{p,q}(\mathbb{Z}_3)$: 
\begin{align}\label{grpq-eq}
r_{p,q}= r_{q^*, p^*} =  (x_{q^*, p^*}, y_{q^*, p^*}; -1/2, 1)=  (x_{p,q}, y_{p,q}; -1/2, 1) =  \frac{(1 + 2x_{p,q})(1 - y_{p,q})}{(1 + 2y_{p,q})(1 - x_{p,q})}.
\end{align}

Finally,  for the nontrivial critical extremizer, we need to  consider the following modified  extremal  problem: Let $\widetilde{r_{p,q}}$ be the optimal constant such that the hypercontractive inequality \eqref{eqn-Tt-f-pq} holds for all functions $f_\rho= 1 + \rho\chi + \rho\overline{\chi}$ with $\rho\in [0,1]$.  Replacing $r_{p,q}$ by $\widetilde{r_{p,q}}$, then Proposition \ref{prop-nontrivial-extremizer}  holds for all $1<p<q<\infty$ (note that now Proposition~\ref{thm-zhongyaoi} is not needed for the modified extremal problem).  Then we may repeat the arguments in \S \ref{sec-system} and prove the existence of 
$\widetilde{\rho_{p,q}} \in (0,1)$ such that $(\widetilde{r_{p,q}}, \widetilde{\rho_{p,q}})$ satisfies the equation \eqref{sec-degree-zero} and hence after changing-of-variable \eqref{eq:transform-ar2xy} and by uniqueness of the solution of the equation-system \eqref{xy-eq} in $(0,1)^2$, we obtain again 
\[
\widetilde{r_{p,q}} = \frac{(1 + 2x_{p,q})(1 - y_{p,q})}{(1 + 2y_{p,q})(1 - x_{p,q})} \text{\,\, and \,\,}  \widetilde{\rho_{p,q}}  = \frac{1-x_{p,q}}{1+2x_{p,q}}. 
\]
With  $f_{\widetilde{\rho_{p,q}}} = 1 + \widetilde{\rho_{p,q}} \chi + \widetilde{\rho_{p,q}}\overline{\chi}$ being a nontrivial critical extremizer for the modified extremal problem: 
\begin{align}\label{mod-ext}
\| T_{\widetilde{r_{p,q}}} f_{\widetilde{\rho_{p,q}}}\|_q = \| f_{\widetilde{\rho_{p,q}}}\|_p. 
\end{align}
Since we have already proved the equality  \eqref{grpq-eq} in this case, we obtain 
\[
\widetilde{r_{p,q}}= r_{p,q}. 
\]
Therefore, the equation \eqref{mod-ext} becomes 
\[
\| T_{r_{p,q}} f_{\widetilde{\rho_{p,q}}}\|_q = \| f_{\widetilde{\rho_{p,q}}}\|_p. 
\]
It follows that $f_{\widetilde{\rho_{p,q}}} = 1 + \widetilde{\rho_{p,q}} \chi + \widetilde{\rho_{p,q}}\overline{\chi}$ is also an extremal function for the original hypercontractivity inequality. This completes the whole proof of Theorem \ref{thm-main-thm-one-xy} by assuming the uniqueness of the solution to \eqref{xy-eq} inside $(0,1)^2$.

\begin{remark*}
The above argument shows that  we always have $\widetilde{r_{p,q}} = r_{p,q}$ for all $1<p<q<\infty$, although  the reduction in Proposition~\ref{thm-zhongyaoi}  is only proved in $1<p<q<2$ or $1<p\le 2 < q<\infty$. 
\end{remark*}

\section{Uniqueness of the solution}\label{sec-unique}

This section is devoted to the proof of the uniqueness of solution in the open unit square to  \eqref{xy-eq}.  While Figures~\ref{fig-H} and \ref{fig-blowup-B}, obtained via symmetrization and blowup techniques, make the uniqueness of the solution in $(0,1)^2$ appear so self-evident as to seem almost trivial, a rigorous proof of this fact requires a highly nontrivial argument.

\begin{proposition}\label{proposition-unique}
Given $1<p<q<\infty$. The equation-system \eqref{xy-eq}
has a unique solution in $(0,1)^2$. 
\end{proposition}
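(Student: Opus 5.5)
The plan is to transport the problem to the symmetrized curve family $H$ described in the introduction and count intersections there. First I will make the changes of coordinates $\Phi,\Psi$ explicit. Set $p=2/(1-\alpha)$ with $\alpha\in(-1,1)$, and put $x=t^{1-\alpha}$. Then $x^{p-1}=t^{1+\alpha}$ and $x^{p}=t^{2}$, so the second component of $h(p,x)$ becomes
\[
\frac{(1-t^{1-\alpha})(1-t^{1+\alpha})}{1+2t^2},
\]
which is even in $\alpha$. For the first component I will take logarithms and add a suitable multiple of the logarithm of the quantity in \eqref{no-minus-eq}, $(1+2x)(1+2x^{p-1})/(1+2x^p)$. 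That quantity is a function of the second component alone, by \eqref{elem-id}, so this recombination is a legitimate coordinate change $\Psi$ of the target plane. It produces the odd first component $H_1(\alpha,t)$ displayed in the introduction, so that $H_1(-\alpha,t)=-H_1(\alpha,t)$ and $H_2(-\alpha,t)=H_2(\alpha,t)$. Since $\Phi$ and $\Psi$ are diffeomorphisms, solutions of \eqref{xy-eq} in $(0,1)^2$ with $p<q$ correspond bijectively to pairs $(t_1,t_2)$ with $H(\alpha,t_1)=H(\beta,t_2)$, where $\alpha<\beta$. By the remark on $h_\lambda$, each curve $H(\alpha,\cdot)$ is embedded, because its second coordinate is strictly monotone in $t$.

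Existence of at least one intersection I will get from the endpoint behaviour. As $t\to0$ we have $H(\alpha,t)\to$ a point whose first coordinate moves monotonically with $\alpha$, while as $t\to1$ all curves tend to $(0,0)$. So two curves start in one horizontal order and, in the blown-up picture $B\circ H$ near $(0,0)$, end in the opposite order; an intermediate value argument on $H_1(\alpha,\cdot)-H_1(\beta,\cdot)$, taken along matching heights $H_2$, then yields a crossing. (For $1<p<q<2$ and $1<p\le2<q$ existence is also immediate from Proposition~\ref{prop-nontrivial-extremizer} together with \S\ref{sec-system}.)

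For uniqueness I split according to where the parameters $t_1,t_2$ of an intersection point lie. First, injectivity (Proposition~\ref{restrict-homeo}): $H$ is injective on $(-1,1)\times(0,\tfrac12)$, proved by showing the Jacobian has constant sign there and the boundary is mapped injectively. This rules out intersections with both $t_1,t_2<\tfrac12$. Second, separation (Proposition~\ref{separation}): after the blowup $b(u,v)=(u/v,v)$, the image of $(-1,1)\times(\tfrac12,1)$ is disjoint from the image of the level curve $t=t_0$ for some $t_0\in(0,\tfrac12)$; together with injectivity and a connectedness argument this forbids intersections with one parameter below $t_0$ and the other above $\tfrac12$. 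Third, transversality (Proposition~\ref{slope-monotone}): in the $3/2$-blowup $B(u,v)=(u/v^{3/2},v^{1/2})$, each arc $B\circ H(\alpha,[\tfrac14,1))$ is the graph of a function of the second coordinate whose slope is strictly monotone in $\alpha$. Consequently two such arcs cross at most once, which handles all remaining intersections with $t_1,t_2\ge\tfrac14$. Combining the three cases gives exactly one intersection.

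The main obstacle is the transversality step near $t=1$, where every curve degenerates into the common point $(0,0)$ with high-order tangency. There I plan to expand $H$ in $s=1-t$ to the needed order to identify the leading slopes, which depend monotonically on $\alpha$. On the compact part of $[\tfrac14,1)$ I will reduce the slope comparison to sign statements for one-variable functions and verify them with the monotonicity game of \S\ref{S:monotonicity}, using instances such as Examples~\ref{ex-section-P-J} and \ref{ex-section-I-W-P}. The symmetry $\alpha\mapsto-\alpha$ halves this work. The Whitney pleat at $(\alpha,t)=(0,\tfrac12)$ needs separate care, since the injectivity and separation regions meet there.
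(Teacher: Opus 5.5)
Your uniqueness argument is the paper's. You use the same $\Phi$ and $\Psi$: your recombination is exactly $\Psi(u,v)=(\log u+\tfrac12\log(9-6v),v)$, because by \eqref{elem-id} the quantity in \eqref{no-minus-eq} equals $3-2v$. You also use the same three ingredients and the same case split. Two details should be corrected.

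First, your parenthetical proof of Proposition~\ref{restrict-homeo} cannot work as stated, because $H$ is not injective on the boundary of $(-1,1)\times(0,\tfrac12)$. The edge $t=\tfrac12$ is folded onto $L$ via $H(\alpha,\tfrac12)=H(-\alpha,\tfrac12)$, and $H(\alpha,t)\to(0,0)$ as $\alpha\to\pm1$ for every fixed $t$. The paper instead shows that $H$ is a local homeomorphism (Lemma~\ref{detJH<0}) which is proper as a map into the slit domain $\Omega$. Since $\Omega$ is simply connected, $H$ is then a homeomorphism onto $\Omega$. The resulting statement $H\big((-1,1)\times(0,\tfrac12)\big)=\Omega$, together with $\Omega\cap L=\emptyset$, is also what excludes the case $t_1<t_0$, $t_2=\tfrac12$. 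Neither of your first two exclusions literally covers that case.

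Second, the case split closes only if $t_0$ is at least the lower endpoint of the transversality interval. The paper takes $t_0=\tfrac25$ in Proposition~\ref{separation} and proves Proposition~\ref{slope-monotone} only on $(\tfrac25,1)$, which is all that is needed. So you need not aim for $[\tfrac14,1)$. Likewise, the pleat at $(0,\tfrac12)$ needs no separate treatment, since $t=\tfrac12$ lies inside that interval.

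Where you genuinely depart from the paper is existence. The paper takes existence from the derivation: Proposition~\ref{prop-nontrivial-extremizer} and \S\ref{sec-system}, with the remaining pairs $(p,q)$ available via the duality of Lemma~\ref{lem-self-dual} or the modified extremal problem of \S\ref{sec-ass-unique}. You instead argue intrinsically, by the intermediate value theorem on the blown-up curves. This works. Writing $s=-\log t$, one has $H_1=-\tfrac{1}{81}\alpha(1-\alpha^2)s^3+O(s^4)$ and $H_2=\tfrac13(1-\alpha^2)s^2+O(s^3)$ as $t\to1$. Hence the first coordinate of $B\circ H(\alpha,t)$ tends to $-3^{-5/2}\alpha/\sqrt{1-\alpha^2}$ as $t\to1$, which is decreasing in $\alpha$, while it tends to $\tfrac{\alpha}{2}\log 3$ as $t\to0$. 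Both curves are graphs over $w=\sqrt{H_2}\in(0,1)$, so the difference of first coordinates at matched $w$ changes sign.

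Your route makes Proposition~\ref{proposition-unique} a self-contained statement about the curve family, independent of the hypercontractive input. The price is this short asymptotic computation. You should include it explicitly, since the order reversal as $t\to1$ is the entire content of your existence step.
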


Define 
$$
\begin{array}{rccc}
h=(h_1,h_2):& (1,\infty)\times(0,1)&\longrightarrow &\mathbb{R}\times \mathbb{R}\\
& (p,x)&\longmapsto&
\Big(
\frac{1}{1+2x}\big(\frac{1+2x^{p}}{3}\big)^{\frac{1}{p}},\;
\frac{(1-x)(1-x^{p-1})}{1+2x^{p}}
\Big).
\end{array}
$$
Then $h$ can be regarded as a one-parameter family $\{h(p,\cdot)\}_{p\in(1,\infty)}$ of curves, with end points
$$h(p,0) = ((1/3)^{1/p},\  1),
\quad h(p,1) = (1/3,\ 0).$$
By taking derivatives, we obtain
$$\frac{\partial h_1}{\partial x} = \frac{2(x^{p-1}-1)}{(1+2x)^2(1+2x^p)}\left(\frac{1+2x^p}{3}\right)^{\frac{1}{p}}<0.$$
Since $1-x$, $1 - x^{p-1}$, $1/(1 + 2 x^p)$ are all strictly decreasing in $x$, we have $\frac{\partial h_2}{\partial x}<0$. 
Therefore both the abscissa and ordinate of the curve $h(p,\cdot)$ monotonically decrease with respect to $x$. 
In particular, the curve $h(p,\cdot)$ has no self-intersecting point. 
This implies the following lemma. 

\begin{lemma}
\label{lemma-no-self-intersecting}
Given $1<p<q<\infty$. The equation system \eqref{xy-eq} has a unique solution in $(0,1)^2$, if and only if the curves $h(p,\cdot)$ and $h(q,\cdot)$ intersect exactly once. 
\end{lemma}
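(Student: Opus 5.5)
The plan is to show that solutions of \eqref{xy-eq} in $(0,1)^2$ correspond bijectively to intersection points of the curves $h(p,\cdot)$ and $h(q,\cdot)$. The statement then follows at once.

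First, observe that $(x,y)\in(0,1)^2$ solves \eqref{xy-eq} exactly when $h(p,x)=h(q,y)$. The first equation of \eqref{xy-eq} is the equality of first coordinates, $h_1(p,x)=h_1(q,y)$, and the second is $h_2(p,x)=h_2(q,y)$. So the solution set is the set $S=\{(x,y)\in(0,1)^2: h(p,x)=h(q,y)\}$. We also need the set of intersection points of the two curves, namely $I=h(p,(0,1))\cap h(q,(0,1))$. The natural map $\pi:S\to I$ sends $(x,y)$ to $h(p,x)$, and it is surjective by definition.

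The key step is injectivity of $\pi$, and this is where the preceding computation enters. We showed that $\partial h_2/\partial x<0$ on $(0,1)$ for every exponent. Hence $x\mapsto h(p,x)$ is injective, since its second coordinate is strictly monotone, and the same holds for $y\mapsto h(q,y)$. Suppose $\pi(x,y)=\pi(x',y')=P$. Then $h(p,x)=h(p,x')=P$ forces $x=x'$, and $h(q,y)=h(q,y')=P$ forces $y=y'$. Thus $\pi$ is a bijection $S\to I$. One could equally use the monotonicity of $h_1$, via $\partial h_1/\partial x<0$ (which holds because $x^{p-1}<1$), but one monotone coordinate suffices.

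It follows that $|S|=|I|$. So \eqref{xy-eq} has exactly one solution in $(0,1)^2$ if and only if the curves meet in exactly one point. There is one subtlety to flag: the endpoints $x,y\in\{0,1\}$ must be excluded consistently on both sides. This works because both $S$ and $I$ are defined using the open interval. In particular, the common endpoint $h(p,1)=h(q,1)=(1/3,0)$ does not count as an intersection, and $(1,1)\notin(0,1)^2$. The same holds at $x=0$: the endpoints $h(p,0)$ and $h(q,0)$ differ anyway. I expect no real obstacle here; the only care needed is this bookkeeping of open versus closed parameter intervals.
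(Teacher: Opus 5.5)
Your proposal is correct and follows the paper's argument: the paper also notes that $h_1(p,\cdot)$ and $h_2(p,\cdot)$ are strictly decreasing, so each curve $h(p,\cdot)$ is injectively parametrized and has no self-intersections. This makes solutions of \eqref{xy-eq} in $(0,1)^2$ correspond bijectively to intersection points, which is exactly your map $\pi:S\to I$. Your extra bookkeeping about the open parameter interval and the common endpoint $(1/3,0)$ is accurate and simply makes explicit what the paper leaves implicit.
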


%\begin{proof}
%By taking derivatives, we obtain
%\begin{align*}
%\frac{\partial h_1}{\partial x} &= \frac{2(x^{p-1}-1)}{(1+2x)^2(1+2x^p)}\left(\frac{1+2x^p}{3}\right)^{\frac{1}{p}}<0,\\
%\frac{\partial h_2}{\partial x} &= \frac{(x^{p-2}-1)(1+2x^p)+px^{p-2}(x-1)(1+2x)}{(1+2x^p)^2}\\
%&=\frac{(x^{p-1}-1)(2x^{p-1}+1) + (p-1)x^{p-2}(x-1)(1+2x)}{(1+2x^p)^2}<0. 
%\end{align*}
%This implies that both the abscissa and ordinate of the curve $h(p,\cdot)$ monotonically decrease with respect to $x$. 
%Therefore, the curve $h(p,\cdot)$ has no self-intersecting point, and similarly for $h(q,\cdot)$. The equivalence in the lemma follows immediately. 
%\end{proof}

By the derivation of the equation system \eqref{xy-eq}, the existence of a solution in the unit square $(0,1)^2$ is automatic. 
To prove its uniqueness, we will first symmetrize $h$, and then establish three crucial properties for the symmetrical family.  
Combining these three properties, we will give the proof of Proposition \ref{proposition-unique} in \S\ref{S:transver}.

\subsection{Symmetrization}

Observe that $h_2(p,x) = h_2(p^*,x^{p-1})$
with $p^*=p/(p-1)$, we define a transformation
\begin{align}\label{phi}
\begin{array}{rccc}
\Phi:& (-1,1)\times (0,1)&\longrightarrow &(1,\infty)\times(0,1)\\[1mm]
& (\alpha,t)&\longmapsto&\big(\frac{2}{1-\alpha}, t^{1-\alpha}\big),
\end{array}
\end{align}
See the left of Figure \ref{fig-phi-psi}. 
When $\Phi(\alpha,t)=(p,x)$, we have $\Phi(-\alpha,t)=(p^*,x^{p-1})$. 
It follows that $$h_2\circ\Phi(\alpha,t) = h_2\circ\Phi(-\alpha,t).$$

\begin{figure}[htbp]
\centering
\includegraphics{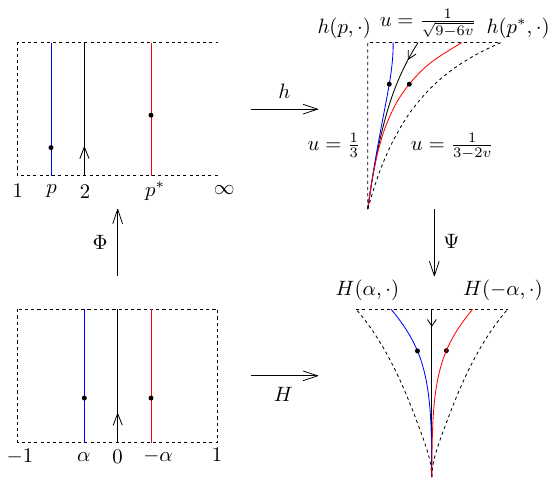}
\caption{Symmetrization through coordinate exchanges}
\label{fig-phi-psi}
\end{figure}

Having examined symmetry in the domain of $h$, we now turn to symmetry in its range. 
Let $(u,v)=h(p,x)$. If $p=2$, then $v = 3/2-1/(6 u^2)$. Letting $p\to\infty$, we get $v = 3/2-1/(2u)$; letting $p\to1$, we get $u=1/3$. 
That is, $p=1,2,\infty$ correspond to 
$u=1/3$, $u=1/\sqrt{9-6v}$, 
$u=1/(3-2v)$ respectively. 
See the upper-right corner of Figure \ref{fig-phi-psi}.  
Note that $1/\sqrt{9-6v}$ is exactly the geometric mean of $1/3$ and $1/(3-2v)$.  
Define another transformation
\begin{align}\label{psi}
\begin{array}{rccc}
\Psi:& (0,\infty)\times(0,1)&\longrightarrow &\mathbb{R}\times(0,1)\\[1mm]
& (u,v)&\longmapsto&\big(\log u-\log\frac{1}{\sqrt{9-6v}},v\big).
\end{array}
\end{align}

Through the coordinate exchanges $\Phi$ and $\Psi$, we define a new curve family 
$$H = \Psi\circ h\circ \Phi,$$ 
as shown in Figure \ref{fig-phi-psi}. 
Direct computation gives  
\begin{equation}
\label{eq-H-alpha-t}
\begin{aligned}
H(\alpha,t)&=\Psi
\left(
\frac{1}{1+2 t^{1-\alpha}}\left(\frac{1+2 t^2}{3}\right)^{\frac{1-\alpha}{2}},\ \  
\frac{(1-t^{1-\alpha})(1-t^{1+\alpha})}{1+2 t^2}
\right)\\
&=
\left(
-\frac{\alpha}{2}\log \frac{1+2 t^2}{3} 
+\frac{1}{2}\log\frac{1+2 t^{1+\alpha}}{1+2 t^{1-\alpha}},\ \  
\frac{(1-t^{1-\alpha})(1-t^{1+\alpha})}{1+2 t^2}
\right).
\end{aligned}
\end{equation}
Observe that $H$ satisfies the following symmetry with respect to $\alpha=0$. Denote $H=(H_1,H_2)$. Then $$H_1(-\alpha,t)=-H_1(\alpha,t),\quad H_2(-\alpha,t)=H_2(\alpha,t).$$
Particularly, $H_1(0,t)\equiv0$. 
See the bottom of Figure \ref{fig-phi-psi}. 
 
Since both $\Phi$ and $\Psi$ are homeomorphisms, 
to prove that distinct curves $h(p,\cdot)$ have a unique intersection point, it suffices to prove that distinct $H(\alpha,\cdot)$ have a unique intersection point. 
As shown in Figure \ref{fig-H}, after zooming in at the origin $(0,0)$, distinct curves of the family $H(\alpha,\cdot)$ appear to intersect at exactly one point. 
To rigorously prove this uniqueness, we rely on three independent key properties for the symmetrical family $H$: {\bf injectivity}, {\bf separation}, and {\bf transversality}.

\subsection{Injectivity}\label{S:injectivity}
Recall that $$
H=(H_1,H_2)=\left(
-\frac{\alpha}{2}\log \frac{1+2 t^2}{3} 
+\frac{1}{2}\log\frac{1+2 t^{1+\alpha}}{1+2 t^{1-\alpha}},\ \ 
\frac{(1-t^{1-\alpha})(1-t^{1+\alpha})}{1+2 t^2}
\right).
$$
For $t=\frac{1}{2}$, we have $ H_1(\alpha,\frac{1}{2})\equiv0$ and
$$H_2(\alpha,\tfrac{1}{2})=\frac{(2-2^\alpha)(2-2^{-\alpha})}{6}\in(0,\tfrac{1}{6}].$$
It follows that $H((-1,1)\times\{\frac{1}{2}\}) = \{(0,v)\mid 0<v\leq\frac{1}{6}\} =: L$.
Let
$$\Omega = \{(u,v)\in\mathbb{R}^2\mid 0<v<1,\ \tfrac{1}{2}\log\tfrac{3-2v}{3}<u< -\tfrac{1}{2}\log\tfrac{3-2v}{3}\}\setminus L.$$
That is, $\Omega$ is the domain bounded by the dashed line in the bottom-right corner of Figure \ref{fig-phi-psi}, minus the vertical line segment $L$.

\begin{proposition}
\label{restrict-homeo}
The restriction $H:(-1,1)\times(0,\frac{1}{2})\to\Omega$ is a homeomorphism.
\end{proposition}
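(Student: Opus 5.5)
The plan is to show that $H$ restricted to $(-1,1)\times(0,\tfrac12)$ is a continuous, injective, proper map into $\Omega$ with image exactly $\Omega$. Continuity is clear from \eqref{eq-H-alpha-t}. A continuous injective map between open subsets of $\mathbb{R}^2$ is open (invariance of domain), so the homeomorphism claim reduces to two things: injectivity, and the fact that the image is exactly $\Omega$.

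For the image, I first check that $H$ maps into $\Omega$. Since $t<\tfrac12$ we have $0<H_2<1$. The bounds $\tfrac12\log\tfrac{3-2v}{3}<H_1<-\tfrac12\log\tfrac{3-2v}{3}$ are the images under $\Psi$ of the limiting curves $p=1$ and $p=\infty$ (that is, $u=1/3$ and $u=1/(3-2v)$). They should follow from strict monotonicity of $h_1(p,x)$ in $p$ at fixed $h_2$-level, or more directly from a one-variable inequality once $v$ is eliminated.

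Next I show that $H$ does not hit $L$ when $t\neq\tfrac12$. The symmetry $H_1(-\alpha,t)=-H_1(\alpha,t)$ gives $H_1(0,t)=0$, but $H_2(0,t)=(1-t)^2/(1+2t^2)$ is larger than $1/6$ exactly when $t<\tfrac12$ (the equation $6(1-t)^2=1+2t^2$ has roots $t=\tfrac12$ and $t=\tfrac52$). For $\alpha\neq0$ and $t<\tfrac12$, I would prove $H_1(\alpha,t)\neq0$. Concretely, I aim to show $\operatorname{sign}H_1(\alpha,t)=\operatorname{sign}\alpha$ for $t\in(0,\tfrac12)$, using $\partial_\alpha H_1$ and the oddness of $H_1$; a monotonicity-game argument in $t$ at fixed $\alpha$, anchored at $t=\tfrac12$ where $H_1\equiv0$, should do it. Properness then follows from boundary behavior. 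As $t\to0$, $H\to(\tfrac{\alpha}{2}\log 3\cdot(\ldots),1)$ tends to the top edge $v=1$. As $t\to\tfrac12$, $H$ tends to $L$. As $\alpha\to\pm1$, $H$ tends to the two side curves. Hence $\partial\big((-1,1)\times(0,\tfrac12)\big)$ maps into $\partial\Omega$, so the map is proper onto $\Omega$. An open, proper (hence closed) map into the connected set $\Omega$ is surjective. Note that $\Omega$ is connected, since removing the slit $L$, which touches the bottom edge, leaves the region connected through $v>1/6$.

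The main obstacle is injectivity. I would show the Jacobian $\det\partial(H_1,H_2)/\partial(\alpha,t)$ is nonvanishing on $(-1,1)\times(0,\tfrac12)$, so that $H$ is a local homeomorphism. A proper local homeomorphism onto the simply connected domain… but $\Omega$ is not simply connected in the usual sense, since it is the dashed region minus a slit from its boundary. It is nonetheless simply connected, because the slit meets the outer boundary. So a proper local homeomorphism onto $\Omega$ is a covering map, and since the domain is connected and simply connected, it is a bijection. For the Jacobian sign, the cleanest route is to fix $v=H_2$ and show that $H_1$ is strictly monotone in $\alpha$ along the level set. Equivalently, I would show $\partial_\alpha H_1\,\partial_t H_2-\partial_t H_1\,\partial_\alpha H_2>0$. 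Here I would use $\partial_t H_2<0$ (from the monotonicity of $h_2$ already shown), $\partial_\alpha H_2$ having the sign of $-\alpha$ by symmetry, and $\partial_\alpha H_1>0$ for $t<\tfrac12$. That last inequality reduces, after writing $s=t^{\alpha}$, to a one-variable inequality that I expect to handle by a monotonicity game in the style of Examples~\ref{example-monotone-game}--\ref{ex-section-I-W-P}. This is the step I expect to be hardest, and the restriction $t<\tfrac12$ is exactly where the pleat at $(0,\tfrac12)$ stops it from failing.
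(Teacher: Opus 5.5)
\textbf{Gap: the Jacobian step.}

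Your overall architecture matches the paper's: properness, a nonvanishing Jacobian giving a local homeomorphism, simple connectivity of $\Omega$, hence a one-sheeted covering. Your explicit check that $H$ avoids the slit $L$ for $t<\frac12$ is a point the paper leaves implicit. The gap is in the one step that carries the real content.

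First, the sign you aim for is wrong. The determinant $\partial_\alpha H_1\,\partial_t H_2-\partial_t H_1\,\partial_\alpha H_2$ is \emph{negative} on $(-1,1)\times(0,\frac12)$. At $\alpha=0$ it equals $U_0(t)V_0'(t)$, with $U_0>0$ (this is \eqref{st-log}) and $V_0'<0$. That negative sign is also what your own list of signs gives for the first term.

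Second, and more seriously, the ingredient ``$\partial_\alpha H_1>0$ for $t<\frac12$'' is false. A direct computation gives $\lim_{\alpha\to\pm1}\partial_\alpha H_1(\alpha,t)=-2U_1(t)$. This is negative for $t\in[\frac14,\frac12)$ by Lemma~\ref{estimate-H1}. Heuristically, $H_1\approx\alpha(1-\alpha^2)U(t)$, and $1-3\alpha^2<0$ once $|\alpha|>1/\sqrt3$. Numerically, $\partial_\alpha H_1(0.9,\frac14)\approx-0.037$. At such points $\partial_\alpha H_1\,\partial_tH_2>0$, so $\det J_H<0$ holds only because the cross term $-\partial_tH_1\,\partial_\alpha H_2$ dominates. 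No bookkeeping of the signs of individual partial derivatives can produce this, and you never control $\partial_t H_1$ in any case. Also, $\partial_\alpha H_1>0$ would be a genuinely two-variable statement; the substitution $s=t^\alpha$ does not make it univariate.

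\textbf{What is needed.}

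You need a quantitative inequality for the determinant as a whole. The paper does this in Lemma~\ref{detJH<0} in four steps:
\begin{enumerate}
\item It computes $\det J_H$ explicitly as $\frac{t}{2(1+2t^2)^2}$ times a combination of $\log t$ and $\log\frac{1+2t^2}{3}$.
\item It reduces $\det J_H<0$ to the ratio inequality \eqref{I-alpha-t}.
\item It proves $I(\alpha,t)\ge I(0,t)=\frac1t$ by showing $\partial_\alpha I>0$ for $\alpha\in(0,1)$, via the $J_1,J_2$ analysis and a discriminant argument.
\item It closes with the univariate inequality \eqref{st-log}, which is exactly where $t<\frac12$ is used.
\end{enumerate}

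Your level-set reformulation is a legitimate framing of the same quantity. Along $H_2=\mathrm{const}$ one has $\frac{d}{d\alpha}H_1=\det J_H/\partial_tH_2$, so the correct target is that $H_1$ increases strictly in $\alpha$ along level sets of $H_2$. But it still requires an estimate of this kind, not term-wise signs.

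\textbf{Smaller point on properness.}

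For fixed $t$, $H(\alpha,t)\to(0,0)$ as $\alpha\to\pm1$. The side curves of $\Omega$ are reached only near the corners $(\pm1,0)$ of the parameter rectangle. Those corners therefore need a separate look, although your conclusion that the boundary is sent into $\partial\Omega$ survives.
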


\begin{proof}
Let $D_0=(-1,1)\times(0,1)$ and $D=(-1,1)\times(0,\frac{1}{2})$. From the behavior of $h$, we know that as $(\alpha,t)\to\partial D_0$, $H(\alpha,t)\to\partial\Omega$. Moreover, $H(\alpha,\frac{1}{2})\in L\subset\partial\Omega$. 
Therefore, as $(\alpha,t)\to\partial D$, $ H(\alpha,t)\to\partial\Omega$, which implies that $ H|_D$ is a proper map.

By Lemma \ref{detJH<0} below, $H|_D$ is a local homeomorphism. Combined with the properness, $ H:D\to\Omega$ is surjective. Since both $D$ and $\Omega$ are simply connected, it follows that $H:D\to\Omega$ is injective. Hence, $H:D\to\Omega$ is a homeomorphism.
\end{proof}

\begin{lemma}
\label{detJH<0}
For $(\alpha,t)\in(-1,1)\times(0,\frac{1}{2})$, the Jacobian matrix $J_H$ of $H$ satisfies $\det(J_H)<0$. 
\end{lemma}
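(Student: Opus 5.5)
Since $\Psi$ and $\Phi$ are diffeomorphisms, I would compute the Jacobian of $H$ directly from the explicit formula \eqref{eq-H-alpha-t} instead of going back through $h$. Put $a=t^{1-\alpha}$, $b=t^{1+\alpha}$, so that $ab=t^2$, and write $L=\log t<0$. Then
\[
H_1=-\tfrac{\alpha}{2}\log\tfrac{1+2t^2}{3}+\tfrac12\log\tfrac{1+2b}{1+2a},\qquad H_2=\frac{(1-a)(1-b)}{1+2t^2}.
\]
The derivatives are elementary: $\partial_\alpha a=-aL$, $\partial_\alpha b=bL$, $t\partial_t a=(1-\alpha)a$, $t\partial_t b=(1+\alpha)b$. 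This gives $\partial_\alpha H_1=-\frac12\log\frac{1+2t^2}{3}+L\big(\frac{b}{1+2b}+\frac{a}{1+2a}\big)$ and $\partial_\alpha H_2=\frac{L\,(a-b)}{1+2t^2}$, with analogous expressions for $t\partial_t H_1$ and $t\partial_t H_2$. I would then form $t\det J_H=\partial_\alpha H_1\, t\partial_tH_2-\partial_\alpha H_2\,t\partial_t H_1$ and clear the positive denominators $(1+2a)(1+2b)(1+2t^2)^2$.

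The symmetry $H_1(-\alpha,t)=-H_1(\alpha,t)$, $H_2(-\alpha,t)=H_2(\alpha,t)$ makes $\det J_H$ even in $\alpha$, so it suffices to take $\alpha\in[0,1)$. At $\alpha=0$ we have $a=b=t$, so $\partial_\alpha H_2=0$ and $\det J_H=\partial_\alpha H_1\cdot\partial_t H_2$. Here $\partial_tH_2<0$ because $h_2$ decreases, and $\partial_\alpha H_1(0,t)=-\frac12\log\frac{1+2t^2}{3}+\frac{2t\log t}{1+2t}$. This last quantity vanishes at $t=1/2$, which is consistent with $L$ being the image of $t=1/2$ (the fold line). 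I would check that it is positive on $(0,1/2)$: it tends to $\frac12\log 3>0$ as $t\to0$, and a monotonicity-game argument on $[0,1/2]$ should rule out any interior zero. This settles $\alpha=0$.

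For general $\alpha$ I would substitute $s=\alpha L$ (so $a=te^{-s}$, $b=te^{s}$) and expand in the variable $\alpha$, or equivalently in $s$, using evenness. The plan is to show that $t\det J_H$ has the form $\partial_\alpha H_1(0,t)\,t\partial_tH_2(0,t)+\alpha^2 R(\alpha,t)$ with a controlled remainder. More robustly, I would split $\det J_H$ into a product of two factors, each of one sign on $(0,1/2)$: one factor coming from the $h_2$-monotonicity, and one generalizing $\partial_\alpha H_1(0,t)$, whose zero set should be exactly the fold curve $t=1/2$. Each factor's sign would be proven with the monotonicity game in Section~\ref{S:monotonicity}, using the vanishing at $t=1/2$ and sign information on a few derivatives.

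The main obstacle is the cross term $\partial_\alpha H_2\, t\partial_tH_1$. It is nonzero for $\alpha\neq0$, and it has sign $\operatorname{sign}(L(a-b))\cdot\operatorname{sign}(t\partial_t H_1)$, so it does not obviously combine with the main term. Near $t=1/2$ the main term degenerates, so there the cross term must be shown to be dominated, or of the right sign, uniformly in $\alpha\in(0,1)$, including near $\alpha\to1$ where $a\to1$. My first attempt would be to rewrite everything in the variables $(a,b)\in(0,1)^2$ with $ab<1/4$. The determinant then becomes an explicit expression in $a$, $b$, $\log a$ and $\log b$, and I would treat it via the orthogonal-basis or convexity trick of Lemma~\ref{lem-orthogonal}, or by a one-variable monotonicity game in $\log(b/a)$ for fixed $ab$. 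If that fails, I would fall back on a rigorous interval-arithmetic case split near the corner $(\alpha,t)=(0,1/2)$, where the Whitney pleat sits.
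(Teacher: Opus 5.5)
Your $\alpha=0$ analysis is correct, and it is the last step of the paper's proof. There $\det J_H(0,t)=\partial_\alpha H_1(0,t)\,\partial_tH_2(0,t)$, with $\partial_\alpha H_1(0,t)=-\frac12 s(t)$ and $s$ as in \eqref{st-log}. The gap is that for $\alpha\neq0$ you give no argument, and neither route you sketch works as stated.

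\textbf{The factorization route.} It assumes $t=\frac12$ is the fold curve, which it is not. Because $H_1(\alpha,\frac12)\equiv0$, one has $\det J_H(\alpha,\frac12)=-\partial_\alpha H_2\,\partial_tH_1$. This is strictly negative for every $\alpha\neq0$; my numerical evaluation gives about $-8\times10^{-4}$ at $\alpha=\frac12$. So the critical set of $H$ meets $\{t\le\frac12\}$ only at the pleat point $(0,\frac12)$, and no factor of $\det J_H$ can vanish along $t=\frac12$.

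\textbf{The ``main term plus remainder'' route.} This fails too, and not only near $t=\frac12$. For $t\in[\frac14,\frac12)$, $H_1(\cdot,t)$ vanishes at $\alpha=0$ and as $\alpha\to1$, and is positive in between (Lemma~\ref{estimate-H1}). Hence $\partial_\alpha H_1$ turns negative near $|\alpha|=1$. At $(\alpha,t)=(0.9,0.4)$ my evaluation gives $\partial_\alpha H_1\approx-4.4\times10^{-3}$. Your main term $\partial_\alpha H_1\,\partial_tH_2\approx+1.2\times10^{-3}$ then has the wrong sign, and only the cross term $\approx-4.9\times10^{-3}$ beats it. An expansion in $\alpha^2$ around $\alpha=0$ gives no uniform control of this regime.

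\textbf{The missing idea: the determinant separates the variables.} Only derivatives of $H_1$ enter, so $\frac12\log\frac{1+2b}{1+2a}$ drops out. Set $P=2-(1+\alpha)t^{1-\alpha}-(1-\alpha)t^{1+\alpha}>0$ and $Q=\frac{1+\alpha}{t^{1-\alpha}}+\frac{1-\alpha}{t^{1+\alpha}}-2$. Then
\[
\frac{2(1+2t^2)^2}{t}\det J_H=-4P\log t+(2P+Q)\log\frac{1+2t^2}{3}.
\]
Dividing by $P\log\frac{1+2t^2}{3}<0$, the condition $\det J_H<0$ becomes $Q/P>\frac{4\log t}{\log\frac{1+2t^2}{3}}-2$, whose right-hand side does not depend on $\alpha$.

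The paper then proves that $Q/P$ is increasing in $|\alpha|$, via $J_1\log\frac1t>J_2$ and a discriminant argument. So $Q/P$ is minimized at $\alpha=0$, where it equals $1/t$. The remaining inequality $\frac1t>\frac{4\log t}{\log\frac{1+2t^2}{3}}-2$ on $(0,\frac12)$ is exactly your $\alpha=0$ statement. This also handles the degeneracy at $t=\frac12$ automatically. Your $\alpha=0$ case is the right endpoint; the monotonicity of $Q/P$ in $\alpha$ is the step your proposal lacks.
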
 

\begin{proof}
A computation followed by simplification yields 
\begin{align*}
\det(J_H) =
& \frac{t}{2(1+2t^2)^2}\bigg[-\left(
2 - (1+\alpha)t^{1-\alpha} -(1-\alpha)t^{1+\alpha}
\right) 4\log t\\
&+\left(
2 -2 (1+\alpha)t^{1-\alpha} -2(1-\alpha)t^{1+\alpha} + \frac{1+\alpha}{t^{1-\alpha}} + \frac{1-\alpha}{t^{1+\alpha}}
\right) \log\frac{1+2t^2}{3} \bigg]. 
\end{align*}
Note that $$(1+\alpha)t^{1-\alpha} + (1-\alpha)t^{1+\alpha}<1+\alpha+1-\alpha=2.$$
The inequality $\det(J_H)<0$ is equivalent to 
\begin{equation}
\label{I-alpha-t}
I(\alpha,t):=\frac{\frac{1+\alpha}{t^{1-\alpha}} + \frac{1-\alpha}{t^{1+\alpha}}-2}
{2 - (1+\alpha)t^{1-\alpha} - (1-\alpha)t^{1+\alpha}}
> \frac{4\log t}{\log\frac{1+2t^2}{3}}-2.
\end{equation}

The proof of the lemma proceeds in two steps. First, we show that 
\begin{equation}
\label{I-alpha-t-I-0-t}
I(\alpha,t)\geq I(0,t), \quad \forall\  (\alpha,t)\in(-1,1)\times(0,1).
\end{equation} 
The function $I(\alpha,t)$ constructed here is monotonic in $\alpha\in(0,1)$ and in $\alpha\in(-1,0)$, which can be verified by differentiation. 
Second, we prove that 
\begin{equation}
\label{I-0-t}
I(0,t) = \frac{1}{t} > \frac{4\log t}{\log\frac{1+2t^2}{3}}-2, \quad \forall\  t\in(0,\tfrac{1}{2}).
\end{equation}

We now show that $\frac{\partial I}{\partial \alpha}>0$ 
for $(\alpha,t)\in(0,1)\times(0,1)$. 
Direct computation gives 
\begin{align*}
&\frac{(2 - (1+\alpha)t^{1-\alpha} - (1-\alpha)t^{1+\alpha})^2}{2}\ 
\frac{\partial I}{\partial \alpha}
\\
&=\Big[
4\alpha - (1+\alpha)(t^{1-\alpha}+\tfrac{1}{t^{1-\alpha}})+(1-\alpha)(t^{1+\alpha}+\tfrac{1}{t^{1+\alpha}})
\Big]\log\tfrac{1}{t} 
- (\tfrac{1}{t^\alpha} - t^\alpha)
(1-t^{1-\alpha})(1-t^{1+\alpha}) \tfrac{1}{t}\\
&=:  J_1\log\tfrac{1}{t} - J_2.
\end{align*}
Thus, it suffices to prove that for $(\alpha,t)\in(0,1)\times(0,1)$, 
$$J_1\log\tfrac{1}{t} > J_2.$$
Clearly, $J_2>0$. 
Since $J_1(\alpha,1)\equiv 0$ and $$\frac{\partial J_1}{\partial t}=(1-\alpha^2)(t^\alpha-\tfrac{1}{t^\alpha})(1+\tfrac{1}{t^2})<0,$$
we have $J_1>0$. Therefore, the problem reduces to proving 
$$J := - \frac{J_2}{J_1} + \log \tfrac{1}{t} > 0.$$
Clearly, $J(\alpha,1)\equiv0$. 
Computation and simplification yield 
\begin{align*}
&\frac{\partial J}{\partial t} J_1^2 = 
-\frac{\partial J_2}{\partial t} J_1 + \frac{\partial J_1}{\partial t} J_2 -\frac{1}{t} J_1^2
=-\frac{1}{t^2}(1-t^{1-\alpha})(1-t^{1+\alpha})\\
&\Big[
(t+\tfrac{1}{t})(t^\alpha-\tfrac{1}{t^\alpha})^2
-2(t-\tfrac{1}{t})(t^{2\alpha}-\tfrac{1}{t^{2\alpha}})\alpha + \left( (t+\tfrac{1}{t})(t^\alpha+\tfrac{1}{t^\alpha})^2
+4(t+\tfrac{1}{t}) - 8(t^\alpha+\tfrac{1}{t^\alpha})
\right)\alpha^2
\Big].
\end{align*}
Since $(t+\frac{1}{t})(t^\alpha+\frac{1}{t^\alpha})>4$, we have 
$$(t+\tfrac{1}{t})(t^\alpha+\tfrac{1}{t^\alpha})^2
+4(t+\tfrac{1}{t}) - 8(t^\alpha+\tfrac{1}{t^\alpha})
> 4(t+\tfrac{1}{t}) - 4(t^\alpha+\tfrac{1}{t^\alpha})
=\frac{4(1-t^{1-\alpha})(1-t^{1+\alpha})}{t}>0. 
$$
To prove $\frac{\partial J}{\partial t}<0$, it suffices to show that the discriminant of the quadratic in $\alpha$ inside the square brackets is negative. That is, 
$$(t-\tfrac{1}{t})^2(t^{\alpha}+\tfrac{1}{t^{\alpha}})^2
<
(t+\tfrac{1}{t})\Big[
(t+\tfrac{1}{t})(t^\alpha+\tfrac{1}{t^\alpha})^2
+4(t+\tfrac{1}{t}) - 8(t^\alpha+\tfrac{1}{t^\alpha})
\Big].$$
Rearranging and simplifying gives 
$$2(t+\tfrac{1}{t})(t^\alpha+\tfrac{1}{t^\alpha})
<(t+\tfrac{1}{t})^2 + (t^\alpha+\tfrac{1}{t^\alpha})^2.$$
Completing the square shows that this inequality holds. Since $\alpha\neq1$, the equality is never attained. 

From $J(\alpha,1)\equiv0$ and $\frac{\partial J}{\partial t}<0$, it follows that $J>0$, and hence $\frac{\partial I}{\partial \alpha}>0$. 
It is worth noting that the above proves $\frac{\partial I}{\partial \alpha}>0$ for $\alpha\in(0,1)$; by symmetry, $\frac{\partial I}{\partial \alpha}<0$ for $\alpha\in(-1,0)$. Therefore, \eqref{I-alpha-t-I-0-t} holds. 

We now verify \eqref{I-0-t}. That is, 
\begin{equation}
\label{st-log}
s(t):=\log\frac{1+2t^2}{3}-\frac{4 t}{1+2t}\log t<0,\quad \forall\  t\in (0, \tfrac{1}{2}).
\end{equation}
Note that $s(\frac{1}{2})=0$. It suffices to show that 
$$s'(t) = 4\frac{-(1+t-2t^2)+(1+2t^2)\log \frac{1}{t}}
{(1+2t^2)(1+2t)^2}>0.$$
That is, we need to prove 
$$r(t):=\log\frac{1}{t} - \frac{1+t-2t^2}{1+2t^2}>0.$$
This follows from $r(\frac{1}{2})=\log 2-\frac{2}{3}>0$ and $$r'(t) = -\frac{1}{t}\frac{(1-t)(1+2t)(1-2t^2)}
{(1+2t^2)^2}<0, \quad\forall\   t\in(0, \tfrac{1}{2}).$$
This completes the proof that \eqref{I-alpha-t} holds for all $(\alpha,t)\in(-1,1)\times(0,\frac{1}{2})$. 
\end{proof}

\subsection{Separation}\label{S:sepa}
Recall from \eqref{blowup-b} that 
\[
b(u,v) = \left(\frac{u}{v},\ v\right).
\]
The action of \(b\circ H\) is illustrated in Figure \ref{fig-blowup-b}.  
As hinted in the figure, the image \(b\circ H((-1,1)\times(\frac{1}{2},1))\) is a curvilinear triangle with a cusp.

\begin{proposition}
\label{separation}
The image $b\circ H((-1,1)\times(\frac{1}{2},1))$ does not intersect the curve $b\circ H((-1,1)\times\{\frac{2}{5}\})$. 
\end{proposition}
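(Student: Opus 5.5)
We have to show that the blown-up image of the strip $t\in(\frac12,1)$ lies entirely on one side of the blown-up curve $t=\frac25$.

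\textbf{Coordinates.} Write $b\circ H(\alpha,t)=(K(\alpha,t),H_2(\alpha,t))$ with $K=H_1/H_2$. Both $H_1$ and $K$ are odd in $\alpha$, and $H_2$ is even.

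\textbf{Horizontal extent of the two sets.}
\begin{itemize}
\item On the strip, $t\to1$ sends $H\to(0,0)$. The boundary value $t=\frac12$ gives $K=0$ (since $H_1(\alpha,\frac12)=0$) and $H_2\in(0,\frac16]$.
\item I expect $|K|$ to stay bounded on $(-1,1)\times(\frac12,1)$; the cusp sits at the origin. So I expect the region to be a thin curvilinear triangle in a vertical window $0<v\le\frac16$ with $|u|\le c_1$.
\item On the curve $t=\frac25$, the value $H_2(\alpha,\frac25)=(1-(\frac25)^{1-\alpha})(1-(\frac25)^{1+\alpha})/(1+\frac{8}{25})$ is at least $H_2(0,\frac25)=\frac{9/25}{33/25}=\frac{3}{11}>\frac16$ near $\alpha=0$. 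It drops toward $0$ only as $|\alpha|\to1$.
\item Meanwhile $|K(\alpha,\frac25)|$ should grow with $|\alpha|$.
\end{itemize}

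\textbf{Strategy.} Find a threshold $v_0$ (for example $v_0=\frac16$) and a bound $c_1$ such that:
\begin{enumerate}
\item every point of the strip image has $v\le v_0$ and $|u|\le c_1$;
\item every point of the $t=\frac25$ curve has either $v>v_0$ or $|u|>c_1$.
\end{enumerate}
By the oddness of $K$, it suffices to treat $\alpha\in[0,1)$.

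\textbf{Step 1: the strip.} Show that $H_2(\alpha,t)\le H_2(0,t)=\frac{(1-t)^2}{1+2t^2}$. This holds by AM--GM-type concavity, since $(1-t^{1-\alpha})(1-t^{1+\alpha})\le(1-t)^2$ because $t^{1-\alpha}+t^{1+\alpha}\ge 2t$. For $t>\frac12$ the right side is less than $\frac16$. So the strip image satisfies $v<\frac16$.

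\textbf{Step 2: bound $K$ on the strip.} Write $H_1$ as a function vanishing at $t=\frac12$ and $t=1$. Using $\log$ expansions, show $|H_1|\le c_1 H_2$ uniformly. I plan to treat this as a one-variable inequality in $t$ after showing monotonicity in $\alpha$ by differentiation, mirroring the proof of Lemma~\ref{detJH<0}. The sub-inequalities would be settled by the monotonicity game of \S\ref{S:monotonicity}.

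\textbf{Step 3: the curve $t=\frac25$.} The quantity $H_2(\alpha,\frac25)$ is even and decreasing in $|\alpha|$. Let $\alpha_0$ solve $H_2(\alpha_0,\frac25)=\frac16$. Then for $|\alpha|\ge\alpha_0$ we need $|K(\alpha,\frac25)|>c_1$. Here $K$ is odd and presumably increasing in $\alpha$, so it suffices to check $K(\alpha_0,\frac25)>c_1$ plus monotonicity; $\alpha_0$ may be bracketed numerically by rational values.

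\textbf{Main obstacle.} The hard part is Step 2 near the cusp $t\to1$. There both $H_1$ and $H_2$ vanish, $H_1$ to order $(1-t)^3$ and $H_2$ to order $(1-t)^2$, so $K\to0$. The difficulty is making the bound uniform as $\alpha\to\pm1$, where $H_2$ degenerates. I would first try substituting $t=e^{-s}$ and comparing Taylor expansions in $s$ with explicit remainders.

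If a single constant $c_1$ does not separate the sets, I would replace it by a curve: show that the right boundary $u=\max_\alpha K(\alpha,t)$ of the triangle is monotone in $v$, and compare it pointwise with the $t=\frac25$ curve.
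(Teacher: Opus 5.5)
Your Step~1 is correct. Your decoupled scheme, a single box $\{v<\frac16,\ |u|\le c_1\}$ that contains the strip image and is avoided by the curve $t=\frac25$, is logically sound and genuinely different from the paper's argument. Numerically it does appear to close, but only in a very thin window that the proposal never checks. You have no freedom in $v_0$: the strip image reaches $v=\frac16$ (at $\alpha=0$, $t\to\frac12$), and a larger $v_0$ only makes Step~3 harder. Numerically the relevant values are:
\begin{itemize}
\item $\sup|K|$ over $(-1,1)\times(\frac12,1)$ is about $0.0065$, approached as $|\alpha|\to1$ near $t\approx0.7$.
\item The bound you can realistically prove, $|K|<|\alpha|\,|U_0(t)|/V_0(t)$, has $\sup_{t\in(\frac12,1)}|U_0/V_0|\approx0.0067$.
\item On the curve, $H_2(\alpha_0,\frac25)=\frac16$ at $\alpha_0\approx0.637$, and $K(\alpha_0,\frac25)\approx0.0071$.
\end{itemize}
So $c_1$ must lie in roughly $(0.0067,0.0071)$. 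A round choice such as $c_1=\frac1{100}$ fails Step~3, because $K(\alpha,\frac25)<\alpha\,U_0(\frac25)/V_0(\frac25)\approx0.0119\,\alpha$, which is below $0.008$ just above $\alpha_0$.

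The natural lower bound on the curve, $K(\alpha,\frac25)>\alpha\,U_1(\frac25)/V_1(\frac25)\approx0.0103\,\alpha$, gives only about $0.0066$ at $\alpha_0$. That falls short of the strip bound $0.0067$. Hence Step~3 genuinely needs two things you have not supplied:
\begin{itemize}
\item the monotonicity of $K(\cdot,\frac25)$ on $[\alpha_0,1)$, which you only presume and which is an extra inequality in $\alpha$;
\item a rigorous evaluation of $K$ at a rational bracket of $\alpha_0$.
\end{itemize}
Your stated ``main obstacle'' is also not where the difficulty lies. Both $H_1$ and $H_2$ carry the factor $1-\alpha^2$, which cancels in $K$. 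At the cusp, $K\to0$ because $U_0/V_0\to0$ as $t\to1$. The real issue is how tight the constants are.

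The paper avoids all of this by coupling the horizontal and vertical bounds through $|\alpha|$. Lemmas~\ref{estimate-H1} and \ref{estimate-H2} squeeze $H_1/(\alpha(1-\alpha^2))$ between $U_0$ and $U_1$, and $H_2/(1-\alpha^2)$ between $V_0$ and $V_1$. On the strip this gives $|K|\le|\alpha|/100$ and, by monotonicity in $t$, $H_2(\alpha,t)<H_2(\alpha,\frac12)=\frac{5-2(2^\alpha+2^{-\alpha})}{6}$. On the curve it gives $|K|\ge|\beta|/100$ and $H_2(\beta,\frac25)\ge\frac3{11}(1-\beta^2)$. A common point forces $|\beta|\le|\alpha|$, hence $H_2(\beta,\frac25)\ge\frac3{11}(1-\alpha^2)>\frac{5-2(2^\alpha+2^{-\alpha})}{6}>H_2(\alpha,t)$, a contradiction. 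Here $\frac1{100}$ only has to sit between $0.0067$ and $0.0103$, a comfortable gap. The argument needs neither the location of $\alpha_0$ nor any monotonicity along $t=\frac25$. It reduces to the one-variable inequalities $100U_0+V_0>0$ on $(\frac12,1)$ and $U_1(\frac25)/V_1(\frac25)>\frac1{100}$, plus an elementary comparison in $\alpha$. Your fallback of replacing $c_1$ by a curve is essentially this argument, with the separating curve parametrized by $|\alpha|$.
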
 

For the proof, we need to estimate 
\begin{align*}
H_1&=-\frac{\alpha}{2}\log \frac{1+2 t^2}{3} 
+\frac{1}{2}\log(1+2 t^{1+\alpha})-\frac{1}{2}\log(1+2 t^{1-\alpha}), \\
H_2&=\frac{(1-t^{1-\alpha})(1-t^{1+\alpha})}{1+2 t^2}.
\end{align*}

\begin{lemma}
[Estimates for $H_1$]
\label{estimate-H1}
For $t>0$, we have the limits 
\begin{align*}
\lim\limits_{\alpha\to0} \frac{H_1}{\alpha(1-\alpha^2)} 
&= - \frac{1}{2}\log\frac{1+2t^2}{3}+\frac{2t}{1+2t}\log t=:U_0,\\
\lim\limits_{\alpha\to1} \frac{H_1}{\alpha(1-\alpha^2)} 
&= \frac{1}{4}\log\frac{1+2t^2}{3} - \frac{1+5t^2}{6(1+2t^2)}\log t=:U_1. 
\end{align*}
For $\alpha\in(-1,0)\cup(0,1)$ and $t\in(\frac{1}{2},1)$, $$U_0< \frac{H_1}{\alpha(1-\alpha^2)}<U_1<0;$$ 
For $\alpha\in(-1,0)\cup(0,1)$ and $t\in[\frac{1}{4},\frac{1}{2})$, $$U_0>\frac{H_1}{\alpha(1-\alpha^2)}>U_1>0.$$
\end{lemma}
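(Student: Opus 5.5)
The plan is to separate the dependence on $\alpha$ and $t$ by writing
$$\frac{H_1(\alpha,t)}{\alpha(1-\alpha^2)} = \frac{1}{1-\alpha^2}\left(-\frac12\log\frac{1+2t^2}{3} + \frac{g(\alpha,t)}{2\alpha}\right),$$
where $g(\alpha,t)=\log(1+2t^{1+\alpha})-\log(1+2t^{1-\alpha})$.

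\textbf{Step 1: the two limits.} Both limits come from Taylor expansion.

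At $\alpha=0$, $g$ is odd in $\alpha$. Its derivative is
$$\partial_\alpha g(0,t)=\frac{4t\log t}{1+2t},$$
so $g/(2\alpha)\to \frac{2t\log t}{1+2t}$, which gives $U_0$.

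At $\alpha=1$ the numerator $-\frac12\log\frac{1+2t^2}{3}+\frac{1}{2}\log\frac{1+2t^2}{3}$ vanishes. This is because $g(1,t)=\log(1+2t^2)-\log 3$, so $H_1(1,t)=0$. L'Hôpital in $\alpha$ then gives
$$\lim_{\alpha\to1}\frac{H_1}{\alpha(1-\alpha^2)}=-\frac{\partial_\alpha H_1(1,t)}{2}.$$
We have
$$\partial_\alpha H_1=-\frac12\log\frac{1+2t^2}{3}+\frac{t^{1+\alpha}\log t}{1+2t^{1+\alpha}}+\frac{t^{1-\alpha}\log t}{1+2t^{1-\alpha}},$$
which at $\alpha=1$ equals $-\frac12\log\frac{1+2t^2}{3}+\left(\frac{t^2}{1+2t^2}+\frac13\right)\log t$. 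Dividing by $-2$ yields $U_1$. This step is routine.

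\textbf{Step 2: monotonicity in $\alpha$.} By oddness of $H_1$ in $\alpha$, the quotient $K(\alpha,t):=H_1/(\alpha(1-\alpha^2))$ is even in $\alpha$, so it suffices to take $\alpha\in(0,1)$. The two-sided inequalities then follow if we show that $\alpha\mapsto K(\alpha,t)$ is strictly monotone on $(0,1)$: increasing for $t\in(\frac12,1)$ and decreasing for $t\in[\frac14,\frac12)$. The endpoint values are $U_0$ and $U_1$.

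To prove this, I would set $\alpha^2=s$ and expand $K$ as a power series in $s$ with coefficients depending on $t$. Since $g(\alpha,t)=\log\frac{1+2te^{\alpha\log t}}{1+2te^{-\alpha\log t}}$ is a function of $\alpha\log t$ alone, I would write $g/(2\alpha)$ as $\log t\cdot \phi(\alpha\log t)$ with $\phi$ even. The monotonicity of $K$ reduces to a sign condition on
$$\partial_\alpha\!\left[(1-\alpha^2)K\right]\cdot(1-\alpha^2)+2\alpha\,(1-\alpha^2)K.$$
I would then handle this via the monotonicity game of \S\ref{S:monotonicity}. The expression vanishes to known order at $\alpha=0$, and at $\alpha=1$ its sign is known from Step 1. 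The remaining task is to sign a sufficiently high $\alpha$-derivative. The expected sign switch at $t=\frac12$ is consistent with $H_1(\alpha,\frac12)\equiv0$ observed earlier.

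\textbf{Step 3: signs of $U_0$ and $U_1$.} The inequality $U_0>0$ on $(0,\frac12)$ is exactly $-s(t)/2>0$ for the function $s$ of \eqref{st-log}. For $t\in(\frac12,1)$ the sign of $U_0$ is reversed by the same derivative argument: $s'>0$ persists because $r(t)>0$ holds on $(0,1)$, as $r(1)=0$ and $r'<0$ on $(\frac1{\sqrt2},1)$ while $r$ is larger before that point. For $U_1$, I would verify $U_1(\frac12)=0$ and that $U_1$ is strictly decreasing on $[\frac14,1)$ by a one-variable derivative computation, using $U_1(1)=0$ as an additional check.

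\textbf{Main obstacle.} The hard part is Step 2. It requires proving monotonicity in $\alpha$ uniformly on the two $t$-ranges, and the sign flips exactly at $t=\frac12$. My first attempt would be to exploit the single-variable structure $g=G(\alpha\log t)$: differentiating in $\alpha$ twice or thrice should eliminate the $\log\frac{1+2t^2}{3}$ term and leave an expression in $t^{\pm\alpha}$ whose sign can be determined by a convexity (orthogonal-basis-type) argument, after which the monotonicity game closes the argument.
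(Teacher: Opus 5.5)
\textbf{Step 2 has a genuine gap.} Your Step~1 is fine. Step~2, however, is the whole content of the lemma, and it is only a plan: the decisive derivative sign is never identified or proved, and the natural version of it is harder than you suggest.

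Clearing the $\alpha^{-2}$ in your expression gives
$N:=\alpha(1-\alpha^2)\partial_\alpha H_1-(1-3\alpha^2)H_1=\alpha^2(1-\alpha^2)^2\partial_\alpha K$.
\begin{itemize}
\item $N$ vanishes to order $\ge 3$ at $\alpha=0$; its expansion starts with $2(U_0+a_3)\alpha^3$, where $a_3$ is the $\alpha^3$-coefficient of $H_1$.
\item $N$ vanishes to order $\ge 2$ at $\alpha=1$.
\item By Rolle, $\partial_\alpha^4N$ must therefore vanish somewhere in $(0,1)$. So no fourth-derivative monotonicity game can work.
\item You would instead need the sign of $\partial_\alpha^5N=-30\alpha\,\partial_\alpha^4H_1+4(1-3\alpha^2)\,\partial_\alpha^5H_1+\alpha(1-\alpha^2)\,\partial_\alpha^6H_1$. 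This combination has mixed signs (note that $1-3\alpha^2$ changes sign), and you do not address it.
\item If you keep the $\alpha^{-2}$ instead, every derivative contains divided differences $H_1/\alpha$, for which you offer no sign argument.
\end{itemize}
The boundary behaviour at $\alpha=1$ is also not ``known from Step~1''. One has $N(1,t)=\partial_\alpha N(1,t)=0$, and the first informative quantity $\partial_\alpha^2N(1,t)=6\,\partial_\alpha H_1(1,t)-2\,\partial_\alpha^2H_1(1,t)$ is a new univariate inequality.

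\textbf{The missing idea.} Skip the quotient and compare $H_1$ directly with the cubics $\alpha(1-\alpha^2)U_i$.
\begin{itemize}
\item Since the subtracted term is cubic in $\alpha$, $g_i:=H_1-\alpha(1-\alpha^2)U_i$ satisfies $\partial_\alpha^4g_i=\partial_\alpha^4H_1=(\log t)^4\bigl(r_4(t^{1+\alpha})-r_4(t^{1-\alpha})\bigr)$, with $r_4(x)=x(1-8x+4x^2)/(1+2x)^4$.
\item This factors as a quantity that is positive for $t\in[\frac14,1)$ times $-(4t^2-1)(t^{-\alpha}-t^{\alpha})$. That factor is exactly where the sign switch at $t=\frac12$ comes from.
\item Four boundary conditions then finish the proof via the monotonicity game: $g_1(0)=g_1(1)=\partial_\alpha g_1(1)=\partial_\alpha^2g_1(0)=0$, respectively $g_0(0)=g_0(1)=\partial_\alpha g_0(0)=\partial_\alpha^2g_0(0)=0$.
\end{itemize}
Monotonicity of $K$ may well be true, but it is stronger than what is needed and harder to prove.

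\textbf{Step 3 is wrong as written.} Both $U_0$ and $U_1$ vanish at $t=\frac12$ and at $t=1$, so neither is monotone on $(\frac12,1)$.
\begin{itemize}
\item $U_1$ is not strictly decreasing on $[\frac14,1)$; your own ``check'' $U_1(1)=0$ contradicts this.
\item $r'>0$, not $<0$, on $(1/\sqrt2,1)$. Hence $r<0$ there, and $s'$ changes sign on $(\frac12,1)$.
\end{itemize}
You do not need the sign of $U_0$ separately, since it follows from the chain of inequalities. The sign of $U_1$ drops out of the game:
\begin{itemize}
\item The game gives $\partial_\alpha^2g_1(1,t)<0$ on $(\frac12,1)$, and the reverse inequality on $[\frac14,\frac12)$.
\item Moreover $\partial_\alpha^2g_1(1,t)=(\log t)^2\bigl(r_2(t^2)-\tfrac19\bigr)+6U_1$, where $r_2(x)=x/(1+2x)^2$.
\item Since $r_2(t^2)-\tfrac19=-(4t^2-1)(t^2-1)/\bigl(9(1+2t^2)^2\bigr)$ has the sign of $t-\frac12$, this forces $U_1<0$ on $(\frac12,1)$ and $U_1>0$ on $[\frac14,\frac12)$.
\end{itemize}
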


\begin{remark*}
The graph shows that $$\max\limits_{t\in[\frac{1}{4},\frac{1}{2}]} |U_0(t)-U_1(t)|<10^{-2},\quad\max\limits_{t\in[\frac{1}{2},1]} |U_0(t)-U_1(t)|<2\times10^{-5}.$$
Hence, $U_0$ and $U_1$ differ very little. A sketch is shown in Figure \ref{fig-U0U1}. 
\begin{figure}[htbp]
\centering
\includegraphics{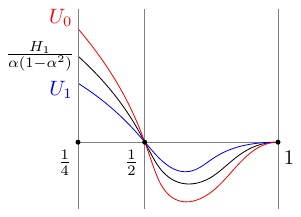}
\caption{Sketch of the estimates for $H_1$}
\label{fig-U0U1}
\end{figure}
\end{remark*}

\begin{lemma}
[Estimates for $H_2$]
\label{estimate-H2}
For $t>0$, we have the limit
$$\lim\limits_{\alpha\to1} \frac{H_2}{1-\alpha^2} 
= \frac{t^2 - 1}{2(1+2 t^2)} \log t=:V_1. $$
For $(\alpha,t)\in(-1,1)\times(0,1)$, $$V_0:=\frac{(1-t)^2}{1+2t^2}\leq \frac{H_2}{1-\alpha^2}<V_1,$$
where equality holds if and only if $\alpha=0$. 
\end{lemma}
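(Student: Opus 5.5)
The plan is to reduce everything to an elementary one-variable concavity argument. Put $s=\log\frac1t>0$, so that $t^{1\pm\alpha}=e^{-(1\pm\alpha)s}$. Since the factor $1+2t^2$ does not depend on $\alpha$, it suffices to study the numerator
\[
(1-t^{1-\alpha})(1-t^{1+\alpha}) = 1+e^{-2s}-e^{-(1-\alpha)s}-e^{-(1+\alpha)s} = 1+e^{-2s}-2e^{-s}\cosh(\alpha s).
\]
This expression is even in $\alpha$, as is $1-\alpha^2$. So I will change variable to $\beta=\alpha^2\in[0,1)$ and set
\[
N(\beta):=1+e^{-2s}-2e^{-s}\cosh\big(s\sqrt{\beta}\big),\qquad \frac{H_2}{1-\alpha^2}=\frac{1}{1+2t^2}\cdot\frac{N(\beta)}{1-\beta}.
\]

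The key observation is that $\cosh(s\sqrt\beta)=\sum_{k\ge0}s^{2k}\beta^k/(2k)!$ is a power series in $\beta$ with positive coefficients, and the coefficients of $\beta^k$ for $k\ge2$ are nonzero. Hence $\beta\mapsto\cosh(s\sqrt\beta)$ is strictly convex on $[0,1]$, so $N$ is strictly concave. Moreover $N(1)=1+e^{-2s}-2e^{-s}\cosh s=0$. Therefore
\[
\frac{N(\beta)}{1-\beta}=-\frac{N(1)-N(\beta)}{1-\beta}
\]
is minus the slope of the chord of the strictly concave function $N$ over $[\beta,1]$. This slope is strictly decreasing in $\beta$, so $N(\beta)/(1-\beta)$ is strictly increasing in $\beta\in[0,1)$. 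As $\beta\to1^-$, it tends to $-N'(1)$.

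It then remains to evaluate the two endpoints. At $\beta=0$ (i.e.\ $\alpha=0$) we get $N(0)=(1-e^{-s})^2=(1-t)^2$, which gives $V_0$. At the other end,
\[
N'(\beta)=-2e^{-s}\,\frac{s\sinh(s\sqrt\beta)}{2\sqrt\beta},
\]
so
\[
-N'(1)=s e^{-s}\sinh s=\tfrac{s}{2}(1-e^{-2s})=\tfrac12(1-t^2)\log\tfrac1t .
\]
Dividing by $1+2t^2$, this is exactly $V_1=\frac{(t^2-1)\log t}{2(1+2t^2)}$. This computation simultaneously proves the stated limit as $\alpha\to1$; it can also be checked directly from $1-t^{1-\alpha}\sim(1-\alpha)\log\frac1t$ and $1-\alpha^2\sim2(1-\alpha)$.

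By strict monotonicity in $\beta$, for $\alpha\in(-1,1)$ we obtain $V_0\le H_2/(1-\alpha^2)<V_1$, with equality on the left only at $\beta=0$, i.e.\ $\alpha=0$. The upper inequality is strict because $\beta<1$. The only step needing care is the strict concavity of $N$ in $\beta$ (rather than in $\alpha$); the power series makes this immediate, so I do not expect a genuine obstacle.
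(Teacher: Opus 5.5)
Your proof is correct, and it takes a genuinely different route from the paper.

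The paper omits this proof, saying only that it is analogous to Lemma~\ref{estimate-H1}. That approach treats the two bounds separately, via $g=H_2-(1-\alpha^2)V_1$ and $g=H_2-(1-\alpha^2)V_0$. One differentiates in $\alpha$ until some $\partial_\alpha^k g$ has an evident sign on $(0,1)$, then propagates that sign back using the vanishing of $g$ and its derivatives at $\alpha=0$ and $\alpha=1$ (the ``monotonicity game'').

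Your substitution $\beta=\alpha^2$ replaces all of this with a single structural fact. The numerator $1+t^2-2t\cosh(\alpha\log t)$ is a strictly concave function of $\beta$ that vanishes at $\beta=1$, so $H_2/(1-\alpha^2)$ is minus a chord slope ending at $\beta=1$. This gives both bounds, the equality case and the limit at once. It also gives the stronger conclusion that $H_2/(1-\alpha^2)$ is strictly increasing in $|\alpha|$.

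What the paper's approach buys is uniformity. The same iterated-derivative machinery handles $H_1$, $v_t$ and $v_t/v$. There the functions involved do not reduce to a single $\cosh$ with a sign-definite expansion in $\alpha^2$; an example is the terms $\log(1+2t^{1\pm\alpha})$ in $H_1$, whose comparison with $U_0,U_1$ flips at $t=\tfrac12$.

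A minor point: you assume $s=\log\frac1t>0$, but the limit is asserted for all $t>0$. Your limit computation uses only $N(1)=0$ and the differentiability of $N$ at $\beta=1$, both valid for every real $s$, so it covers $t\ge 1$ as well.
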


Before showing lemmas \ref{estimate-H1} and \ref{estimate-H2}, let us prove Proposition \ref{separation}. 

\begin{proof}[Proof of Proposition \ref{separation}]
Let $\alpha\in(0,1)$ and $t\in(\frac{1}{2},1)$. 
By Lemmas \ref{estimate-H1} and \ref{estimate-H2}, we have 
$$0<\frac{-H_1}{H_2} < \frac{-\alpha(1-\alpha^2)U_0}{(1-\alpha^2)V_0}=\frac{-\alpha U_0}{V_0}<\frac{\alpha}{100}.$$
Here, the inequality $\frac{-U_0}{V_0}<\frac{1}{100}$ is equivalent to $100 U_0+V_0>0$, 
which can be proved similarly to \eqref{st-log}. 
Since $H_2$ is monotonically decreasing in $t$,  
$$H_2(\alpha,t) < H_2(\alpha,\tfrac{1}{2}) = \frac{5-2(2^\alpha+2^{-\alpha})}{6}.$$
By symmetry, for any $(\alpha,t)\in (-1,1)\times(\frac{1}{2},1)$, we have $$\left|\frac{H_1}{H_2}\right|\leq\frac{|\alpha|}{100},\quad H_2<\frac{5-2(2^\alpha+2^{-\alpha})}{6}.$$

Now consider $\alpha\in(0,1)$ and $t=\frac{2}{5}$. 
By Lemmas \ref{estimate-H1} and \ref{estimate-H2}, we obtain 
\begin{align*}
\frac{H_1}{H_2} &> \frac{\alpha(1-\alpha^2)U_1}{(1-\alpha^2)V_1}=\frac{\alpha U_1}{V_1}
=\left(\frac{5}{7}-\frac{11}{14}
\frac{\log\frac{11}{25}}
     {\log\frac{2}{5}}\right)\alpha>\frac{\alpha}{100}, \\ 
H_2 &> (1-\alpha^2)V_0 = \frac{3}{11}(1-\alpha^2).
\end{align*}
% 5/7-11/14*log(11/25)/log(2/5)
By symmetry again, for any $(\alpha,t)\in (-1,1)\times\{\frac{2}{5}\}$, we have $$\left|\frac{H_1}{H_2}\right|\geq\frac{|\alpha|}{100},\quad H_2\geq\frac{3}{11}(1-\alpha^2).$$

For every $\alpha\in(-1,1)$, one easily checks that $\frac{3}{11}(1-\alpha^2) > \frac{5-2(2^\alpha+2^{-\alpha})}{6}$, the proposition follows. 
\end{proof}

\begin{proof}
[Proof of Lemma \ref{estimate-H1}]
The limits can be obtained by direct computation. We now prove the inequalities. 
Let $g=H_1-\alpha(1-\alpha^2) U_1$. 
Define 
\begin{align*}
&r_1=\frac{x}{1+2x},\hspace{6.15em}
r_2=xr'_1=\frac{x}{(1+2x)^2},\\
&r_3=xr'_2=\frac{x(1-2x)}{(1+2x)^3},\qquad
r_4=xr'_3=\frac{x(1-8x+4x^2)}{(1+2x)^4}.
\end{align*}
Then \begin{align*}
\frac{\partial g}{\partial \alpha} 
&= -\frac{1}{2}\log\frac{1+2t^2}{3}+
(r_1(t^{1+\alpha})+r_1(t^{1-\alpha}))\log t - (1-3\alpha^2) U_1
,\\
\frac{\partial^2 g}{\partial \alpha^2} 
&= (r_2(t^{1+\alpha})-r_2(t^{1-\alpha}))(\log t)^2+6\alpha U_1
,\\
\frac{\partial^3 g}{\partial \alpha^3} 
&= (r_3(t^{1+\alpha})+r_3(t^{1-\alpha}))(\log t)^3+6 U_1
,\\
\frac{\partial^4 g}{\partial \alpha^4} 
&= (r_4(t^{1+\alpha})-r_4(t^{1-\alpha}))(\log t)^4
=
\frac{-4 t^3(4t^2-1)(\frac{1}{t^\alpha}-t^\alpha)(\log t)^4}{(1+2t^{1+\alpha})^4(1+2t^{1-\alpha})^4}I,\\
I &:= 24+2 (2t+\tfrac{1}{2t} ) (t^\alpha+\tfrac{1}{t^\alpha} ) - (2t+\tfrac{1}{2t}-t^\alpha-\tfrac{1}{t^\alpha} )^2. 
% matlab验证代码
%syms t positive
%syms alpha real
%f = ( t^(1+alpha)*(1-8*t^(1+alpha)+4*(t^(1+alpha))^2)/(1+2*t^(1+alpha))^4 - t^(1-alpha)*(1-8*t^(1-alpha)+4*(t^(1-alpha))^2)/(1+2*t^(1-alpha))^4 )* log(t)^4 - ( -4*t^3*(4*t^2-1)*(1/t^alpha-t^alpha)*(24+2*(2*t+1/(2*t))*(t^alpha+1/t^alpha)- (2*t+1/(2*t)-t^alpha-1/t^alpha)^2)*log(t)^4 )/((1+2*t^(1+alpha))^4*(1+2*t^(1-alpha))^4);
%f = simplify(f)
\end{align*}
For $(\alpha,t)\in(-1,1)\times[\frac{1}{4},1)$, we have $2t+\frac{1}{2t}, t^\alpha+\frac{1}{t^\alpha}\in[2,5)$, 
hence $I>0$. 
Therefore, the sign of $\frac{\partial^4 g}{\partial \alpha^4}$ is determined by $(4t^2-1)(\frac{1}{t^\alpha}-t^\alpha)$, which is equivalent to being determined by $(t-\frac{1}{2})\alpha$. 

For $(\alpha,t)\in(0,1)\times(\frac{1}{2},1)$, we have $\frac{\partial^4 g}{\partial \alpha^4}<0$. 
% \footnote{直接判定$\frac{\partial^4 g}{\partial \alpha^4}$的正负是证明的关键.}
Hence, $\frac{\partial^3 g}{\partial \alpha^3}$ is decreasing in $\alpha$. Note that 
\begin{equation}
\label{monotone-game-u-1}
g(0,t)\equiv0,\quad 
g(1,t)\equiv0,\quad
\frac{\partial g}{\partial \alpha}(1,t)\equiv0,\quad
\frac{\partial^2 g}{\partial \alpha^2}(0,t)\equiv0.
\end{equation}
We can then conclude from Figure \ref{derivatives-g} that $g<0$, i.e., $\frac{H_1}{\alpha(1-\alpha^2)}<U_1$. 
\begin{figure}[htbp]
\centering
\includegraphics{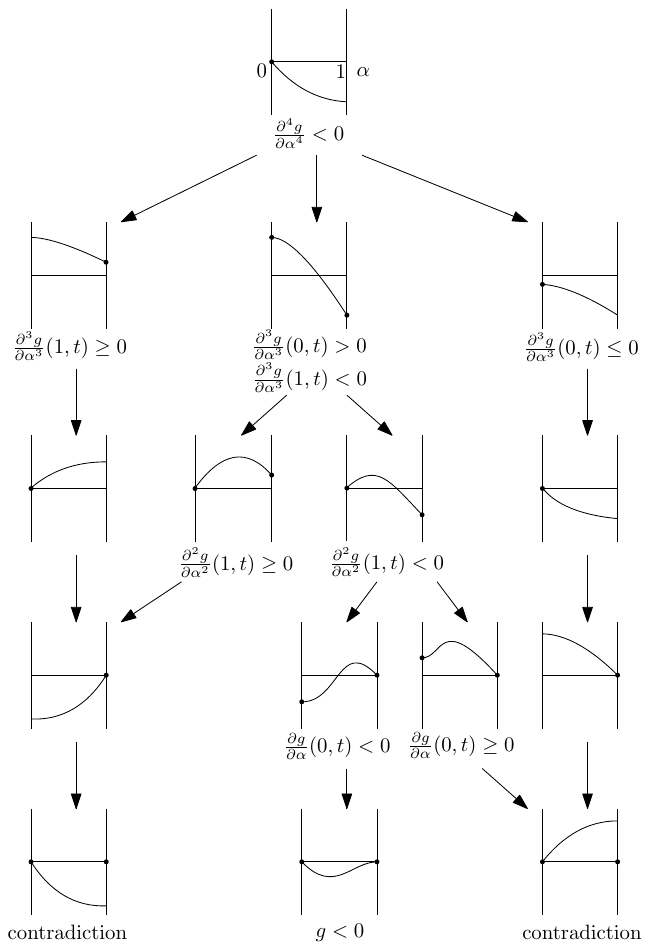}
\caption{Determining the sign of the partial derivatives of $g$ with respect to $\alpha$}
\label{derivatives-g}
\end{figure}
For $(\alpha,t)\in(0,1)\times[\frac{1}{4},\frac{1}{2})$, we have $\frac{\partial^4 g}{\partial \alpha^4}>0$. By reversing all signs in Figure \ref{derivatives-g}, we obtain $g>0$, i.e., $\frac{H_1}{\alpha(1-\alpha^2)}>U_1$. 

To establish the inequalities for $U_0$, let $g=H_1-\alpha(1-\alpha^2) U_0$. 
Then 
\begin{equation}
\label{monotone-game-u-2}
g(0,t)\equiv0,\quad 
g(1,t)\equiv0,\quad
\frac{\partial g}{\partial \alpha}(0,t)\equiv0,\quad
\frac{\partial^2 g}{\partial \alpha^2}(0,t)\equiv0.
\end{equation}
A similar argument shows that for $(\alpha,t)\in(0,1)\times(\frac{1}{2},1)$, we have $\frac{H_1}{\alpha(1-\alpha^2)}>U_0$; and for $(\alpha,t)\in(0,1)\times[\frac{1}{4},\frac{1}{2})$, we have $\frac{H_1}{\alpha(1-\alpha^2)}<U_0$.  

Finally, the case $\alpha\in(-1,0)$ follows by symmetry. 
\end{proof}

\begin{remark*}
As corollaries of Figure \ref{derivatives-g}, 
we have 
$$\frac{\partial^3 g}{\partial\alpha^3}(0,t)>0, \quad 
\frac{\partial^3 g}{\partial\alpha^3}(1,t)<0, \quad
\frac{\partial^2 g}{\partial\alpha^2}(1,t)<0, \quad
\frac{\partial g}{\partial\alpha}(0,t)<0. 
$$
These are univariate inequalities involving logarithms, which can be proved directly by methods similar to \eqref{st-log} or using the techniques in \S\ref{section-log} in the appendix. 
\end{remark*}

\begin{remark*}
The proof of Lemma \ref{estimate-H2} is analogous to that of Lemma \ref{estimate-H1} and is therefore omitted. 
\end{remark*}

\subsection{Transversality}\label{S:transver}

We turn to  the tangents of the curves in Figure \ref{fig-blowup-B}. 
For simplicity, denote 
\begin{align*}
u&=H_1= -\frac{\alpha}{2}\log \frac{1+2 t^2}{3} 
+\frac{1}{2}\log(1+2 t^{1+\alpha}) - \frac{1}{2}\log(1+2 t^{1-\alpha}),\\
v&=H_2= \frac{(1-t^{1-\alpha})(1-t^{1+\alpha})}{1+2 t^2}. 
\end{align*} 
Recall from \eqref{blowup-B} that 
\[
B(u,v) = \left(\frac{u}{v^{3/2}},\ v^{1/2}\right).
\]

Given $(\alpha,w)\in(-1,1)\times(0,1)$. 
Since $v=v(\alpha,t)$ is monotonically decreasing in $t$, the equation $w=\sqrt{v}$ determines a unique $t=t(\alpha,w)$. 
Treating $\alpha$ as the parameter and $w$ as the independent variable, we now obtain a one-parameter family of functions $uv^{-\frac{3}{2}}$. Rotating Figure \ref{fig-blowup-B} counterclockwise by $90^\circ$ and applying a reflection across the vertical axis yields the graph of $uv^{-\frac{3}{2}}$ as a function of $w$. Its derivative with respect to $w$ is shown in Figure \ref{fig-slope}. 
\begin{figure}[ht]
\centering
\includegraphics[width=0.45\textwidth]{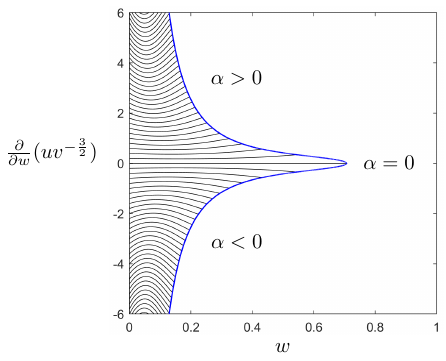}
\caption{Graphs of $\frac{\partial}{\partial w} (uv^{-\frac{3}{2}})$ for different values of $\alpha$ (the blue curve corresponds to $t=\frac{1}{4}$) 
}
\label{fig-slope}
\end{figure}

\begin{proposition}
\label{slope-monotone}
For $(\alpha,t)\in(-1,1)\times(\frac{2}{5},1)$, we have $\frac{\partial}{\partial \alpha}\frac{\partial}{\partial w} (uv^{-\frac{3}{2}})>0$. 
\end{proposition}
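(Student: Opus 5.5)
The approach is to turn the statement into an explicit inequality in the variables $(\alpha,t)$ and then prove it by the same machinery as Lemmas~\ref{estimate-H1} and~\ref{estimate-H2}. Since $w=v^{1/2}$ and $v$ is strictly decreasing in $t$, the chain rule gives, at fixed $\alpha$,
\[
S(\alpha,t):=\frac{\partial}{\partial w}\big(uv^{-3/2}\big)=\frac{2v^{1/2}\,\partial_t(uv^{-3/2})}{\partial_t v}=\frac{2u_tv-3uv_t}{v^{2}\,v_t}.
\]
Differentiating in $\alpha$ at fixed $w$, with $t=t(\alpha,w)$, gives
\[
\frac{\partial}{\partial\alpha}\Big|_{w}S=S_\alpha-S_t\,\frac{v_\alpha}{v_t}=\frac{S_\alpha v_t-S_tv_\alpha}{v_t}.
\]
Since $v_t<0$, the claim is equivalent to
\[
\mathcal{K}(\alpha,t):=S_tv_\alpha-S_\alpha v_t>0
\]
on $(-1,1)\times(\tfrac25,1)$. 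This is a polynomial expression in $u,v$ and their first and second partial derivatives.

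\textbf{Step 1: reduce to $\alpha\in[0,1)$.} The function $u$ is odd in $\alpha$ and $v$ is even. Hence $S$ is odd in $\alpha$, $v_\alpha$ is odd, and $S_t,v_t$ are odd and even respectively. It follows that $\mathcal{K}$ is even in $\alpha$, so it suffices to treat $\alpha\in[0,1)$. At $\alpha=0$ we have $u\equiv0$ and $v_\alpha=0$, so
\[
\mathcal{K}(0,t)=-S_\alpha(0,t)\,v_t(0,t).
\]
Here $S_\alpha(0,t)$ is computed from $u_\alpha(0,t)=U_0(t)$ (Lemma~\ref{estimate-H1}) and $v(0,t)=V_0(t)$. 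The positivity of $\mathcal{K}(0,t)$ becomes a one-variable inequality between logarithmic and rational functions of $t$. I would prove it as in \eqref{st-log}: check the value at an endpoint, differentiate, and reduce to a rational-times-$\log$ inequality.

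\textbf{Step 2: control $\alpha\in(0,1)$.} I would use the factorizations suggested by Lemmas~\ref{estimate-H1} and~\ref{estimate-H2}, namely $u=\alpha(1-\alpha^2)\widetilde U$ and $v=(1-\alpha^2)\widetilde V$. The powers of $(1-\alpha^2)$ partly cancel in $S$ and in $\mathcal{K}$. After clearing positive factors, I would write $\mathcal{K}$ as a combination of the functions $r_k(t^{1\pm\alpha})$ introduced in the proof of Lemma~\ref{estimate-H1}, with powers of $\log t$ as coefficients. I would then run a monotonicity game in $\alpha$ on $[0,1]$. The endpoint data are the value at $\alpha=0$ from Step~1, the vanishing of odd-order $\alpha$-derivatives at $\alpha=0$ (by evenness), and the limit $\alpha\to1$ computed through $U_1$ and $V_1$. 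The sign of a high-order $\alpha$-derivative should factor, as with $\partial^4_\alpha g$ before, into a sign-definite expression times a factor like $(t^{-\alpha}-t^{\alpha})$. The restriction $t>\tfrac25$ should enter exactly where that analogue of the factor $I$ must be shown positive, as $t\ge\tfrac14$ did before.

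\textbf{Main obstacle.} I expect the hardest part to be the corner $t\to1$. There $u=O((1-t)^3)$ and $v=O((1-t)^2)$, so $S$ and $\mathcal{K}$ are $0/0$ quotients and all leading terms cancel. This is precisely the cancellation the $3/2$-order blowup $B$ is designed to resolve. I would first set $s=-\log t$ and expand $u,v$ in powers of $s$ with coefficients polynomial in $\alpha$. I would verify that the leading coefficient of $\mathcal{K}$ as $s\to0$ is a positive multiple of $(1-\alpha^2)$ to some power; the straight-line appearance of the arcs in Figure~\ref{fig-blowup-B} indicates this should hold. Then I would prove positivity on a compact range $t\in(\tfrac25,1-\delta]$ by the sign-of-derivatives argument of Step~2, and on $[1-\delta,1)$ by the expansion with an explicit remainder bound. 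If the global factorization in Step~2 fails, I would fall back on splitting $(\tfrac25,1)$ into a few subintervals and using the estimates $U_0<\widetilde U<U_1$ and $V_0\le\widetilde V<V_1$ on each piece.
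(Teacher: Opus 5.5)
Your reduction is correct and is in fact the paper's. A direct computation gives $\mathcal K=2\left(\frac{J}{v^2}-\frac{I}{v_t^2}\right)$, with $I=\frac{(u_{t\alpha}v_t-u_{tt}v_\alpha)v_t-(v_{t\alpha}v_t-v_{tt}v_\alpha)u_t}{v}$ and $J=\frac32(u_\alpha v_t-u_tv_\alpha)$. So $\mathcal K>0$ is exactly the paper's target $I\,v^2/v_t^2<J$, and your parity argument and the reduction at $\alpha=0$ are fine.

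The gap is Step~2. It carries the whole content of the proposition, and there you only say that a high $\alpha$-derivative ``should factor''. If you carry out the computation, $v$, $v_t$, $I$ and $J$ are all linear in $t^{\pm\alpha}$ with coefficients polynomial in $\alpha$; the $r_k(t^{1\pm\alpha})$ terms you anticipate cancel. After clearing denominators you must therefore sign $v_t^2J-v^2I$. This is cubic in $t^{\pm\alpha}$: it mixes the modes $t^{\pm\alpha},t^{\pm2\alpha},t^{\pm3\alpha}$ and vanishes to third order at $\alpha=1$. Nothing indicates that its high $\alpha$-derivatives split as $(t^{-\alpha}-t^{\alpha})$ times a sign-definite factor, as happened for $g$ in Lemma~\ref{estimate-H1}. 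Even if one did, the endpoint data for such a game would be a long list of unproven one-variable logarithmic inequalities. Your fallback does not close this either: the bounds $U_0<\widetilde U<U_1$ and $V_0\le\widetilde V<V_1$ control $u$ and $v$, but not $u_t,u_{tt},u_{t\alpha},v_{tt}$, all of which enter $\mathcal K$.

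The missing idea is to decouple the non-polynomial factor $v^2/v_t^2$ before running any monotonicity game. The paper proves $\alpha$-independent rational bounds $W_1<v_t/v\le W_0<0$ on $(\frac25,1)$, using Lemmas~\ref{estimate-vt} and~\ref{estimate-vtv} plus the intermediate $W_1$. It then inserts an explicit intermediate function $P$, a polynomial of degree $4$ in $\alpha$. After normalizing both sides by $(1-\alpha^2)(t-1)^4/(1+2t^2)^3$, the two sides have first-order Taylor coefficients $\frac23(1+\alpha^2)$ and $\frac12(1+\alpha^2)$ at $t=1$; $P$ is chosen so that its first-order term $(1-\frac t2)(1+\alpha^2)(t-1)$ lies between them. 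Since $I<P\,(v_t/v)^2$ is affine in $(v_t/v)^2\in[W_0^2,W_1^2)$, it suffices to check the two endpoints. Thus \eqref{I1-P-I2} splits into \eqref{P-J}, \eqref{I-W0-P} and \eqref{I-W1-P}. Each of these is linear in $t^{\pm\alpha}$ plus a polynomial in $\alpha$, so a suitable $\alpha$-derivative has a sign that Lemmas~\ref{basic-monotone-t-2alpha} and~\ref{basic-monotone-t-alpha} reduce to one-variable inequalities; the endpoint data are likewise one-variable inequalities on all of $(\frac25,1)$.

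This also disposes of your $t\to1$ obstacle. No splitting off of $[1-\delta,1)$ and no remainder bounds uniform in $\alpha$ are needed. You have not supplied such bounds, and they would have to survive the corner $\alpha\to\pm1$, where $v$ degenerates as well.
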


Before proving Proposition \ref{slope-monotone}, we use it to prove Proposition \ref{proposition-unique}.

\begin{proof}
[Proof of Proposition \ref{proposition-unique}]
It suffices to show that for $\alpha\neq\beta$, the curves $H(\alpha,\cdot)$ and $H(\beta,\cdot)$ intersect at most once. 

Let $t_0=\frac{2}{5}$. By Proposition \ref{slope-monotone}, the curve $B\circ H(\alpha,(t_0,1))$ can cross $B\circ H(\beta,(t_0,1))$ at most once. Hence, they have at most one intersection point, which implies that $H(\alpha,(t_0,1))$ and $H(\beta,(t_0,1))$ also have at most one intersection. 

By Proposition \ref{restrict-homeo}, the arcs $H(\alpha,(0,t_0])$ and $H(\beta,(0,\frac{1}{2}])$ are disjoint. 
By Proposition \ref{separation}, the images $b\circ H(\alpha,(0,t_0])$ and $b\circ H(\beta,(\frac{1}{2},1))$ are disjoint, i.e., $H(\alpha,(0,t_0])$ and $H(\beta,(\frac{1}{2},1))$ are disjoint. 
Therefore, $H(\alpha,(0,t_0])$ and $H(\beta,\cdot)$ do not intersect. Similarly, $H(\beta,(0,t_0])$ and $H(\alpha,\cdot)$ do not intersect. 

In summary, $H(\alpha,\cdot)$ and $H(\beta,\cdot)$ have at most one intersection point. Since both $\Phi$ and $\Psi$ are homeomorphisms, distinct curves $h(p,\cdot)$ also have at most one intersection point. 
By Lemma \ref{lemma-no-self-intersecting}, the equation system \eqref{xy-eq} has a unique solution in $(0,1)^2$. 
\end{proof}

\begin{proof}
[Proof of Proposition \ref{slope-monotone}]
First, we compute $\frac{\partial}{\partial \alpha}\frac{\partial}{\partial w} (uv^{-\frac{3}{2}})$. 
Treating $(\alpha,w)$ as independent variables, and differentiating both sides of $$w=\sqrt{v}=\sqrt{v(\alpha,t(\alpha,w))}$$ with respect to $\alpha$ and $w$ yields $$
0 = v_\alpha + v_t t_\alpha, \quad
1 = \frac{v_t t_w}{2\sqrt{v}}.$$
Hence, 
$$
t_\alpha = -\frac{v_\alpha}{v_t}, \quad
t_w = \frac{2\sqrt{v}}{v_t}.$$
Therefore,  
\begin{align*}
\frac{\partial}{\partial \alpha}\frac{\partial}{\partial w} (w^{-3} u)
=\ & \frac{\partial}{\partial \alpha}(-3 w^{-4} u + w^{-3} u_t t_w )
= \frac{\partial}{\partial \alpha} \left(-3 w^{-4} u + 2w^{-2} \frac{u_t}{v_t} \right)\\
=\ & -3 w^{-4}(u_\alpha+u_t t_\alpha) + 2w^{-2}\frac{(u_{t\alpha}+u_{tt}t_\alpha)v_t - (v_{t\alpha}+v_{tt}t_\alpha)u_t}{v_t^2}. 
\end{align*}
Further simplifying the inequality $\frac{\partial}{\partial \alpha}\frac{\partial}{\partial w} (w^{-3} u)>0$ leads to 
$$\frac{(u_{t\alpha}v_t-u_{tt}v_\alpha)v_t - (v_{t\alpha}v_t-v_{tt}v_\alpha)u_t}{v}\ \frac{v^2}{v_t^2}<\frac{3}{2}(u_\alpha v_t- u_t v_\alpha).$$
Note that since $v_t<0$, multiplying by $v_t$ reverses the inequality. Direct computation gives 
\begin{align*}
I &:=
\frac{(u_{t\alpha}v_t-u_{tt}v_\alpha)v_t - (v_{t\alpha}v_t-v_{tt}v_\alpha)u_t}{v} \\
&= \frac{4}{(1+2t^2)^3}\Big[
(1-\alpha^2)\left(t^\alpha+\tfrac{1}{t^\alpha} \right)\log t 
+\alpha\left(t^\alpha-\tfrac{1}{t^\alpha}\right)
+\frac{1-2t^2}{1+2t^2}\left(t^\alpha+\tfrac{1}{t^\alpha} \right)
+\frac{2t}{1+2t^2}\Big], \\
J &:= \frac{3}{2}(u_\alpha v_t- u_t v_\alpha )\\
&= \frac{3t}{4(1+2t^2)^2}\Big[
A\left(t^\alpha+\tfrac{1}{t^\alpha}\right)
+B\alpha\left(t^\alpha-\tfrac{1}{t^\alpha}\right)+C
\Big],
\end{align*}
where 
\begin{align*}
A&= 4t\log t+\frac{1-2t^2}{t}\log\frac{1+2t^2}{3}<0,\\
B&=-4t\log t+\frac{1+2t^2}{t}\log\frac{1+2t^2}{3}<0,\\
C&=-8\log t+2\log\frac{1+2t^2}{3}>0. 
\end{align*}
With these notations, the inequality to be proved becomes 
$$I\frac{v^2}{v_t^2}<J.$$

Recall that the Jacobian $\det(J_H)=u_\alpha v_t- u_t v_\alpha=\frac{2}{3}J$ has already been computed in the proof of Lemma \ref{detJH<0}. 
Recall also that an estimate for $v=H_2$ is provided by Lemma \ref{estimate-H2}. 
In \S\ref{sec-vt-vt-v}, we will further estimate $v_t$ and $\frac{v_t}{v}$. Equipped with these estimates, we now proceed to the main argument. 

%\begin{align*}
%I_1 &:=\frac{(u_{t\alpha}v_t-u_{tt}v_\alpha)v_t - (v_{t\alpha}v_t-v_{tt}v_\alpha)u_t}{v}\\
%&=\frac{4}{(1+2t^2)^3} \left( (1-\alpha^2)\left(t^\alpha+\frac{1}{t^\alpha} \right)\log t - (1+2t^2)v_t \right), \\
%I_2 &:= \frac{3}{2}(u_\alpha v_t- u_t v_\alpha )= \frac{3}{2}\det(J_H)\\
%&= \frac{-6t\log t}{(1+2t^2)^2}v + 
%\left(\frac{3t^2}{1+2t^2}\log t - \frac{3}{4}\log\frac{1+2t^2}{3}\right)v_t. 
%\end{align*}

Next, we heuristically construct an intermediate function between $I\frac{v^2}{v_t^2}$ and $J$ by plotting the graphs of these one-parameter functions for observation. 
Further computation gives 
\begin{align*}
I &=\frac{4}{(1+2t^2)^3} \Big[ (1-\alpha^2)\left(t^\alpha+\tfrac{1}{t^\alpha} \right)\log t - (1+2t^2)v_t \Big], \\
J &= \frac{-6t\log t}{(1+2t^2)^2}v + 
\left(\frac{3t^2}{1+2t^2}\log t - \frac{3}{4}\log\frac{1+2t^2}{3}\right)v_t. 
\end{align*}
Then by Lemmas \ref{estimate-H2} and \ref{estimate-vt}, the functions $v$, $v_t$, $I$, $I \frac{v^2}{v_t^2}$ and $J$ share an asymptotic factor $1-\alpha^2$. 
We now multiply $I \frac{v^2}{v_t^2}$ and $J$ by $\frac{(1+2t^2)^3}{1-\alpha^2}$, and compute their Taylor expansions at $t=1$: 
\begin{align*}
I'\frac{v^2}{v_t^2}&:=\frac{(1+2t^2)^3}{(1-\alpha^2)(t-1)^4} I \frac{v^2}{v_t^2} = \frac{1}{3} + \frac{2}{3}(1+\alpha^2)(t - 1) + o(t-1), \\
J'&:=\frac{(1+2t^2)^3}{(1-\alpha^2)(t-1)^4} J = \frac{1}{3} + \frac{1}{2}(1+\alpha^2)(t - 1) + o(t-1).
\end{align*}
After these transformations, 
$$\frac{I'\frac{v^2}{v_t^2}-\frac{1}{3}}{(1+\alpha^2)(t-1)}\text{\ \ and\ \ }\frac{J'-\frac{1}{3}}{(1+\alpha^2)(t-1)}$$ have an $\alpha$-independent intermediate function 
\begin{equation}
\label{mid-function-1-t-2}
1-\frac{t}{2},\quad t\in\left(\frac{2}{5},\ 1\right).
\end{equation}
Therefore, we aim to prove 
\begin{equation}
\label{I1-P-I2}
I\frac{v^2}{v_t^2}< P < J,
\end{equation}
where 
$$P:=\frac{(t-1)^4}{(1+2t^2)^3}(1-\alpha^2)\left[\frac{1}{3}+\left(1-\frac{t}{2}\right)(t-1)(1+\alpha^2)\right].$$
It is worth noting that the intermediate function $P$ is a polynomial of degree $4$ in $\alpha$, and thus its fifth partial derivative with respect to $\alpha$ vanishes. 

Finally, we transform the left-hand side of \eqref{I1-P-I2} into more tractable inequalities. 
In Lemma \ref{estimate-H2}, we proved that $0<V_0\leq \frac{v}{1-\alpha^2}<V_1$. Denote the derivatives of $V_0$ and $V_1$ by $V'_0$ and $V'_1$ respectively. 
By Lemma \ref{estimate-vt}, we have $0>V'_0\geq \frac{v_t}{1-\alpha^2}> V'_1$. It follows that $\frac{V'_1}{V_0}< \frac{v_t}{v} <\frac{V'_0}{V_1}<0$. Furthermore, Lemma \ref{estimate-vtv} provides the optimal $\alpha$-independent upper and lower bounds for $\frac{v_t}{v}$. Therefore, 
$$%\begin{equation}
%\label{V0V1-vtv}
\frac{V'_1}{V_0}< \frac{V'_1}{V_1}<
\frac{v_t}{v} 
\leq \frac{V'_0}{V_0}
<\frac{V'_0}{V_1}<0.
$$%\end{equation}
Define 
\begin{align*}
W_0&=\frac{V'_0}{V_0} = \frac{2(1+2t)}{(1+2t^2)(t-1)},\\
W_1&=\frac{2}{3}t-2+\frac{2}{t-1} = \frac{2(t^2-4t+6)}{3(t-1)}. 
\end{align*}
Here, $W_1$ serves as an intermediate function between $\frac{V'_1}{V_0}$ and $\frac{V'_1}{V_1}$. Both $W_0$ and $W_1$ are chosen to be rational functions to facilitate the proof of the inequality \eqref{log-Gk}. The construction of $W_1$ is analogous to that of the intermediate function \eqref{mid-function-1-t-2}. Since we are dealing with univariate functions here, the construction is more straightforward. The rigorous proof of the inequality $\frac{V'_1}{V_0}<W_1<\frac{V'_1}{V_1}$ follows the same method as in \S\ref{section-log} and is omitted. 
Although $P$ takes both positive and negative values in the domain under consideration, using $W_1<\frac{v_t}{v}\leq W_0<0$, we can decompose \eqref{I1-P-I2} into the following three inequalities to be proved: 
\begin{align}
\label{P-J}
&P<J,\\
\label{I-W0-P}
&I < P W_0^2,\\
\label{I-W1-P}
&I < P W_1^2.
\end{align}
These three inequalities will be proved later. From them, we obtain \eqref{I1-P-I2}, which directly implies $I\frac{v^2}{v_t^2}<J$ and thus completes the proof. 
\end{proof}
 
\begin{proof} 
[Proof of \eqref{P-J}]

Let 
\begin{align*}
g &= \frac{4(1+2t^2)^2}{3t}(J-P) = A\left(t^\alpha+\tfrac{1}{t^\alpha}\right)
+B\alpha\left(t^\alpha-\tfrac{1}{t^\alpha}\right)+C
+D(1-\alpha^2)+E(1-\alpha^4),
\end{align*}
where $$D=-\frac{4(t-1)^4}{9t(1+2t^2)},\quad
E=-\frac{4(t-1)^5(1-\frac{t}{2})}{3t(1+2t^2)}.$$
Then 
\begin{align*}
\frac{\partial g}{\partial\alpha}&=
(A\log t +B)\left(t^\alpha-\tfrac{1}{t^\alpha}\right)
+B\alpha\left(t^\alpha+\tfrac{1}{t^\alpha}\right)\log t
-2D\alpha-4E\alpha^3,\\
\frac{\partial^2 g}{\partial\alpha^2}&=
(A(\log t)^2 + 2 B\log t)\left(t^\alpha+\tfrac{1}{t^\alpha}\right)
+B\alpha\left(t^\alpha-\tfrac{1}{t^\alpha}\right)(\log t)^2
-2D-12E\alpha^2,\\
\frac{\partial^3 g}{\partial\alpha^3}&=
(A(\log t)^3 + 3 B(\log t)^2)\left(t^\alpha-\tfrac{1}{t^\alpha}\right)
+B\alpha\left(t^\alpha+\tfrac{1}{t^\alpha}\right)(\log t)^3
-24E\alpha,\\
\frac{\partial^4 g}{\partial\alpha^4}&=
(A(\log t)^4 + 4 B(\log t)^3)\left(t^\alpha+\tfrac{1}{t^\alpha}\right)
+B\alpha\left(t^\alpha-\tfrac{1}{t^\alpha}\right)(\log t)^4
-24E,\\
\frac{\partial^5 g}{\partial\alpha^5}&=
(A(\log t)^5 + 5 B(\log t)^4)\left(t^\alpha-\tfrac{1}{t^\alpha}\right)
+B\alpha\left(t^\alpha+\tfrac{1}{t^\alpha}\right)(\log t)^5.  
\end{align*}

Let $\alpha\in(0,1)$ and $t\in(\frac{2}{5},1)$. 
We now verify that $\frac{\partial^5 g}{\partial\alpha^5}>0$. 
This is equivalent to showing that $\alpha\frac{1+t^{2\alpha}}{1-t^{2\alpha}}>\frac{A}{B} + \frac{5}{\log t}$. 
By Lemma \ref{basic-monotone-t-2alpha}, we have 
$\alpha\frac{1+t^{2\alpha}}{1-t^{2\alpha}}>\frac{1}{-\log t}$. Thus, it suffices to prove 
\begin{equation}
\label{A-B-6-logt}
\frac{A}{B} + \frac{6}{\log t}<0.
\end{equation}
Additionally, we need to prove 
\begin{align}
\label{g00t}
g(0,t)& = 2A + C+D+E>0,\\
\label{g11t}
\frac{\partial g}{\partial\alpha}(1,t)
&=(A\log t +B)\left(t-\tfrac{1}{t}\right)
+B\left(t+\tfrac{1}{t}\right)\log t
-2D-4E
<0.
\end{align}
Combining these with $$
g(1,t)\equiv0,\quad 
\frac{\partial g}{\partial\alpha}(0,t)\equiv0,\quad
\frac{\partial^3 g}{\partial\alpha^3}(0,t)\equiv0,
$$
we can conclude, in a manner analogous to Figure \ref{derivatives-g}, that $g>0$. 
See also Example \ref{ex-section-P-J}. 
The proofs of the univariate inequalities \eqref{A-B-6-logt}, \eqref{g00t}, and \eqref{g11t} are given in \S\ref{section-log}. 

For $\alpha\in(-1,0)$, we have $g(\alpha,t)=g(-\alpha,t)>0$. 
For $\alpha=0$, the result follows from \eqref{g00t}. 
This completes the proof of \eqref{P-J}. 
\end{proof}

\begin{proof}
[Proof of \eqref{I-W0-P} and \eqref{I-W1-P}]

Let $G = \frac{(1+2t^2)^3}{4}I$. 
Then 
\begin{align*}
G &=
(1-\alpha^2)\left(t^\alpha+\tfrac{1}{t^\alpha} \right)\log t 
+\alpha\left(t^\alpha-\tfrac{1}{t^\alpha}\right)
+\frac{1-2t^2}{1+2t^2}\left(t^\alpha+\tfrac{1}{t^\alpha} \right)
+\frac{2t}{1+2t^2}, \\
\frac{\partial G}{\partial\alpha}&=
(1-\alpha^2)\left(t^\alpha-\tfrac{1}{t^\alpha} \right)(\log t)^2
-\alpha\left(t^\alpha+\tfrac{1}{t^\alpha} \right)\log t 
+t^\alpha-\tfrac{1}{t^\alpha} 
+\frac{1-2t^2}{1+2t^2}\left(t^\alpha-\tfrac{1}{t^\alpha} \right)\log t
,\\
\frac{\partial^2 G}{\partial\alpha^2}&=
(1-\alpha^2)\left(t^\alpha+\tfrac{1}{t^\alpha} \right)(\log t)^3
-3\alpha\left(t^\alpha-\tfrac{1}{t^\alpha} \right)(\log t)^2 
+\frac{1-2t^2}{1+2t^2}\left(t^\alpha+\tfrac{1}{t^\alpha} \right)(\log t)^2
,\\
\frac{\partial^3 G}{\partial\alpha^3}&=
(1-\alpha^2)\left(t^\alpha-\tfrac{1}{t^\alpha} \right)(\log t)^4
-5\alpha\left(t^\alpha+\tfrac{1}{t^\alpha} \right)(\log t)^3 \\
&\ \ \ \ 
-3\left(t^\alpha-\tfrac{1}{t^\alpha} \right)(\log t)^2
+\frac{1-2t^2}{1+2t^2}\left(t^\alpha-\tfrac{1}{t^\alpha} \right)(\log t)^3\\
&=
\left(t^\alpha-\tfrac{1}{t^\alpha} \right)(\log t)^3
\left(
(1-\alpha^2)\log t + 5\alpha\frac{1+t^{2\alpha}}{1-t^{2\alpha}}-\frac{3}{\log t}+\frac{1-2t^2}{1+2t^2}
\right).
%
%\frac{\partial^4 G}{\partial\alpha^4}&=
%(1-\alpha^2)\left(t^\alpha+\frac{1}{t^\alpha} \right)(\log t)^5
%-7\alpha\left(t^\alpha-\frac{1}{t^\alpha} \right)(\log t)^4 \\
%&\ \ \ \ 
%-8\left(t^\alpha+\frac{1}{t^\alpha} \right)(\log t)^3
%+\frac{1-2t^2}{1+2t^2}\left(t^\alpha+\frac{1}{t^\alpha} \right)(\log t)^4
%,\\
%\frac{\partial^5 G}{\partial\alpha^5}&=
%(1-\alpha^2)\left(t^\alpha-\frac{1}{t^\alpha} \right)(\log t)^6
%-9\alpha\left(t^\alpha+\frac{1}{t^\alpha} \right)(\log t)^5 \\
%&\ \ \ \ 
%-15\left(t^\alpha-\frac{1}{t^\alpha} \right)(\log t)^4
%+\frac{1-2t^2}{1+2t^2}\left(t^\alpha-\frac{1}{t^\alpha} \right)(\log t)^5\\
%&=
%\left(t^\alpha-\frac{1}{t^\alpha} \right)(\log t)^5
%\left(
%(1-\alpha^2)\log t + 9\alpha\frac{1+t^{2\alpha}}{1-t^{2\alpha}}-\frac{15}{\log t}+\frac{1-2t^2}{1+2t^2}
%\right).
\end{align*}

Let $\alpha\in(0,1)$ and $t\in(\frac{2}{5},1)$. 
By Lemma \ref{basic-monotone-t-2alpha}, we have $\alpha\frac{1+t^{2\alpha}}{1-t^{2\alpha}}>\frac{1}{-\log t}$. 
By Lemma \ref{basic-monotone-t-alpha}, we have $\frac{1}{t^\alpha}-t^\alpha>-2\alpha\log t$. 
Therefore,  
\begin{align*}
\frac{\partial^3 G}{\partial\alpha^3}
&>\left(t^\alpha-\tfrac{1}{t^\alpha} \right)(\log t)^3
\left( 
\log t-\frac{8}{\log t}+\frac{1-2t^2}{1+2t^2}
\right)\\
&>2\alpha(\log t)^4\left( 
\log t-\frac{8}{\log t}+\frac{1-2t^2}{1+2t^2}
\right). 
\end{align*}

For $k=0, 1$, define $G_k=\frac{(1+2t^2)^3}{4}(I - PW_k^2)$. Then $$
\frac{\partial^3 G_k}{\partial\alpha^3}=  
\frac{\partial^3 G}{\partial\alpha^3}+ 6(t-1)^5\left(1-\frac{t}{2}\right)\alpha W_k^2.
$$
We now verify $\frac{\partial^3 G_k}{\partial\alpha^3}>0$. 
By the estimate for $\frac{\partial^3 G}{\partial\alpha^3}$ and the fact that $W_1^2> W_0^2$, it suffices to prove 
\begin{equation}
\label{log-W1}
(\log t)^4\left( 
\log t-\frac{8}{\log t}+\frac{1-2t^2}{1+2t^2}
\right)+3(t-1)^5\left(1-\frac{t}{2}\right) W_1^2>0.
\end{equation}
% Here, the left-hand side of \eqref{log-W1} is exactly $\frac{1}{2}\frac{\partial^4 G_1}{\partial\alpha^4}(0,t)$. 
Additionally, we need to prove 
\begin{equation}
\label{log-Gk}
G_k(0,t) = 2\log t + \frac{2(1+2t)(1-t)}{1+2t^2} 
- \frac{(t-1)^4(\frac{1}{3}+(1-\frac{t}{2})(t-1))}{4}W_k^2
<0.
\end{equation}
Combining these with 
$$
G_k(1,t)\equiv0,\quad 
\frac{\partial G_k}{\partial\alpha}(0,t)\equiv0,
$$
we can conclude, in a manner analogous to Figure \ref{derivatives-g}, that $G_k<0$. 
See also Example \ref{ex-section-I-W-P}. 
The proofs of the univariate inequalities \eqref{log-W1} and \eqref{log-Gk} are given in \S\ref{section-log}. 

For $\alpha\in(-1,0)$, we have $G_k(\alpha,t)=G_k(-\alpha,t)<0$. 
For $\alpha=0$, the result follows from \eqref{log-Gk}. 
This completes the proof of \eqref{I-W0-P} and \eqref{I-W1-P}. 
\end{proof}

\subsection{Estimates for $v_t$ and $\frac{v_t}{v}$}
\label{sec-vt-vt-v}

Recall that 
\begin{align*}
v   &= \frac{(1-t^{1-\alpha})(1-t^{1+\alpha})}{1+2 t^2} =
\frac{t}{1+2 t^2}\Big[
t+\tfrac{1}{t}-\left(t^\alpha+\tfrac{1}{t^\alpha}\right)
\Big],\\
v_t &=
\frac{-1}{1+2t^2}\left[
\frac{1 - 2 t^2}{1 + 2 t^2}\left(t^\alpha+\tfrac{1}{t^\alpha}\right) 
+ \alpha \left(t^\alpha-\tfrac{1}{t^\alpha} \right)
+ \frac{2 t}{ 1 + 2 t^2 }
\right].
\end{align*}
Recall also that $V_0$ and $V_1$ are defined in Lemma \ref{estimate-H2}, whose derivatives are denoted by $V'_0$ and $V'_1$ respectively. 

\begin{lemma}
[Estimates for $v_t$]
\label{estimate-vt}
For $t>0$, we have the limit  
$$\lim\limits_{\alpha\to1} \frac{v_t}{1-\alpha^2} 
= \frac{t^2-1}{2t(1+2 t^2)} + \frac{3t}{(1+2t^2)^2} \log t=V'_1. $$
For $(\alpha,t)\in(-1,1)\times(\frac{1}{8},1)$, $$V'_0=\frac{2(2t+1)(t-1)}{(1+2t^2)^2}\geq \frac{v_t}{1-\alpha^2}>V'_1,$$
where equality holds if and only if $\alpha=0$. 
\end{lemma}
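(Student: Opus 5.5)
The plan is to treat $v_t$ as a function of $\alpha$ for fixed $t\in(\frac18,1)$ and run the same monotonicity game as in the proof of Lemma~\ref{estimate-H1}. The hope is that the fourth $\alpha$-derivative has a clean sign.

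Put $L=-\log t\in(0,\log 8)$ and $c=c(t)=\frac{1-2t^2}{1+2t^2}\in(-\frac13,1)$. Then $t^\alpha+t^{-\alpha}=2\cosh(\alpha L)$ and $\alpha(t^\alpha-t^{-\alpha})=-2\alpha\sinh(\alpha L)$, so the formula for $v_t$ above becomes
\[
(1+2t^2)\,v_t=-\Big[2c\cosh(\alpha L)-2\alpha\sinh(\alpha L)+\tfrac{2t}{1+2t^2}\Big].
\]
The value at $\alpha=0$ comes out as $v_t(0,t)=-\frac{2(1-t)(1+2t)}{(1+2t^2)^2}=V_0'$. Since $v\equiv0$ at $\alpha=\pm1$ for every $t$, we get $v_t(\pm1,t)=0$.

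For the limit I would apply l'H\^opital at $\alpha=1$. This gives $\lim_{\alpha\to1}\frac{v_t}{1-\alpha^2}=-\frac12\partial_\alpha v_t(1,t)$, and a direct evaluation should match $V_1'$, which is also the $t$-derivative of $V_1$ from Lemma~\ref{estimate-H2}. The analogous formula at $\alpha=-1$ follows by evenness.

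Next I differentiate four times in $\alpha$, using $\partial_\alpha^4\big(\alpha\sinh(\alpha L)\big)=L^4\alpha\sinh(\alpha L)+4L^3\cosh(\alpha L)$. This yields
\[
\partial_\alpha^4 v_t=-\frac{2L^3}{1+2t^2}\Big[(cL-4)\cosh(\alpha L)-L\alpha\sinh(\alpha L)\Big].
\]
On $t\in(\frac18,1)$ we have $cL<\log 8<4$ and $\alpha\sinh(\alpha L)\ge0$, so the bracket is negative and $\partial_\alpha^4 v_t>0$. Subtracting a quadratic polynomial in $\alpha$ does not change this. Hence both $g:=v_t-(1-\alpha^2)V_0'$ and $h:=v_t-(1-\alpha^2)V_1'$ satisfy $\partial_\alpha^4>0$. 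By evenness it suffices to work with $\alpha\in[0,1]$, and evenness gives $g'(0)=g'''(0)=h'(0)=h'''(0)=0$.

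For $g$ we have $g(0)=g(1)=0$. Since $g'''(0)=0$ and $g''''>0$, $g''$ is increasing. Together with $g'(0)=0$ and the vanishing at both endpoints, this forces $g$ to first decrease and then increase back to $0$. So $g<0$ on $(0,1)$, which is exactly $\frac{v_t}{1-\alpha^2}<V_0'$ for $\alpha\neq0$, with equality at $\alpha=0$.

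For $h$ we have $h(1)=0$, and $h'(1)=0$ by the choice of $V_1'$ as the limit. Again $h''$ is increasing, and $h'(0)=h'(1)=0$, so $h'<0$ on $(0,1)$. Thus $h$ decreases to $h(1)=0$, giving $h>0$, i.e.\ $\frac{v_t}{1-\alpha^2}>V_1'$.

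I expect the main obstacle to be bookkeeping rather than ideas. The first delicate point is verifying that $V_1'$ coincides with $-\frac12\partial_\alpha v_t(1,t)$, so that $h'(1)=0$ genuinely holds. The second is checking the sign of $cL-4$ uniformly; this is where the restriction $t>\frac18$, i.e.\ $L<\log8$, is used, and it holds with room to spare.
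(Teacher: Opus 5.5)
Your proposal is correct and follows essentially the paper's route: the same comparison functions $v_t-(1-\alpha^2)V_1'$ and $v_t-(1-\alpha^2)V_0'$ with the same boundary data, closed by the same monotonicity-game argument. The only difference is that the paper shows $\partial_\alpha^3 g>0$ on $(0,1)$ directly (using $-\log t<3$), whereas you go one derivative further and recover $\partial_\alpha^3 g>0$ from $\partial_\alpha^4 g>0$ together with $\partial_\alpha^3 g(0,t)=0$ by evenness.
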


\begin{proof}
The limit can be obtained directly, and the inequality is proved similarly to Lemma \ref{estimate-H1}. 
Let $g=v_t-(1-\alpha^2) V'_1$. 
Then \begin{align*}
\frac{\partial g}{\partial \alpha} 
&= \frac{-\log t}{1+2t^2}\left[
\left(\frac{1}{\log t}+\frac{1 - 2 t^2}{1 + 2 t^2}\right)\left(t^\alpha-\tfrac{1}{t^\alpha}\right) 
+ \alpha \left(t^\alpha+\tfrac{1}{t^\alpha} \right)
\right]
 + 2\alpha V'_1
,\\
\frac{\partial^2 g}{\partial \alpha^2} 
&= \frac{-(\log t)^2}{1+2t^2}\left[
\left(\frac{2}{\log t}+\frac{1 - 2 t^2}{1 + 2 t^2}\right)\left(t^\alpha+\tfrac{1}{t^\alpha}\right) 
+ \alpha \left(t^\alpha-\tfrac{1}{t^\alpha} \right)
\right]
 + 2 V'_1
,\\
\frac{\partial^3 g}{\partial \alpha^3} 
&= \frac{-(\log t)^3}{1+2t^2}\left[
\left(\frac{3}{\log t}+\frac{1 - 2 t^2}{1 + 2 t^2}\right)\left(t^\alpha-\tfrac{1}{t^\alpha}\right) 
+ \alpha \left(t^\alpha+\tfrac{1}{t^\alpha} \right)
\right]\\
&=\frac{-(\log t)^3}{1+2t^2}\left(\tfrac{1}{t^\alpha}-t^\alpha\right) \left(
-\frac{3}{\log t}-\frac{1 - 2 t^2}{1 + 2 t^2}
+ \alpha\frac{1+t^{2\alpha}}{1-t^{2\alpha}}
\right). 
\end{align*}
For $(\alpha,t)\in(0,1)\times(\frac{1}{8},1)$, since $-\frac{3}{\log t}-\frac{1 - 2 t^2}{1 + 2 t^2}>-\frac{3}{\log t}-1>0$, we have $\frac{\partial^3 g}{\partial \alpha^3} >0$. 
Thus, $\frac{\partial^2 g}{\partial \alpha^2}$ is monotonically increasing in $\alpha$. Note that 
$$
%\label{monotone-game-vt-1}
g(1,t)\equiv0,\quad 
\frac{\partial g}{\partial \alpha}(0,t)\equiv0,\quad
\frac{\partial g}{\partial \alpha}(1,t)\equiv0.
$$
Similarly to Figure \ref{derivatives-g}, we conclude $g>0$ (i.e., $\frac{v_t}{1-\alpha^2}>V'_1$) and $g(0,t)>0$ (i.e., $V'_0>V'_1$).  

To obtain the inequality for $V'_0$, let $g=v_t-(1-\alpha^2) V'_0$. 
Then 
$$
% \label{monotone-game-vt-2}
g(0,t)\equiv0,\quad 
g(1,t)\equiv0,\quad
\frac{\partial g}{\partial \alpha}(0,t)\equiv0.
$$
A similar argument shows that for $(\alpha,t)\in(0,1)\times(\frac{1}{8},1)$, we have $g<0$, i.e., $\frac{v_t}{1-\alpha^2}<V'_0$. 

Finally, the case $\alpha=0$ follows from $V'_0>V'_1$ and the case $\alpha\in(-1,0)$ follows by symmetry. 
\end{proof}

\begin{lemma}
[Estimates for $\frac{v_t}{v}$]
\label{estimate-vtv}
For $t\in(0,1)$, we have the limit 
$$\lim\limits_{\alpha\to1} \frac{v_t}{v} 
= \frac{6t}{(1+2t^2)(t^2-1)} + \frac{1}{t\log t} =\frac{V'_1}{V_1}. $$
For $(\alpha,t)\in(-1,1)\times(\frac{1}{5},1)$, $$\frac{V'_1}{V_1}<\frac{v_t}{v}\leq \frac{V'_0}{V_0}=\frac{2(1+2t)}{(1+2t^2)(t-1)},$$
where equality holds if and only if $\alpha=0$. 
\end{lemma}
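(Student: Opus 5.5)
The estimate for $v_t/v$ is a ratio of two quantities already controlled separately, so the natural route is not to combine Lemmas \ref{estimate-H2} and \ref{estimate-vt} (that only gives $V'_1/V_0<v_t/v<V'_0/V_1$, which is weaker), but to clear denominators and run the same monotonicity game in $\alpha$ as in Lemmas \ref{estimate-H1} and \ref{estimate-vt}. The limit is immediate: dividing numerator and denominator by $1-\alpha^2$ and using the limits in Lemmas \ref{estimate-H2} and \ref{estimate-vt} gives $V'_1/V_1$, and a direct simplification of $V'_1/V_1$ yields the stated closed form. Symmetry $v(-\alpha,t)=v(\alpha,t)$ lets us restrict to $\alpha\in[0,1)$, and at $\alpha=0$ we have $v=(1-\alpha^2)V_0$ and $v_t=(1-\alpha^2)V'_0$ with equality, giving the equality case.

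For the upper bound, since $v>0$ and $V_0>0$, the claim $v_t/v\le V'_0/V_0$ is equivalent to $g_0:=V_0v_t-V'_0v\le 0$, and for the lower bound to $g_1:=V_1v_t-V'_1v>0$. Writing $s=t^\alpha+t^{-\alpha}$, $d=t^\alpha-t^{-\alpha}$, both $v$ and $v_t$ are linear combinations of $s$, $\alpha d$ and constants with $t$-dependent coefficients (the explicit formulas in \S\ref{sec-vt-vt-v}), so $g_k=a_k(t)\,s+b_k(t)\,\alpha d+c_k(t)$. Differentiating in $\alpha$ repeatedly, as in the proof of Lemma \ref{estimate-vt}, the higher derivatives factor as $d\,(\log t)^m$ times an expression of the form $\beta_k(t)+\alpha\frac{1+t^{2\alpha}}{1-t^{2\alpha}}$. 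Then Lemma \ref{basic-monotone-t-2alpha} ($\alpha\frac{1+t^{2\alpha}}{1-t^{2\alpha}}>-1/\log t$) reduces the sign of the third derivative to a univariate inequality in $t$ on $(\frac15,1)$. Boundary data are available for free: $g_0(0,t)=0$, $\partial_\alpha g_k(0,t)=0$ by evenness, and $g_1(1,t)=0$ while $g_0(1,t)=0$ too (both $v,v_t$ vanish at $\alpha=1$), and $\partial_\alpha g_1(1,t)=0$ since $V_1,V_1'$ are exactly the first-order coefficients at $\alpha=1$. With a fixed sign of $\partial^3_\alpha g_k$, the monotonicity game (as in Figure \ref{derivatives-g}, Example \ref{ex-section-I-W-P}-type data) pins down the sign of $g_k$ on $(0,1)$.

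The main obstacle is verifying the univariate inequalities: the sign of the bracketed $t$-coefficient in $\partial_\alpha^3 g_k$ after invoking Lemma \ref{basic-monotone-t-2alpha}, and, for $g_0$, possibly one extra endpoint inequality such as the sign of $\partial_\alpha g_0(1,t)$, which is a logarithmic inequality in $t$ (it is essentially $V'_0V_1-V_0V'_1$, i.e. the comparison $V'_1/V_1<V'_0/V_0$). This is where the threshold $t>\frac15$ should enter (analogous to $t>\frac18$ in Lemma \ref{estimate-vt}, where $-3/\log t-1>0$ was used); I would first try crude bounds $\frac{1-2t^2}{1+2t^2}<1$ and $-1/\log t$ monotone, and if insufficient, prove the logarithmic inequalities by the derivative-and-endpoint method of \eqref{st-log} and \S\ref{section-log}.
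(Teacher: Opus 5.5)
Your proof is correct, but it takes a different route from the paper.

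\textbf{The paper's route.} The paper never compares $v_t/v$ with the two endpoint ratios separately. Instead it proves that $\alpha\mapsto v_t/v$ is strictly decreasing on $(0,1)$. Writing $v_t/v=\partial_t\log v$, it computes $\partial_\alpha(-tv_t/v)$ and clears the positive factor $(1-t^{1+\alpha})^2(1-t^{1-\alpha})^2/t^2$. This gives
\[
I=A(t^\alpha-t^{-\alpha})-(t^{2\alpha}-t^{-2\alpha})+B\alpha(t^\alpha+t^{-\alpha})-4\alpha\log t .
\]
It then shows $\partial_\alpha^4 I>0$ using the \emph{upper} bound $\alpha\frac{1+t^{2\alpha}}{1-t^{2\alpha}}<\frac{1+t^2}{1-t^2}$ of Lemma~\ref{basic-monotone-t-2alpha} together with $t+\frac1t<5.2$; this is where $t>\frac15$ enters. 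Positivity of $I$ follows from the boundary data \eqref{monotone-game-u-1}. Both bounds are then simply the values of $v_t/v$ at $\alpha=0$ and as $\alpha\to1$.

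\textbf{Your route.} Your functions $g_k=V_kv_t-V_k'v$ are linear in $t^\alpha+t^{-\alpha}$, $\alpha(t^\alpha-t^{-\alpha})$ and $1$, exactly as in Lemma~\ref{estimate-vt}. Your boundary data are right. Carrying out the computation gives
\[
\partial_\alpha^3 g_k=(\log t)^3\bigl(t^\alpha-t^{-\alpha}\bigr)\frac{V_k}{1+2t^2}\Bigl[t\frac{V_k'}{V_k}-\frac{1-2t^2}{1+2t^2}-\frac{3}{\log t}+\alpha\frac{1+t^{2\alpha}}{1-t^{2\alpha}}\Bigr].
\]
The prefactor is positive for $\alpha\in(0,1)$. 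So the \emph{lower} bound of Lemma~\ref{basic-monotone-t-2alpha} reduces $\partial_\alpha^3 g_k>0$ to the univariate inequality
\[
\Phi_k(t):=t\frac{V_k'}{V_k}-\frac{1-2t^2}{1+2t^2}-\frac{4}{\log t}>0 .
\]
For $k=1$ this reads $\frac{6t^2}{(1+2t^2)(t^2-1)}-\frac{1-2t^2}{1+2t^2}-\frac{3}{\log t}>0$. Numerically $\Phi_0(\frac15)\approx0.99$, $\Phi_1(\frac15)\approx0.78$, and $\Phi_k\approx\frac{2}{1-t}-1$ as $t\to1$. Both stay positive well below $t=\frac15$, so the threshold is not tight for your argument.

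Once $\partial_\alpha^3 g_k>0$ is known, the games close as follows:
\begin{itemize}
\item $\partial_\alpha g_1(0,t)=\partial_\alpha g_1(1,t)=g_1(1,t)=0$ forces $g_1>0$ on $[0,1)$.
\item $g_0(0,t)=\partial_\alpha g_0(0,t)=g_0(1,t)=0$ already forces $g_0<0$ on $(0,1)$.
\end{itemize}
So the extra endpoint inequality for $\partial_\alpha g_0(1,t)$ that you anticipated is not needed.

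\textbf{Trade-off.} The paper obtains a stronger statement (monotonicity in $\alpha$) from a single auxiliary function, at the cost of a fourth-order game and a crude numerical bound. You get two third-order games of a shape the paper already handles. The cost is that you still owe proofs of the two logarithmic inequalities $\Phi_0,\Phi_1>0$ on $(\frac15,1)$; the numerics indicate they hold with room, and the derivative-and-endpoint method of \S\ref{section-log} should handle them.
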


\begin{proof}
The limits can be obtained directly. To prove the inequality, it suffices to show that for any $t\in(\frac{1}{5},1)$, the function $\frac{v_t}{v}$ is monotonically decreasing in $\alpha\in(0,1)$. 
From 
\begin{align*}
\frac{v_t}{v}&=\frac{\partial}{\partial t}\log v
= \frac{\partial}{\partial t}( \log(1-t^{1+\alpha}) +\log(1-t^{1-\alpha}) - \log(1+2 t^2))\\
&=
-\frac{(1+\alpha)t^{\alpha}}{1-t^{1+\alpha}}
-\frac{(1-\alpha)t^{-\alpha}}{1-t^{1-\alpha}}
-\frac{4t}{1+2t^2}\\
&= -\frac{1}{t}\left(\frac{t^{1+\alpha}}{1-t^{1+\alpha}}+\frac{t^{1-\alpha}}{1-t^{1-\alpha}}
+\alpha\left(\frac{t^{1+\alpha}}{1-t^{1+\alpha}}-\frac{t^{1-\alpha}}{1-t^{1-\alpha}}\right)
+\frac{4t^2}{1+2t^2}
\right), 
\end{align*}
we obtain 
\begin{align*}
\frac{\partial}{\partial \alpha}\frac{-t v_t}{v}&=
\left(\frac{t^{1+\alpha}}{(1-t^{1+\alpha})^2}-\frac{t^{1-\alpha}}{(1-t^{1-\alpha})^2}\right)\log t
+ \frac{t^{1+\alpha}}{1-t^{1+\alpha}}-\frac{t^{1-\alpha}}{1-t^{1-\alpha}}\\
&\ \ \ \ + \alpha \left(\frac{t^{1+\alpha}}{(1-t^{1+\alpha})^2}+\frac{t^{1-\alpha}}{(1-t^{1-\alpha})^2}\right)\log t. 
\end{align*}
Multiplying by $\frac{(1-t^{1+\alpha})^2(1-t^{1-\alpha})^2}{t^2}$ yields 
$$I:=
A\left(t^\alpha-\tfrac{1}{t^\alpha}\right)
-\left(t^{2\alpha}-\tfrac{1}{t^{2\alpha}}\right)
+B\alpha\left(t^\alpha+\tfrac{1}{t^\alpha}\right)
-4\alpha\log t, 
$$
where 
$$
A = t+\tfrac{1}{t}-\left(t-\tfrac{1}{t}\right)\log t, \quad B = \left(t+\tfrac{1}{t}\right)\log t. 
$$
% matlab验证
%syms a t
%v = (1-t^(1+a))*(1-t^(1-a))/(1+2*t^2);
%I = diff(diff(v,t)/v *(-t),a) * (1-t^(1+a))^2*(1-t^(1-a))^2/ t^2;
%A = t+1/t-(t-1/t)*log(t);
%B = (t+1/t)*log(t);
%I2 = A*(t^a-1/t^a) - (t^(2*a)-1/t^(2*a))+B*a*(t^a+1/t^a)-4*a*log(t);
%simplify(I-I2)
We aim to prove $I>0$. 
Differentiating $I$ with respect to $\alpha$ successively gives 
\begin{align*}
\frac{\partial I}{\partial\alpha}&=
(A\log t+B)\left(t^\alpha+\tfrac{1}{t^\alpha}\right)
-\left(t^{2\alpha}+\tfrac{1}{t^{2\alpha}}\right)(2\log t)+B\alpha\left(t^\alpha-\tfrac{1}{t^\alpha}\right)\log t
-4\log t, \\
\frac{\partial^2 I}{\partial\alpha^2}&=
(A(\log t)^2+2B\log t)\left(t^\alpha-\tfrac{1}{t^\alpha}\right)
-\left(t^{2\alpha}-\tfrac{1}{t^{2\alpha}}\right)(2\log t)^2
 +B\alpha\left(t^\alpha+\tfrac{1}{t^\alpha}\right)(\log t)^2,\\
\frac{\partial^3 I}{\partial\alpha^3}&=
(A(\log t)^3+3B(\log t)^2)\left(t^\alpha+\tfrac{1}{t^\alpha}\right)
-\left(t^{2\alpha}+\tfrac{1}{t^{2\alpha}}\right)(2\log t)^3
 +B\alpha\left(t^\alpha-\tfrac{1}{t^\alpha}\right)(\log t)^3,\\
\frac{\partial^4 I}{\partial\alpha^4}&=
(A(\log t)^4+4B(\log t)^3)\left(t^\alpha-\tfrac{1}{t^\alpha}\right)
-\left(t^{2\alpha}-\tfrac{1}{t^{2\alpha}}\right)(2\log t)^4
 +B\alpha\left(t^\alpha+\tfrac{1}{t^\alpha}\right)(\log t)^4.
\end{align*}
By Lemma \ref{basic-monotone-t-2alpha}, we have $\alpha\frac{1+t^{2\alpha}}{1-t^{2\alpha}}<\frac{1+t^2}{1-t^2}$. 
Using $t^\alpha+\frac{1}{t^\alpha}\geq2$ and $t\in(\frac{1}{5},1)$, we obtain 
\begin{align*}
&\ \ \ \ \frac{1}{(\log t)^4 (t^\alpha-\frac{1}{t^\alpha})} \frac{\partial^4 I}{\partial\alpha^4}\\
&=5\left(t+\tfrac{1}{t}\right) -\left(t-\tfrac{1}{t}\right)\log t
-16\left(t^\alpha+\tfrac{1}{t^\alpha}\right)
+\alpha\frac{1+t^{2\alpha}}{1-t^{2\alpha}}\left(t+\tfrac{1}{t}\right)(-\log t)\\
&<5\left(t+\tfrac{1}{t}\right) -\left(t-\tfrac{1}{t}\right)\log t
-32
+\frac{1+t^2}{1-t^2}\left(t+\tfrac{1}{t}\right)(-\log t)\\
&< 5\times5.2 -\left(t-\tfrac{1}{t}\right)\log t
-32
+5.2\frac{1+t^2}{1-t^2}(-\log t)\\
&= \frac{26}{5}\frac{1+t^2}{1-t^2}(-\log t) - \left(t-\tfrac{1}{t}\right)\log t - 6<0. 
\end{align*}
Here, the last inequality holds for all $t>0$, whose proof is straightforward and omitted here.  
Since $t^\alpha-\frac{1}{t^\alpha}<0$, we have $\frac{\partial^4 I}{\partial\alpha^4}>0$. Hence, $\frac{\partial^3 I}{\partial\alpha^3}$ is monotonically increasing in $\alpha$. 
Note that 
$$I(0,t)\equiv0,\quad
I(1,t)\equiv0,\quad
\frac{\partial I}{\partial\alpha}(1,t)\equiv0,\quad
\frac{\partial^2 I}{\partial\alpha^2}(0,t)\equiv0,$$
which is the same as \eqref{monotone-game-u-1}. 
Reversing all signs in Figure \ref{derivatives-g} yields $I>0$, i.e., $\frac{\partial}{\partial \alpha}\frac{v_t}{v}<0$. 
The case $\alpha=0$ is clear and the case $\alpha\in(-1,0)$ follows by symmetry.  
\end{proof}

\begin{lemma}
\label{basic-monotone-t-2alpha}
For any $\alpha,t\in(0,1)$, we have $$\frac{1}{-\log t}<\alpha\frac{1+t^{2\alpha}}{1-t^{2\alpha}}<\frac{1+t^2}{1-t^2}.$$
\end{lemma}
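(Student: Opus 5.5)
Both inequalities reduce to statements about the single variable $s=-\log t>0$ and the quantity $\beta=\alpha s\in(0,s)$. Writing $t^{2\alpha}=e^{-2\beta}$, we get
\[
\alpha\frac{1+t^{2\alpha}}{1-t^{2\alpha}}=\frac{1}{s}\cdot \beta\coth\beta .
\]
In the same way, the right-hand side equals $\frac{1}{s}\cdot s\coth s$. The lemma is therefore equivalent to
\[
1<\beta\coth\beta< s\coth s,\qquad 0<\beta<s .
\]

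The plan is to prove the two elementary facts about $\phi(\beta)=\beta\coth\beta$ on $(0,\infty)$ and then substitute.

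First, I will show $\phi(\beta)>1$ for $\beta>0$. This is equivalent to $\beta\cosh\beta>\sinh\beta$, i.e. $\tanh\beta<\beta$. The latter holds because $\tanh 0=0$ and $\tanh'\beta=1-\tanh^2\beta<1$ for $\beta>0$.

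Second, I will show $\phi$ is strictly increasing on $(0,\infty)$. Differentiating gives
\[
\phi'(\beta)=\coth\beta-\frac{\beta}{\sinh^2\beta}=\frac{\sinh\beta\cosh\beta-\beta}{\sinh^2\beta}=\frac{\tfrac12\sinh 2\beta-\beta}{\sinh^2\beta},
\]
which is positive since $\sinh x>x$ for $x>0$.

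Substituting back, monotonicity together with $\beta=\alpha s<s$ (because $\alpha<1$) gives the upper bound. Indeed, $s\coth s=(-\log t)\frac{1+t^2}{1-t^2}$, so after dividing by $s$ the right-hand side is exactly $\frac{1+t^2}{1-t^2}$. The first fact gives the lower bound after dividing by $s$.

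There is no real obstacle here. The only point requiring care is the bookkeeping in the change of variables $\beta=-\alpha\log t$, in particular checking that the factor $\frac1s$ comes out uniformly on both sides so that the comparison is legitimate. Both hyperbolic inequalities are standard one-line derivative arguments.
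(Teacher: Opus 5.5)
Your proof is correct and is essentially the paper's argument. The paper proves that $\alpha\mapsto\alpha\frac{1+t^{2\alpha}}{1-t^{2\alpha}}$ is strictly increasing on $(0,1)$, reducing this to the positivity of $t^{-2\alpha}-t^{2\alpha}+4\alpha\log t=4\bigl(\tfrac12\sinh 2\beta-\beta\bigr)$ with $\beta=-\alpha\log t$, which is exactly the numerator of your $\phi'$. It then reads off both bounds from the limit as $\alpha\to0$ and the value at $\alpha=1$. Your substitution only recasts this in hyperbolic notation, with the lower bound obtained directly from $\tanh\beta<\beta$ instead of from the $\alpha\to0$ limit.
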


\begin{proof}
Since $\lim\limits_{\alpha\to0}\alpha\frac{1+t^{2\alpha}}{1-t^{2\alpha}}= \frac{1}{-\log t}$, it suffices to show that $\frac{\partial}{\partial\alpha}\frac{\alpha(1+t^{2\alpha})}{1-t^{2\alpha}}>0$, i.e., $$I:=t^{-2\alpha}-t^{2\alpha}+4\alpha\log t>0.$$
This follows from $I(0,t)\equiv0$ and $\frac{\partial I}{\partial\alpha}=2(t^{-2\alpha}+t^{2\alpha}-2)(-\log t)>0$. 
\end{proof}

\begin{lemma}
\label{basic-monotone-t-alpha}
For any $\alpha,t\in(0,1)$, we have $$-2\log t<\frac{t^{-\alpha}-t^\alpha}{\alpha}<t^{-1}-t.$$
\end{lemma}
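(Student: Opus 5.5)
Fix $t\in(0,1)$ and regard both sides as functions of $\alpha\in(0,1)$. I will prove the two inequalities separately. Each one reduces to the sign of a single auxiliary function that vanishes at $\alpha=0$, and that sign is read off from one derivative, exactly as in the proof of Lemma~\ref{basic-monotone-t-2alpha}.

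For the lower bound, put $\phi(\alpha)=t^{-\alpha}-t^{\alpha}+2\alpha\log t$. Then $\phi(0)=0$ and
\[
\phi'(\alpha)=-(t^{-\alpha}+t^{\alpha})\log t+2\log t=(t^{-\alpha}+t^{\alpha}-2)(-\log t).
\]
This is strictly positive for $\alpha>0$, because $t^{\alpha}\neq 1$ and the AM--GM inequality gives $t^{-\alpha}+t^{\alpha}>2$. Hence $\phi(\alpha)>0$, which is the statement $\frac{t^{-\alpha}-t^{\alpha}}{\alpha}>-2\log t$.

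For the upper bound, write $s=-\log t>0$, so that $\frac{t^{-\alpha}-t^{\alpha}}{\alpha}=\frac{2\sinh(\alpha s)}{\alpha}$. The function $\alpha\mapsto\frac{\sinh(\alpha s)}{\alpha}$ is strictly increasing on $(0,\infty)$: its power series $\sum_{k\ge0}\frac{s^{2k+1}\alpha^{2k}}{(2k+1)!}$ has positive coefficients, and the terms with $k\geq1$ are strictly increasing in $\alpha$. So for $\alpha<1$ the quotient is strictly smaller than its value at $\alpha=1$, which is $2\sinh s=t^{-1}-t$.

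If a derivative-based argument is preferred, one can instead use $\psi(\alpha)=\alpha(t^{-1}-t)-(t^{-\alpha}-t^{\alpha})$, which vanishes at $\alpha=0$ and at $\alpha=1$. Its second derivative is $\psi''(\alpha)=-(t^{-\alpha}-t^{\alpha})(\log t)^2<0$, so $\psi$ is strictly concave. A strictly concave function vanishing at both endpoints is positive on $(0,1)$, which gives the same bound.

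Neither step has a real obstacle. The only points needing care are the strictness of the inequalities, which uses $t\neq1$ and $\alpha\in(0,1)$ strictly, and checking the signs after substituting $\log t<0$.
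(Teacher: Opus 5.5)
Your argument is correct, but it is organized differently from the paper's.

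The paper obtains both bounds from one fact: $Q(\alpha)=(t^{-\alpha}-t^{\alpha})/\alpha$ is strictly increasing on $(0,1]$. Hence $Q(\alpha)$ lies strictly between $\lim_{\alpha\to0^+}Q(\alpha)=-2\log t$ and $Q(1)=t^{-1}-t$. After differentiating, $Q'>0$ becomes $\alpha\frac{1+t^{2\alpha}}{1-t^{2\alpha}}>\frac{1}{-\log t}$, which the paper quotes from the left half of Lemma~\ref{basic-monotone-t-2alpha}.

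You instead prove the two bounds separately and from scratch. Writing $s=-\log t$ and $x=\alpha s$, your lower bound is $\sinh x>x$. Your $\phi$ is in fact the auxiliary function the paper uses to prove Lemma~\ref{basic-monotone-t-2alpha}: there $I(\alpha)=\phi(2\alpha)$. Your concavity alternative for the upper bound is the chord inequality $\sinh(\alpha s)<\alpha\sinh s$ for the convex function $\sinh$ on $[0,s]$, which compares $Q(\alpha)$ only with $Q(1)$.

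Your power-series argument, by contrast, proves exactly the paper's key claim that $Q(\alpha)=2\sinh(\alpha s)/\alpha$ is strictly increasing. It therefore already gives the lower bound as well: the terms with $k\ge 1$ are positive, so $Q(\alpha)>2s=-2\log t$. With the series, the step via $\phi$ is redundant, and you recover the paper's structure with a more transparent proof of the monotonicity.

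The paper's version is shorter only because it reuses Lemma~\ref{basic-monotone-t-2alpha}, which it needs elsewhere anyway. Yours is self-contained and entirely elementary.
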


\begin{proof}
Since $\lim\limits_{\alpha\to0}\frac{t^{-\alpha}-t^\alpha}{\alpha}= -2\log t$, it suffices to show that $\frac{\partial}{\partial\alpha}\frac{t^{-\alpha}-t^\alpha}{\alpha}>0$. 
After differentiation and simplification, it is equivalent to 
$$\alpha\frac{1+t^{2\alpha}}{1-t^{2\alpha}}>\frac{1}{-\log t},$$
which follows from Lemma \ref{basic-monotone-t-2alpha}.  
\end{proof}

\section{The biased two-point hypercontractivity: Proof of Theorem~\ref{thm-biased-alpha}}\label{sec-biased}

Recall that the $\lambda$-biased Bernoulli random variable $X=X_\lambda$ $(0<\lambda<1)$ defined in \eqref{eqn-biased-rw}:
\[
\mathbb{P}(X=1-\lambda)=\lambda,\quad \mathbb{P}(X=-\lambda)=1-\lambda.
\]
Then
\[
\|1+\rho X\|_p=\left(\lambda |1+ (1-\lambda) \rho |^p
+(1-\lambda)|1-\lambda \rho|^p\right)^{\frac{1}{p}}.
\]
We want to find the optimal constant  \(\sigma_{p,q}(\lambda)\in [0,1]\) such that
\[
\|1+r\rho X\|_q\leq \|1+\rho X\|_p \quad \text{for all $0\le r\leq \sigma_{p,q}(\lambda)$ and all $\rho\in \mathbb{R}$}.
\]
We only need to consider $\rho$'s such that $1+\rho X\geq 0$.
%In this case,
%\[
%\|1+rbX\|_q=\Big(\lambda |1+rb(1-\lambda)|^q+(1-\lambda)|1-rb\lambda|^q\Big)^{\frac{1}{q}}
%\]
Thus we only need to consider 
$$
% \label{eqn-bias-gao-fangcheng-1}
\Big(\lambda (1+(1-\lambda)r\rho)^q+(1-\lambda)(1-\lambda r\rho)^q\Big)^{\frac{1}{q}}\leq  \Big(\lambda (1+(1-\lambda)\rho)^p+(1-\lambda)(1-\lambda\rho)^p\Big)^{\frac{1}{p}}$$
with 
\begin{align}\label{eqn-bias-gao-fangcheng-2}
-\frac{1}{1-\lambda}\leq \rho\leq \frac{1}{\lambda}.
\end{align}
%\[
%-\frac{1}{1-\lambda}\leq b\leq \frac{1}{\lambda}
%\]
%\section{Log-Sobolev inequality}
As in \eqref{def-Grrho}, we define the defect function
\begin{align*}
G(\lambda,r,\rho) := G(\lambda, p,q,r,\rho)=
&\Big(\lambda (1+(1-\lambda)\rho)^p+(1-\lambda)(1-\lambda\rho)^p\Big)^{\frac{1}{p}}\\
-&\Big(\lambda (1+(1-\lambda) r\rho)^q+(1-\lambda)(1-\lambda r\rho)^q\Big)^{\frac{1}{q}}
\end{align*}
with $r\in (0,1)$ and $\rho$ satisfying \eqref{eqn-bias-gao-fangcheng-2}.
\begin{lemma}\label{lemma-non-trivial-ffgg-cao-fan-han-qiu-wang}
For $\lambda\in (0,1/2)$ and $1<p<q<\infty$, one has
$
0<\sigma_{p,q}(\lambda)<\sqrt{(p-1)/(q-1)}.
$
\end{lemma}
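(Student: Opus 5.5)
The plan is to mimic the proof of Lemma~\ref{lem-rough}. The upper bound comes from a third-order Taylor expansion of the defect function $G(\lambda,r,\rho)$ at $\rho=0$. Since $\mathbb{E}X=0$, we have $\mathbb{E}X^2=\lambda(1-\lambda)=:s_2$ and $\mathbb{E}X^3=\lambda(1-\lambda)(1-2\lambda)=:s_3$. Expanding $\|1+\rho X\|_p=(\mathbb{E}(1+\rho X)^p)^{1/p}$ gives
\[
\|1+\rho X\|_p = 1+\frac{p-1}{2}s_2\rho^2+\frac{(p-1)(p-2)}{6}s_3\rho^3+O(\rho^4),
\]
because the cubic cross term coming from $\frac1p(\frac1p-1)$ times the product of the linear and quadratic terms vanishes (the linear term is zero). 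Hence
\[
G(\lambda,r,\rho)=\frac{s_2}{2}\big(p-1-(q-1)r^2\big)\rho^2+\frac{s_3}{6}\big((p-1)(p-2)-(q-1)(q-2)r^3\big)\rho^3+O(\rho^4).
\]

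For the upper bound, if $r>\sqrt{(p-1)/(q-1)}$ the quadratic coefficient is negative, so $G<0$ for small $\rho\neq0$. At $r=\sqrt{(p-1)/(q-1)}$ the quadratic term vanishes. The cubic coefficient is then $\frac{s_3}{6}(p-1)\big(p-2-(q-2)\sqrt{(p-1)/(q-1)}\big)$, and here $s_3\neq0$ precisely because $\lambda\neq1/2$. The bracket is nonzero by the same fact already used in Lemma~\ref{lem-rough}: the function $\phi(s)=(s-2)/\sqrt{s-1}$ has $\phi'(s)=\frac{s}{2(s-1)^{3/2}}>0$, so $\phi$ is strictly increasing and $\phi(p)\neq\phi(q)$. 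A nonzero cubic term changes sign across $\rho=0$, and both signs of $\rho$ are admissible because the range \eqref{eqn-bias-gao-fangcheng-2} contains a neighborhood of $0$. So $G<0$ for some small $\rho$, which gives $\sigma_{p,q}(\lambda)<\sqrt{(p-1)/(q-1)}$. This is the one genuinely $\lambda$-dependent step, and I expect it to be the only subtle point; it is exactly where the symmetric case $\lambda=1/2$ fails to give strict inequality.

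For the lower bound $\sigma_{p,q}(\lambda)>0$, I would take $r$ small. By Jensen's inequality $\|1+\rho X\|_p\ge|\mathbb{E}(1+\rho X)|=1$, and $\|1+\rho X\|_p>1$ for $\rho\neq0$. Near $\rho=0$, the expansion gives $G\ge c\rho^2>0$ uniformly for all $r\le r_1$ and $|\rho|\le\delta$, where $r_1<\sqrt{(p-1)/(q-1)}$ is fixed and the $O(\rho^3)$ remainder is controlled uniformly in $r\in[0,r_1]$. On the compact set $\delta\le|\rho|$ within \eqref{eqn-bias-gao-fangcheng-2}, $\|1+\rho X\|_p\ge1+\eta$ for some $\eta>0$. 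Meanwhile $\|1+r\rho X\|_q\to1$ uniformly as $r\to0$, because $|\rho|\le 1/\lambda$ is bounded. Choosing $r$ small enough therefore gives $G\ge0$ everywhere, so $\sigma_{p,q}(\lambda)>0$. Alternatively, one can simply cite the known positivity due to Wolff \cite{Wolff} and Oleszkiewicz \cite{Ole03}.
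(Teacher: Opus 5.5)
Your proof is correct and follows the paper's route: the same third-order expansion of $G(\lambda,r,\rho)$ at $\rho=0$ as in Lemma~\ref{lem-rough}. The cubic coefficient at $r=\sqrt{(p-1)/(q-1)}$ is nonzero because $\lambda(1-\lambda)(1-2\lambda)\neq 0$ for $\lambda\neq 1/2$. You also fill in two points the paper leaves implicit, and both arguments are sound: that $p-2-(q-2)\sqrt{(p-1)/(q-1)}\neq 0$, via the monotonicity of $(s-2)/\sqrt{s-1}$, and the lower bound $\sigma_{p,q}(\lambda)>0$, via the uniform local expansion plus compactness away from $\rho=0$.
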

\begin{proof}
The proof is almost the same as that for Lemma~\ref{lem-rough}. We only need to take the Taylor expansion of $G(\lambda,r,\rho)$ at $\rho=0$:
\[
G = \tfrac{\lambda(1-\lambda)}{2}\Big[(p-1) - (q-1)r^2\Big] \rho^2 + \tfrac{\lambda(1-\lambda)(1-2\lambda)}{6}\Big[(p-1)(p-2) - (q-1)(q-2)r^3\Big] \rho^3 + O(\rho^4). 
\]
The remaining proof is the same as that in Lemma~\ref{lem-rough}.
\end{proof}

\begin{lemma}
\label{lemma-non-trivial-fg-cao-fan-han-qiu-wang}
For $\lambda\in (0,1)\setminus\{1/2\}$ and $1<p<q<\infty$, one has
\[
G(\lambda,\sigma_{p,q}(\lambda),\tfrac{1}{\lambda})>0\quad\text{and}\quad G(\lambda,\sigma_{p,q}(\lambda),-\tfrac{1}{1-\lambda})>0.
\]
\end{lemma}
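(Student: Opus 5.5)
The plan follows the proof of Lemma~\ref{lemma-GAR-dayuling}: assume equality at an endpoint and derive a contradiction from the first-order condition there.

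Fix $\lambda$, $p<q$ and put $\sigma=\sigma_{p,q}(\lambda)$. By the definition of $\sigma$ and continuity in $r$, we have $G(\lambda,\sigma,\rho)\ge 0$ for all $\rho$ in \eqref{eqn-bias-gao-fangcheng-2}. By Lemma~\ref{lemma-non-trivial-ffgg-cao-fan-han-qiu-wang}, $0<\sigma<1$.

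Suppose $G(\lambda,\sigma,\rho_*)=0$ at an endpoint $\rho_*$. Then $\rho_*$ is a minimum point of $G(\lambda,\sigma,\cdot)$. At $\rho_*=1/\lambda$ the left derivative is therefore $\le 0$. At $\rho_*=-1/(1-\lambda)$ the right derivative is $\ge 0$.

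\textbf{Rewriting the first-order condition.} Write $N_p(\rho)=\|1+\rho X\|_p$ and $Z_\rho=1+\rho X$. Using $(1-\lambda)\rho=a-1$ and $\lambda\rho=1-b$ for the two atoms $a=1+(1-\lambda)\rho$ and $b=1-\lambda\rho$, one checks the identity
\[
\rho\,\frac{d}{d\rho}N_p(\rho)=N_p(\rho)^{1-p}\Big(\mathbb{E}[Z_\rho^p]-\mathbb{E}[Z_\rho^{p-1}]\Big).
\]
Set $s=\sigma\rho_*$ and $W=Z_s$. Multiply the derivative inequality by $\rho_*$; this reverses it in the case $\rho_*<0$, so in both cases we get $\rho_*\partial_\rho G\le 0$. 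Now divide by the common value $N_p(\rho_*)=N_q(s)$. The condition becomes
\[
1-\frac{\mathbb{E}[Z_{\rho_*}^{p-1}]}{\mathbb{E}[Z_{\rho_*}^{p}]}\;\le\;1-\frac{\mathbb{E}[W^{q-1}]}{\mathbb{E}[W^{q}]}.
\]

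\textbf{Evaluating the left-hand side.} At $\rho_*=1/\lambda$ we have $b=0$ and $a=1/\lambda$, so the left side is $1-1/a=1-\lambda$. At $\rho_*=-1/(1-\lambda)$ we have $a=0$ and $b=1/(1-\lambda)$, so the left side is $1-1/b=\lambda$. Hence we would need $\mathbb{E}[W^{q-1}]\le c\,\mathbb{E}[W^q]$ with $c=\lambda$ in the first case and $c=1-\lambda$ in the second.

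\textbf{Contradiction.} Because $0<\sigma<1$, the point $s$ lies strictly inside the admissible interval, so both atoms of $W$ are strictly positive. In the first case $s\in(0,1/\lambda)$, so $W\le 1+(1-\lambda)s<1/\lambda$. In the second case $s\in(-1/(1-\lambda),0)$, so $W\le 1-\lambda s<1/(1-\lambda)$. Thus $W<1/c$ pointwise, hence $cW^q<W^{q-1}$ pointwise, and taking expectations gives $\mathbb{E}[W^{q-1}]>c\,\mathbb{E}[W^q]$. This contradicts the displayed inequality, so both endpoint values of $G$ are strictly positive.

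The only delicate step is the derivative identity together with the sign bookkeeping when multiplying by the negative $\rho_*$. The rest is elementary.
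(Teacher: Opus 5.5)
Your proof is correct and follows the paper's argument. Both assume equality at an endpoint, use the one-sided first-order condition there, and reach an impossible inequality; your $\mathbb{E}[W^{q-1}]\le c\,\mathbb{E}[W^q]$ is exactly the paper's displayed contradiction once the first equation is substituted in. The differences are cosmetic: you package the derivative through the identity $\rho N_p'(\rho)=N_p(\rho)^{1-p}\big(\mathbb{E}[Z_\rho^p]-\mathbb{E}[Z_\rho^{p-1}]\big)$ and handle the endpoint $-1/(1-\lambda)$ directly, whereas the paper obtains it from the symmetries $G(\lambda,r,\rho)=G(1-\lambda,r,-\rho)$ and $\sigma_{p,q}(\lambda)=\sigma_{p,q}(1-\lambda)$.
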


\begin{proof}
By the definition of $\sigma:=\sigma_{p,q}(\lambda)$, we have
$G(\lambda,\sigma,\frac{1}{\lambda})\geq 0$. 
Now we prove that $$G(\lambda,\sigma,\tfrac{1}{\lambda})>0.$$ 
Assume by contradiction that $G(\lambda,\sigma,\tfrac{1}{\lambda})=0$. Then the left derivative of $G(\lambda,\sigma,\cdot)$ at $\rho=\tfrac{1}{\lambda}$ is non-positive. 
That is,
\[
G(\lambda,\sigma,\tfrac{1}{\lambda})=0
\quad \text{and} \quad 
\tfrac{\partial G}{\partial \rho}(\lambda,\sigma,(\tfrac{1}{\lambda})^-)\leq 0.
\]
Directive computation gives 
$$\left\{
\begin{aligned}
&\lambda^{\frac{q}{p}-q}=\lambda\left(1+\tfrac{1-\lambda}{\lambda}\sigma\right)^q+(1-\lambda)(1-\sigma)^q,\\
&\lambda^{\frac{q}{p}-q}\leq \sigma \left[\left(1+\tfrac{1-\lambda}{\lambda}\sigma\right)^{q-1}-(1-\sigma)^{q-1}\right].
\end{aligned}
\right.$$
It follows that 
$$\lambda\left(1+\tfrac{1-\lambda}{\lambda}\sigma\right)^q+(1-\lambda)(1-\sigma)^q
\leq \sigma \left[\left(1+\tfrac{1-\lambda}{\lambda}\sigma\right)^{q-1}-(1-\sigma)^{q-1}\right],  
$$
and hence 
\[
\left(1+\tfrac{1-\lambda}{\lambda}\sigma\right)^{q-1}\lambda(1-\sigma)+(1-\sigma)^{q-1}[(1-\lambda)(1-\sigma)+\sigma]\leq 0.
\]
This is clearly impossible, since $\lambda,\sigma\in(0,1)$. Therefore $G(\lambda,\sigma_{p,q}(\lambda),\tfrac{1}{\lambda})>0$. 

Notice that 
$$G(\lambda,r,\rho)=G(1-\lambda,r,-\rho)\quad\text{and}\quad 
\sigma_{p,q}(\lambda)=\sigma_{p,q}(1-\lambda).$$
Combing the previous discussion, one has
\[
G(\lambda,\sigma_{p,q}(\lambda),-\tfrac{1}{1-\lambda})=G\left(1-\lambda,\sigma_{p,q}(1-\lambda),\tfrac{1}{1-\lambda}\right)>0.
\]
This completes the whole proof.
\end{proof}

\begin{lemma}
\label{lemma-cao-fan-han-qiu-wang-lemma}
There exists $\rho_0\in (-\frac{1}{1-\lambda},\frac{1}{\lambda})\setminus\{0\}$ such that 
\[
G(\lambda,\sigma_{p,q}(\lambda),\rho_0)=0
\quad\text{and}\quad 
\tfrac{\partial G}{\partial\rho} (\lambda,\sigma_{p,q}(\lambda),\rho_0)=0.
\]
\end{lemma}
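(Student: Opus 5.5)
The plan is to mimic the proof of Proposition~\ref{prop-nontrivial-extremizer}, now on the whole admissible interval $[-\frac{1}{1-\lambda},\frac{1}{\lambda}]$. Here no reduction step like Proposition~\ref{thm-zhongyaoi} is needed: by the preceding discussion, the inequality for $X_\lambda$ is equivalent to $G(\lambda,r,\rho)\ge 0$ for all $\rho$ in \eqref{eqn-bias-gao-fangcheng-2}. Write $\sigma=\sigma_{p,q}(\lambda)$.

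\textbf{Step 1: a neighbourhood of $\rho=0$ is harmless for $r$ slightly above $\sigma$.} By Lemma~\ref{lemma-non-trivial-ffgg-cao-fan-han-qiu-wang}, we may fix $r_0\in(\sigma,\sqrt{(p-1)/(q-1)})$. The Taylor expansion of $G(\lambda,r_0,\rho)$ at $\rho=0$ given in that lemma has positive quadratic coefficient $\tfrac{\lambda(1-\lambda)}{2}[(p-1)-(q-1)r_0^2]$. Hence there is $\rho_1>0$ with $G(\lambda,r_0,\rho)\ge 0$ for $|\rho|\le\rho_1$.

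Next, $\|1+r\rho X\|_q$ is nondecreasing in $r\in[0,1]$. This holds by convexity of $r\mapsto\|1+r\rho X\|_q$ together with $\mathbb{E}X=0$: Jensen gives $\|1+r\rho X\|_q\ge 1$, so the minimum is at $r=0$. Consequently $G(\lambda,r,\rho)\ge G(\lambda,r_0,\rho)\ge 0$ for all $r\le r_0$ and $|\rho|\le\rho_1$.

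\textbf{Step 2: passing to the limit $r\to\sigma^+$.} For $r\in(\sigma,r_0]$, the definition of $\sigma$ forces $\min G(\lambda,r,\cdot)<0$ on the admissible interval. By Step 1, this minimum is attained on the compact set $K=[-\frac{1}{1-\lambda},-\rho_1]\cup[\rho_1,\frac1\lambda]$. Joint continuity of $G$ then gives $\min_K G(\lambda,\sigma,\cdot)\le 0$. On the other hand, $G(\lambda,\sigma,\cdot)\ge 0$ everywhere. So some $\rho_0\in K$ satisfies $G(\lambda,\sigma,\rho_0)=0$, and in particular $\rho_0\neq 0$.

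Lemma~\ref{lemma-non-trivial-fg-cao-fan-han-qiu-wang} rules out the two endpoints $\rho_0=\frac1\lambda$ and $\rho_0=-\frac{1}{1-\lambda}$. Therefore $\rho_0\in(-\frac{1}{1-\lambda},\frac1\lambda)\setminus\{0\}$.

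\textbf{Step 3: Fermat's principle.} In the open interval, $G(\lambda,\sigma,\cdot)$ is differentiable, because both $1+(1-\lambda)\rho$ and $1-\lambda\rho$ are positive there, as are their $r$-scaled versions. Since $\rho_0$ is an interior global minimum of a nonnegative function vanishing at $\rho_0$, we get $\partial_\rho G(\lambda,\sigma,\rho_0)=0$.

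\textbf{Main obstacle.} The only delicate points are the monotonicity in $r$ and the endpoint exclusion. The monotonicity is handled by the convexity/Jensen argument in Step 1, and the endpoint exclusion is already provided by Lemma~\ref{lemma-non-trivial-fg-cao-fan-han-qiu-wang}. So the main care is in Step 2: ensuring uniform control near $\rho=0$ via the single fixed $r_0$, exactly as in Proposition~\ref{prop-nontrivial-extremizer}.
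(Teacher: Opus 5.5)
Your proposal is correct and is exactly the paper's argument: the paper proves this lemma by saying it is the same as the proof of Proposition~\ref{prop-nontrivial-extremizer}, once Lemmas~\ref{lemma-non-trivial-ffgg-cao-fan-han-qiu-wang} and \ref{lemma-non-trivial-fg-cao-fan-han-qiu-wang} are available, and your Steps 1--3 carry out that adaptation with a two-sided neighbourhood of $\rho=0$. Your convexity-plus-Jensen justification that $\|1+r\rho X\|_q$ is nondecreasing in $r$ supplies a detail that the paper only asserts.
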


\begin{proof}
Using Lemmas~\ref{lemma-non-trivial-ffgg-cao-fan-han-qiu-wang} and \ref{lemma-non-trivial-fg-cao-fan-han-qiu-wang}, the proof is exactly the same as Proposition~\ref{prop-nontrivial-extremizer}. 
\end{proof}

\begin{proof}
[Proof of Theorem~\ref{thm-biased-alpha}]
By Lemma~\ref{lemma-cao-fan-han-qiu-wang-lemma}, the pair $(r,\rho)=(\sigma_{p,q}(\lambda),\rho_0)$ satisfies the following system:
\begin{align*}
\left\{
\begin{aligned}
&\Big(\lambda (1+(1-\lambda)\rho)^p+(1-\lambda)(1-\lambda\rho)^p\Big)^{\frac{1}{p}} = \Big(\lambda (1+(1-\lambda)r\rho)^q+(1-\lambda)(1-\lambda r\rho)^q\Big)^{\frac{1}{q}}, \\
&\rho\frac{(1+(1-\lambda)\rho)^{p-1} - (1-\lambda\rho)^{p-1}}{\lambda (1+(1-\lambda)\rho)^p+(1-\lambda)(1-\lambda\rho)^p} 
= r\rho \frac{(1+(1-\lambda)r\rho)^{q-1} - (1-\lambda r\rho)^{q-1}}{\lambda (1+(1-\lambda)r\rho)^q+(1-\lambda)(1-\lambda r\rho)^q}.
\end{aligned}
\right.
\end{align*}
Define the following change of variables:
\begin{align}
\label{new-cov}
 x=\frac{1-\lambda\rho}{1+(1-\lambda)\rho}, \quad y=\frac{1-\lambda r\rho}{1+(1-\lambda)r\rho}.
\end{align}
When
\[
0<r<1,\quad -\frac{1}{1-\lambda}<\rho<\frac{1}{\lambda},
\]
the range of $(x,y)$ is
\[
\begin{cases}
x<y<1, & \text{when } 0 < x<1, \\
y=1, & \text{when } x=1, \\
1<y<x, & \text{when } x>1.
\end{cases}
\]
Then $(x,y)$ satisfies the following equation system:
\begin{align*}
\left\{
\begin{aligned}
&\frac{\big(\lambda + (1-\lambda) x^p\big)^{\frac{1}{p}}}{\lambda + (1-\lambda)x} = \frac{\big(\lambda + (1-\lambda) y^q\big)^{\frac{1}{q}}}{\lambda + (1-\lambda)y},\\
&\frac{(1-x)(1 - x^{p-1})}{\lambda + (1-\lambda) x^p} = \frac{(1-y)(1 - y^{q-1})}{\lambda + (1-\lambda) y^q},
\end{aligned}
\right.
\end{align*}
which is the desired equation-system \eqref{eqn-biased-duiou-system}.  Reversing the change-of-variable  \eqref{new-cov}, one obtains
\[
\sigma_{p,q}(\lambda)=\frac{1-y}{1-x} \cdot \frac{\lambda + (1-\lambda)x}{\lambda + (1-\lambda)y}, 
\]
as desired.
\end{proof}

%%%%%%%%%%%%%%%%%%%%%%%%%%%%%%%%%%%%%%%%%%%%%%%%%%%%%%
%%%%%%%%%%%%%%%%%%%%%%%%%%%%%%%%%%%%%%%%%%%%%%%%%%%%%%
%%%%%%%%%%%%%%%%%%%%%%%%%%%%%%%%%%%%%%%%%%%%%%%%%%%%%%
%%%%%%%%%%%%%%%%%%%%%%%%%%%%%%%%%%%%%%%%%%%%%%%%%%%%%%
%%%%%%%%%%%%%%%%%%%%%%%%%%%%%%%%%%%%%%%%%%%%%%%%%%%%%%
\appendix

\section{A three‑variable inequality}
\label{section-psi}

In this part, we will prove a three‑variable inequality from the monotonicity lemma (see Lemma~\ref{lemma-reduce-p-q-2}). 
The main idea of the proof is to reduce multivariate inequalities to inequalities involving fewer variables. 
Recall that 
$$
\Delta=\bigg\{x+iy  ~\bigg|~ y\in\Big[0,\frac{\sqrt{3}}{4}\Big],\ x\in\Big[\frac{\sqrt{3}}{3}y, 1-\sqrt{3}y\Big]\bigg.\bigg\}. 
$$

\begin{lemma}
\label{lemma-ineq-psi}
For $1<q<2$ and $t>-1$, define 
$$A(q,t) = (1+t)^{q-1}\left(\log(1+t)-\frac{t}{2(1+t)}\right).$$
For $1<q<2$ and $x+i y\in \Delta$, let 
\begin{align*}
\psi(q,x,y) = A(q,2x)\ (-2y) + A(q,-x-\sqrt{3}y)\ (-\sqrt{3}x+y) + A(q,-x+\sqrt{3}y)\ (\sqrt{3}x+y).
\end{align*}
Then for any $1<q<2$ and $x+i y\in \operatorname{int}(\Delta)$, we have $$\psi(q,x,y) < 0.$$
\end{lemma}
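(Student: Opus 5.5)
The plan is to exploit the structure already used in Lemma~\ref{lem-decreasing-zero}: $\psi$ is a linear functional of the one-variable function $A(q,\cdot)$ evaluated at the three points $2r\rho\cos\eta$ with weights $-2r\rho\sin\eta$. Here $\eta$ ranges over $\theta,\theta\pm\frac{2\pi}{3}$, and we write $x+iy=r\rho e^{i\theta}$. The vectors $(1,1,1)$, $(2r\rho\cos\eta)_\eta$ and $(\sin\eta)_\eta$ are mutually orthogonal. So for every affine $L(t)=a+bt$ the functional annihilates $L$:
\[
\sum_\eta L(2r\rho\cos\eta)\sin\eta=0.
\]
Consequently $\psi(q,x,y)=\Lambda_{x,y}[A(q,\cdot)]$, where $\Lambda_{x,y}$ is linear, kills affine functions, and, by Lemma~\ref{lem-orthogonal}, does not vanish on any strictly convex or strictly concave function on $(-1,2)$. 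A first observation is that $\Lambda_{x,y}$ has a \emph{fixed sign} on strictly convex functions throughout $\operatorname{int}(\Delta)$. Indeed, it is nonzero on each such function, it depends continuously on $(x,y)$, and the interior of $\Delta$ is connected. The sign can be read off from the test function $t^2$: we have $\Lambda_{x,y}[t^2]=0$ (this is the $p=2$ case, where $F$ is independent of $\theta$), so $t^2$ itself is too degenerate. Instead I will use $(1+t)^{a}$ with $a\in(0,1)$ and $a\in(1,2)$, whose signs are already known from Lemma~\ref{lem-decreasing-zero}: for $1<p<2$, the concave function $A_p$ gives $\Lambda>0$, i.e. $\sum_\eta A_p(2\rho\cos\eta)\sin\eta<0$. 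Hence $\Lambda$ is positive on strictly concave functions and negative on strictly convex ones.

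Next I set $s=1+t\in(0,3)$ and decompose
\[
A(q,t)=s^{q-1}\log s-\tfrac12 s^{q-1}+\tfrac12 s^{q-2}.
\]
The middle term $-\tfrac12 s^{q-1}$ is convex since $0<q-1<1$, so it contributes negatively. The last term $\tfrac12 s^{q-2}$ is convex since $-1<q-2<0$, so it also contributes negatively. It therefore remains to control $B(s)=s^{q-1}\log s$. Its second derivative is
\[
B''(s)=s^{q-3}\big[(q-1)(q-2)\log s+2q-3\big].
\]
This changes sign exactly once, at $\log s_*=(2q-3)/((q-1)(2-q))$, so $B$ is neither convex nor concave on all of $(0,3)$. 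Rather than treat $B$ alone, I will group it with a multiple of the convex pieces. Concretely, I look for $c=c(q)\in[0,1]$ such that $\widetilde A=B-\tfrac c2 s^{q-1}+\tfrac c2 s^{q-2}$ has
\[
\widetilde A''=s^{q-4}\Big[s\big((q-1)(q-2)\log s+2q-3\big)-\tfrac c2 (q-1)(q-2)s+\tfrac c2(q-2)(q-3)\Big]\ge 0
\]
on $(0,3)$. The leftover $(1-c)$ multiples of the convex terms only reinforce the sign. The bracket is a one-variable function of $s$ for each $q$, and its minimum over $s$ can be located by one differentiation. Choosing $c=1$ is the natural first attempt, since that is the full function $A$; if that fails near $q\to2$ or $q\to1$, I will let $c$ vary with $q$.

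The main obstacle is precisely this convexity check. If $A(q,\cdot)$ genuinely fails to be convex on the whole range $(-1,2)$, for example near $s\to0$ when $q$ is close to $1$, a global convexity argument will not suffice. In that case the fallback is to restrict attention to where the three nodes actually lie. The smallest node $2r\rho\cos(\theta+\frac{2\pi}{3})$ stays in $(-1,-\frac12)$ only near the vertex, and there the weights are small. I would then combine the orthogonal-basis argument on a subinterval with a direct one-variable estimate near the boundary edges $y=0$ and $\theta=\pi/3$, where $\psi$ vanishes identically. That gives the strict inequality $\psi<0$ by continuity together with nonvanishing in the interior, exactly as in the proof of Lemma~\ref{lem-decreasing-zero}.
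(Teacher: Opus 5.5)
\textbf{The gap: $A(q,\cdot)$ is not convex on $(-1,2)$ for $q$ near $1$.} Your reduction is sound as far as it goes. You have $\psi=\Lambda_{x,y}[A(q,\cdot)]$, the functional $\Lambda_{x,y}$ kills affine functions, and it is negative on every strictly convex function on $(-1,2)$. One slip: $\Lambda_{x,y}[t^2]$ is not zero but equals $-6r^3\rho^3\sin 3\theta$. The degenerate $p=2$ case of Lemma~\ref{lem-decreasing-zero} corresponds to the affine function $1+t$, not to $t^2$.

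The real problem is that your whole argument then rests on $A(q,\cdot)$ being strictly convex on an interval containing the three nodes, and this fails for a large part of the range of $q$.
\begin{itemize}
\item At $q=\tfrac32$, your bracket with $c=1$ equals $\tfrac34(1-\log 3)<0$ at $s=3$, so $A(\tfrac32,\cdot)$ is concave near $t=2$.
\item At $q=1$ one has $\partial_t^2A(1,t)=-t/(1+t)^3$. By continuity, for $q$ close to $1$ the function $A(q,\cdot)$ is convex on $(-1,0]$ but concave on $[\tfrac12,\tfrac32]$.
\item In general $\partial_t^2A=(q-1)(q-2)(1+t)^{q-3}g(1+t)$ with $g$ increasing and $g(0^+)=-\infty$. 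From this one finds that $A(q,\cdot)$ is convex on all of $(-1,2)$ exactly when $q\ge q_0$, with $q_0\approx 1.515$.
\end{itemize}
So your argument proves the lemma for $q\in[q_0,2)$, where it is much shorter than the paper's, and nowhere else.

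The parameter $c$ cannot rescue this. The pieces $-\tfrac12 s^{q-1}$ and $\tfrac12 s^{q-2}$ are convex, so $c=1$ is already the most favourable choice: $\widetilde A''\ge 0$ for some $c\in[0,1]$ would force $A''\ge0$.

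The fallback does not work either. For $q$ near $1$ the inflection point of $A(q,\cdot)$ lies near $t=0$. Typical interior points have nodes on both sides of it: for example $x+iy=\tfrac12+\tfrac{i}{10}$ has nodes $1$, $-0.67$, $-0.33$. There Lemma~\ref{lem-orthogonal} gives no information. The ``nonvanishing in the interior'' you invoke at the end is exactly what is unavailable, so that step is circular.

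\textbf{What the paper does instead.} The missing idea is to apply the orthogonal-basis argument not to $A$ but to a high derivative $\partial_t^kA$, which does have a fixed convexity on $(-1,2)$. By Lemma~\ref{lemma-Ak}, $\partial_t^{k+2}A$ has constant sign on $(-1,2]$ once $k+2\ge 6$. The steps are as follows.
\begin{itemize}
\item Fix $(q,y)$ and differentiate in $x$. This gives $\partial_x^k\psi=\alpha_k+\beta_k+\gamma_k$, where $\beta_k$ is precisely your functional applied to $\partial_t^kA$. Hence $\beta_k<0$ for $k\ge 4$ (Lemma~\ref{lemma-beta-k}).
\item The extra terms $\alpha_k,\gamma_k$ get explicit signs from Lemma~\ref{lemma-Ak} on the node ranges. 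This is why the domain is split at $x=\tfrac12$: one uses $k=4$ when $2x\le 1$ and $k=6$ when $2x\ge1$.
\item One then integrates back in $x$ from the edge $x=y/\sqrt3$, where $\psi\equiv 0$. There the lower derivatives reduce to one-variable functions $\Psi_k(q,s)$ whose signs are checked directly. For $2x\ge1$ one also needs a separate estimate of $\partial_x^5\psi$ at $x=\tfrac12$.
\end{itemize}
Some such device is needed to cover $q$ near $1$: pass to derivatives where convexity is uniform in $t\in(-1,2)$, then recover the sign of $\psi$ from boundary data.
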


{\bf Observations.} Plotting shows that $\psi$ is not necessarily monotone in $q$ and $y$, but we have $\psi(q,\frac{\sqrt{3}}{3}y,y)\equiv0$ and $\psi$ is decreasing in $x\in[\frac{\sqrt{3}}{3}y, 1-\sqrt{3}y ]$. See Figure \ref{fig-region-partition}. 
This motivates an analysis of the partial derivatives of $\psi$ with respect to $x$. 

\begin{figure}[htbp]
\centering
\includegraphics{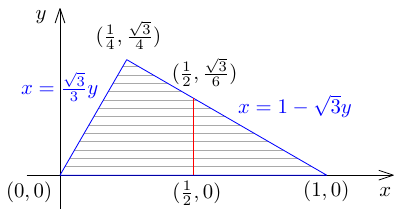}
\caption{Partition of $\operatorname{int}(\Delta)$ by the line $x=\frac{1}{2}$}
\label{fig-region-partition}
\end{figure}

{\bf Facts.}
For $k\geq1$ one can inductively obtain 
\begin{align*}
\frac{\partial^k \psi}{\partial x^k}
&=  \alpha_k(q,x,y) + \beta_k(q,x,y) + \gamma_k(q,x,y),\\
\alpha_k&:=(2^k-(-1)^k) \frac{\partial^k A}{\partial t^k}(q,2x)(-2y),\\
\beta_k&:= (-1)^k\bigg(\frac{\partial^k A}{\partial t^k}(q,2x)(-2y) 
+ \frac{\partial^k A}{\partial t^k}(q,-x-\sqrt{3}y)(-\sqrt{3}x+y)\\
&\ \ \ \ 
+ \frac{\partial^k A}{\partial t^k}(q,-x+\sqrt{3}y)(\sqrt{3}x+y)\bigg),\\
\gamma_k&:= (-1)^k k\sqrt{3}\left(\frac{\partial^{k-1} A}{\partial t^{k-1}}(q,-x-\sqrt{3}y)
- \frac{\partial^{k-1} A}{\partial t^{k-1}}(q,-x+\sqrt{3}y)\right). 
\end{align*}
The specific form of $\beta_k$ is designed to facilitate the application of the orthogonal basis method (Lemma \ref{lem-orthogonal}). 
Furthermore, a second induction gives 
\begin{align*}
A_k(q,t)&:=(1+t)^{k+1-q}
\left(\prod_{j=1}^k\frac{1}{q-j}\right) \frac{\partial^k A}{\partial t^k}\\
&= \log(1+t) + \frac{1}{2(1+t)} - \frac{1}{2} 
- \frac{k}{2(1+t)(q-1)}
+ \sum_{j=1}^k\frac{1}{q-j}. 
\end{align*}

\begin{lemma}
\label{lemma-Ak}
Let $k$ be a positive integer and let $q\in(1,2)$. 
Then for every $$t\in\Big(-1,\frac{k}{2}-1\Big],$$ 
we have $\frac{\partial^k A}{\partial t^k}(q,t)<0$ if $k$ is odd, and $\frac{\partial^k A}{\partial t^k}(q,t)>0$ if $k$ is even. 
\end{lemma}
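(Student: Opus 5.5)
The natural route is the closed formula for $A_k$ recorded in the Facts just above. Since
\[
\frac{\partial^k A}{\partial t^k}(q,t) = (1+t)^{q-1-k}\Big(\prod_{j=1}^k (q-j)\Big) A_k(q,t)
\]
and $(1+t)^{q-1-k}>0$ for $t>-1$, the sign question splits into two parts. First, for $q\in(1,2)$ the factor $q-1$ is positive and $q-j<0$ for $j\ge 2$, so $\prod_{j=1}^k(q-j)$ has sign $(-1)^{k-1}$. It therefore suffices to prove
\[
A_k(q,t)<0 \qquad\text{for all } t\in(-1,\tfrac k2-1],
\]
because then the product has sign $(-1)^k$: negative for odd $k$ and positive for even $k$, as claimed. (If one prefers to re-derive the Facts formula, a short induction on $k$ using $\partial_t[(1+t)^{q-1-k}A_k]=(1+t)^{q-2-k}(q-1-k)A_{k+1}$ checks it.)

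For $A_k<0$, substitute $s=1+t\in(0,k/2]$ and write
\[
A_k = \log s + \frac{1}{2s}\Big(1-\frac{k}{q-1}\Big) - \frac12 + \frac{1}{q-1} + \sum_{j=2}^k\frac{1}{q-j}.
\]
The plan is to show that the $s$-dependence is increasing and then check the right endpoint. The derivative in $s$ is
\[
\frac1s - \frac{1}{2s^2}\Big(1-\frac{k}{q-1}\Big),
\]
which is positive since $k/(q-1)>k\ge1$. So $A_k$ is strictly increasing in $s$, and it is enough to verify $A_k<0$ at $s=k/2$, which gives
\[
A_k\big|_{s=k/2} = \log\frac k2 + \frac1k - \frac{1}{q-1} - \frac12 + \frac{1}{q-1} + \sum_{j=2}^k\frac{1}{q-j} = \log\frac k2 + \frac1k - \frac12 + \sum_{j=2}^k\frac{1}{q-j}.
\]
The $\frac{1}{q-1}$ terms cancel, which is presumably why the interval endpoint was chosen as $k/2-1$.

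It remains to bound the sum. For $j\ge2$ and $q\in(1,2)$ we have $q-j\in(1-j,2-j)$, so $\frac1{q-j}<\frac{1}{1-j}$, i.e.
\[
\sum_{j=2}^k\frac1{q-j}<-\sum_{i=1}^{k-1}\frac1i=-H_{k-1}.
\]
It therefore suffices to show $\log\frac k2+\frac1k-\frac12\le H_{k-1}$.
\begin{itemize}
\item For $k=1$ the sum is empty and the value is $\log\frac12+1-\frac12=\frac12-\log2<0$. This case has to be done directly, since the strict inequality then comes from the numbers rather than from the sum bound.
\item For $k\ge2$, use $H_{k-1}\ge\log k$ to get $\log\frac k2+\frac1k-\frac12-H_{k-1}\le -\log2+\frac1k-\frac12<0$.
\end{itemize}
The only step needing care is the constant bookkeeping at the endpoint, in particular the cancellation of $\frac1{q-1}$ and the separate handling of $k=1$. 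I expect no real obstacle.
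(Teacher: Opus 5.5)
Your proposal is correct and follows essentially the same route as the paper: $A_k$ is increasing in $t$, you evaluate it at $t=\frac k2-1$ where the $\frac{1}{q-1}$ terms cancel, and you conclude with the bounds $\frac{1}{q-j}<\frac{1}{1-j}$ for $j\ge 2$ and $H_{k-1}\ge\log k$. The only cosmetic difference is that the paper treats all $k$ at once via $\log\frac k2+\frac1k-\frac12<\log k$, whereas you handle $k=1$ separately; you also justify the monotonicity and the closed formula for $A_k$, which the paper only asserts.
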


\begin{proof}
Since $A_k$ is increasing in $t$, 
\begin{align*}
A_k(q,t)&\leq A_k\left(q,\frac{k}{2}-1\right) 
% = \log\frac{k}{2} + \frac{q-1-k}{(q-1)k}-\frac{1}{2}+ \sum_{j=1}^k\frac{1}{q-j}
 = \log\frac{k}{2} + \frac{1}{k}-\frac{1}{2}+ \sum_{j=2}^k\frac{1}{q-j}\\
& < \log k + \sum_{j=2}^k\frac{1}{1-j} = \log k - \sum_{j=1}^{k-1}\frac{1}{j}\leq 0.  
\end{align*}
If $k$ is odd, then $\prod_{j=1}^k\frac{1}{q-j}>0$ implies $\frac{\partial^k A}{\partial t^k}(q,t)<0$; 
if $k$ is even, then $\prod_{j=1}^k\frac{1}{q-j}<0$ implies $\frac{\partial^k A}{\partial t^k}(q,t)>0$. 
\end{proof}

\begin{lemma}
\label{lemma-beta-k}
Let $k\geq 4$ be a positive integer and let $q\in(1,2)$. 
Then for every $x+i y\in \operatorname{int}(\Delta)$, we have $\beta_k(q,x,y)<0$. 
\end{lemma}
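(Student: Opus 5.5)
The natural tool is the orthogonal basis method (Lemma~\ref{lem-orthogonal}), combined with a sign determination at one specific point, exactly as in the proof of Lemma~\ref{lem-decreasing-zero}. Write $\beta_k=(-1)^k\sum_{\eta}A^{(k)}(q,2r_\eta)\,w_\eta$ schematically. Here the three arguments are $t_1=2x$, $t_2=-x-\sqrt3 y$, $t_3=-x+\sqrt3 y$, and the weights are $y_1=-2y$, $y_2=-\sqrt3x+y$, $y_3=\sqrt3x+y$. These satisfy $y_1+y_2+y_3=0$ and $t_1+t_2+t_3=0$. We also have $\sum t_iy_i=-4xy+(\sqrt3x^2-xy+3xy-\sqrt3y^2)\cdot$ (with the appropriate signs), which vanishes. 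Indeed, $(1,1,1)$, $(t_i)$, $(y_i)$ are, up to scaling, the vectors $(1,1,1)$, $(\cos\eta)$, $(\sin\eta)$ used before, rotated; so they form an orthogonal basis. For $x+iy\in\operatorname{int}(\Delta)$ the $t_i$ are distinct and all lie in $(-1,1)$, and $(y_i)\neq 0$. Hence, if $t\mapsto \partial_t^kA(q,t)$ is strictly convex or strictly concave on $(-1,1)$, Lemma~\ref{lem-orthogonal} gives $\beta_k\neq0$ on $\operatorname{int}(\Delta)$.

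So the first step is to show that $\partial_t^{k+2}A(q,t)$ has a constant sign on $(-1,1)$ for $k\ge4$. Lemma~\ref{lemma-Ak} applied with $k+2$ in place of $k$ gives exactly this on $(-1,\frac{k+2}{2}-1]=(-1,\frac k2]$. Since $k\ge4$, this interval contains $(-1,2)\supset(-1,1)$, and indeed contains the full range of the $t_i$ over $\Delta$, which lies in $[-1,2]$. Thus $\partial_t^kA$ is strictly convex when $k$ is even and strictly concave when $k$ is odd. The sign is $(-1)^{k+2}$-determined, in particular nonzero, and no case split on $q$ is needed. We conclude that $\beta_k$ does not vanish on the connected set $\operatorname{int}(\Delta)$ for each fixed $q\in(1,2)$. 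Because $\beta_k$ is continuous in $(q,x,y)$ on the connected set $(1,2)\times\operatorname{int}(\Delta)$, its sign is constant there.

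The second step is to determine that sign, and this is where the real content lies. I would avoid a numerical evaluation at a single point, since it would have to be done for every $k\ge4$. Instead I would use a second-order expansion near the edge $x=\frac{\sqrt3}{3}y$, where $t_1=t_3$ and $\beta_k$ vanishes identically. More precisely, I would use the finite-difference form. Since the $t_i$ and $y_i$ are orthogonal to $(1,1,1)$ and to each other, one can write $\sum_i f(t_i)y_i$ as a divided-difference expression. Concretely, $\sum f(t_i)y_i = c\,(t_1-t_2)(t_2-t_3)(t_3-t_1)\,f[t_1,t_2,t_3]$ with an explicit constant $c$ depending only on the normalization, where $f[t_1,t_2,t_3]=\tfrac12 f''(\xi)$ is the second divided difference. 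I would identify $c$ by testing with $f(t)=t^2$. Then
\[
\beta_k=(-1)^k\,c\,(t_1-t_2)(t_2-t_3)(t_3-t_1)\,\tfrac12\,\partial_t^{k+2}A(q,\xi)
\]
for some $\xi\in(-1,2)$. Here $(-1)^k\partial_t^{k+2}A$ has the sign $(-1)^k(-1)^{k+2}\cdot(+1)=+$, since even orders of $A$ are positive and odd orders negative by Lemma~\ref{lemma-Ak}. The sign of $\beta_k$ therefore reduces to the sign of $c\prod(t_i-t_j)$ on $\operatorname{int}(\Delta)$. This product is a fixed polynomial in $(x,y)$ with constant sign on the interior, because the $t_i$ keep the ordering $t_2<t_3<t_1$ there. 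Its sign can be checked at one convenient point, e.g. $(x,y)=(\frac12,\frac{\sqrt3}{6})$.

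The main obstacle I anticipate is the bookkeeping of the constant $c$ and the orientation. Getting a single sign wrong would flip the conclusion, so I would cross-check against a direct evaluation of $\beta_4$ at that point for, say, $q=\frac32$.
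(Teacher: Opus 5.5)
Your proposal is correct, but your second step fixes the sign by a different route from the paper.

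The paper stops after your first step. It uses Lemma~\ref{lemma-Ak} with index $k+2$ to get strict convexity or concavity of $\partial_t^kA(q,\cdot)$ on $(-1,2)$, and Lemma~\ref{lem-orthogonal} to get $\beta_k\neq0$. It then reads off the sign from a boundary limit. For fixed $q,y$, letting $x\to1-\sqrt3y$ sends $t_2\to-1$, where $(-1)^k\partial_t^kA(q,t)\to+\infty$. The weight of that term tends to $4y-\sqrt3<0$, and the other two terms stay bounded, so $\beta_k\to-\infty$.

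Your divided-difference identity makes the non-vanishing step and the connectedness argument unnecessary. Since $(y_1,y_2,y_3)=-\frac{1}{\sqrt3}(t_3-t_2,\,t_1-t_3,\,t_2-t_1)$, one has, pointwise on $\operatorname{int}(\Delta)$,
\[
\beta_k=-\frac{1}{2\sqrt3}\,(t_1-t_2)(t_2-t_3)(t_3-t_1)\,(-1)^k\partial_t^{k+2}A(q,\xi)
=-3y\,(3x^2-y^2)\,(-1)^k\partial_t^{k+2}A(q,\xi),
\]
with $\xi\in(t_2,t_1)\subset(-1,2)$. This is negative because $3x^2>y^2$ on the interior and $(-1)^k\partial_t^{k+2}A(q,\cdot)>0$ on $(-1,2)$ for $k\ge4$. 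So your constant is $c=-1/\sqrt3$, and the product of differences equals $6\sqrt3\,y(3x^2-y^2)>0$.

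The two routes buy different things. Yours gives an explicit, quantitative formula: in effect a sharpened orthogonal basis method, of which Lemma~\ref{lem-orthogonal} is the qualitative shadow. The paper's stays inside its qualitative framework and needs only the easy blow-up of $\partial_t^kA$ at $t=-1$, read off from the formula for $A_k$.

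There are two minor slips:
\begin{itemize}
\item The $t_i$ range over $(-1,2)$, not $(-1,1)$. You correct this later, and it is exactly why $k\ge4$ is needed.
\item Your test point $(\tfrac12,\tfrac{\sqrt3}{6})$ lies on the edge $x=1-\sqrt3y$, where $t_2=-1$. It can still be used to read off the sign of the polynomial $(t_1-t_2)(t_2-t_3)(t_3-t_1)$. However, $\beta_4$ cannot be evaluated there directly; it only has the limit $-\infty$, which is precisely the paper's argument.
\end{itemize}
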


\begin{proof}
By $k\geq4$ and Lemma \ref{lemma-Ak}, we have 
$\frac{\partial^{k+2} A}{\partial t^{k+2}}(q,t)>0$ for any $t\in(-1,2)$ 
or $\frac{\partial^{k+2} A}{\partial t^{k+2}}(q,t)<0$ for any $t\in(-1,2)$. It follows that $\frac{\partial^k A}{\partial t^k}$ is strictly convex/concave on $t\in(-1,2)$. 
Because $x+i y\in \operatorname{int}(\Delta)$, the numbers $2x,-x-\sqrt{3}y,-x+\sqrt{3}y\in(-1,2)$ are pairwise different. 
Observe that the columns of the matrix
\[
\left(\begin{array}{ccc}
1 & 2x & -2y \\[1mm]
1 & -x-\sqrt{3}y & -\sqrt{3}x+y \\[1mm]
1 & -x+\sqrt{3}y &  \sqrt{3}x+y
\end{array}\right)
\]
are mutually orthogonal.
Using the orthogonal basis method (Lemma \ref{lem-orthogonal}) we obtain $\beta_k(q,x,y)\neq0$. 
Fixing $q\in(1,2)$ and $y\in(0,\frac{\sqrt{3}}{4})$, 
we have 
\begin{align*}
\lim_{x\to1-\sqrt{3}y}\beta_k(q,x,y) 
&= C + \lim_{x\to1-\sqrt{3}y}(-1)^k \frac{\partial^k A}{\partial t^k}(q,-x-\sqrt{3}y)(-\sqrt{3}x+y)\\
&= C + (-1)^k (4y-\sqrt{3}) \lim_{t\to-1}\frac{\partial^k A}{\partial t^k}(q,t) = -\infty,
\end{align*}
where $C=C(k,q,y)$ is a constant. 
Therefore $\beta_k<0$. 
\end{proof}

\begin{proof}
[Proof of Lemma \ref{lemma-ineq-psi}]
We prove $\psi<0$ by considering two cases $2x\leq 1$ and $2x\geq 1$, as shown in Figure \ref{fig-region-partition}.

Case 1: $2x\leq 1$. First verify $\frac{\partial^4 \psi}{\partial x^4} < 0$. 
By Lemma \ref{lemma-Ak}, we have $\frac{\partial^4 A}{\partial t^4}>0$ for $t\in(-1,1]$, so $\alpha_4 < 0$ and $\gamma_4 < 0$. 
Combining this with $\beta_4<0$ (see Lemma \ref{lemma-beta-k}) gives $\frac{\partial^4 \psi}{\partial x^4}=\alpha_4+\beta_4+\gamma_4 < 0$. This means $\frac{\partial^3 \psi}{\partial x^3}$ is strictly decreasing in $x$. Proceeding inductively, together with 
\begin{equation}
\label{psi-3-2-1}
\frac{\partial^k \psi}{\partial x^k}\left(q,\frac{\sqrt{3}}{3}y,y\right) < 0,\quad k=3,2,1
\end{equation}
and $\psi(q,\frac{\sqrt{3}}{3}y,y)\equiv0$, we obtain $\psi(q,x,y) < 0$.
The proof of \eqref{psi-3-2-1} will be given later. 

Case 2: $2x\geq 1$. First verify $\frac{\partial^6 \psi}{\partial x^6}<0$. By Lemma \ref{lemma-Ak}, we have $\frac{\partial^6 A}{\partial t^6}>0$ for $t\in(-1,2]$, so $\alpha_6<0$ and $\gamma_6<0$. 
Combining this with $\beta_6<0$ (see Lemma \ref{lemma-beta-k}) gives $\frac{\partial^6 \psi}{\partial x^6}=\alpha_6+\beta_6+\gamma_6 < 0$, which implies that $\frac{\partial^5 \psi}{\partial x^5}$ is strictly decreasing in $x$. 
From Case 1 we already know $\frac{\partial^k \psi}{\partial x^k}\left(q,\frac{1}{2},y\right) < 0$ for $k=4,3,2,1,0$. 
Combined with 
\begin{equation}
\label{psi-5}
\frac{\partial^5 \psi}{\partial x^5}\left(q,\frac{1}{2},y\right)<0, 
\end{equation}
we again obtain $\psi(q,x,y)<0$. 
Inequality \eqref{psi-5} will also be given later. 

Summarizing Cases 1 and 2 we have proved $\psi<0$
\end{proof}

In the following, we will prove the two‑variable inequalities \eqref{psi-3-2-1} and \eqref{psi-5}. 

\begin{proof}
[Proof of \eqref{psi-3-2-1}]
For any positive integer $k$, using $\beta_{k}(q,\frac{\sqrt{3}}{3}y,y)\equiv0$ we obtain 
\begin{align*}
&\frac{\sqrt{3}}{3}\frac{\partial^k \psi}{\partial x^k}\left(q,\frac{\sqrt{3}}{3}y,y\right) = \frac{\sqrt{3}}{3}\alpha_k\left(q,\frac{\sqrt{3}}{3}y,y\right) + \frac{\sqrt{3}}{3}\gamma_k\left(q,\frac{\sqrt{3}}{3}y,y\right)\\
& = (-2^k+(-1)^k) s \frac{\partial^k A}{\partial t^k}(q,s) 
+ (-1)^k k\left(\frac{\partial^{k-1} A}{\partial t^{k-1}}(q,-2s)
- \frac{\partial^{k-1} A}{\partial t^{k-1}}(q,s)\right)\\
&=: \Psi_k(q,s),
\end{align*}
where $q\in(1,2)$ and $s = \frac{2\sqrt{3}}{3}y\in(0,\frac{1}{2})$. 
To prove \eqref{psi-3-2-1}, we need to check $\Psi_k(q,s)<0$ for $k=3,2,1$ successively. 

First we prove $\Psi_3(q,s)<0$. 
Direct calculation gives 
\begin{align*}
\Psi_3
&= -9 s \frac{\partial^3 A}{\partial t^3}(q,s) 
- 3 \frac{\partial^2 A}{\partial t^2}(q,-2s)
+ 3 \frac{\partial^2 A}{\partial t^2}(q,s),\\
\frac{\partial\Psi_3}{\partial s}
&= -9 s \frac{\partial^4 A}{\partial t^4}(q,s) 
+ 6 \frac{\partial^3 A}{\partial t^3}(q,-2s)
- 6 \frac{\partial^3 A}{\partial t^3}(q,s). 
\end{align*}
By Lemma \ref{lemma-Ak}, we have $\frac{\partial^4 A}{\partial t^4}(q,t)>0$ for $t\in(-1,1]$, so 
$\frac{\partial^3 A}{\partial t^3}(q,-2s)< \frac{\partial^3 A}{\partial t^3}(q,s)$ for $s\in(0,\frac{1}{2})$, hence $\frac{\partial\Psi_3}{\partial s}<0$. 
Thus $\Psi_3$ is strictly decreasing in $s$. Together with $\Psi_3(q,0)\equiv0$ this yields $\Psi_3(q,s)< 0$. 

Next we show $\Psi_2(q,s)< 0$. 
We compute 
\begin{align*}
\Psi_2
&= -3 s \frac{\partial^2 A}{\partial t^2}(q,s) 
+ 2 \frac{\partial A}{\partial t}(q,-2s)
- 2 \frac{\partial A}{\partial t}(q,s),\\
\frac{\partial\Psi_2}{\partial s}
&= -3 s \frac{\partial^3 A}{\partial t^3}(q,s) 
- 4 \frac{\partial^2 A}{\partial t^2}(q,-2s)
- 5 \frac{\partial^2 A}{\partial t^2}(q,s),\\
\frac{\partial^2\Psi_2}{\partial s^2}
&= -3 s \frac{\partial^4 A}{\partial t^4}(q,s) 
+ 8 \frac{\partial^3 A}{\partial t^3}(q,-2s)
- 8 \frac{\partial^3 A}{\partial t^3}(q,s). 
\end{align*}
As for $\frac{\partial\Psi_3}{\partial s}<0$, we obtain $\frac{\partial^2\Psi_2}{\partial s^2}<0$. 
By Lemma \ref{lemma-Ak} again, we have $\frac{\partial\Psi_2}{\partial s}(q,0) = -9\frac{\partial^2 A}{\partial t^2}(q,0)<0$, so $\frac{\partial\Psi_2}{\partial s}<0$. 
Then it follows from $\Psi_2(q,0)\equiv0$ that $\Psi_2(q,s)< 0$. 

For $\Psi_1$, 
\begin{align*}
\Psi_1 
&= -3 s \frac{\partial A}{\partial t}(q,s) 
-  h(q,-2s) + h(q,s),\\
\frac{\partial \Psi_1}{\partial s}
&= -3 s \frac{\partial^2 A}{\partial t^2}(q,s) 
+ 2 \frac{\partial A}{\partial t}(q,-2s)
- 2 \frac{\partial A}{\partial t}(q,s).
\end{align*}
From $\frac{\partial \Psi_1}{\partial s} = \Psi_2<0$ and $\Psi_1(q,0)\equiv0$ we get $\Psi_1(q,s) < 0$. 

Consequently, \eqref{psi-3-2-1} holds for $k=3,2,1$. 
\end{proof}

\begin{proof}
[Proof of \eqref{psi-5}]
By Lemma \ref{lemma-beta-k}, we have $\beta_5<0$. 
Therefore 
\begin{align*}
&\frac{\partial^5 \psi}{\partial x^5}\left(q,\frac{1}{2},y\right) < \alpha_5\left(q,\frac{1}{2},y\right) + \gamma_5\left(q,\frac{1}{2},y\right)\\
&=-66y\frac{\partial^5 A}{\partial t^5}(q,1)
-5\sqrt{3} \frac{\partial^4 A}{\partial t^4}\left(q,-\frac{1}{2}-\sqrt{3}y\right) 
+5\sqrt{3} \frac{\partial^4 A}{\partial t^4}\left(q,-\frac{1}{2}+\sqrt{3}y\right)\\
&=:\Psi(q,y),
\end{align*}
where $q\in(1,2)$ and $y\in(0,\frac{\sqrt{3}}{6})$. 
Differentiating with respect to $y$ yields 
\begin{align*}
\frac{\partial \Psi}{\partial y} 
&= -66\frac{\partial^5 A}{\partial t^5}(q,1)
+15 \frac{\partial^5 A}{\partial t^5}\left(q,-\frac{1}{2}-\sqrt{3}y\right) 
+15 \frac{\partial^5 A}{\partial t^5}\left(q,-\frac{1}{2}+\sqrt{3}y\right).
\end{align*}
By Lemma \ref{lemma-Ak}, we have $\frac{\partial^6 A}{\partial t^6}>0$ for $t\in(-1,2]$, so $\frac{\partial^5 A}{\partial t^5}(q,t)$ is strictly increasing in $t$. Hence 
\begin{equation}
\label{ineq-A5}
\frac{\partial \Psi}{\partial y}< -66\frac{\partial^5 A}{\partial t^5}(q,1)
+30\frac{\partial^5 A}{\partial t^5}(q,0). 
\end{equation}
Note that $\frac{\partial^5 A}{\partial t^5}(q,0)<\frac{\partial^5 A}{\partial t^5}(q,1)<0$; see Lemma \ref{lemma-Ak}. 
Computations give 
\begin{align*}
\frac{\partial^5 A}{\partial t^5}(q,0) \prod_{j=1}^5\frac{1}{q-j}
& = -\frac{5}{2(q-1)} + \sum_{j=1}^5 \frac{1}{q-j}\\
&< -\frac{3}{2(q-1)} + \frac{1}{q-2} + \sum_{j=3}^5 \frac{1}{1-j}\\
&= -\frac{3}{2(q-1)} + \frac{1}{q-2} - \frac{13}{12}, 
\\
\frac{\partial^5 A}{\partial t^5}(q,1) \prod_{j=1}^5\frac{1}{q-j}
& > \frac{1}{16}\left(
\log 2 -\frac{1}{4} - \frac{5}{4(q-1)} + \sum_{j=1}^5 \frac{1}{q-j}\right)\\
& > \frac{1}{16}\left( -\frac{1}{4} - \frac{1}{4(q-1)} + \frac{1}{q-2}
+ \sum_{j=3}^5 \frac{1}{2-j}\right)\\
& = \frac{1}{16}\left( - \frac{1}{4(q-1)} + \frac{1}{q-2}
- \frac{25}{12} \right).
\end{align*}
Consequently, 
$$
- \frac{11}{16}\left( - \frac{1}{4(q-1)} + \frac{1}{q-2} - \frac{25}{12} \right)
+ 5\left(-\frac{3}{2(q-1)} + \frac{1}{q-2} - \frac{13}{12}\right)<0
$$
implies $-11\frac{\partial^5 A}{\partial t^5}(q,1)
+5\frac{\partial^5 A}{\partial t^5}(q,0)<0$. 
Combining this with \eqref{ineq-A5} gives $\frac{\partial \Psi}{\partial y}<0$. 
Together with $\Psi(q,0)\equiv0$ we obtain $\Psi(q,y)<0$. 
Therefore \eqref{psi-5} holds. 
\end{proof}

\section{Univariate inequalities involving logarithms}
\label{section-log}

In this section, we prove several previously stated univariate inequalities involving logarithms. 
While their validity is readily confirmed graphically, we establish them rigorously using two principal strategies.
The first approach successively differentiates and transforms the expressions to eliminate the logarithmic terms, ultimately reducing the problem to determining the sign of a polynomial. The second approach transforms both sides of a given inequality to obtain two well-separated functions, then heuristically constructs a lower-degree polynomial that lies between them, thereby decomposing the original inequality into two more tractable ones. 
Owing to substantial computations involved, both methods benefit from the use of symbolic computation software.

\begin{proof}
[Proof of \eqref{A-B-6-logt}]
Let $t\in(\frac{2}{5},1)$. We need to show
$$\frac{4t\log t+\frac{1-2t^2}{t}\log\frac{1+2t^2}{3}}
{-4t\log t+\frac{1+2t^2}{t}\log\frac{1+2t^2}{3}}+
\frac{6}{\log t}<0.$$
Let $x=t^2$. Then it suffices to show that
\begin{equation}
\label{12-logx}
\frac{2x\log x+(1-2x)\log\frac{1+2x}{3}}
{-2x\log x+(1+2x)\log\frac{1+2x}{3}}+
\frac{12}{\log x}<0
\end{equation}
for any $x\in(\frac{1}{10},1)$. 
It is easy to verify that 
\begin{align*}
-2x\log x+(1+2x)\log\frac{1+2x}{3}&<0, \\
12+24x+(1-2x)\log x&>0.
\end{align*}
Hence, the equality \eqref{12-logx} can be simplified to
$$f(x):=\frac{-24x\log x+2x(\log x)^2}{12+24x+(1-2x)\log x}+\log\frac{1+2x}{3}<0.$$
Since $f(1)=0$, we just need to show 
$$f'(x)=
\frac{2
\left((\log x)^2 + 2\frac{1-2x}{1+2x}\log x - 96\right)
\log x
}{(12+24x+(1-2x)\log x)^2} 
>0.$$
It follows from $|\log x|<3$ and $\left|\frac{1-2x}{1+2x}\right|<1$ that 
$$(\log x)^2 + 2\frac{1-2x}{1+2x}\log x - 96<0.$$
So $f'(x)>0$. This proves \eqref{A-B-6-logt}. 
\end{proof}

\begin{proof}
[Proof of \eqref{g00t}]
Let $t\in(\frac{2}{5},1)$. Multiplying the inequality \eqref{g00t} by $\frac{t}{2(1-t)(1+2t)}$, we have 
$$ f_1(t):=\log \frac{1+2t^2}{3} - \frac{4t}{1+2t}\log t + \frac{2(t-1)^3 (1+3(1-\frac{t}{2})(t-1))}{9(1+2t^2)(1+2t)}>0.$$
Since $f_1(1)=0$, it suffices to show that 
\begin{align*}
f_2(t) &:= \frac{(1+2t)^2}{4} f'_1(t) 
=  
\frac{(1-t)(24 t^6 - 30 t^5 - 78 t^4 - 67 t^3 - 75 t^2 - 33 t - 65)}{36(1+2t^2)^2} - \log t
<0.
\end{align*}
After two further differentiations, we obtain
\begin{align*}
f_3(t)
&:= \frac{9t (1 + 2 t^2)^3}{(1 - t)^3 (1+2t)}f'_2(t)
 = 18 t^5 + 18 t^4 + 27 t^3 + 27 t^2 - t - 9,\\
f'_3(t) &= 90 t^4 + 72 t^3 + 81 t^2 + 54 t - 1. 
\end{align*}
Since $f'_3$ is positive when $t\in(\frac{2}{5},1)$, the function $f_3$ monotonically increases. 
Moreover, since $f_3(\frac{2}{5}) = -\frac{8459}{3125}<0$ and $f_3(1) = 80>0$, the function $f_2$ must first decrease and then increase. Combining this with the facts that $f_2(\frac{2}{5}) = -\frac{5087}{5500} - \log\frac{2}{5} < 0$ and $f_2(1)=0$, we conclude that $f_2(t)<0$ for all $t\in(\frac{2}{5},1)$. 
This completes the proof of \eqref{g00t}. 
\end{proof}

\begin{proof}
[Proof of \eqref{g11t}]
Let $t\in(\frac{2}{5},1)$. 
The inequality \eqref{g11t} simplifies to
\begin{align*}
f_1(t)&:=a_0 + a_1\log \frac{1+2t^2}{3}<0, \\
a_0 &:= -8(\log t)^2 + 4(1-t^2)\log t + 
\frac{8(t-1)^4(1+6(t-1)(1-\frac{t}{2}))}{9t(1+2t^2)},\\
a_1 &:= 6\log t + \frac{(1+2t^2)(t^2-1)}{t^2} < 0. 
\end{align*}
Differentiation yields 
\begin{align*}
\frac{d}{dt} \frac{f_1}{a_1} &= 
\frac{8(1-t)}{9 t^4 (1 + 2t^2)^2 a_1 ^2} f_2,\\
f_2(t) &:= b_0 + b_1\log t + b_2(\log t)^2,\\
b_0 &:= (t - 1)^3 (1 + 2 t^2) (6 t^6 + 24 t^5 - 79 t^4 + 34 t^3 - 30 t^2 + 38 t - 5),\\
b_1 &:= 3t (t - 1) (36 t^7 - 108 t^6 + 14 t^5 - 20 t^4 + 24 t^3 - 35 t^2 - 16 t - 3),\\
b_2 &:= 18t (1 + t) (1 + 2 t^2) (1 + 2 t^4) > 0. 
\end{align*}
A further differentiation gives 
\begin{align*}
\frac{d}{dt} \frac{f_2}{b_2} 
&= \frac{d}{dt} \frac{b_0}{b_2} + \frac{b_1}{b_2} \frac{1}{t} 
+ \left(\frac{d}{dt} \frac{b_1}{b_2} + \frac{2}{t}\right)\log t = \frac{18}{b_2^2} f_3,\\
f_3(t)&:= c_0 + c_1\log t,\\
c_0 &:= (t - 1) (2 t^2 + 1)  \big(72 t^{15} + 216 t^{14} + 8 t^{13} - 1632 t^{12} + 1010 t^{11}- 1298 t^{10} + 806 t^9\\
&\ \ \ \   - 260 t^8 - 185 t^7 - 659 t^6  + 817 t^5 - 772 t^4 + 166 t^3 - 244 t^2 + 6 t + 5\big),\\
c_1 &:= 6 t (72 t^{15} + 240 t^{14} - 232 t^{13} + 184 t^{12} - 134 t^{11}+ 396 t^{10} + 98 t^9 \\
&\ \ \ \  + 158 t^8 - 235 t^7 + 404 t^6 - 55 t^5 + 63 t^4 - 53 t^3 + 43 t^2 + 17 t + 6)\\
&>6t[ (- 232 t^{13} + 396 t^{10})
+( - 134 t^{11} + 158 t^8)
+( - 235 t^7 + 404 t^6) \\
&\ \ \ \ + (- 55 t^5 + 63 t^4) + (- 53 t^3 + 43 t^2 + 17 t)]>0. 
\end{align*}
Differentiating once more, we obtain 
\begin{align*}
\frac{d}{dt} \frac{f_3}{c_1} &= 
\frac{12 (t - 1)^3 (t + 1) (2 t^2 + 1) (2 t^4 + 1)}{c_1^2} f_4,\\
f_4(t) &:= 2592 t^{23} + 20736 t^{22} + 44640 t^{21} + 111648 t^{20} + 140288 t^{19}  + 236448 t^{18} + 513408 t^{17} \\
&\ \ \ \ 
  + 399872 t^{16} + 599322 t^{15} + 363480 t^{14} + 478582 t^{13} + 485286 t^{12}  - 22311 t^{11}+ 390104 t^{10}\\
&\ \ \ \ 
  - 132111 t^9 + 66438 t^8 - 35150 t^7 - 23445 t^6 - 27777 t^5 - 10135 t^4  \\
&\ \ \ \ 
- 3198 t^3 - 2079 t^2 - 223 t - 15. 
\end{align*}

Since the coefficients from $t^{12}$ to $t^{23}$ in $f_4$ are all positive, it follows that $\frac{d^{12} f_4}{dt^{12}} > 0$. 
By examining the signs of the successive derivatives at $t=\frac{2}{5}$ and $t=1$ (see Table \ref{table-bd-value-fk-g11t}), working backwards recursively, we conclude that $f_1<0$ (see Figure \ref{fig-fk}). 
\end{proof}

%\begin{table}[ht]
%\centering
%\begin{tabular}{|l|l|l|} 
%\hline
%$k$ & $f_k(\frac{2}{5})$ & $f_k(1)$  
%\\ \hline
%$1$ 
%&
%$- 8(\log\frac{2}{5})^2 + \frac{84}{25}\log\frac{2}{5} 
%- \frac{564}{1375}
%+ (6\log\frac{2}{5} - \frac{693}{100})\log\frac{11}{25}
%\approx -0.00269$
%& 
%0
%\\ \hline
%$2$ 
%&
%$ \frac{5463612}{390625}(\log\frac{2}{5})^2 
%+ \frac{19991286}{1953125}\log\frac{2}{5} 
%- \frac{81080109}{48828125}
%\approx 0.704$
%& 
%0
%\\ \hline
%$3$ 
%&
%$ \frac{6875519483952}{152587890625}\log\frac{2}{5}
%+ \frac{107238939732441}{3814697265625}
%\approx -13.2$
%& 
%0
%\\ \hline
%\end{tabular}
%\caption{Boundary values of $f_1,f_2,f_3$}
%\label{table-bd-value-123}
%\end{table}

\begin{table}[htbp]
\centering
\caption{Boundary values of $f_k$ in the proof of \eqref{g11t}}
\label{table-bd-value-fk-g11t}
\renewcommand{\arraystretch}{1.2}
\begin{tabular}{|l|r|r|} 
\hline
 & $t=\frac{2}{5}$ & $t=1$  
\\ \hline
$f_1$ & $\approx -0.00269$ & $0$
\\ \hline
$f_2$ & $\approx 0.704$ & $0$
\\ \hline
$f_3$ & $\approx -13.2$ & $0$
\\ \hline
$f_4$ 
& $ -\frac{15221744572681889469}{11920928955078125}$
& 3596400
\\ \hline
     $f_5:=f'_4$ 
& $-\frac{25063807070406017691}{2384185791015625}$
& $54275184$
\\ \hline
     $f_6:=f'_5/6$ 
& $-\frac{5221050470773832781}{476837158203125}$
& $130454253$
\\ \hline
     $f_7:=f'_6$
& $-\frac{308644680335820318}{95367431640625}$
& $1807469505$
\\ \hline
     $f_8:= f'_7 / 4$
& $\frac{1580116058314183251}{3814697265625}$
& % $6024859857$
\\ \hline
     $f_9:= f'_8 / 15$
& $\frac{2713516452538527489}{3814697265625}$
& % $5151771285$
\\ \hline
     $f_{10}:= f'_9 / 2$
& $\frac{5231822432670641043}{762939453125}$
& % $31733373933$
\\ \hline
     $f_{11}:= f'_{10} / 14$
& $\frac{1285654319889708897}{152587890625}$
& % $26757635355$
\\ \hline
     $f_{12}:= f'_{11} / 12$
& $\frac{344854420540353128}{30517578125}$
& % $25147949002$
\\ \hline
     $f_{13}:= f'_{12} / 3$
& $\frac{70423837809853329}{1220703125}$
& % $89995986108$
\\ \hline
    $f_{14}:= f'_{13} / 10$
& $\frac{103322515145357454}{1220703125}$
& % $91572756321$
\\ \hline
    $f_{15}:= f'_{14} / 33$
& $\frac{8798669534981667}{244140625}$
& % $26617374429$
\\ \hline
    $f_{16}:= f'_{15} / 8$
& $\frac{2945602219999709}{48828125}$
& % $29887117297$
\\ \hline
\end{tabular}
\end{table}

\begin{figure}[htbp]
\centering
\includegraphics{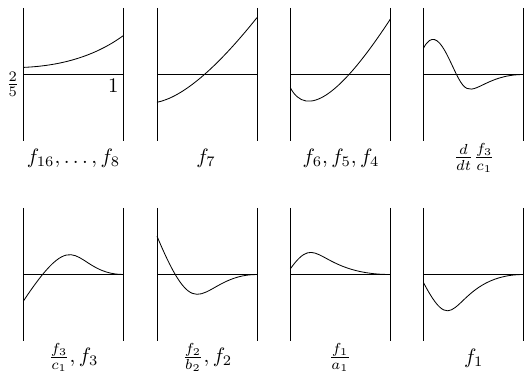}
\caption{Determining the sign of $f_k$ in the proof of \eqref{g11t}}
\label{fig-fk}
\end{figure}

\begin{proof}
[Proof of \eqref{log-W1}]
Let $t\in(\frac{2}{5},1)$. 
Previously, we constructed a rational function $W_1$ satisfying $\frac{V'_1}{V_0}<W_1< \frac{V'_1}{V_1}$. Further, we construct an intermediate function between $\frac{V'_1}{V_0}$ and $W_1$: 
\begin{align*}
W &= \frac{((\frac{5}{9}t-1)(t-1)+\frac{2}{3})(t-1)+2}{(t-1)^3}(\log t)^2 \\
&= \frac{5 t^3 - 19 t^2 + 29 t + 3}{9(t-1)^3}(\log t)^2.
\end{align*}
To prove the inequality \eqref{log-W1}, it suffices to verify 
\begin{align}
\label{W-W1}
&W <W_1<0,\\
\label{log-W}
&(\log t)^4\left( 
\log t-\frac{8}{\log t}+\frac{1-2t^2}{1+2t^2}
\right)+3(t-1)^5\left(1-\frac{t}{2}\right) W^2>0. 
\end{align}

We first prove $W <W_1$. 
After simplification and differentiation, we obtain 
\begin{align*}
f_1(t)&:=(\log t)^2 - \frac{6(t - 1)^2 (t^2 - 4t + 6)}{5 t^3 - 19 t^2 + 29 t + 3}>0,\\
f_2(t) &:= t f'_1(t) = 
2\log t -\frac{6t(t - 1)(5 t^5 - 33 t^4 + 93 t^3 - 83 t^2 - 96 t + 222)}{(5 t^3 - 19 t^2 + 29 t + 3)^2},\\
f_3(t) &:= t f'_2(t) = 
\frac{2 (t - 1)^2}{(5 t^3 - 19 t^2 + 29 t + 3)^3} f_4,\\
f_4(t)&:= - 75 t^8 + 830 t^7 - 4307 t^6 + 12957 t^5 - 24362 t^4  + 25406 t^3 - 12339 t^2 + 2835 t + 27. 
\end{align*}
From the boundary values of the successive derivatives of $f_4$ (Table \ref{table-bd-value-fk-log-W1}), it is straightforward to deduce that $f_4>0$, and hence $f_3>0$. Since $f_2(1)=0$, it follows that $f_2<0$. Further, since $f_1(1)=0$, we conclude that $f_1>0$. This completes the proof of \eqref{W-W1}. 

\begin{table}[ht]
\centering
\caption{Boundary values of $f_4$ in the proof of \eqref{log-W1}}
\label{table-bd-value-fk-log-W1}
\renewcommand{\arraystretch}{1.2}
\begin{tabular}{|r|r|r|r|r|r|r|r|r|} 
\hline
$k$ & $f_4^{(k)}(\frac{2}{5})$ & $f_4^{(k)}(1)$  
& $k$ & $f_4^{(k)}(\frac{2}{5})$ & $f_4^{(k)}(1)$ 
& $k$ & $f_4^{(k)}(\frac{2}{5})$ & $f_4^{(k)}(1)$ 
\\ \hline
$0$ & $\frac{4772277}{15625}$ & $972$ 
& $3$ & $\frac{1759284}{125}$ & $ -22572$
& $6$ & $-1669680$ & $ -429840$
\\ \hline
$1$ & $\frac{1058427}{3125}$ & $ 1080$
& $4$ & $-169440$ & $ -9168$
& $7$ & $2973600$ & $ 1159200$
\\ \hline
$2$ & $\frac{1961442}{625}$ & $ -3996$
& $5$ & $616824$ & $ 41400$
& $8$ & $-3024000$ & $ -3024000$
\\ \hline
\end{tabular}
\end{table}

We now prove the inequality \eqref{log-W}. After simplification, it is equivalent to 
$$\log t-\frac{8}{\log t}+\frac{1-2t^2}{1+2t^2}
+\frac{(1-\frac{t}{2})(5 t^3 - 19 t^2 + 29 t + 3)^2}{27(t-1)}>0.$$
For $t\in(\frac{2}{5},1)$, we have $\log t>\frac{5}{3}(t-1)$ and $\frac{1-2t^2}{1+2t^2}>-\frac{1}{3}$. 
Thus, it suffices to prove
$$\frac{5}{3}(t-1) - \frac{8}{\log t} - \frac{1}{3}
+\frac{(1-\frac{t}{2})(5 t^3 - 19 t^2 + 29 t + 3)^2}{27(t-1)}>0.$$
Next, by introducing the intermediate function $\frac{14t+10}{3}$, we reduce the problem to verifying 
$$%\begin{equation}
%\label{mid-function-14t-10-3}
\frac{8(1-t)}{-\log t}>\frac{14t+10}{3}>\left(2-\frac{5}{3}t\right)(1-t)+\frac{(1-\frac{t}{2})(5 t^3 - 19 t^2 + 29 t + 3)^2}{27}.
$$%\end{equation}
The left inequality is equivalent to $\log t+\frac{12(1-t)}{7t+5}>0$, whose proof is straightforward and omitted here. The right inequality simplifies to 
$$g(t) := 25 t^7 - 240 t^6 + 1031 t^5 - 2374 t^4 + 2871 t^3 - 1370 t^2 + 111 t + 54 > 0.$$
A graphical study of $g$ over $[\frac{2}{5},1]$ shows that its minimum occurs near $t=\frac{1}{2}$. Computing the Taylor expansion of $g$ at $t=\frac{1}{2}$ and using the bound $|2t-1|<1$, we obtain 
\begin{align*}
128 g(t)&= 
789 - 821 (2 t-1) + 14581 (2 t-1)^2 + 2483 (2 t-1)^3 \\
&\ \ \ \  
-4697 (2 t-1)^4 + 1769 (2 t-1)^5 -305 (2 t-1)^6+25 (2 t-1)^7 \\
&> 789 - 821 (2 t-1)  + (14581 - 2483 - 4697 - 1769 - 305 -25)(2 t-1)^2\\
& = 789 - 821 (2 t-1) + 5302 (2 t-1)^2 > 0.
\end{align*}
The positivity of the last quadratic in $2t-1$ follows from its discriminant being negative: 
$$821^2 - 4\times789\times5302 = -16059071<0.$$
This completes the proof of \eqref{log-W}. 

Finally, combining \eqref{W-W1} and \eqref{log-W}, we conclude that \eqref{log-W1} holds. 
\end{proof}

\begin{proof}
[Proof of \eqref{log-Gk}]
Let $t\in(\frac{2}{5},1)$. We aim to prove that $G_0(0,t)<0$ and $G_1(0,t)<0$. 

We first prove $G_0(0,t)<0$. Direct computation gives
\begin{align*}
G_0(0,t) &= 2\log t + \frac{(1+2t)(1 - t)(- 6t^4 + 21 t^3 + 10 t^2 - 5 t + 16)}{6 (1 + 2t^2)^2},\\
\frac{d}{dt} G_0(0,t) &= 
\frac{(1+2t)(1 - t)^2 (24 t^5 - 12 t^4 + 66 t^3 - 74 t^2 + 11 t + 12)}{6t(1+2t^2)^3}>0. 
\end{align*}
Here, $\frac{d}{dt} G_0(0,t)>0$ because
\begin{align*}
&24 t^5 - 12 t^4 + 1= 12t^4(2t-1) + 1
> -12\times 0.5^4\times 0.2 + 1 = 0.85 >0,\\
&66 t^3 - 74 t^2 + 11 t + 11 > 11 (6t^3-7t^2+t+1)>0. 
\end{align*}
The cubic polynomial $6t^3-7t^2+t+1$ attains its minimum on $[\frac{2}{5},1]$ at $t=\frac{7+\sqrt{31}}{18}$, which is positive. 
Since $G_0(0,1)=0$ and $\frac{d}{dt} G_0(0,t)>0$, it follows that $G_0(0,t)<0$. 

Next, we prove $G_1(0,t)<0$. Computation yields 
\begin{align*}
G_1(0,t) &= 2\log t + \frac{t - 1}{54 (1+2 t^2)}(6 t^9 - 72 t^8 + 389 t^7 - 1212 t^6 \\
&\ \ \ \ + 2353 t^5 - 2924 t^4 + 2400 t^3 - 1456 t^2 + 444 t - 252),\\
\frac{d}{dt} G_1(0,t) &= 
\frac{(1 - t)^2}{54 t (1 + 2t^2)^2}f,\\
f&:= 96 t^{10} - 900 t^9 + 3696 t^8 - 8420 t^7 + 11672 t^6 \\
&\ \ \ \ - 11105 t^5 + 8804 t^4 - 5384 t^3 + 2156 t^2 - 480 t + 108. 
\end{align*}
Computing the Taylor expansion of $f$ at $t=\frac{1}{2}$ and using $|2t-1|<1$, we obtain 
\begin{align*}
128 f(t) &= 8523 + 2463 (2 t - 1) + 6476 (2 t - 1)^2 + 9592 (2 t - 1)^3+ 7322 (2 t - 1)^4- 3014 (2 t - 1)^5\\
&\ \ \ \    - 232 (2 t - 1)^6 - 296 (2 t - 1)^7  + 363 (2 t - 1)^8 - 105 (2 t - 1)^9 + 12 (2 t - 1)^{10}\\
&> 8523 - 2463 + (6476 - 9592) (2 t - 1)^2 + (7322 - 3014 - 232 -296 - 105) (2 t - 1)^4\\
&> 6060 - 3116 (2 t - 1)^2 >0. 
\end{align*}
Hence, $\frac{d}{dt} G_1(0,t)>0$. 
Combining this with $G_1(0,1)=0$ yields $G_1(0,t)<0$. 

In conclusion, \eqref{log-Gk} holds for both $k=0$ and $k=1$. 
\end{proof}

\section{The explicit formulas for $r_{p,p^*}(\mathbb{Z}_3)$, $r_{p,2}(\mathbb{Z}_3)$ and $r_{2,p^*}(\mathbb{Z}_3)$ 
}

\subsection{First proof}
Let \(p \in (1, 2)\). As the definitions of \(\Phi\) in \eqref{phi}, we let 
$
\alpha = 1 - \tfrac{2}{p}$ and $\alpha^* = 1 - \tfrac{2}{p^*} = -\alpha.
$
Recall from \eqref{eq-H-alpha-t} that  
$$H(\alpha, t) = \left( -\frac{\alpha}{2} \log \frac{1 + 2t^2}{3} + \frac{1}{2} \log \frac{1 + 2t^{1+\alpha}}{1 + 2t^{1-\alpha}},\ \ \frac{(1 - t^{1-\alpha})(1 - t^{1+\alpha})}{1 + 2t^2} \right).
$$ 
In Proposition \ref{proposition-unique}, we have prove that any distinct $H(\alpha,\cdot)$ have a unique intersection point. 
Particularly, the curves \(H(\alpha, \cdot)\) and \(H(-\alpha, \cdot)\) intersect at a unique point  
\[
H(\alpha,\tfrac{1}{2}) = H(-\alpha,\tfrac{1}{2}) =( 0,\tfrac{(2 - 2^\alpha)(2 - 2^{-\alpha})}{6} ).
\]  
Observe that  
\begin{align}\label{eq:dual-two}
\Phi(\alpha, \tfrac12) = 
( \tfrac{2}{1-\alpha}, 2^{\alpha-1}), \quad 
\Phi(-\alpha, \tfrac12) = ( \tfrac{2}{1+\alpha}, 2^{-\alpha-1}).
\end{align}  
Hence, for the pair \((p, p^*)\), the unique solution to \eqref{xy-eq} is  
$
(x , y) = (2^{\alpha-1},  2^{-\alpha-1}).
$
By Theorem \ref{thm-main-thm-one-xy},  
\[
r_{p,p^*}(\mathbb{Z}_3) = \frac{(1 + 2x)(1 - y)}{(1 + 2y)(1 - x)} = \frac{(1+2^{\alpha})(1-2^{-\alpha-1})}{(1+2^{-\alpha})(1-2^{\alpha-1})} = \frac{2(4^{1/p^*} - 1)}{4 - 4^{1/p^*}}.
\]  

By symmetry, the curves \( H(0, \cdot)\), \( H(\alpha, \cdot)\), and \( H(\alpha^*, \cdot)\) intersect at a unique point.  
That is, there is a unique $t\in(0,1)$ such that 
$H(0,t) = H(\alpha,\tfrac{1}{2}) = H(-\alpha,\tfrac{1}{2}).$
By \(\Phi(0,t) = (2, t)\) and \eqref{eq:dual-two}, for the pairs \((p, 2)\) and \((2, p^*)\), the corresponding solutions of system \eqref{xy-eq} are respectively  
$
 (x_1,y_1) = (2^{\alpha-1},t)$ and $
(x_2,y_2) = (t,2^{-\alpha-1}). 
$
By Theorem \ref{thm-main-thm-one-xy},  
\[
r_{p, 2}(\mathbb{Z}_3) = \frac{(1 + 2^{\alpha})(1 - t)}{(1 + 2t)(1 - 2^{\alpha-1})}
\quad \text{and} \quad
r_{2, p^*}(\mathbb{Z}_3) = \frac{(1 + 2t)(1 - 2^{-\alpha-1})}{(1 + 2^{-\alpha})(1 - t)}.
\]  
Therefore,  
$
r_{p, 2}(\mathbb{Z}_3)\cdot  r_{2, p^*}(\mathbb{Z}_3)= r_{p, p^*}(\mathbb{Z}_3)$.   
Since \(r_{p, 2}(\mathbb{Z}_3) = r_{2, p^*}(\mathbb{Z}_3)\), we obtain  
\[
r_{p, 2}(\mathbb{Z}_3) = r_{2, p^*}(\mathbb{Z}_3) = \sqrt{2(4^{1/p^*} - 1)/(4 - 4^{1/p^*})}.
\]  

\subsection{Second proof}\label{sec-sym} Here we also give the sketch of a second proof of $r_{p,p^*}$ for $p\in (1,2)$ and leave the second proof for $r_{p,2} = r_{2, p^*}$ to the reader. 

Indeed, by the equivalent equation-system \eqref{self-dual-form}, it is clear that the solution $(x,y)\in (0,1)^2$ to the following system is also a solution to \eqref{self-dual-form} and hence \eqref{xy-eq}: 
\[
\ell(p, x)= \ell(p^*, y) \text{\,\,and \,\,} y= x^{p-1}. 
\]
Then one can check easily that $(x,y)=(2^{-2/p}, 2^{-2/p^*})$ is a solution  to \eqref{xy-eq} (which must be unique by Theorem  \ref{thm-main-thm-one-xy}). Thus $r_{p,p^*}(\mathbb{Z}_3)$ is obtained using \eqref{cross-rpq}.  

\section{Proof of Corollary~\ref{coro-algebraic}}\label{sec-daishushu}
\begin{lemma}\label{lemma-daishushu}
Let $P, Q \in \mathbb{Z}[u, v]$ be two polynomials with integer coefficients. 
Suppose that  in an open set  $\Omega \in \mathbb{R}^2$, the system of equations
\begin{equation}\label{PQ-sys}
        P(u, v) =         Q(u, v) = 0
\end{equation}
has a unique real solution  $(u_0, v_0)$. Then both $u_0$ and $v_0$ are algebraic numbers.
\end{lemma}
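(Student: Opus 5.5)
The plan is to reduce to a pair of coprime integer polynomials vanishing at $(u_0,v_0)$ and then eliminate one variable by a resultant. The only subtlety is that $P$ and $Q$ might share a common factor, so that their common zero set is not finite. Uniqueness in the open set $\Omega$ is exactly what lets us handle that case.

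\textbf{Step 1 (splitting off the common factor).} Let $G=\gcd(P,Q)$ in $\mathbb{Q}[u,v]$. After clearing denominators we may take $G\in\mathbb{Z}[u,v]$ and write $P=GP_1$, $Q=GQ_1$ with $P_1,Q_1\in\mathbb{Z}[u,v]$ coprime. The real zero set of \eqref{PQ-sys} is then $Z(G)\cup Z(P_1,Q_1)$.

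\textbf{Step 2 (the coprime case).} Suppose $(u_0,v_0)\in Z(P_1,Q_1)$. Since $P_1,Q_1$ have no common factor, we may (after a generic integer linear change of variables making both monic in $v$, if needed) form the resultant $R(u)=\mathrm{Res}_v(P_1,Q_1)\in\mathbb{Z}[u]$. This is a nonzero polynomial that vanishes at every common zero, so $R(u_0)=0$ and $u_0$ is algebraic. Taking the resultant with respect to $u$ shows that $v_0$ is algebraic too. The change of variables is an invertible linear map with rational coefficients, so it preserves algebraicity.

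\textbf{Step 3 (the point lies on $Z(G)$).} Replace $G$ by an irreducible factor $F\in\mathbb{Z}[u,v]$ with $F(u_0,v_0)=0$. Every real zero of $F$ is a solution of \eqref{PQ-sys}. By uniqueness in $\Omega$, $(u_0,v_0)$ is therefore an isolated point of $Z(F)\cap\mathbb{R}^2$. Take a small disk around $(u_0,v_0)$ on which $F$ vanishes only at the centre. The punctured disk is connected, so $F$ has constant sign there, and $(u_0,v_0)$ is a local extremum of $F$. Hence $F_u(u_0,v_0)=F_v(u_0,v_0)=0$.

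Some partial derivative, say $F_u$, is not identically zero. If both vanished identically then $F$ would be constant; this cannot happen, since $F$ is irreducible and has a zero. The polynomial $F_u$ has smaller degree in $u$ than $F$, and $F$ is irreducible, so $F$ and $F_u$ are coprime. Applying Step 2 to the pair $(F,F_u)$ shows that $u_0,v_0$ are algebraic.

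The step I expect to need the most care is Step 3: the passage from "isolated real zero" to "critical point", and the coprimality of $F$ with a nonvanishing partial derivative. The resultant step is standard. Applied to Theorem~\ref{thm-main-thm-one-xy} with rational $p,q$, one first clears fractional powers by introducing auxiliary variables (e.g. $x=s^{N}$ for a common denominator $N$). The uniqueness of the solution in $(0,1)^2$ then feeds directly into this lemma, and \eqref{cross-rpq} shows that $r_{p,q}(\mathbb{Z}_3)$ is a rational expression in algebraic numbers, hence algebraic.
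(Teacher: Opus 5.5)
Your proposal is correct and takes the same route as the paper. Both split off $\gcd(P,Q)$ and use a nonzero resultant in the coprime case. In the common-factor case, both show that the isolated zero is a critical point (your local-extremum argument replaces the paper's appeal to the implicit function theorem) and then pair the polynomial with one of its partial derivatives.

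Your choice of an irreducible factor $F$ together with a partial derivative $F_u\not\equiv 0$ is a cleaner version of the paper's squarefree reduction, because it makes $\gcd(F,F_u)=1$ immediate. By contrast, the paper's claim that a common factor of $H$ and $H_u$ forces $H$ to have repeated factors overlooks factors of $H$ that do not involve $u$.
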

\begin{proof}
Set
$
V_{\mathbb{R}}(P, Q) = \{ (u, v) \in \mathbb{R}^2: P(u, v) = Q(u, v) = 0 \}. 
$
%The condition that $(u_0, v_0)$ is unique in $\Omega$ implies that $(u_0, v_0)$ is an \textbf{isolated point} of the real algebraic set $V_{\mathbb{R}}(P, Q)$.
Consider the greatest common divisor of the two polynomials:
\[
H(u, v) = \gcd(P(u, v), Q(u, v)) \in \mathbb{Q}[u, v].
\]
Without loss of generality, we may assume $H(u, v) \in \mathbb{Z}[u, v]$. Then 
$
V_{\mathbb{R}}(P, Q) = V_{\mathbb{R}}(H) \cup V_{\mathbb{R}}(\tilde{P}, \tilde{Q}),
$
where $\tilde{P} = P/H$ and $\tilde{Q} = Q/H$ are coprime.  We analyze two cases.

\noindent
\textbf{Case 1}.
Assume that $(u_0, v_0)$ is a solution to the system $\tilde{P} = \tilde{Q} = 0$.

Since $\tilde{P}, \tilde{Q}$ are coprime, their resultant (see, e.g.,  \cite[Chapter 3]{CLO05} for its precise definition) with respect to $v$ is not identically zero:
\[
R(u) = \operatorname{Res}_v(\tilde{P}, \tilde{Q}) \in \mathbb{Q}[u], \quad R(u) \not\equiv 0.
\]
According to the properties of the resultant (see, e.g., \cite[Chapter 3]{CLO05}), 
\[
\tilde{P}(u_0, v_0) = \tilde{Q}(u_0, v_0) = 0 \text{\,\, and \,\,}
R(u_0) = 0.
\]
Since $R\in \mathbb{Q}[u]$ is a non-zero polynomial, its root $u_0$ must be an algebraic number. By symmetry (computing $\operatorname{Res}_u(\tilde{P}, \tilde{Q})$), $v_0$ is also an algebraic number.

\noindent
\textbf{Case 2}. 
Assume that $(u_0, v_0)$ is a root of the common factor $H(u, v) = 0$. 

By assumption,  $(u_0, v_0)$ is an isolated point of the set $\{(u, v): H(u, v) = 0\}$ in $\mathbb{R}^2$. The implicit function theorem implies that $(u_0, v_0)$ must be a singular point of the curve $H(u,v)=0$:
\begin{equation}
        H(u_0, v_0) = 0 ,\quad 
        \frac{\partial H}{\partial u}(u_0, v_0) = 0,\quad      \frac{\partial H}{\partial v}(u_0, v_0) = 0.
\end{equation}
Note that since $H \in \mathbb{Z}[u, v]$, the partial derivatives $H_u = \frac{\partial H}{\partial u}$ and $H_v = \frac{\partial H}{\partial v}$ are also in $\mathbb{Z}[u, v]$. 

Now we consider the new system  $H = H_u = 0$. Let $L(u, v) = \gcd(H, H_u)$.
\begin{itemize}
    \item If $H$ and $H_u$ are coprime, we return to \textbf{Case 1}, and  $(u_0,v_0)$ are both algebraic.
    \item Otherwise,  $H$ has repeated factors. We can divide $H$ by $L$ repeatedly until we obtain a polynomial $\widehat{H}(u, v)$  which shares the same zero set but has no repeated factors. The point $(u_0, v_0)$ remains an isolated zero of $\widehat{H} = 0$, and thus must be a singular point of $\widehat{H}$. Then we are reduced to the coprime case.
\end{itemize}
Therefore, $u_0$ and $v_0$ are algebraic numbers.
\end{proof}

\begin{proof}[Proof of Corollary~\ref{coro-algebraic}]
Assume 
$
p=m/n, q=j/k$ with $m,n,j,k\in\mathbb{N}$. By Theorem~\ref{thm-main-thm-one-xy},  using the formula \eqref{cross-rpq} for $r_{p,q}(\mathbb{Z}_3)$, it suffices to show that all coordinates of  the unique solution $(x,y)$ in $(0,1)^2$ to \eqref{xy-eq} are algebraic.  It can be easily checked that, the pair $(u,v)=(x^{1/n}, y^{1/k})$ satisfies a system of the form \eqref{PQ-sys} admitting  a unique solution in $(0,1)^2$.   Therefore, by $(x,y)  =(u^n, v^k)$, Corollary \ref{coro-algebraic} follows immediately from Lemma~\ref{lemma-daishushu}.
\end{proof}

{\flushleft \bf Declaratioins: Competing Interests}  The authors declare no competing interests.

\bibliographystyle{alpha}
%\bibliography{bib-Z3}

\begin{thebibliography}{99}
\bibitem[ADFS04]{ADFS-04}
Noga Alon, Irit Dinur, Ehud Friedgut, and Benny Sudakov.
\newblock Graph products, {F}ourier analysis and spectral techniques.
\newblock {\em Geom. Funct. Anal.}, 14(5):913--940, 2004.

\bibitem[And99]{And99}
Mats~Erik Andersson.
\newblock Remarks on hypercontractivity for the smallest groups.
\newblock In {\em Complex analysis and differential equations ({U}ppsala,
  1997)}, volume~64 of {\em Acta Univ. Upsaliensis Skr. Uppsala Univ. C Organ.
  Hist.}, pages 51--60. Uppsala Univ., Uppsala, 1999.

\bibitem[And02]{Andersson}
Mats~Erik Andersson.
\newblock Beitrag zur {T}heorie des {P}oissonschen {I}ntegrals \"{u}ber
  endlichen {G}ruppen.
\newblock {\em Monatsh. Math.}, 134(3):177--190, 2002.

\bibitem[AGV85]{Arnold-Sing}
Vladimir Arnold, Sabir Gusein-Zade, and Alexander Varchenko.
\newblock {\em Singularities of differentiable maps. {V}ol. {I}}, volume~82 of
  Monogr. Math.,
\newblock Birkh\"{a}user Boston, Inc., Boston, MA, 1985.
\newblock The classification of critical points, caustics and wave fronts,
  Translated from the Russian by Ian Porteous and Mark Reynolds.

\bibitem[Bea95]{Bea-95}
Alan Beardon.  
\newblock {\em The geometry of discrete groups}. 
Grad. Texts Math., vol. 91. SpringerVerlag, New York, 1995. 

\bibitem[Bec75]{Beckner}
William Beckner.
\newblock Inequalities in {F}ourier analysis.
\newblock {\em Ann. of Math. (2)}, 102(1):159--182, 1975.

\bibitem[BJJ83]{BJJ83}
William Beckner, Svante Janson, and David Jerison.
\newblock Convolution inequalities on the circle.
\newblock In {\em Conference on harmonic analysis in honor of {A}ntoni
  {Z}ygmund, {V}ol. {I}, {II} ({C}hicago, {I}ll., 1981)}, Wadsworth Math. Ser.,
  pages 32--43. Wadsworth, Belmont, CA, 1983.



\bibitem[BKKKL92]{BKKKL}
Jean Bourgain, Jeff Kahn, Gil Kalai, Yitzhak Katznelson, and Nathan Linial.
\newblock The influence of variables in product spaces.
\newblock {\em Isr. J. Math.}, 77(1-2):55--64, 1992.

\bibitem[Bon70]{Bonami}
Aline Bonami.
\newblock \'{E}tude des coefficients de {F}ourier des fonctions de
  {$L^{p}(G)$}.
\newblock {\em Ann. Inst. Fourier (Grenoble)}, 20(fasc. 2):335--402 (1971),
  1970.

\bibitem[Bor82]{Borell-82}
Christer Borell.
\newblock Positivity improving operators and hypercontractivity.
\newblock {\em Math. Z.}, 180:225--234, 1982.

\bibitem[CLO05]{CLO05}
David Cox, John Little, and Donal O'Shea.
\newblock{\em Using algebraic geometry}.
\newblock 2nd ed., Grad. Texts Math., 185, Springer, 2005.

\bibitem[CLSC08]{CLS08}
Guan-Yu Chen, Wai-Wai Liu, and Laurent Saloff-Coste.
\newblock The logarithmic {S}obolev constant of some finite {M}arkov chains.
\newblock {\em Ann. Fac. Sci. Toulouse Math. (6)}, 17(2):239--290, 2008.

\bibitem[DFR08]{DFR-08}
Irit Dinur, Ehud Friedgut, and Oded Regev.
\newblock Independent sets in graph powers are almost contained in juntas.
\newblock {\em Geom. Funct. Anal.}, 18(1):77--97, 2008.

\bibitem[DSC96]{DS96}
Persi Diaconis and Laurent Saloff-Coste.
\newblock Logarithmic {S}obolev inequalities for finite {M}arkov chains.
\newblock {\em Ann. Appl. Probab.}, 6(3):695--750, 1996.

\bibitem[EGSC18]{EGS-18}
Nathaniel Eldredge, Leonard Gross, and Laurent Saloff-Coste.
\newblock Strong hypercontractivity and logarithmic {S}obolev inequalities on
  stratified complex {L}ie groups.
\newblock {\em Trans. Am. Math. Soc.}, 370(9):6651--6683, 2018.

\bibitem[FI21]{FI-21}
Rupert~L. Frank and Paata Ivanisvili.
\newblock Hypercontractivity of the semigroup of the fractional {L}aplacian on
  the {$n$}-sphere.
\newblock {\em J. Funct. Anal.}, 281(8):Paper No. 109145, 10, 2021.

\bibitem[FF14]{FF-14}
Dvir Falik and Ehud Friedgut.
\newblock Between {A}rrow and {G}ibbard-{S}atterthwaite; a representation
  theoretic approach.
\newblock {\em Isr. J. Math.}, 201(1):247--297, 2014.

\bibitem[FK96]{FK-96}
Ehud Friedgut and Gil Kalai.
\newblock Every monotone graph property has a sharp threshold.
\newblock {\em Proc. Am. Math. Soc.}, 124(10):2993--3002, 1996.

\bibitem[Fri98]{Fri-98}
Ehud Friedgut.
\newblock Boolean functions with low average sensitivity depend on few
  coordinates.
\newblock {\em Combinatorica}, 18(1):27--35, 1998.

\bibitem[Fri23]{Fri-23}
Ehud Friedgut.
\newblock K{KL}'s influence on me.
\newblock In {\em I{CM}---{I}nternational {C}ongress of {M}athematicians.
  {V}ol. 6. {S}ections 12--14}, pages 4568--4581. EMS Press, Berlin, 2023.

  
\bibitem[Gro75]{Groos}
Leonard Gross.
\newblock Logarithmic {S}obolev inequalities.
\newblock {\em Am. J. Math.}, 97(4):1061--1083, 1975.


\bibitem[Gro99]{Gro-99}
Leonard Gross.
\newblock Hypercontractivity over complex manifolds.
\newblock {\em Acta Math.}, 182(2):159--206, 1999.

\bibitem[Gro02]{Gro-02}
Leonard Gross.
\newblock Strong hypercontractivity and relative subharmonicity.
\newblock {\em J. Funct. Anal.}, v190, 38--92. 2002.

\bibitem[HHM18]{HHM-18}
Jan H{\c a}z{\l}a, Thomas Holenstein, and Elchanan Mossel.
\newblock Product space models of correlation: between noise stability and
  additive combinatorics.
\newblock {\em Discrete Anal.}, Paper No. 20, 63, 2018.

\bibitem[IN22]{IN-22}
Paata Ivanisvili and Fedor Nazarov.
\newblock On {Weissler}'s conjecture on the {Hamming} cube. {I}.
\newblock {\em Int. Math. Res. Not.}, 2022(9):6991--7020, 2022.

\bibitem[Jan83]{Jan-83}
Svante Janson.
\newblock On hypercontractivity for multipliers on orthogonal polynomials.
\newblock {\em Ark. Mat.}, 21(1):97--110, 1983.

\bibitem[JPPP17]{JPPP}
Marius Junge, Carlos Palazuelos, Javier Parcet, and Mathilde Perrin.
\newblock Hypercontractivity in group von {N}eumann algebras.
\newblock {\em Mem. Am. Math. Soc.}, 249(1183):xii+83, 2017.

\bibitem[JPPR15]{JPPE}
Marius Junge, Carlos Palazuelos, Javier Parcet, and Eric Ricard.
\newblock Hypercontractivity for free products.
\newblock {\em Ann. Sci. \'{E}c. Norm. Sup\'{e}r. (4)}, 48(4):861--889, 2015.

\bibitem[KLo18]{KL-18}
Peter Keevash and Eoin Long.
\newblock A stability result for the cube edge isoperimetric inequality.
\newblock {\em J. Comb. Theory, Ser. A}, 155:360--375, 2018.

\bibitem[KLi18]{KLi-18}
Nathan Keller and Noam Lifshitz.
\newblock Approximation of biased {B}oolean functions of small total influence
  by {DNF}s.
\newblock {\em Bull. Lond. Math. Soc.}, 50(4):667--679, 2018.

\bibitem[KLLM24]{KLLM-24}
Peter Keevash, Noam Lifshitz, Eoin Long, and Dor Minzer.
\newblock Hypercontractivity for global functions and sharp thresholds.
\newblock {\em J. Am. Math. Soc.}, 37(1):245--279, 2024.

\bibitem[KLM26]{KLM-26}
Nathan Keller, Noam Lifshitz, and Omri Marcus.
\newblock Sharp hypercontractivity for global functions.
\newblock {\em J. Eur. Math. Soc.}, 2026.
\newblock Online first.

\bibitem[KS88]{KS-88}
Wieslaw Krakowiak and Jerzy Szulga.
\newblock Hypercontraction principle and random multilinear forms.
\newblock {\em Probab. Theory Relat. Fields}, 77(3):325--342, 1988.

\bibitem[KS91]{KS-91}
Stanis{\l}aw Kwapie{\'n} and Jerzy Szulga.
\newblock Hypercontraction methods in moment inequalities for series of
  independent random variables in normed spaces.
\newblock {\em Ann. Probab.}, 19(1):369--379, 1991.

\bibitem[Kul22]{Kui-22}
Aleksei Kulikov.
\newblock Functionals with extrema at reproducing kernels.
\newblock {\em Geom. Funct. Anal.}, 32(4):938--949, 2022.

\bibitem[LaO00]{LaO00}
Rafa{\l} Lata\l{a} and Krzysztof Oleszkiewicz.
\newblock Between {S}obolev and {P}oincar\'{e}.
\newblock In {\em Geometric aspects of functional analysis}, volume 1745 of
  {\em Lect. Notes Math.}, pages 147--168. Springer, Berlin, 2000.

\bibitem[Mel23]{Mel-23}
Petar Melentijevi\'{c}.
\newblock Hypercontractive inequalities for weighted {B}ergman spaces.
\newblock {\em Bull. Lond. Math. Soc.}, 55(6):2611--2616, 2023.

\bibitem[Mos10]{Mos-10}
Elchanan Mossel.
\newblock Gaussian bounds for noise correlation of functions.
\newblock {\em Geom. Funct. Anal.}, 19(6):1713--1756, 2010.

\bibitem[MOO10]{MDO-10}
Elchanan Mossel, Ryan O'Donnell, and Krzysztof Oleszkiewicz.
\newblock Noise stability of functions with low influences: invariance and
  optimality.
\newblock {\em Ann. of Math. (2)}, 171(1):295--341, 2010.

\bibitem[MOS13]{MOS-13}
Elchanan Mossel, Krzysztof Oleszkiewicz, and Arnab Sen.
\newblock On reverse hypercontractivity.
\newblock {\em Geom. Funct. Anal.}, 23(3):1062--1097, 2013.

\bibitem[Nel73]{Nelson-73}
Edward Nelson.
\newblock The free {M}arkoff field.
\newblock {\em J. Funct. Anal.}, 12:211--227, 1973.

\bibitem[O'D14]{Donnell}
Ryan O'Donnell.
\newblock {\em Analysis of {B}oolean functions}.
\newblock Cambridge University Press, New York, 2014.

\bibitem[Ole03]{Ole03}
Krzysztof Oleszkiewicz.
\newblock On a nonsymmetric version of the {K}hinchine-{K}ahane inequality.
\newblock In {\em Stochastic inequalities and applications}, volume~56 of {\em
  Progr. Probab.}, pages 157--168. Birkh\"{a}user, Basel, 2003.

\bibitem[SVZ24]{SVZ24}
Joseph Slote, Alexander Volberg, and Haonan Zhang.
\newblock Bohnenblust-{H}ille inequality for cyclic groups.
\newblock {\em Adv. Math.}, 452:Paper No. 109824, 35, 2024.

\bibitem[Tal94]{Tala-94}
Michel Talagrand.
\newblock On {Russo}'s approximate zero-one law.
\newblock {\em Ann. Probab.}, 22(3):1576--1587, 1994.

\bibitem[Wei80]{Wei-90}
Fred~B. Weissler.
\newblock Logarithmic {S}obolev inequalities and hypercontractive estimates on
  the circle.
\newblock {\em J. Funct. Anal.}, 37(2):218--234, 1980.

\bibitem[Wol07]{Wolff}
{Pawe\l} Wolff.
\newblock Hypercontractivity of simple random variables.
\newblock {\em Stud. Math.}, 180(3):219--236, 2007.

\bibitem[Yao25]{yao2024optimal}
Gan Yao.
\newblock Optimal hypercontractivity and {Log--Sobolev} inequalities on cyclic
  groups $\mathbb{Z}_{m\cdot 2^k}$.
\newblock {\em arXiv preprint}, December 2025.

\bibitem[Zha21]{Zhao2021}
Yu~Zhao.
\newblock {\em Generalizations and Applications of Hypercontractivity and
  Small-Set Expansion}.
\newblock Ph.d. dissertation, Columbia University, 2021.
\end{thebibliography}

\end{document}